\newcommand{\e}{\varepsilon}
\newcommand{\tp}{\mathrm{tp}}
\newcommand{\Lc}{\mathcal{L}}
\newcommand{\Uc}{\mathcal{U}}
\newcommand{\Ac}{\mathcal{A}}
\newcommand{\Dc}{\mathcal{D}}
\newcommand{\Cc}{\mathcal{C}}
\newcommand{\Bc}{\mathcal{B}}
\newcommand{\Rb}{\mathbb{R}}
\newcommand{\Pb}{\mathbb{P}}
\newcommand{\Eb}{\mathbb{E}}
\newcommand{\frk}{\mathfrak}
\newcommand{\bbar}{\bar{b}}
\newcommand{\ybar}{\bar{y}}
\newcommand{\xbar}{\bar{x}}
\newcommand{\zbar}{\bar{z}}
\newcommand{\fbar}{\bar{f}}
\newcommand{\hbarr}{\bar{h}}
\newcommand{\pn}{\#}
\def\Av{\operatorname{Av}}
\def\Avtp{\operatorname{Avtp}}
\def\tp{\operatorname{tp}}
\def\KM{\operatorname{KM}}
\providecommand{\dotdiv}{
  \mathbin{
    \vphantom{+}
    \text{
      \mathsurround=0pt 
      \ooalign{
        \noalign{\kern-.35ex}
        \hidewidth$\smash{\cdot}$\hidewidth\cr 
        \noalign{\kern.35ex}
        $-$\cr 
      }%
    }%
  }%
}
\newcommand{\cB}{\mathcal{B}}
\newcommand{\cF}{\mathcal{F}}
\newcommand{\cU}{\mathcal{U}}
\newcommand{\cL}{\mathcal{L}}
\newcommand{\cS}{\mathcal{S}}
\newcommand{\R}{\mathbb{R}}
\newcommand{\seq}{\subseteq}
\newcommand{\kM}{\mathfrak{M}}
\newcommand{\abar}{\bar{a}}
\newcommand{\cbar}{\bar{c}}
\newcommand{\gbar}{\bar{g}}
\newcommand{\fsd}{\mathbin{\raisebox{.25pt}{\textnormal{\footnotesize{$\triangle$}}}}}
\newcommand{\ssd}{\bigtriangleup}
\newcommand{\subt}{\hspace{-1pt}\scaleto{\triangle}{5pt}\hspace{-1pt}}
\newcommand{\ssubt}{\hspace{-1pt}\scaleto{\triangle}{3pt}\hspace{-1pt}}
\newtheorem{thm}{Theorem}[section]
\newtheorem{theorem}[thm]{Theorem}
\newtheorem{prop}[thm]{Proposition}
\newtheorem{proposition}[thm]{Proposition}
\newtheorem{lem}[thm]{Lemma}
\newtheorem{lemma}[thm]{Lemma}
\newtheorem{cor}[thm]{Corollary}
\newtheorem{corollary}[thm]{Corollary}
\newtheorem{fact}[thm]{Fact}
\newtheorem{question}[thm]{Question}
\theoremstyle{definition}
\newtheorem{defn}[thm]{Definition}
\newtheorem{definition}[thm]{Definition}
\newtheorem{remark}[thm]{Remark}
\def\Ind{\setbox0=\hbox{$x$}\kern\wd0\hbox to 0pt{\hss$\mid$\hss}
\lower.9\ht0\hbox to 0pt{\hss$\smile$\hss}\kern\wd0}
\def\Notind{\setbox0=\hbox{$x$}\kern\wd0\hbox to 0pt{\mathchardef
\nn=12854\hss$\nn$\kern1.4\wd0\hss}\hbox to
0pt{\hss$\mid$\hss}\lower.9\ht0 \hbox to 0pt{\hss$\smile$\hss}\kern\wd0}
\newcommand{\sqin}{%
  \mathrel{\vphantom{\sqsubset}\text{%
      \mathsurround=0pt
      \ooalign{$\sqsubset$\cr$-$\cr}%
    }}%
}
\newcommand{\miff}{\makebox[.5in]{$\Leftrightarrow$}}
\newcommand{\inv}{^{\text{-}1}}
\newcommand{\fim}{\textit{fim}}
\newcommand{\famm}{\textit{fam}}
\newcommand{\dfs}{\textit{dfs}}
\newcommand{\Toot}{\Leftrightarrow}
\newcommand{\Ti}{T^\infty_{\textnormal{\sfrac{1}{2}}}}
\renewcommand{\epsilon}{\varepsilon}
\begin{document}

\title{Generic stability, randomizations, and NIP formulas}

\date{August 3, 2023}
\author[G. Conant]{Gabriel Conant}

\address{Department of Mathematics\\
The Ohio State University\\
Columbus, OH, 43210, USA}
\email{conant.38@osu.edu}%

\author[K. Gannon]{Kyle Gannon}

\address{Beijing International Center for Mathematical Research (BICMR)\\
Peking University\\
No. 5 Yiheyuan Road, Haidian District, Beijing, China}
\email{kgannon@bicmr.pku.edu.cn}

\author[J. Hanson]{James E. Hanson}

\address{Department of Mathematics\\
University of Maryland,\\ College Park, MD, 20742, USA}
\email{jhanson9@umd.edu}

\thanks{GC was partially supported by NSF grant DMS-2204787.}

\maketitle

\begin{abstract}

We prove a number of results relating the concepts of Keisler measures, generic stability, randomizations, and NIP formulas. Among other things, we do the following: 

\begin{enumerate}
    \item We introduce the notion of a \emph{Keisler-Morley measure}, which plays the role of a Morley sequence for a Keisler measure. We prove that if $\mu$ is \fim\ over $M$, then for any Keisler-Morley measure $\lambda$ in $\mu$ over $M$ and any formula $\varphi(x,b)$, $\lim_{i \to \infty} \lambda(\varphi(x_i,b)) = \mu(\varphi(x,b))$. We also show that any measure satisfying this conclusion must be \famm.
    \item We study the map, defined by Ben Yaacov, taking a definable measure $\mu$ to a type $r_\mu$ in the randomization. 
    We prove that this map commutes with Morley products, and  that if $\mu$ is \fim\ then $r_\mu$ is generically stable.
    \item  We characterize when generically stable types are closed under Morley products by means of a variation of \emph{ict}-patterns. Moreover, we show that NTP$_2$ theories satisfy this property.
    \item We prove that if a local measure admits a suitably tame global extension, then it has finite packing numbers with respect to any definable family. We also characterize NIP formulas via the existence of tame extensions for local measures. 
\end{enumerate}
    
\end{abstract}

 \section*{Introduction}

 This paper continues a line of research started in \cite{CoGa} and continued in \cite{CoGaHa}. The general goal is to study the behavior of Keisler measures without a global NIP assumption on the ambient theory. One of the key contributions of \cite{CoGaHa} was  exhibiting the failure of certain properties of Keisler measures outside of NIP (e.g., associativity and preservation of Borel definability for Morley products). Here  we focus on obtaining positive results. 

 In NIP theories, a global type is called \emph{generically stable} if it is definable and finitely satisfiable in some small model (see \cite{HP}). Since the definitions of definability and finite satisfiability transfer easily to Keisler measures, one obtains a reasonable notion of generic stability for  measures in the NIP setting, which has a number of useful and striking characterizations  (see \cite{HPS}).
 
 Without assuming NIP, the situation is different. The notion of generic stability for global types in arbitrary theories\footnote{By a type (or measure) ``in" a theory $T$, we mean a type (or measure) over a subset of some model of $T$. We note that the second author strenuously objects to this sloppy terminology.}  was initially clarified in work of Pillay and Tanovi\v{c} \cite{PiTa}, and further studied by Garc\'{i}a, Onshuus, and Usvyatsov \cite{GaOnUs}. Moreover, this notion is known to \emph{not} be the same as ``definable and finitely satisfiable" in general \cite{CoGa}.
A motivating question for the first part of this paper is the following.
\begin{quotation}
    What is the correct definition of ``generic stability" for a Keisler measure in an arbitrary theory?
\end{quotation}
Previous work has supported the class of frequency interpretation measures (\fim) as a potential answer. In particular, the first two authors show in \cite{CoGa} that \fim\ characterizes generic stability for types. Moreover, in \cite{CoGaHa}, we prove that  \fim\ measures commute with  Borel-definable measures (or more generally, with any invariant measure admitting well-defined Morley products). For NIP theories, this latter condition is one of the characterizations of generic stability for Keisler measures from \cite{HPS}. Despite these results from \cite{CoGa} and \cite{CoGaHa}, it remains an open problem to prove that \fim\ can be characterized via a definition that ``clearly" captures the essence of  generic stability as defined for types.  Therefore, in the first part of the present paper, we will approach this problem from a different direction. In particular, we  identify other potential notions of ``generic stability" for a Keisler measure, and prove implications between them.

Consider a first-order theory $T$ and a Keisler measure $\mu$ over a monster model $\cU\models T$. For purposes of discussion, let us assume $\mu$ is definable over some small model $M\prec\cU$ (see Definition \ref{def:basic-measure}). We will work with three properties of $\mu$ resembling generic stability  of global types  (Fact \ref{fact:genstab}).
\begin{enumerate}[$(1)$]
    \item $\mu$ is a \emph{frequency interpretation measure} (\fim). 
    
    This notion was defined by Hrushovski, Pillay, and Simon \cite{HPS}, and essentially means that $\mu$ can be locally finitely approximated by average measures over a randomly chosen finite tuple (in the sense of the Morley powers $\mu^{(n)}$). See Definition \ref{def:fimDL} for details.

    \item The ``randomization" $r_\mu$ is generically stable as a type in the continuous theory $T^R$ (the randomization of $T$). 

    Here we use a process (due to Ben Yaacov \cite{BYT}) for transferring a definable Keisler measure $\mu$ in a theory $T$ to a definable global type $r_\mu$ in $T^R$. Roughly speaking, $r_\mu$ is the ``generic" linear extension of $\mu$ viewed as a type in $T^R$ in a suitable way.  In Section \ref{sec:ran-def-meas}, we clarify the details of this construction and  show that the map  $\mu\mapsto r^\mu$ commutes with Morley products (Proposition \ref{prop:prod-same-1}).

      \item $\mu$ is \emph{self-averaging}.

    This concept is introduced in Definition \ref{def:GSM}, and captures the idea that the behavior of $\mu$ is controlled by ``Morley sequences in $\mu$" by which we mean global measures extending $\mu^{(\omega)}|_M$ (see Definition \ref{defn:fim-measure}). 
\end{enumerate}

In Section \ref{sec:self-averaging} we establish  properties of self-averaging measures (some of which are discussed further below). Then in Section \ref{sec:randomization}, we  prove the following implications:\medskip
\begin{center}
     $\mu$ is \fim \makebox[.3in]{$\Rightarrow$}  $r_\mu$ is generically stable \makebox[.3in]{$\Rightarrow$}  $\mu$ is self-averaging. 
\end{center}\medskip
These implications are obtained from Proposition \ref{prop:rpgs}, which is  the main result of Section \ref{sec:randomization}.  In particular, given $\mu$, we obtain two sequences $(\psi^1_n(x_1,\ldots,x_n))_{n=1}^\infty$ and $(\psi^2_n(x_1,\ldots,x_n))_{n=1}^\infty$ of continuous formulas in $T^R$ such that $\psi^2_n(\xbar)\leq\psi^1_n(\xbar)$ (pointwise) for all $n$; and in Proposition \ref{prop:rpgs}, we show that for $t\in\{1,2\}$, condition $(t)$ holds if and only if $\lim_{n\to\infty}\psi^t_n(\xbar)^{r_\mu^{(n)}}=0$. This immediately yields $(1)\Rightarrow (2)$ (see Corollary \ref{cor:fim-transfer}), and leads to a short proof of $(2)\Rightarrow (3)$ (see Remark \ref{rem:sa-random}). We do not currently know whether either of the above implications are strict. However, it is worth noting that the formulas $\psi^1_n(\xbar)$ and $\psi^2_n(\xbar)$ from the statement of Proposition \ref{prop:rpgs} are \emph{nearly} identical, and the weak inequality between the formulas is simply an instance of Jensen's inequality. We also point out that for a global \emph{type}, (1), (2), and (3) are equivalent and coincide with the standard notion of generic stability  (see Fact \ref{fact:CG}, Corollary \ref{cor:fimtypes}, and Proposition \ref{prop:rpgs}). Moreover,   (1), (2), and (3) are equivalent for   measures  in NIP theories (see Corollary \ref{cor:saNIP} and Remark \ref{rem:fim-transfer-NIP}). Finally, in Section \ref{sec:fam} we show that \famm\ measures transfer to \famm\ types, and in Section \ref{sec:dfs} we give an example which demonstrates \emph{dfs} measures need not transfer to \emph{dfs} types. 

For some readers, the involvement of randomizations might appear unmotivated. An informative summary of this perspective can be found in the introduction of \cite{HPS}, where the main characterizations of generically stable measures in NIP theories are proved. The discussion of randomizations in \cite{HPS} is partly in order to explain the proof of \cite[Lemma 2.10]{HPS}, which has aspects bearing  some resemblance to the notion of self-averaging. That being said, in Theorem \ref{thm:fim-main} we will give direct proofs of the following implications (which do not require any familiarity with randomizations):\medskip
\begin{center}
     $\mu$ is \fim \makebox[.3in]{$\Rightarrow$}   $\mu$ is self-averaging \makebox[.3in]{$\Rightarrow$} $\mu$ is \famm.
\end{center}\medskip
Recall that \famm\ (``finitely approximated measure") is a weakening of \fim\ in which one only asserts the existence of finite approximations  (see Definition \ref{def:fam}). So even though we  do not know whether (1), (2), and (3) above are equivalent, the above implications show that all three properties sit in the same place among other commonly studied notions. On the other hand, the implication from self-averaging to \famm\ is known to be strict, due to the existence of types that are \famm\ but not generically stable (examples can be found in \cite[Theorem 5.8]{CoGa} and \cite[Section 8]{CoGaHa}).

None of the properties (1), (2), or (3) above is currently known to be preserved by Morley products. Indeed, the question of whether generic stability for \emph{types} is preserved by Morley products remains an unsettling open question. In Section \ref{sec:NTP2}, we provide some partial progress. First, we give a combinatorial characterization of theories in which generically stable types are closed under Morley products. This characterization is formulated using a ternary variation of ict-patterns involving stable sequences (see Definition \ref{def:gss}). We then show that such a pattern cannot exist in an NTP$_2$ theory. Therefore, generically stable types in NTP$_2$ theories are closed under Morley products. This generalizes the NIP case, which is an easy consequence of the characterization of generic stability in \cite[Proposition 3.2]{HP}. 

Finally, in Section \ref{sec:local}, we contribute to the ongoing development of ``local NIP", i.e., the  study of NIP formulas in arbitrary theories. Given a formula $\varphi(x,y)$ and a  $\varphi$-measure $\mu$ over a model $M\prec\cU$, we show that if $\mu$ admits a global extension satisfying a strong definability condition (implied for example by ``local \dfs"), then for any $\epsilon>0$ there is finite bound on the size of an $\epsilon$-separated family of instances of $\varphi(x,y)$ (see Proposition \ref{prop:phipack}). We then obtain a characterization of NIP formulas via the existence of suitably tame (e.g., \dfs, smooth, etc.) extensions for local Keisler measures over models (see Theorem \ref{thm:localNIPchar}).

{\subsection*{Acknowledgements} We thank Will Johnson for observing Remark \ref{rem:fim-transfer-NIP}, and Karim Khanaki for  comments on an earlier draft.

\section{Preliminaries}

 Given $r,s\in \R$ and $\epsilon > 0$, we write $r \approx_{\epsilon} s$ to mean $|r - s| <\epsilon$. We frequently use the notation $a_{<\omega}$ for an $\omega$-indexed sequence $(a_i)_{i<\omega}$ of elements from some set. Similarly, $a_{<n}$ denotes a finite sequence $(a_i)_{i<n}$.

  Let $T$ be a complete first-order theory with monster model $\cU$. We write $A\subset\cU$ to denote that $A$ is a small subset of $\cU$, and we write $M\prec\cU$ to denote that $M$ is a small elementary substructure of $\cU$. Throughout the paper, the letters $x$, $y$, and $z$ represent tuples of variables. However, we also write $\xbar$, $\ybar$, and $\zbar$ when we wish to emphasize the fact that we are working with tuples. 

\subsection{Generically stable types}\label{sec:gstypes-def}
The notion of a generically stable global type has several different characterizations, which we now recall.

\begin{fact} \label{fact:genstab}
Given $p \in S_{x}(\mathcal{U})$ and $M\prec\cU$, the following are equivalent.
\begin{enumerate}[$(i)$]
    \item For any Morley sequence $(a_i)_{i < \omega}$ in $p$ over $M$, $\lim_{i \to \infty} \tp(a_i/\mathcal{U}) = p$. 
    \item For any $\mathcal{L}$-formula $\varphi(x,y)$, there exists some $n_\varphi\geq 1$ such that for any $b\in \cU^y$ and 
     any Morley sequence $(a_i)_{i < \omega}$ in $p$ over $M$, 
    \[
    \varphi(x,b)\in p\miff |\{i<\omega:\cU\models\varphi(a_i,b)\}|\geq n_\varphi.
    \]
      
    \item There does not exist an $\mathcal{L}$-formula $\varphi(x,y)$, a Morley sequence $(a_i)_{i<\omega}$ in $p$ over $M$, and a sequence $(b_i)_{i<\omega}$ from $\cU^y$ such that $\cU\models\varphi(a_i,b_j)$ if and only if $i\leq j$. 
\end{enumerate}
\end{fact}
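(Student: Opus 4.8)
The plan is to prove $(ii)\Rightarrow(i)$ and $(ii)\Rightarrow(iii)$ directly, and then close the cycle by deriving $(ii)$ from each of $(i)$ and $(iii)$. Throughout I would assume (as is implicit in the statement, since otherwise Morley sequences need not be well behaved) that $p$ is $M$-invariant, so that every Morley sequence in $p$ over $M$ realizes the single type $p^{(\omega)}|_M$, is $M$-indiscernible, and has the feature that each of its subsequences and each of its tails is again a Morley sequence in $p$ over $M$.

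For $(ii)\Rightarrow(i)$: fix a Morley sequence $(a_i)_{i<\omega}$ in $p$ over $M$ and a formula $\varphi(x,b)$ with $b\in\cU^y$. If $\varphi(x,b)\in p$, then $\neg\varphi(x,b)\notin p$, so applying $(ii)$ to the formula $\neg\varphi(x,y)$ gives $|\{i<\omega:\cU\models\neg\varphi(a_i,b)\}|<n_{\neg\varphi}$; in particular this set is finite, so $\cU\models\varphi(a_i,b)$ for all large $i$. The case $\varphi(x,b)\notin p$ is symmetric, using the bound $n_\varphi$ for $\varphi$ itself. As $\varphi$ and $b$ were arbitrary, $\tp(a_i/\cU)\to p$. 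For $(ii)\Rightarrow(iii)$: suppose $\varphi(x,y)$, a Morley sequence $(a_i)_{i<\omega}$, and $(b_j)_{j<\omega}$ witness a failure of $(iii)$, and take $j=n_\varphi-1$. Then $\{i:\cU\models\varphi(a_i,b_j)\}=\{0,\dots,j\}$ has size $n_\varphi$, so $\varphi(x,b_j)\in p$ by $(ii)$; but $\{i:\cU\models\neg\varphi(a_i,b_j)\}$ is infinite, so $\neg\varphi(x,b_j)\in p$ by $(ii)$ applied to $\neg\varphi$, contradicting consistency of $p$.

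It then remains to establish $(i)\Rightarrow(ii)$ and $(iii)\Rightarrow(ii)$; equivalently, starting from a failure of $(ii)$ for some fixed $\varphi(x,y)$, I must produce both a Morley sequence in $p$ over $M$ along which $\tp(\cdot/\cU)$ does not converge to $p$, and a half-graph pattern as in $(iii)$. Negating $(ii)$ yields, for every $n$, a parameter $b_n$ and a Morley sequence exhibiting a discrepancy for $\varphi$; passing by pigeonhole to one of the two possible cases and extracting subsequences, one gets for each $n$ a length-$n$ Morley sequence in $p$ over $M$ together with a $b_n$ on which $\varphi$ takes the ``wrong'' value on all $n$ terms. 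I would then feed these finite approximations into a compactness argument — combined with Ramsey's theorem to straighten the resulting configuration into an $M$-indiscernible sequence — to build the required infinite object. The hard part is that, since $p$ is only assumed $M$-invariant and not definable, the condition ``$\varphi(x,b)\in p$'' is merely a function of $\tp(b/M)$ and need not be a closed (let alone definable) condition on it, so one cannot naively pass to a limit of the $b_n$ while keeping track of whether $\varphi(x,\cdot)\in p$. The remedy is to transport this membership information through the limit concretely: adjoin to each finite Morley sequence one further realization of $p$ over all the relevant parameters, so that the single exceptional value of $\varphi$ is witnessed directly by the global type $p$ rather than by an unstable closure condition. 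Executing this carefully — which is exactly the nontrivial content of Fact~\ref{fact:genstab}, and for which one may consult \cite{PiTa}, \cite{GaOnUs}, and \cite{CoGa} — yields the desired non-convergent Morley sequence and half-graph, completing the cycle of implications.
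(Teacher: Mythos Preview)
The paper does not actually prove Fact~\ref{fact:genstab}; it simply points the reader to \cite{PiTa} and \cite[Section 3]{CoGa}. So there is no in-paper argument to compare against, and your proposal is strictly more detailed than what appears in the text.

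Your direct arguments for $(ii)\Rightarrow(i)$ and $(ii)\Rightarrow(iii)$ are correct as written. For the remaining implications, your sketch is sound in outline and honestly flags the genuine difficulty: the set of $b$ with $\varphi(x,b)\in p$ is only type-definable over $M$ in the sense of $M$-invariance, not closed in $S_y(\cU)$, so one cannot pass naively to a limit of the $b_n$. Your remedy of adjoining an extra realization of $p$ over the accumulated parameters to carry the membership datum through the compactness argument is exactly the standard maneuver used in the cited references. One small sharpening worth noting: since all Morley sequences in $p$ over $M$ realize the same type $p^{(\omega)}|_M$, you may fix a single Morley sequence from the outset and conjugate the witnessing parameters $b_n$ back to it via $\operatorname{Aut}(\cU/M)$; this streamlines the compactness step. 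In any case, since you ultimately defer the execution to \cite{PiTa}, \cite{GaOnUs}, and \cite{CoGa}, your treatment is consistent with---and indeed more informative than---the paper's.
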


For proofs of the previous characterizations, the reader may consult \cite{PiTa} or  \cite[Section 3]{CoGa}. We say that $p\in S_x(\cU)$ is \textbf{generically stable over $M$} if the previous equivalent conditions hold. Generically stable types can also be understood via ``stable sequences", which we now recall.

\begin{definition}\label{def:stabseq}
    A sequence $(a_i)_{i<\omega}$ in $\mathcal{U}^{x}$ is  \textbf{stable} if  for any $\Lc$-formula $\varphi(x,y)$ there is some natural number $n$ such that for any $b\in\Uc^y$, either 
    \[
    |\{i<\omega:\cU\models \varphi(a_i,b)\}|\leq n\makebox[.5in]{or}|\{i<\omega:\cU\models \neg\varphi(a_i,b)\}|\leq n.
    \]
\end{definition}

Any stable sequence $a_{<\omega}$ determines a unique global type in $S_{x}(\mathcal{U})$, typically called the \textit{average type} of the sequence. More explicitly, we let $\Avtp(a_{<\omega})$ be the type in $S_{x}(\mathcal{U})$ such that an $\mathcal{L}_{\cU}$-formula $\varphi(x)$ is in $\Avtp(a_{<\omega})$ if and only if $\{i<\omega:\cU\models\varphi(a_i)\}$ is infinite. The next fact is also well known.

\begin{fact}\label{fact:gen-stab-seq-and-types}$~$
\begin{enumerate}[$(a)$]
    \item Suppose that $a_{<\omega}$ is a sequence in $\mathcal{U}^{x}$. If $a_{< \omega}$ is stable, then the global type $\Avtp(a_{<\omega})$ is $\{a_i: i < \omega\}$-invariant and generically stable. 
    \item An $M$-invariant type $p\in S_x(\cU)$ is generically stable if and only if $p=\Avtp(a_{<\omega})$ for any Morley sequence $a_{<\omega}$ in $p$ over $M$.
  \end{enumerate}
\end{fact}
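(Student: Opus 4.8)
The plan is to treat parts~(a) and~(b) separately, in both cases reducing to the characterizations in Fact~\ref{fact:genstab}. For part~(a), I would first verify that $p := \Avtp(a_{<\omega})$ is a well-defined complete global type: stability of $a_{<\omega}$ ensures that for each formula $\varphi(x,y)$ and each $b\in\cU^y$, exactly one of $\{i : \cU\models\varphi(a_i,b)\}$ and $\{i : \cU\models\neg\varphi(a_i,b)\}$ is infinite (in fact cofinite), and $p$ is consistent since every finite conjunction of its members holds of cofinitely many $a_i$. Writing $n_\varphi$ for the stability bound of $a_{<\omega}$ for $\varphi$, the ``majority-vote'' formula $\theta_\varphi(y)$ asserting that at least $n_\varphi+1$ among $a_0,\dots,a_{2n_\varphi}$ satisfy $\varphi(x,y)$ is a $\varphi$-definition of $p$ (that $\varphi(x,b)\in p$ iff $\cU\models\theta_\varphi(b)$ is immediate from stability), so $p$ is definable over $\{a_i : i<\omega\}$; it is clearly also finitely satisfiable in, and hence invariant over, $\{a_i : i<\omega\}$.

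To get generic stability, I would fix a small model $M$ with $\{a_i : i<\omega\}\subseteq M$, so that $p$ is $M$-invariant, and verify condition~$(iii)$ of Fact~\ref{fact:genstab}. The first key point is that every Morley sequence $(c_i)_{i<\omega}$ in $p$ over $M$ is again a stable sequence: toward a contradiction with a failure of $\varphi$-stability for $(c_i)$, witnessed by some $b$ and finitely many $c_i$'s on each side, I would use that each $\tp(c_i/Mc_{<i})$ (being a restriction of $p$) is finitely satisfiable in $\{a_i : i<\omega\}$ to ``pull the failure back'': peeling the $c_i$'s off one at a time in decreasing order of index, replacing each by some element of $\{a_i : i<\omega\}$, and including inequalities in the pulled-back formula so that the chosen elements stay pairwise distinct (the $c_i$ are pairwise distinct except in the algebraic case, which is trivial); this produces a failure of $\varphi$-stability for $(a_i)$. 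The second key point finishes $(iii)$: if $\varphi(x,y)$, a Morley sequence $(c_i)$ in $p$ over $M$, and $(b_j)_{j<\omega}$ from $\cU^y$ satisfied $\cU\models\varphi(c_i,b_j)$ exactly when $i\le j$, then for each $j$ only $c_0,\dots,c_j$ satisfy $\varphi(x,b_j)$, so $\neg\varphi(x,b_j)\in\Avtp(c_{<\omega})$; since $(c_i)$ is a stable sequence by the first point, $\Avtp(c_{<\omega})$ is itself defined by a majority-vote formula, forcing at most $n$ of $c_0,\dots,c_{2n}$ to satisfy $\varphi(x,b_j)$, where $n$ is the $\varphi$-stability bound of $(c_i)$; taking $j = n$ gives $n+1 \le n$, a contradiction. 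Hence $p$ is generically stable over $M$.

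For part~(b), the forward direction is immediate from Fact~\ref{fact:genstab}: given a Morley sequence $(a_i)$ in a type $p$ generically stable over $M$, applying condition~$(ii)$ to $\varphi$ and to $\neg\varphi$ shows $(a_i)$ is a stable sequence, and then condition~$(i)$ — with stability upgrading ``infinitely many'' to ``cofinitely many'' — gives $\Avtp(a_{<\omega}) = p$. For the converse, assume $p$ is $M$-invariant and $p = \Avtp(a_{<\omega})$ for every Morley sequence $(a_i)$ in $p$ over $M$. I would first note that any order-preserving subsequence of such a Morley sequence is again one. Then, for a Morley sequence $(a_i)$ and $\varphi(x,b)$ with $b\in\cU^y$: were $\{i : \cU\models\varphi(a_i,b)\}$ both infinite and co-infinite, splitting $(a_i)$ along this set into two Morley subsequences and applying the hypothesis to each would force both $\varphi(x,b)$ and $\neg\varphi(x,b)$ into $p$, which is impossible; so this set is finite or cofinite, and in either case the hypothesis tells us whether $\varphi(x,b)\in p$. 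It follows that $\lim_i\tp(a_i/\cU) = p$, i.e.\ condition~$(i)$ of Fact~\ref{fact:genstab} holds, so $p$ is generically stable over $M$.

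The step I expect to be the main obstacle is the generic-stability half of part~(a), and specifically the interplay of its two key points. The difficulty is that the way a Morley-sequence element $c_i$ interacts with an external parameter $b\notin M$ is not directly governed by the definability or the finite satisfiability of $p$ over $M$; the argument above circumvents this by transferring the question to $\Avtp(c_{<\omega})$ and invoking \emph{its} definability, which becomes available only once the first key point has established that $(c_i)$ is a stable sequence.
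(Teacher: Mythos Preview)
The paper does not give its own proof of this fact; it is stated as well known with pointers to \cite{PiTa} and \cite{GaOnUs}. So there is no ``paper's approach'' to compare against, and I will evaluate your argument on its own.

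Your treatment of part~(b) is correct and standard. For part~(a), the overall architecture --- show $p=\Avtp(a_{<\omega})$ is definable and finitely satisfiable over $A=\{a_i\}$, then show every Morley sequence $(c_i)$ in $p$ over $M\supseteq A$ is again stable, then read off condition~$(iii)$ of Fact~\ref{fact:genstab} --- is the right one, and your second key point is fine once the first is in hand.

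The one place that needs sharpening is the pull-back in the first key point. You invoke finite satisfiability of $\tp(c_i/Mc_{<i})$ in $A$, but the formula you actually want to transport --- some instance $\varphi(x,b)^{\pm}$ --- has the external parameter $b\notin Mc_{<i}$, so finite satisfiability over $Mc_{<i}$ does not touch it directly. The fix (which your phrase ``the pulled-back formula'' may already be gesturing at) is to existentially quantify $b$: at the step where you peel off $c_{i_k}$, apply finite satisfiability to the $Mc_{<i_k}$-formula
\[
\psi(x)\;:=\;\exists y\Bigl(\bigwedge_{\ell<k}\varphi(c_{i_\ell},y)^{\epsilon_\ell}\ \wedge\ \bigwedge_{\ell>k}\varphi(a_{j_\ell},y)^{\epsilon_\ell}\ \wedge\ \varphi(x,y)^{\epsilon_k}\Bigr)\ \wedge\ \bigwedge_{\ell>k} x\neq a_{j_\ell},
\]
which $c_{i_k}$ satisfies via the current witness, and whose parameters (the earlier $c$'s and the already-chosen $a_{j_\ell}\in A\subseteq M$) all lie in $Mc_{<i_k}$. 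This produces distinct $a_{j_0},\dots,a_{j_k}$ and a \emph{new} witness $b'$ with more than $n_\varphi$ of the $a_{j_\ell}$ on each side of $\varphi(-,b')$, contradicting stability of $(a_i)$. With this clarification your proof goes through.
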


\subsection{Keisler measures} \label{sec:gloss}
Here we give a terse review of definitions and facts involving Keisler measures. We refer the reader to \cite{CoGaHa} for further details and discussion, including original sources (e.g., \cite{HPP,HPS}).

Given a tuple of variables $x$, let $\kM_x(\cU)$ denote the space of Keisler measures on $\cL_{\cU}$-formulas in free variables $x$. As usual, we identify an element of $\kM_x(\cU)$ with a regular Borel probability measure on $S_x(\cU)$. Given $a\in\cU^x$, we let $\delta_a\in\kM_x(\cU)$ denote the Dirac measure concentrating on the realized type $\tp(a/\cU)$.

\begin{definition}[invariant/definable/finitely satisfiable]\label{def:basic-measure} Fix $\mu \in \mathfrak{M}_{x}(\mathcal{U})$. 
\begin{enumerate}[$(1)$]
    \item We say that $\mu$ is \textbf{invariant} if there is a  set $A \subset \mathcal{U}$ such that for any $\mathcal{L}$-formula $\varphi(x,y)$ and $b,c \in \mathcal{U}^{y}$, if $b\equiv_A c$ then $\mu(\varphi(x,b)) = \mu(\varphi(x,c))$. In this case, we also say that $\mu$ is \textbf{$A$-invariant}, and we define the map $F^\varphi_{\mu,A}\colon S_y(A)\to [0,1]$ via $F^\varphi_\mu(q)=\mu(\varphi(x,b))$ for some/any $b\models q$.
    \item We say that $\mu$ is \textbf{Borel-definable} if there is a set $A\subset\cU$ such that $\mu$ is $A$-invariant and, for every $\mathcal{L}$-formula $\varphi(x,y)$, the map $F_{\mu,A}^{\varphi}$ is Borel. In this case, we also say that $\mu$ is \textbf{Borel-definable over $A$.} 
    \item We say that $\mu$ is \textbf{definable} if there is a set $A\subset\cU$ such that $\mu$ is $A$-invariant and, for every $\mathcal{L}$-formula $\varphi(x,y)$, the map $F_{\mu,A}^{\varphi}$ is continuous. In this case, we also say that $\mu$ is \textbf{$A$-definable} or \textbf{definable over $A$}. 
    \item We say that $\mu$ is \textbf{finitely satisfiable in $A\subset\cU^x$} if for any $\cL_{\cU}$-formula $\varphi(x)$ with $\mu(\varphi(x))>0$, there is some $a\in A^x$ such that $\cU\models\varphi(a)$. 
    \item We say that $\mu$ is \textbf{\textit{dfs}} if it is definable and finitely satisfiable in some $A\subset\cU$. In this case, we also also say that $\mu$ is \textbf{\dfs\ over $A$}.
    \end{enumerate}
\end{definition}

\begin{definition}[average measures] Given a tuple $\abar = (a_1,\ldots,a_n)$ of elements of $\mathcal{U}^{x}$, define $\Av(\abar)=\frac{1}{n}\sum_{i\leq n}\delta_{a_i}$. Note $\Av(\abar)\in \mathfrak{M}_{x}(\mathcal{U})$.
\end{definition}

\begin{definition}[\famm]\label{def:fam}
    We say that $\mu\in\kM_x(\cU)$ is a \textbf{finitely approximated measure} (or \textbf{\textit{fam}}) if there is some $A\subset\cU^x$ such that, for any $\cL$-formula $\varphi(x,y)$ and any $\epsilon>0$, there is some $\abar\in (A^x)^{<\omega}$ such that for any $b\in\cU^y$, $\mu(\varphi(x,b))\approx_\epsilon\Av(\abar)(\varphi(x,b))$. In this case we also say that $\mu$ is \textbf{\famm\ over $A$}.
\end{definition}

\begin{definition}[Morley product] Fix $\mu \in \mathfrak{M}_{x}(\mathcal{U})$ and $\nu \in \mathfrak{M}_{y}(\mathcal{U})$, and assume $\mu$ is Borel-definable over $A \subset \mathcal{U}$. Then the \textbf{Morley product}, denoted $\mu \otimes \nu$, is the unique Keisler measure in $\mathfrak{M}_{xy}(\mathcal{U})$ such that for any $\cL_{\cU}$-formula $\varphi(x,y,b)$, 
\[
    (\mu \otimes \nu)(\varphi(x,y,b)) = \int_{S_{y}(Ab)} F_{\mu,Ab}^{\varphi}\, d(\nu|_{Ab})
\]
where $\nu|_{Ab}$ is the restriction of $\nu$ to $S_y(Ab)$. 

Given a sequence of variables $x_{<\omega}$, each of sort $x$, we  define $\mu^{(1)}(x_0)= \mu(x_0)$, $\mu^{(n+1)}(x_0,\ldots,x_n) = \mu(x_0) \otimes \mu^{(n)}(x_1,\ldots,x_n)$, and $\mu^{(\omega)}(x_{<\omega}) = \bigcup_{n < \omega} \mu^{(n)}(x_{<n})$. 
\end{definition}

\begin{definition}[\fim]\label{def:fimDL}
We say that $\mu\in\kM_x(\cU)$ is a \textbf{frequency interpretation measure} (or \textbf{\textit{fim}}) if there is $M\prec\cU$ such that for any $\mathcal{L}$-formula $\varphi(x,y)$, there are consistent $\cL_M$-formulas $(\theta(x_1,\ldots,x_n))_{n\geq 1}$ satisfying the following properties:
\begin{enumerate}[$(i)$]
\item For any $\epsilon>0$, there is a $k(\epsilon)\geq 1$ such that if $n\geq k(\epsilon)$ and $\cU\models\theta_n(\abar)$ then $\mu(\varphi(x,b))\approx_{\epsilon} \Av(\abar)(\varphi(x,b))$ for any $b\in\cU^y$. 
\item $\lim_{n\to\infty}\mu^{(n)}(\theta_n(x_1,\ldots,x_n))=1$.
\end{enumerate}
In this case, we also say that $\mu$ is \textbf{\fim\ over $M$}.
\end{definition}

\begin{fact}
    \fim\ $\Rightarrow$ \famm\ $\Rightarrow$ \dfs.
\end{fact}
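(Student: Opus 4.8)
The statement consists of two implications, and the plan is to obtain each by directly unwinding the relevant definitions; I do not expect either to require a new idea. \emph{For \fim\ $\Rightarrow$ \famm:} assuming $\mu$ is \fim\ over $M\prec\cU$, I claim $\mu$ is \famm\ over $M^x$. Given an $\cL$-formula $\varphi(x,y)$ and $\epsilon>0$, take the consistent $\cL_M$-formulas $(\theta_n(x_1,\dots,x_n))_{n\geq 1}$ supplied by Definition \ref{def:fimDL} and set $n:=k(\epsilon)$. Since $\theta_n$ is a consistent $\cL_M$-formula and $M\prec\cU$, by elementarity there is a realization $\abar=(a_1,\dots,a_n)$ of $\theta_n$ with each $a_i\in M^x$; condition $(i)$ of Definition \ref{def:fimDL} then gives $\mu(\varphi(x,b))\approx_\epsilon\Av(\abar)(\varphi(x,b))$ for every $b\in\cU^y$, which is exactly what \famm\ over $M^x$ requires. (Note that condition $(ii)$ of \fim\ plays no role in this direction.)

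\emph{For \famm\ $\Rightarrow$ \dfs:} suppose $\mu$ is \famm\ over $A\seq\cU^x$. Replacing $A$ by $M^x$ for a small model $M\prec\cU$ whose universe contains every coordinate of every tuple of $A$ — which only enlarges $A$ and hence preserves \famm\ — I may assume $A=M^x$, and I will verify that $\mu$ is finitely satisfiable in $M^x$ and definable over $M$. For finite satisfiability: if $\varphi(x,b)$ is an $\cL_\cU$-formula with $\mu(\varphi(x,b))>0$, apply \famm\ with $\epsilon:=\tfrac12\mu(\varphi(x,b))$ to obtain $\abar=(a_1,\dots,a_n)\in(M^x)^n$ with $\Av(\abar)(\varphi(x,b))>\tfrac12\mu(\varphi(x,b))>0$, whence $\cU\models\varphi(a_i,b)$ for some $i$ with $a_i\in M^x$. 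For $M$-invariance: if $b\equiv_M c$, then for each $\varphi$ and $\epsilon>0$ the $\epsilon$-approximating tuple $\abar\in(M^x)^{<\omega}$ from the definition of \famm\ yields a measure $\Av(\abar)$ that is invariant over the finite tuple $\abar$ (all of whose coordinates lie in $M$), so $\Av(\abar)(\varphi(x,b))=\Av(\abar)(\varphi(x,c))$; the triangle inequality then forces $|\mu(\varphi(x,b))-\mu(\varphi(x,c))|<2\epsilon$, and letting $\epsilon\to 0$ gives $M$-invariance. Hence each $F^\varphi_{\mu,M}\colon S_y(M)\to[0,1]$ is well defined.

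For definability I show each $F^\varphi_{\mu,M}$ is continuous. For any $\abar=(a_1,\dots,a_n)\in(M^x)^{<\omega}$, the function $F^\varphi_{\Av(\abar),M}$ sending $q$ to $\Av(\abar)(\varphi(x,b))=\tfrac1n|\{i\leq n:\cU\models\varphi(a_i,b)\}|$ (for $b\models q$) is continuous, indeed locally constant, since each $\{q\in S_y(M):\varphi(a_i,y)\in q\}$ is clopen ($\varphi(a_i,y)$ being an $\cL_M$-formula as $a_i\in M^x$). By \famm, for each $\epsilon>0$ some such $\abar$ satisfies $\sup_{q\in S_y(M)}|F^\varphi_{\mu,M}(q)-F^\varphi_{\Av(\abar),M}(q)|\leq\epsilon$, so $F^\varphi_{\mu,M}$ is a uniform limit of continuous functions on the compact space $S_y(M)$ and is therefore continuous. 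Thus $\mu$ is definable over $M$ and finitely satisfiable in $M^x$, i.e., \dfs. As for the difficulty: I anticipate essentially no obstacle — the whole argument is definition-chasing together with the fact that a uniform limit of continuous functions on a compact space is continuous. The only points calling for mild care are bookkeeping ones: pulling the \fim-witnesses $\abar$ back into $M$ by elementarity, and noting that the parameter set in the definition of \famm\ may without loss of generality be taken to consist of the tuples over a small model.
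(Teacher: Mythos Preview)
Your proof is correct. The paper states this implication as a \textbf{Fact} without proof (it is standard background from the prior literature on Keisler measures), so there is no argument in the paper to compare against; your definition-chasing derivation is the expected one.
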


\begin{fact}[\cite{CoGa}]\label{fact:CG}
    An invariant type $p\in S_x(\cU)$ is generically stable over $M\prec\cU$ if and only if it is \fim\ over $M$.
\end{fact}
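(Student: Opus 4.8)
The plan is to prove both directions directly, viewing the complete invariant type $p$ as a $\{0,1\}$-valued Keisler measure; under this identification $p^{(n)}$ is the $n$-th Morley power of $p$, one has $\Av(a_1,\dots,a_n)(\varphi(x,b))=\tfrac1n|\{i:\cU\models\varphi(a_i,b)\}|$, and ``$\lim_n p^{(n)}(\theta_n)=1$'' just says $\theta_n\in p^{(n)}$ for all large $n$. I will also use two standard observations: a type generically stable over $M$ is, in particular, $M$-definable; and, by the definition of the Morley product, if $(a_i)_{i<\omega}$ is a Morley sequence in $p$ over $M$ then the reversed initial segment $(a_{n-1},\dots,a_0)$ realizes $p^{(n)}|_M$, so the $\cL_M$-formulas in $p^{(n)}$ are exactly those satisfied by such tuples.

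\smallskip\noindent\textit{\fim\ over $M$ $\Rightarrow$ generically stable over $M$.}\ I would verify condition $(iii)$ of Fact~\ref{fact:genstab}. If it fails, fix $\varphi(x,y)$, a Morley sequence $(a_i)_{i<\omega}$ in $p$ over $M$, and $(b_j)_{j<\omega}$ in $\cU^y$ with $\cU\models\varphi(a_i,b_j)\Leftrightarrow i\leq j$; let $(\theta_n)_{n\geq1}$ be the \fim\ data for $\varphi$ and fix $\epsilon<\tfrac14$. For all large $n$ we have $\theta_n\in p^{(n)}$, so $\cU\models\theta_n(a_{n-1},\dots,a_0)$; since the average measure ignores the order of the tuple, condition $(i)$ of \fim\ gives $p(\varphi(x,b_j))\approx_\epsilon\tfrac1n|\{i<n:i\leq j\}|=\min(j{+}1,n)/n$ for every $j$. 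Taking $j=\lfloor n/2\rfloor$ makes the right side lie near $\tfrac12$ for large $n$, forcing $p(\varphi(x,b_j))\in(\tfrac14,\tfrac34)$, contradicting $p(\varphi(x,b_j))\in\{0,1\}$.

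\smallskip\noindent\textit{Generically stable over $M$ $\Rightarrow$ \fim\ over $M$.}\ Fix $\varphi(x,y)$, let $n^\ast=\max\{n_\varphi,n_{\neg\varphi}\}$ for the constants from Fact~\ref{fact:genstab}$(ii)$ applied to $\varphi$ and to $\neg\varphi$, and let $\psi(y)\in\cL_M$ be the $\varphi$-definition of $p$, so $\cU\models\psi(b)$ iff $p(\varphi(x,b))=1$. For $n\geq(n^\ast)^2$ take $\theta_n(x_1,\dots,x_n)\in\cL_M$ to be the sentence: for all $y$, if $\psi(y)$ then at least $\lceil n-\sqrt n\,\rceil$ of the $\varphi(x_j,y)$ hold, and if $\neg\psi(y)$ then at most $\lfloor\sqrt n\,\rfloor$ of them hold (for smaller $n$ take any consistent $\cL_M$-formula). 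Condition $(i)$ of \fim\ then follows immediately from the shape of $\theta_n$ and the fact that $p$-values are $0$ or $1$, with $k(\epsilon)$ chosen so that $n\geq k(\epsilon)$ forces $n\geq(n^\ast)^2$ and $n^{-1/2}<\epsilon$. For condition $(ii)$: given a Morley sequence $(a_i)_{i<\omega}$ in $p$ over $M$, Fact~\ref{fact:genstab}$(ii)$ applied to $\varphi$ and to $\neg\varphi$ shows that for every $b$ at most $n^\ast$ of the $a_i$ disagree with the value $p(\varphi(x,b))$, so among $a_0,\dots,a_{n-1}$ at most $n^\ast\leq\sqrt n$ disagree; hence the reversed tuple $(a_{n-1},\dots,a_0)$, which realizes $p^{(n)}|_M$, satisfies $\theta_n$ whenever $n\geq(n^\ast)^2$. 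As $p^{(n)}|_M$ is complete and $\theta_n\in\cL_M$, this forces $\theta_n\in p^{(n)}$, i.e.\ $p^{(n)}(\theta_n)=1$, for all large $n$; and each $\theta_n$ is consistent, being realized.

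\smallskip The steps most prone to slippage are bookkeeping ones: reconciling the ordering built into the Morley power $p^{(n)}$ with that of a Morley sequence (dealt with by passing to a reversed initial segment, and harmless because only order-insensitive average measures are tested against $\theta_n$), and extracting from generic stability the needed \emph{uniformity in $b$} of the finite approximations. The latter is exactly what Fact~\ref{fact:genstab}$(ii)$ supplies — bounds $n_\varphi,n_{\neg\varphi}$ independent of $b$ — together with $M$-definability of $p$, which lets the ``majority value'' $p(\varphi(x,b))$ be captured by the single $\cL_M$-formula $\psi(y)$.
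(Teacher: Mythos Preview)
The paper does not prove this statement; it is recorded as a fact with a citation to \cite{CoGa}. So there is no in-paper proof to compare against. Your argument is correct and self-contained: the forward direction (\fim\ $\Rightarrow$ generically stable) is the easy one, and your ``average near $\tfrac12$'' contradiction via condition $(iii)$ of Fact~\ref{fact:genstab} works cleanly; the backward direction is where the content lies, and your construction of $\theta_n$ from the $\varphi$-definition $\psi(y)$ together with the uniform exception bound $n^\ast$ from Fact~\ref{fact:genstab}$(ii)$ is exactly the right idea. The bookkeeping points you flag (reversed tuples realizing $p^{(n)}|_M$, and the fact that generic stability over $M$ gives $M$-definability so that $\psi(y)$ exists as an $\cL_M$-formula) are handled correctly. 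One very minor remark: your ceiling/floor thresholds in $\theta_n$ could create an off-by-one worry at $n=(n^\ast)^2$, but since Fact~\ref{fact:genstab}$(ii)$ actually gives strictly fewer than $n^\ast$ exceptions, the inequalities go through; if you want to be safe you can simply require $n>(n^\ast)^2$ in the definition of the ``real'' $\theta_n$.
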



\section{Self-averaging measures}\label{sec:self-averaging}

Recall from the introduction our motivating question of whether \fim\ measures can be characterized by some property that ``clearly" resembles the various characterizations of generic stability for types given in Fact \ref{fact:genstab}.  Toward formulating this more rigorously, the first challenge we face is that there is no existing notion of a Morley sequence in a \emph{measure}. This motivates the following definition, which provides one possibility for such a notion.

\begin{definition}[Keisler-Morley measure]\label{defn:fim-measure}
Suppose  $\mu\in\kM_x(\cU)$ is Borel-definable over $M\prec\cU$, and let $x_{<\omega}$ be a sequence of variables with each $x_i$ of sort $x$.  Then $\lambda \in \mathfrak{M}_{x_{<\omega}}(\mathcal{U})$ is a \textbf{Keisler-Morley measure in $\mu$ over $M$} if $\lambda|_{M} = \mu^{(\omega)}|_{M}$. We let $\KM(\mu/M)$ denote the set of Keisler-Morley measures in $\mu$ over $M$. 
\end{definition}

Using Keisler-Morley measures as replacements of Morley sequences, 
we now formulate a Keisler measure analogue  of the characterization of generically stable types given in Fact \ref{fact:genstab}$(i)$. 

\begin{definition}[self-averaging]\label{def:GSM} Suppose $\mu \in \mathfrak{M}_{x}(\mathcal{U})$ is Borel-definable over $M\prec\cU$. Then $\mu$ is \textbf{self-averaging over $M$} if for any $\cL_{\cU}$-formula $\varphi(x)$ and any  $\lambda\in \KM(\mu/M)$, $\lim_{i\to\infty}\lambda(\varphi(x_i))=\mu(\varphi(x))$.
\end{definition}

Our next goal is to study properties of self-averaging measures, and to see how this notion fits with other notions of ``stable-like" behavior. First, we will prove that any self-averaging measure is automatically ``uniformly self-averaging", which establishes the analogue of Fact \ref{fact:genstab}[$(i)\Leftrightarrow (ii)$]. The proof will rely on the following topological lemma.  

\begin{lem}\label{lem:uniform-pruning}
Let $f\colon X\to K$ be an arbitrary function from a compact Hausdorff space $X$ to a compact interval $K\seq \R$. Suppose there is a closed subset $C\seq K^\omega\times X$ satisfying the following properties.
  \begin{enumerate}[\hspace{5pt}$\ast$]
  \item The projection of $C$ onto $X$ is all of $X$.
  \item If $(\alpha,x) \in C$ and $g\colon \omega \to \omega$ is strictly increasing, then $(\alpha \circ g,x) \in C$.
  \item For any $(\alpha,x) \in C$, $\lim_{i \to \infty}\alpha(i) = f(x)$.
  \end{enumerate}
  Then $f$ is continuous and, for any $\e > 0$, there is an $n_\e < \omega$ such that for any $(\alpha,x)\in C$,  $\{i<\omega : \alpha(i) \not\approx_\e f(x)\}| \leq n_\e$.
\end{lem}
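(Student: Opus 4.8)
The plan is to establish the two conclusions in order --- continuity of $f$ first, then the uniform bound --- using in both cases the same pair of moves: apply the subsequence-closure property of $C$ to \emph{prune} an arbitrary witnessing pair $(\alpha,x)\in C$ into a more convenient one, and then use compactness of $K^\omega\times X$ together with closedness of $C$ to pass to a limit pair, where the pointwise-convergence hypothesis forces a contradiction. Throughout one must argue with nets and subnets, since $X$ is only assumed compact Hausdorff and need not be metrizable.

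\emph{Continuity of $f$.} Suppose $f$ is discontinuous at some $x\in X$. Then there is $\delta>0$ such that every neighborhood of $x$ contains a point $x'$ with $|f(x')-f(x)|\geq\delta$; collecting these (indexed by neighborhoods of $x$ under reverse inclusion) gives a net $(x_j)_j$ with $x_j\to x$ and $|f(x_j)-f(x)|\geq\delta$ for all $j$. For each $j$ choose $\alpha_j$ with $(\alpha_j,x_j)\in C$. Since $\alpha_j(i)\to f(x_j)$, there is $m_j$ with $|\alpha_j(i)-f(x_j)|<\delta/3$ for $i\geq m_j$; replacing $\alpha_j$ by its shift $i\mapsto\alpha_j(m_j+i)$ (still in $C$ by the subsequence property) we may assume $|\alpha_j(i)-f(x_j)|<\delta/3$ for \emph{all} $i$. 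Pass to a subnet along which $\alpha_j\to\alpha$ in $K^\omega$ and $f(x_j)\to v$ in $K$; closedness gives $(\alpha,x)\in C$, hence $\alpha(i)\to f(x)$, while passing to the limit in the pruned inequalities (using continuity of the coordinate projections) gives $|\alpha(i)-v|\leq\delta/3$ for every $i$, and clearly $|v-f(x)|\geq\delta$. Letting $i\to\infty$ in $|\alpha(i)-v|\leq\delta/3$ yields $|f(x)-v|\leq\delta/3<\delta$, a contradiction.

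\emph{The uniform bound.} Fix $\e>0$ and suppose the bound fails, so that for each $n<\omega$ there is $(\alpha_n,x_n)\in C$ with $|\{i<\omega:\alpha_n(i)\not\approx_\e f(x_n)\}|\geq n$. Choose a strictly increasing $g_n\colon\omega\to\omega$ whose first $n$ values lie in that set of ``bad'' indices; then $\beta_n:=\alpha_n\circ g_n$ satisfies $(\beta_n,x_n)\in C$ and $|\beta_n(i)-f(x_n)|\geq\e$ for all $i<n$. Pass to a subnet along which $(\beta_n,x_n)\to(\beta,x)$ in $K^\omega\times X$; then $(\beta,x)\in C$, so $\beta(i)\to f(x)$, and, since $f$ is continuous by the previous step, $f(x_n)\to f(x)$ along the subnet. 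For each fixed $i$, all sufficiently advanced indices in the subnet have their $n$-coordinate exceeding $i$, so the bound $|\beta_n(i)-f(x_n)|\geq\e$ applies there; taking the limit gives $|\beta(i)-f(x)|\geq\e$ for every $i$, contradicting $\beta(i)\to f(x)$.

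The only genuinely delicate point is the net bookkeeping in the last step: one must use that a subnet of the \emph{sequence} $(\beta_n,x_n)_n$ still has its $n$-coordinate tending to infinity, so that the inequality $|\beta_n(i)-f(x_n)|\geq\e$ --- which holds only for $i<n$ --- can be pushed to the limit separately for each fixed $i$. The remaining ingredients (Tychonoff compactness of $K^\omega$, uniqueness of limits in the Hausdorff space $X$, continuity of coordinate projections, and closedness of $C$) are routine.
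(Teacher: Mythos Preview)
Your proposal is correct and follows essentially the same approach as the paper's proof: prune each witnessing pair using the subsequence-closure property, then pass to a compactness limit inside the closed set $C$ to obtain a contradiction with the pointwise-convergence hypothesis. The only cosmetic difference is that the paper packages the limit-taking via ultrafilter limits rather than subnets, which avoids the explicit bookkeeping you flag in your last paragraph but is otherwise the same argument.
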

\begin{proof}
We first prove $f$ is continuous. Suppose not.  Fix $x \in X$, a family $(y_j)_{j \in J}$ in $X$, and an ultrafilter $\Dc$ on $J$ such that $\lim_{j \to \Dc}y_j = x$ and $\lim_{j \to \Dc}f(y_j) \neq f(x)$. Let $r = \lim_{j \to \Dc} f(y_j)$. Fix $\e > 0$ with $|r-f(x)| > \e$.

  By throwing away part of $J$ if necessary, we may assume that $r \approx_{\frac{1}{3}\e} f(y_j)$ for all $j \in J$. For each $j \in J$, fix $(\alpha_j,y_j) \in C$. By assumption, we must have that $\lim_{i \to \infty}\alpha_j(i) = f(y_j)$. This means that on some final segment of $\omega$, $\alpha_j(i) \approx_{\frac{1}{3}\e} f(y_j)$. By composing with appropriately chosen $g\colon \omega \to \omega$, we may therefore assume that $\alpha_j(i) \approx_{\frac{1}{3}\e} f(y_j)$ (and therefore $\alpha_j(i) \approx_{\frac{2}{3}\e} r$) for each $i < \omega$ and $j \in J$.

  Let $\alpha = \lim_{j \to \Dc}\alpha_j$. Since $C$ is closed, we have that $(\alpha,x) \in C$, but we also have by construction that $|\alpha(i) - f(x)| > \frac{1}{3}\e$ for every $i<\omega$, which is a contradiction. Therefore we must have that $f$ is continuous.

Now suppose the latter statement fails. Then there is a $\delta > 0$ such that for any $n<\omega$, there is a $(\beta_n,z_n) \in C$ such that
  \[
|\{i<\omega : \beta_n(i) \not \approx_\delta f(z_n)\}| > n.
\]
By composing each $\beta_n$ with an appropriately chosen $g\colon \omega \to \omega$, we may assume  $ \beta_n(i) \not \approx_\delta f(z_n)$ for all $i<n$. Fix a non-principal ultrafilter $\Dc$ on $\omega$ and let $(\beta,z) = \lim_{n \to \Dc} (\beta_n,z_n)$. Since $C$ is closed, we have  $(\beta,z) \in C$. By construction, we have for each $i<\omega$ that $\beta(i) \not \approx_{\frac{1}{2}\delta}\lim_{n \to \Dc} f(z_n) = f(z)$ (the equality holds since $f$ is continuous). This contradicts our initial assumptions, and thus no such $\delta$ exists.
\end{proof}

\begin{corollary}\label{cor:sa-def}
Suppose $\mu\in\kM_x(\cU)$ is self-averaging over $M\prec\cU$. Then $\mu$ is definable over $M$.
\end{corollary}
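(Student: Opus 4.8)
Since $\mu$ is assumed Borel-definable over $M$, it is in particular $M$-invariant, so to show $\mu$ is definable over $M$ it is enough to fix an arbitrary $\cL$-formula $\varphi(x,y)$ and prove that the map $F:=F^\varphi_{\mu,M}\colon S_y(M)\to[0,1]$ is continuous. I would deduce this from Lemma~\ref{lem:uniform-pruning}, applied with $X:=S_y(M)$, $K:=[0,1]$, and $f:=F$ (only the conclusion that $f$ is continuous is needed). So the task reduces to producing a closed set $C\subseteq[0,1]^{\omega}\times S_y(M)$ with the three bulleted properties of that lemma.

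Take $C$ to be the set of pairs $(\alpha,q)$ such that there are a Keisler-Morley measure $\lambda\in\KM(\mu/M)$ and a realization $b\models q$ in $\cU$ with $\alpha(i)=\lambda(\varphi(x_i,b))$ for every $i<\omega$. (By homogeneity of $\cU$ together with $M$-invariance of $\mu^{(\omega)}$, the set of sequences $\alpha$ occurring with a fixed $q$ does not depend on which $b\models q$ is chosen.) Three of the four checks are routine. Surjectivity of the projection to $S_y(M)$: any $q$ is realized in $\cU$, and $\mu^{(\omega)}|_M$ has at least one global extension $\lambda\in\KM(\mu/M)$, so $C$ contains the resulting pair $(\alpha,q)$. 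Closure under strictly increasing reindexing: if $(\alpha,q)\in C$ is witnessed by $(\lambda,b)$ and $g\colon\omega\to\omega$ is strictly increasing, then the pushforward of $\lambda$ under the coordinate map $x_i\mapsto x_{g(i)}$ again lies in $\KM(\mu/M)$ (here one uses that $\mu^{(\omega)}$ is invariant under strictly increasing reindexings of its variables), and it witnesses $(\alpha\circ g,q)\in C$. The limit condition: if $(\alpha,q)\in C$ is witnessed by $(\lambda,b)$, then $\varphi(x,b)$ is an $\cL_{\cU}$-formula, and self-averaging of $\mu$ over $M$ gives $\lim_{i\to\infty}\alpha(i)=\lim_{i\to\infty}\lambda(\varphi(x_i,b))=\mu(\varphi(x,b))=F(q)$. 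This last step is the only place the hypothesis is used.

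The substantive point, which I expect to be the main obstacle, is that $C$ is closed. The naive approach --- take a net $(\alpha_k,q_k)\to(\alpha,q)$ with witnesses $(\lambda_k,b_k)$ and pass to limits --- is obstructed because $\tp(b_k/M)\to q$ does not force the $b_k$ to converge to a realization of $q$ over all of $\cU$, so $\lim_k\lambda_k$ and $\lim_k b_k$ need not be ``compatible''. My plan would be to carry $y$ along as an auxiliary variable: after refining the net to an ultrafilter $\Dc$, form $\rho_k:=\delta_{b_k}\otimes\lambda_k\in\kM_{yx_{<\omega}}(\cU)$ (legitimate since $\delta_{b_k}$ is definable), note that $\rho_k(\varphi(x_i,y))=\lambda_k(\varphi(x_i,b_k))=\alpha_k(i)$ and that $\rho_k$ restricts to $\mu^{(\omega)}|_M$ on the $x_{<\omega}$-variables and to $\delta_{q_k}$ on $y$, and pass to $\rho:=\lim_{k\to\Dc}\rho_k$ in the compact space $\kM_{yx_{<\omega}}(\cU)$; then $\rho(\varphi(x_i,y))=\alpha(i)$, while $\rho|_M$ restricts to $\mu^{(\omega)}|_M$ on $x_{<\omega}$ and to $\delta_q$ on $y$. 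It then remains to ``integrate $y$ back out at a single realization of $q$'': the $y$-marginal of $\rho$ is concentrated on the fiber over $q$ of the restriction map $S_y(\cU)\to S_y(M)$, and disintegrating $\rho$ over this marginal should produce, for a generic realization of $q$, a genuine member of $\KM(\mu/M)$ computing the $\alpha(i)$, and hence a witness of $(\alpha,q)\in C$. Making this last disintegration/realization step precise (in particular coping with the non-metrizability of $S_y(\cU)$) is where the real work lies; the rest is bookkeeping. Once $C$ is closed, Lemma~\ref{lem:uniform-pruning} gives that $F$ is continuous, and as $\varphi$ was arbitrary, $\mu$ is definable over $M$.
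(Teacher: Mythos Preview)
Your setup is identical to the paper's: the same $C$, the same application of Lemma~\ref{lem:uniform-pruning}, and the same three routine verifications. The only divergence is in proving $C$ closed.

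The paper handles closedness with a short compactness argument and no measure-theoretic machinery: fix $b\models q$ and suppose no $\lambda\in\KM(\mu/M)$ gives $\lambda(\varphi(x_i,b))=\alpha(i)$ for all $i$; since $\KM(\mu/M)$ is compact and membership depends only on the restriction to $M$, one extracts $\e>0$, $n<\omega$, and $\psi(y)\in q$ such that no pair $(\lambda,c)$ with $\lambda\in\KM(\mu/M)$ and $c\models\psi$ satisfies $\lambda(\varphi(x_i,c))\approx_\e\alpha(i)$ for all $i<n$. This immediately contradicts $(\alpha,q)$ lying in the closure of $C$.

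Your ultralimit route is sound through the construction of $\rho$, but the ``disintegration'' you flag as the hard step is a red herring; you need neither disintegration nor any metrizability. Just restrict: set $\sigma:=\rho|_M\in\kM_{yx_{<\omega}}(M)$. You have already observed $\sigma|_{x_{<\omega}}=\mu^{(\omega)}|_M$, that $\sigma|_y=q$ is a \emph{type}, and that $\sigma(\varphi(x_i,y))=\alpha(i)$. Now pick any $b\models q$ and define $\lambda_0\in\kM_{x_{<\omega}}(Mb)$ by $\lambda_0(\psi(\bar{x},b)):=\sigma(\psi(\bar{x},y))$ for $\cL_M$-formulas $\psi$; this is well-defined precisely because $\sigma|_y=\tp(b/M)$. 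Then $\lambda_0|_M=\mu^{(\omega)}|_M$ and $\lambda_0(\varphi(x_i,b))=\alpha(i)$, so any global extension of $\lambda_0$ witnesses $(\alpha,q)\in C$. (This extraction is exactly Lemma~\ref{lem:weird-extension}, which the paper proves later for a different purpose.) So your plan is correct, but the obstacle you anticipated is illusory, and the paper's direct compactness argument is shorter.
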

\begin{proof}
Fix a formula $\varphi(x,y)$. Let $C$ be the subset of $[0,1]^\omega \times S_y(M)$ consisting of those points $(\alpha,q)$ such that for some $\lambda \in \mathrm{KM}(\mu/M)$ and some $b\models q$, $\alpha(i) = \lambda(\varphi(x_i,b))$ for all $i<\omega$. Note that  the projection of $C$ onto $S_y(M)$ is all of $S_y(M)$. It is also clearly the case that if $(\alpha,q) \in C$ and $g\colon \omega \to \omega$ is a strictly increasing function, then $(\alpha\circ g,q) \in C$ as well. Furthermore, since $\mu(x)$ is self-averaging, we have that for every $(\alpha,q) \in C$, $\lim_{i \to \infty}\alpha(i) = F_\mu^\varphi(q)$.\medskip

  \noindent\emph{Claim.} $C$ is a closed subset of $[0,1]^\omega \times S_y(M)$.

 \noindent \emph{Proof.} Suppose that $(\alpha,q)$ is an element of the closure of $C$. Fix $b \in \Uc$ with $\tp(b/M) = q(y)$. Now suppose that there is no $\lambda \in \mathrm{KM}(\mu/M)$ such that $\alpha(i) = \lambda(\varphi(x_i,b))$ for all $i<\omega$. Since $\mathrm{KM}(\mu/M)$ is a closed subset of $\frk{M}_{\xbar}(\Uc)$ and is the set of global extensions of a measure over $M$, we must have by compactness that there is an $\e > 0$, an $n<\omega$, and a $\psi(y) \in \tp(b/M)$ such that if $\lambda \in \mathrm{KM}(\mu/M)$ and $c \in \psi(\Uc)$, then $\alpha(i) \not \approx_\e \lambda(\varphi(x_i,c))$ for some $i<n$. But $(\alpha,q)$ is in the closure of $C$, so there must be some $\lambda \in \mathrm{KM}(\mu/M)$ and $c \in \psi(\Uc)$ such that $\alpha(i) \approx_\e \lambda(\varphi(x_i,c))$ for all $i<n$, which is a contradiction. \hfill $\square_{\text{claim}}$
 \medskip

  Now we can apply Lemma \ref{lem:uniform-pruning} to $C$ and the function $F_\mu^\varphi\colon S_y(M) \to [0,1]$, and conclude  that $F_\mu^\varphi$ is continuous. Since we can do this for any formula $\varphi(x,y)$, we have that $\mu$ is definable over $M$.
\end{proof}

We now prove the analogue of Fact \ref{fact:genstab}[$(i)\Leftrightarrow(ii)$] for self-averaging measures.

\begin{proposition}\label{prop:sa-unif}
Suppose $\mu\in\kM_x(\cU)$ is Borel-definable over $M\prec\cU$. Then $\mu$ is self-averaging over $M$ if and only if for any $\cL$-formula $\varphi(x,y)$ and any $\epsilon > 0$, there is some  $n_{\epsilon,\varphi}\geq 1$ such that for any $b \in \mathcal{U}^{y}$ and $\lambda \in \KM(\mu/M)$,
\[
    |\{i \in \omega: \lambda(\varphi(x_i,b)) \not\approx_\epsilon \mu(\varphi(x,b)) \}| < n_{\epsilon,\varphi}.
\]
\end{proposition}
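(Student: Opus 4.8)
The plan is to deduce Proposition~\ref{prop:sa-unif} from Lemma~\ref{lem:uniform-pruning}, using the very same closed set $C$ constructed in the proof of Corollary~\ref{cor:sa-def}. The backward direction is immediate: if the uniform bound holds, then for any fixed $\varphi(x)$ (absorbing parameters into $M$ if need be, or more directly for $\varphi(x,b)$ with $b\in\cU^y$) and any $\lambda\in\KM(\mu/M)$, only finitely many $i$ can have $\lambda(\varphi(x_i,b))$ outside an $\epsilon$-neighborhood of $\mu(\varphi(x,b))$, for every $\epsilon>0$; hence $\lim_{i\to\infty}\lambda(\varphi(x_i,b))=\mu(\varphi(x,b))$, which is exactly self-averaging. (One must note that a general $\cL_\cU$-formula $\varphi(x)$ has the form $\psi(x,b)$ for some $\cL$-formula $\psi$ and $b\in\cU^y$, so the quantification over all $b$ and all $\cL$-formulas covers all $\cL_\cU$-formulas.)

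For the forward direction, I would fix an $\cL$-formula $\varphi(x,y)$ and form the set $C\seq [0,1]^\omega\times S_y(M)$ of pairs $(\alpha,q)$ such that $\alpha(i)=\lambda(\varphi(x_i,b))$ for all $i<\omega$, for some $\lambda\in\KM(\mu/M)$ and some $b\models q$. As verified in the proof of Corollary~\ref{cor:sa-def}, $C$ projects onto all of $S_y(M)$, is closed under precomposition of $\alpha$ with strictly increasing $g\colon\omega\to\omega$ (since $\KM(\mu/M)$ is closed under the corresponding reindexing of variables — this is where Borel-definability of $\mu$ and hence well-definedness of $\mu^{(\omega)}$ is used), is closed in $[0,1]^\omega\times S_y(M)$, and satisfies $\lim_{i\to\infty}\alpha(i)=F_\mu^\varphi(q)$ for every $(\alpha,q)\in C$ by the self-averaging hypothesis. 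Lemma~\ref{lem:uniform-pruning}, applied to $f=F_\mu^\varphi\colon S_y(M)\to[0,1]$, then yields, for each $\epsilon>0$, an $n_\epsilon<\omega$ with $|\{i<\omega:\alpha(i)\not\approx_\epsilon f(q)\}|\le n_\epsilon$ for all $(\alpha,q)\in C$. Unwinding: for any $b\in\cU^y$ and $\lambda\in\KM(\mu/M)$, taking $q=\tp(b/M)$ and $\alpha(i)=\lambda(\varphi(x_i,b))$ gives $(\alpha,q)\in C$, and $f(q)=F_\mu^\varphi(\tp(b/M))=\mu(\varphi(x,b))$; so setting $n_{\epsilon,\varphi}=n_\epsilon+1$ gives the desired strict inequality.

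The one subtlety — and the only place requiring genuine care rather than bookkeeping — is reconciling the indexing conventions: Lemma~\ref{lem:uniform-pruning} requires closure under \emph{all} strictly increasing reindexings, and one must check that reindexing the coordinates of a Keisler-Morley measure $\lambda\in\KM(\mu/M)$ by a strictly increasing $g$ again produces an element of $\KM(\mu/M)$, i.e.\ that its restriction to $M$ is still $\mu^{(\omega)}|_M$. This holds because $\mu^{(\omega)}|_M$ is itself invariant under such reindexing (the Morley power $\mu^{(\omega)}$ is ``the same in every block of coordinates''), so pushing forward $\lambda$ along the coordinate map $x_i\mapsto x_{g(i)}$ preserves the condition defining $\KM(\mu/M)$. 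This is already implicitly used in the proof of Corollary~\ref{cor:sa-def}, so I would simply cite that verification. Everything else is a direct translation between the statement of the Proposition and the conclusion of the Lemma, so I expect no real obstacle; the bulk of the work was already done in establishing Lemma~\ref{lem:uniform-pruning} and the closedness claim inside Corollary~\ref{cor:sa-def}.
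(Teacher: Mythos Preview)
Your proposal is correct and follows essentially the same route as the paper: the backward direction is immediate, and for the forward direction you apply Lemma~\ref{lem:uniform-pruning} to the set $C$ and the function $F^\varphi_\mu$ already analyzed in the proof of Corollary~\ref{cor:sa-def}, exactly as the paper does. Your added remarks on the reindexing subtlety and the $+1$ adjustment for the strict inequality are sound elaborations of points the paper leaves implicit.
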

\begin{proof}
The right-to-left direction is clear. For left-to-right, assume $\mu$ is self-averaging over $M$. Fix an  $\cL$-formula $\varphi(x,y)$. Recall that in the proof of Corollary \ref{cor:sa-def}, we showed that the assumptions of Lemma \ref{lem:uniform-pruning}  hold for the function $F^\varphi_\mu\colon S_y(M)\to [0,1]$ and the set $F$ of points $(\alpha,q)\in [0,1]^\omega\times S_y(M)$ such that for some $\lambda\in \KM(\mu/M)$ and $b\models q$, we have $\alpha(i)=\lambda(\varphi(x_i,b))$ for all $i<\omega$. So by Lemma \ref{lem:uniform-pruning}, we have that for each $\e > 0$, there is an $n_{\varphi,\e} < \omega$ such that for any $\lambda \in \mathrm{KM}(\mu/M)$ and any $b \in \Uc$, $|\{i < \omega : \lambda(\varphi(x_i,b)) \not \approx_\e F_\mu^\varphi(\tp(b/M))\}| \leq n_{\varphi,\e}$. Since we can do this for any $\varphi(x,y)$ and $\e >0$, we have that $\mu$ is self-averaging over $M$.
\end{proof}

At this point, the reader may wonder about condition $(iii)$ of Fact \ref{fact:genstab}. Recall that this condition (for a global type $p$) essentially says that there is no witness to the order property using a Morley sequence in $p$ (over a small model). We can obtain a reasonable analogue using Keisler-Morley measures in place of Morley sequences, together with a continuous version of the order property. However, in this case, we have only been able to prove an implication in one direction. 

\begin{proposition}\label{prop:GRASstable}
Suppose  $\mu \in \mathfrak{M}_{x}(\mathcal{U})$  is self-averaging over $M\prec\cU$. Then there does not exist an $\Lc$-formula $\varphi(x,y)$, a Keisler-Morley measure $\lambda\in\KM(\mu/M)$,  a sequence $(b_i)_{i<\omega}$ from $\Uc^y$, and real numbers $r<s$ in $[0,1]$,  such that if $i\leq j$ then $\lambda(\varphi(x_i,b_j))\leq r$ and if $i>j$ then $\lambda(\varphi(x_i,b_j))\geq s$.
\end{proposition}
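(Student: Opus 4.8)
The plan is to argue by contradiction: suppose such $\varphi(x,y)$, $\lambda \in \KM(\mu/M)$, $(b_i)_{i<\omega}$, and $r < s$ exist. The key idea is to extract from the tail behavior of $\lambda$ along the sequence $x_{<\omega}$ a witness that contradicts the uniform self-averaging statement of Proposition~\ref{prop:sa-unif}. First I would fix $\epsilon = \frac{1}{3}(s - r) > 0$ and apply Proposition~\ref{prop:sa-unif} to obtain $n_{\epsilon,\varphi}$, so that for any $b \in \Uc^y$ and any Keisler-Morley measure $\lambda' \in \KM(\mu/M)$, the set $\{i<\omega : \lambda'(\varphi(x_i,b)) \not\approx_\epsilon \mu(\varphi(x,b))\}$ has size less than $n_{\epsilon,\varphi}$. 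Note that since the $x_i$ are interchangeable in $\mu^{(\omega)}|_M$ but \emph{not} necessarily in $\lambda$, the main work is to produce, from the given $\lambda$ and the order-like configuration, a single Keisler-Morley measure together with a single parameter $b$ for which many of the $\lambda(\varphi(x_i,b))$ values are bounded away from $\mu(\varphi(x,b))$.

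The mechanism I would use is a reindexing/shift trick on the variable sequence. Observe that if $\sigma\colon\omega\to\omega$ is strictly increasing, then the pushforward of $\lambda$ under the coordinate reindexing $x_i \mapsto x_{\sigma(i)}$ is again an element of $\KM(\mu/M)$, since $\mu^{(\omega)}|_M$ is invariant under such reindexings (it is a union of products of copies of $\mu|_M$). Using this, and by a compactness argument (taking a limit along a non-principal ultrafilter of the finite pieces $(b_0,\ldots,b_{k-1})$ and the measures obtained by shifting $\lambda$), I would arrange a single Keisler-Morley measure $\lambda^\ast \in \KM(\mu/M)$ and a single parameter $b^\ast \in \Uc^y$ such that $\lambda^\ast(\varphi(x_i,b^\ast)) \leq r$ for all $i$ in some infinite set and $\lambda^\ast(\varphi(x_i,b^\ast)) \geq s$ for all $i$ in another infinite set. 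Concretely, one can take $b^\ast$ to be a limit of the $b_j$'s as $j \to \infty$ along an ultrafilter: for each fixed $i$, for all sufficiently large $j$ we have $i \le j$, hence $\lambda(\varphi(x_i, b_j)) \le r$, so in the limit $\lambda(\varphi(x_i, b^\ast)) \le r$ for \emph{every} $i$. That alone gives $\lambda(\varphi(x_i,b^\ast)) \le r$ for all $i<\omega$. On the other hand, the hypothesis $i > j \Rightarrow \lambda(\varphi(x_i,b_j)) \ge s$ says that before passing to the limit, each $b_j$ witnesses $\lambda(\varphi(x_i,b_j)) \ge s$ for infinitely many $i$ (namely all $i > j$).

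Now combine the two: since $\mu(\varphi(x,b^\ast))$ is a single number in $[0,1]$, it is either $\le \frac{r+s}{2}$ or $\ge \frac{r+s}{2}$. In the first case, using a $b_j$ with $j$ large (so that $i > j \Rightarrow \lambda(\varphi(x_i,b_j)) \ge s$), the set $\{i : \lambda(\varphi(x_i,b_j)) \ge s\} \supseteq \{i : i > j\}$ is infinite, and on it $\lambda(\varphi(x_i,b_j)) - \mu(\varphi(x,b_j))$ — here I would also need $\mu(\varphi(x,b_j))$ itself close to $\mu(\varphi(x,b^\ast))$, which follows if $\mu$ is definable (hence continuous in the parameter) and $b_j \to b^\ast$; definability of $\mu$ comes for free from Corollary~\ref{cor:sa-def} — is at least $s - \frac{r+s}{2} - o(1) > \epsilon$ for cofinitely many such $i$, contradicting the bound $n_{\epsilon,\varphi}$. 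In the second case, use $b^\ast$ itself: $\lambda(\varphi(x_i,b^\ast)) \le r$ for all $i$, while $\mu(\varphi(x,b^\ast)) \ge \frac{r+s}{2}$, so $\mu(\varphi(x,b^\ast)) - \lambda(\varphi(x_i,b^\ast)) \ge \frac{r+s}{2} - r > \epsilon$ for \emph{all} $i<\omega$, again contradicting finiteness of the exceptional set. Either way we reach a contradiction.

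The step I expect to be the main obstacle is the bookkeeping around which parameter and which Keisler-Morley measure to use so that \emph{infinitely many} coordinates $i$ are simultaneously bad for a \emph{single} pair $(\lambda',b')$ — the given data only guarantees a "staircase" pattern, and one must be careful that shifting/limiting does not destroy membership in $\KM(\mu/M)$ or the inequalities. The cleanest route is probably the dichotomy on $\mu(\varphi(x,b^\ast))$ as above, where the "$\le r$ everywhere" half is genuinely easy (the limit parameter $b^\ast$ works directly with the original $\lambda$, no shifting needed), and only the "$\ge s$" half requires picking a sufficiently late $b_j$ together with the continuity of $F^\varphi_\mu$ near $b^\ast$. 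One should double-check that the contradiction in that half really uses infinitely many $i$: indeed $\{i : i > j\}$ is cofinite, so for any fixed $n_{\epsilon,\varphi}$ we can choose $j$ and then find more than $n_{\epsilon,\varphi}$ values $i > j$ with $\lambda(\varphi(x_i,b_j)) \ge s$ and $\mu(\varphi(x,b_j)) \le \frac{r+s}{2} + \epsilon'$ (for small $\epsilon'$ via continuity), yielding $\lambda(\varphi(x_i,b_j)) \not\approx_\epsilon \mu(\varphi(x,b_j))$ for all of them.
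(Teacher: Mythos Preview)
Your overall strategy---apply Proposition~\ref{prop:sa-unif} and derive a contradiction via a dichotomy on the value of $\mu(\varphi(x,\cdot))$---is the right one, but you have overcomplicated the execution and introduced a genuine gap. The problematic step is your claim that for the ultralimit parameter $b^\ast$ one has $\lambda(\varphi(x_i,b^\ast)) \le r$ for all $i$. You argue this by passing the limit in $j$ through $\lambda$, but $\lambda$ is an \emph{arbitrary} global extension of $\mu^{(\omega)}|_M$; it is not assumed invariant or definable over any small set, so the map $b \mapsto \lambda(\varphi(x_i,b))$ need not factor through types over a small set, let alone continuously. The true inequality $\lim_j \lambda(\varphi(x_i,b_j)) \le r$ therefore does not transfer to any actual element $b^\ast \in \Uc^y$. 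Your Case~2 rests entirely on this step, so the argument as written does not close. (The reindexing discussion earlier in your proposal is also never actually used and can be discarded.)

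The paper's proof avoids all limits and is much shorter. After fixing $\epsilon < \frac{s-r}{2}$ and obtaining $n = n_{\epsilon,\varphi}$ from Proposition~\ref{prop:sa-unif}, simply set $b = b_{n+1}$ and run the dichotomy on $t = \mu(\varphi(x,b))$ directly: if $t \le \frac{r+s}{2}$ then every $i > n+1$ satisfies $\lambda(\varphi(x_i,b)) \ge s$, hence $|\lambda(\varphi(x_i,b)) - t| > \epsilon$, giving infinitely many bad indices; if $t > \frac{r+s}{2}$ then every $i \le n+1$ satisfies $\lambda(\varphi(x_i,b)) \le r$, hence $|\lambda(\varphi(x_i,b)) - t| > \epsilon$, giving more than $n$ bad indices. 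Either way the exceptional set has size at least $n$, contradicting Proposition~\ref{prop:sa-unif}. No ultrafilters, no continuity of $\lambda$, and no appeal to definability of $\mu$ are needed: the single parameter $b_{n+1}$ already carries both ``enough indices with value $\le r$'' and ``enough indices with value $\ge s$'' built in by the staircase hypothesis.
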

\begin{proof}
Suppose there are such $\varphi(x,y)$, $(b_i)_{i<\omega}$, $r<s$, and $\lambda$. Fix some $\epsilon<\frac{s-r}{2}$. Apply Proposition \ref{prop:sa-unif} to $\varphi(x,y)$ and $\epsilon$ to get some integer $n\geq 1$. Let $b=b_{n+1}$. Then we see that there is no $t\in[0,1]$ such that $|\{i<\omega:|\lambda(\varphi(x_i,b))-t|>\epsilon\}|\leq n$ (which contradicts the choice of $n$). Indeed if $t\leq \frac{r+s}{2}$ then $|\lambda(\varphi(x_i,b))-t|>\epsilon$ for all $i>n+1$; and if $t>\frac{r+s}{2}$ then $|\lambda(\varphi(x_i,b))-t|>\epsilon$ for all $i\leq n+1$. 
\end{proof}

Once again, the converse of the previous proposition remains an open question. Indeed, it is not  clear whether the conclusion of the  proposition implies \emph{any} of the common notions of good behavior for measures (e.g., \dfs).

Next we turn to the main result of this section, which we state now (and then prove over the course of a few steps). 

\begin{theorem}\label{thm:fim-main}
Suppose $\mu\in\kM_x(\cU)$ is Borel definable over $M\prec\cU$.
\begin{enumerate}[$(a)$]
    \item If $\mu$ is \fim\ over $M$ then it is self-averaging over $M$.
    \item If $\mu$ is self-averaging over $M$ then it is \famm\ over $M$. 
\end{enumerate}
\end{theorem}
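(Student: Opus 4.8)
The plan is to prove $(a)$ and $(b)$ by separate arguments.

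\smallskip
\noindent\emph{Part $(a)$.} Fix $\lambda\in\KM(\mu/M)$, an $\cL_\cU$-formula $\varphi(x,b)$ with $\varphi(x,y)\in\cL$ and $b\in\cU^y$, and $\epsilon>0$; the goal is $\lim_{j}\lambda(\varphi(x_j,b))=\mu(\varphi(x,b))$. Since $\mu$ is \fim\ over $M$, fix $\cL_M$-formulas $(\theta_n)_{n\ge1}$ and $k(\cdot)$ as in Definition \ref{def:fimDL} for $\varphi(x,y)$, and using $(ii)$ choose $n\ge k(\epsilon)$ with $\mu^{(n)}(\theta_n)>1-\epsilon$. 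The first step is the identity $\lambda(\theta_n(x_{j_1},\dots,x_{j_n}))=\mu^{(n)}(\theta_n)$ for every increasing $j_1<\cdots<j_n$: this holds because $\theta_n$ is over $M$ and $\lambda|_M=\mu^{(\omega)}|_M$, and because the marginal of the Morley power $\mu^{(\omega)}$ on any increasing tuple of coordinates equals $\mu^{(n)}$, which follows by an easy induction from the definition of $\otimes$. On the event $\theta_n(x_{j_1},\dots,x_{j_n})$ (of $\lambda$-measure $>1-\epsilon$), property $(i)$ says that $\tfrac1n\,|\{l\le n:\varphi(x_{j_l},b)\text{ holds}\}|$ lies within $\epsilon$ of $\mu(\varphi(x,b))$; integrating against $\lambda$ and bounding the integrand by $1$ off that event gives
\[
\Bigl|\tfrac1n\textstyle\sum_{l=1}^{n}\lambda(\varphi(x_{j_l},b))-\mu(\varphi(x,b))\Bigr|\le2\epsilon
\]
for every increasing $j_1<\cdots<j_n$. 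To finish, given $\delta>0$ pick $j_1<\cdots<j_n$ all large enough that each $\lambda(\varphi(x_{j_l},b))$ lies within $\delta$ of $\limsup_j\lambda(\varphi(x_j,b))$; the displayed inequality then forces $\limsup_j\lambda(\varphi(x_j,b))$ to be within $2\epsilon+\delta$ of $\mu(\varphi(x,b))$, and symmetrically for $\liminf$. Letting $\delta\to0$ and then $\epsilon\to0$ yields the claim. I expect this direction to be routine once the marginal identity is in hand.

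\smallskip
\noindent\emph{Part $(b)$.} By Corollary \ref{cor:sa-def}, $\mu$ is definable over $M$, so the Morley powers of $\mu$ are defined and $b\mapsto\mu(\varphi(x,b))$ is continuous; by Proposition \ref{prop:sa-unif}, for each $\cL$-formula $\varphi(x,y)$ and $\epsilon>0$ there is $n=n_{\epsilon,\varphi}$ such that for every $\lambda\in\KM(\mu/M)$ and every $b$, at most $n$ of the values $\lambda(\varphi(x_i,b))$ fail to be $\epsilon$-close to $\mu(\varphi(x,b))$. Consequently, for $N$ large the averaged measure $\tfrac1N\sum_{i<N}(\text{marginal of }\lambda\text{ on }x_i)\in\kM_x(\cU)$ is within $2\epsilon$ of $\mu$ on every instance of $\varphi$, for every $\lambda\in\KM(\mu/M)$. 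Taking $\lambda$ finitely satisfiable in $M$ (which exists, since every Keisler measure over $M$ has a finitely satisfiable global extension) makes this averaged measure finitely satisfiable in $M$ as well, hence a weak$^\ast$ limit of averages of tuples from $M$.

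\smallskip
\noindent The remaining step — and, I expect, the main obstacle — is to convert this into a single finite tuple $\abar$ from a fixed small set with $\sup_b|\Av(\abar)(\varphi(x,b))-\mu(\varphi(x,b))|$ small; finite satisfiability only delivers approximation at finitely many parameters at once (and without further input this step would be false outside NIP), so the constant $n_{\epsilon,\varphi}$ must be used a second time. I would argue by contradiction: if no finite tuple works for some $\varphi,\epsilon$, then for every $N_0$-tuple $\abar$ (with $N_0$ chosen $>n/\epsilon$) there is a parameter $b_{\abar}$ with $|\Av(\abar)(\varphi(x,b_{\abar}))-\mu(\varphi(x,b_{\abar}))|$ large. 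These bad configurations assemble into a closed set $B$ of types over $M$ that surjects, under the coordinate projection, onto $S_{(x_0\cdots x_{N_0-1})}(M)$; lifting $\mu^{(N_0)}|_M$ through this surjection (using that the space of measures concentrated on $B$ maps onto the space of measures on $S_{(x_0\cdots x_{N_0-1})}(M)$, since it contains every Dirac measure) and amalgamating the lift with $\mu^{(\omega)}|_M$ over their common marginal produces a candidate Keisler--Morley measure in $\mu$ that should contradict the bound from Proposition \ref{prop:sa-unif}. The delicate point is that the bad parameter $b_{\abar}$ is correlated with $\abar$, so turning it into an honest parameter attached to a Keisler--Morley measure in $\mu$ — whose restriction to $M$ must remain exactly $\mu^{(\omega)}|_M$ — requires care with the amalgamation (for instance, realizing the parameter in a larger monster model and tracking a disintegration) so that the deviation is not averaged away.
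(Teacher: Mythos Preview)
Your argument for part $(a)$ is correct and in fact cleaner than the paper's. Both proofs establish the key inequality $\bigl|\tfrac{1}{n}\sum_{l}\lambda(\varphi(x_{j_l},b))-\mu(\varphi(x,b))\bigr|<2\epsilon$ for all increasing $j_1<\cdots<j_n$, and then deduce convergence; the paper isolates the last step as a separate lemma about measures on $[0,1]^\omega$, while your $\limsup/\liminf$ argument does the same thing in place. The difference is in proving the inequality: the paper routes through definability of $\mu$, introducing an auxiliary $\cL_M$-formula $\psi(y)$ approximating $\mu(\varphi(x,b))$ and a supremum function $g$ over $\psi(\cU^y)$, in order to reduce to an integral against $\lambda|_M=\mu^{(\omega)}|_M$. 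You avoid this entirely by observing that $\theta_n$ is already over $M$, so $\lambda(\theta_n(x_{j_1},\dots,x_{j_n}))>1-\epsilon$ directly, and on that event the \fim\ bound applies to \emph{every} parameter, in particular to the fixed $b$. This is a genuine simplification.

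Part $(b)$, however, has a real gap in the contradiction sketch, and it is not just a matter of care. Suppose you succeed in lifting $\mu^{(N_0)}|_M$ to a measure $\sigma_0$ on your closed set $B$ and amalgamating with $\mu^{(\omega)}|_M$. Two obstructions remain. First, nothing forces $\sigma_0|_y$ to be a \emph{type}, which is exactly what is needed to extract a single parameter $b$ and a $\lambda\in\KM(\mu/M)$ (this is the content of the paper's Lemma~\ref{lem:weird-extension}). Second, and more seriously, even granting a type on the $y$-coordinate, the conclusion you would get is that $\bigl|\Av(x_0,\dots,x_{N_0-1})(\varphi(x,b))-\mu(\varphi(x,b))\bigr|\ge\epsilon$ holds $\lambda$-almost everywhere. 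This does \emph{not} contradict Proposition~\ref{prop:sa-unif}: that proposition only controls the \emph{integrated} values $\lambda(\varphi(x_i,b))$, forcing $\tfrac{1}{N_0}\sum_i\lambda(\varphi(x_i,b))$ close to $\mu(\varphi(x,b))$, i.e.\ the \emph{mean} of $\Av(\,\cdot\,)(\varphi(x,b))$ under $\lambda$ is close to $\mu(\varphi(x,b))$. A random variable can have mean $r$ while being a.e.\ at distance $\ge\epsilon$ from $r$ (take values $r\pm\epsilon$ with equal probability), so there is no contradiction.

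The paper's route is different and avoids this trap. It first upgrades self-averaging to an open-neighborhood statement (Proposition~\ref{prop:sa-char}): there is an open $U\ni\mu^{(\omega)}|_M$ in $\kM_{\bar x}(M)$ and an $n$ such that \emph{every} global $\nu$ with $\nu|_M\in U$ satisfies $\bigl|\mu(\varphi(x,b))-\tfrac{1}{n}\sum_{i<n}\nu(\varphi(x_i,b))\bigr|<\epsilon$ for all $b$. The proof is an ultrafilter limit of hypothetical counterexamples $(\nu_U,b_U)$, passing through measures $\sigma_U\in\kM_{\bar x y}(M)$ whose $y$-marginals are types (hence the limit $y$-marginal is a type), and then invoking Lemma~\ref{lem:weird-extension}. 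Once Proposition~\ref{prop:sa-char} is in hand, \famm\ follows immediately: average measures $\Av(\abar^0,\dots,\abar^{k-1})$ with $\abar^j\in M^\omega$ are dense in $\kM_{\bar x}(M)$, so one lands in $U$ and reads off a finite approximation. The key idea you are missing is precisely this neighborhood characterization; the ultrafilter construction there is what guarantees a type on $y$, and the conclusion is phrased in terms of integrated quantities from the start, so the pointwise-versus-mean mismatch never arises.
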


The proof of part $(a)$ will require the following fact about probability measures.

\begin{lem}\label{lem:limit}
  Let $\nu$ be a Borel probability measure on $[0,1]^\omega$, and fix $r\in [0,1]$. Suppose that for every $\e > 0$, there is some $n\geq 1$ such that for any $i_1<i_2<\dots <i_n<\omega$, 
  \[
    \int \left| r - \frac{1}{n} \sum_{k=1}^n x_{i_k} \right| d\nu < \e.
  \]
  Then $\lim_{i \to \infty} \int x_i\, d\nu = r$.
\end{lem}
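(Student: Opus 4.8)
\textbf{Proof proposal for Lemma \ref{lem:limit}.}

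The plan is to show that the sequence of real numbers $(\int x_i\, d\nu)_{i<\omega}$ is Cauchy with limit $r$. First I would introduce, for each $i<\omega$, the shorthand $m_i = \int x_i\, d\nu \in [0,1]$, and more generally, for a finite set $S = \{i_1 < \dots < i_n\}$, the average $A_S = \frac1n\sum_{k=1}^n x_{i_k}$, a random variable on $[0,1]^\omega$ with $\int A_S\, d\nu = \frac1n\sum_{k=1}^n m_{i_k}$. The hypothesis says: for every $\e>0$ there is $n = n(\e)$ such that $\int |r - A_S|\, d\nu < \e$ whenever $|S| = n$. The first easy consequence, by the triangle inequality $|r - \frac1n\sum m_{i_k}| \le \int|r - A_S|\,d\nu$, is that the average of any $n(\e)$ distinct terms $m_i$ is within $\e$ of $r$.

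The key step is to upgrade this ``averages are close to $r$'' statement to ``individual terms are eventually close to $r$''. Fix $\e>0$ and set $n = n(\e/4)$, so any $n$ distinct $m_i$ average to within $\e/4$ of $r$. I claim $\limsup_i m_i$ and $\liminf_i m_i$ both lie within $O(\e)$ of $r$, which suffices since $\e$ is arbitrary. Suppose toward a contradiction that infinitely many indices $i$ have $m_i > r + \e$ (the case $m_i < r - \e$ is symmetric). Pick one such index $j$ with $m_j > r+\e$. Now I want to combine this ``heavy'' index with $n-1$ other indices whose average is close to $r$ to violate the bound. The point is that, having fixed $n = n(\e/4)$, I can also invoke the hypothesis at a much finer scale: choose $N$ large (to be determined) and consider $n' = n(\e/(4N))$ distinct indices $i_1 < \dots < i_{n'}$, all different from $j$; their $m$-average is within $\e/(4N)$ of $r$. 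Partition these $n'$ indices into blocks — but actually the cleaner route is: since the average of \emph{any} $n$ distinct terms is within $\e/4$ of $r$, averaging over many disjoint such $n$-blocks shows that for any large finite set $F$ not containing $j$, $\frac{1}{|F|}\sum_{i\in F} m_i \approx_{\e/4} r$ provided $|F|$ is a multiple of $n$ (and one handles the remainder, at most $n$ terms out of $|F|$, by crudeness once $|F|$ is large). Then taking $F$ of size $(n-1)\cdot M$ for large $M$ and splitting $F \cup \{j\}$ — wait, this needs care.

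Let me restate the core combinatorial move more carefully, as this is the main obstacle. I have $m_j \ge r+\e$ for a fixed $j$, and I know every $n$-term average of the $m_i$ is within $\e/4$ of $r$. Take a large finite set $F$ of indices, $j\notin F$, with $|F| = tn$ for large $t$. Adjoin $j$: consider the multiset $F \cup \{j, j, \dots\}$ where $j$ is repeated $n-1$ times — no, I cannot repeat indices. Instead: take $t(n-1)$ fresh indices forming $F$, $j \notin F$, partition $F \cup \{j\}$... the sizes don't align. The working version: take $F$ with $|F| = t n - 1$ for large $t$, $j \notin F$; then $F \cup \{j\}$ has size $tn$, so partitioning it into $t$ blocks of size $n$ shows its $m$-average is within $\e/4$ of $r$, i.e. $\frac{1}{tn}\big(m_j + \sum_{i\in F} m_i\big) \approx_{\e/4} r$. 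Separately, $F$ itself: pad $F$ with one more fresh index $j'$ (also $\ne j$) to get size $tn$, whose average is within $\e/4$ of $r$, so $\sum_{i\in F} m_i = tn\cdot r \pm (tn)(\e/4) - m_{j'} \in [tnr - tn\e/4 - 1,\ tnr + tn\e/4]$. Subtracting, $m_j = (tn)(\text{avg of }F\cup\{j\}) - \sum_{i\in F} m_i \le tn(r+\e/4) - (tnr - tn\e/4 - 1) = tn\e/2 + 1$ — this blows up, useless.

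The resolution is that I should not try to isolate $m_j$ by \emph{subtracting} large quantities; instead, fix $n = n(\e/4)$ once, and directly bound each tail term. Here is the clean argument: for any single index $j$ and any $n-1$ indices $i_1<\dots<i_{n-1}$ distinct from $j$, the set $\{j, i_1,\dots,i_{n-1}\}$ has size $n$, so $m_j + \sum_{k} m_{i_k} \le n(r+\e/4)$, whence $m_j \le n(r+\e/4) - \sum_k m_{i_k} \le nr + n\e/4 - 0$. That only gives $m_j \le nr + n\e/4$, far from $r+\e$ when $n$ is large. So a \emph{single} fixed-size average is genuinely too weak; I really do need averages at scales $n(\delta)$ for $\delta \to 0$.

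Thus the actual proof: fix $\e>0$. For each $k \ge 1$ let $n_k = n(\e 2^{-k})$, so any $n_k$ distinct $m_i$ average to within $\e 2^{-k}$ of $r$. The hypothesis, applied across all scales, forces $m_i \to r$: indeed, if not, pick $\e'>0$ and an infinite set $I$ of indices with $|m_i - r|\ge\e'$ for all $i\in I$. Choose $k$ with $\e 2^{-k} < \e'/2$, and let $n = n_k$. Among the first $n$ elements of $I$ (say with $m_i > r + \e'$ for infinitely many — pass to a subset so all of $I$ deviates the same direction, say $m_i \ge r+\e'$), their average is $\ge r + \e'$, but it must be within $\e 2^{-k} < \e'/2$ of $r$ — contradiction. (If instead all of $I$ has $m_i \le r - \e'$, the average is $\le r - \e'$, same contradiction; a general infinite $I$ splits into one of these two cases by pigeonhole.) Hence every infinite set of indices must have terms averaging to within every $\e 2^{-k}$ of $r$ — in particular no infinite subsequence of $(m_i)$ stays bounded away from $r$, so $\lim_i m_i = r$, i.e. $\lim_i \int x_i\, d\nu = r$. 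I expect the main obstacle to be exactly this last point — cleanly extracting ``$m_i \to r$'' from ``all sufficiently long averages are close to $r$'' — which is handled by the observation that if some subsequence $(m_{i})_{i\in I}$ had all terms $\ge r+\e'$, then every $n_k$-term sub-average from $I$ is also $\ge r + \e'$, directly contradicting the hypothesis at scale $\e 2^{-k} < \e'$.
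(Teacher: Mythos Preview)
Your proposal is correct, and after the false starts you arrive at exactly the paper's argument: assume the limit fails, extract an infinite set $I$ on which all $m_i$ lie on one side of $r$ by at least $\e'$, apply the hypothesis at a scale $<\e'$ to get $n$, and average $n$ indices from $I$ to contradict $|r - \tfrac{1}{n}\sum m_{i_k}| \le \int|r - A_S|\,d\nu$. The multi-scale apparatus $n_k = n(\e 2^{-k})$ is unnecessary---a single application of the hypothesis with parameter $\e'$ (or anything below it) suffices, which is precisely how the paper presents it.
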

\begin{proof}
Suppose not. Then for some $\e>0$, $\left|r -  \int x_i\, d\nu \right| \geq \e$ for infinitely many $i<\omega$. It must either be the case that for infinitely many $i < \omega$, $\int x_i\, d\nu \geq r + \e$ or that for infinitely many $i < \omega$, $\int x_i\, d\nu \leq r - \e$. We may assume without loss of generality that the first case occurs. Let $I \subseteq \omega$ be the infinite set of such indices. 

  Let $n$ be as in the statement of the lemma (for our choice of $\e$). Pick $i_1< i_2<\ldots < i_n$ from $I$. Then
  \begin{multline*} 
  \e \leq  \frac{1}{n} \sum_{k=1}^n \int (x_{i_k}-r)\, d\nu   
         =\left| \frac{1}{n} \sum_{k=1}^n \int (x_{i_k}-r)\, d\nu \right| 
         =  \left| \int \frac{1}{n}\sum_{k=1}^n (x_{i_k} - r)\, d\nu \right| \\
         \leq \int \left| \frac{1}{n}\sum_{k=1}^n(x_{i_k}-r) \right|d\nu  
    =  \int \left| r - \frac{1}{n} \sum_{k=1}^n x_{i_k} \right| d\nu < \e,
  \end{multline*}
  which is absurd.
\end{proof}

\begin{proof}[\textnormal{\textbf{Proof of Theorem \ref{thm:fim-main}}$(a)$}]
Suppose $\mu\in\kM_x(\cU)$ is fim over $M\prec\cU$. We want to show that $\mu$ is self-averaging over $M$. Fix some $\lambda\in \KM(\mu/M)$ and an $\cL_{\cU}$-formula $\varphi(x,b)$. Let  $r=\mu(\varphi(x,b))$. We need to show $\lim_{i\to\infty}\lambda(\varphi(x_i,b))=r$. Throughout the proof, we let $\xbar=(x_i)_{i<\omega}$, where each $x_i$ is of sort $x$.

\medskip

\noindent\textit{Claim.} For any $\epsilon>0$, there is  some $N=N_\epsilon \in \mathbb{N}$ such that for any $k > N$ and any $i_1<\ldots<i_k<\omega$, 
\[ 
\int|r - \Av(x_{i_1},\ldots, x_{i_k})(\varphi(x,b)) |\, d\lambda < \epsilon. \tag{$\ast$}
\]

\noindent\textit{Proof.} Fix $\epsilon>0$. Since $\mu$ is \fim\ over $M$, there is some $N$ such that for any $k > N$, we have  an $\cL_M$-formula $\theta_k(x_1,\ldots,x_k)$ satisfying the following properties:
\begin{enumerate}
\item[(1)] $\mu^{(k)}(\theta_k(x_1,\ldots,x_k)) > 1 - \frac{\epsilon}{3}$. 
\item[(2)] If $\cU\models \theta_k(\abar)$, then $ \sup_{c \in \mathcal{U}^{y}} |\Av(\abar)(\varphi(x,c)) - \mu(\varphi(x,c))| < \frac{\epsilon}{3}$. 
\end{enumerate} 
Moreover, since $\mu$ is definable over $M$, there is an $\cL_M$-formula $\psi(y)$ such that:
\begin{enumerate}
\item[(3)] $\mathcal{U} \models \psi(b)$.
\item[(4)] For any $c \in \mathcal{U}^{y}$, if $\mathcal{U} \models \psi(c)$ then  $|\mu(\varphi(x,c)) - r| < \frac{\epsilon}{3}$. 
\end{enumerate} 
Fix $k > N$ and $i_1<\ldots<i_k$. We need to prove $(\ast$). Define $f\colon \mathcal{U}^{\xbar} \to [0,1]$ via 
\[ 
f(\abar) = \sup_{c \in \psi(\mathcal{U}^{y})}|r - \Av(a_{i_1},\ldots,a_{i_k})(\varphi(x,c))|. 
\] 
  Note that if $\abar,\abar'\in\cU^{\xbar}$ have the same type over $M$, then $f(\abar)=f(\abar')$. So we have a well-defined map $g\colon S_{\xbar}(M) \to [0,1]$ via $g(p) = f(\abar)$ where $\abar \models p$. 
We claim $g$ is also continuous (and hence integrable). To see this, note that by construction, $g$ has finite image, say $\{r_1,\ldots,r_n\}$. For $1\leq i\leq n$, define the $\cL_M$-formula  
\[
\gamma_i(x_{i_1},\ldots,x_{i_k}) : = \exists y( \psi(y) \wedge |r - \Av(x_{i_1},\ldots,x_{i_k})(\varphi(x,y))| \geq r_i).
\]
Then for any $p\in S_{\xbar}(M)$ and $1\leq i\leq n$, $g(p) = r_i$ if and only if 
\[ 
\bigwedge_{j \leq i} \gamma_j(x_{i_1},\ldots,x_{i_k}) \wedge \bigwedge_{i < j} \neg \gamma_{j}(x_{i_1},\ldots,x_{i_k}) \in p. 
\]
It follows that $g$ is continuous. 

Now, note that if $\abar c \in \mathcal{U}^{x_{i_1}\ldots x_{i_k}y}$ and  $\cU\models \theta_k(\abar) \wedge \psi(c)$, then 
\[
|r - \Av(\abar)(\varphi(x,c))|  \leq |r - \mu(\varphi(x,c))| + |\mu(\varphi(x,c)) - \Av(\abar)(\varphi(x,c))| <   \frac{2\epsilon}{3}.
\]
It follows that for any $p \in S_{\xbar}(M)$,  if $\theta_{k}(x_{i_1},\ldots,x_{i_k}) \in p$ then  $g(p) \leq \frac{2\epsilon}{3}$. Therefore 
\begin{multline*} 
\int g\, d(\mu^{(\omega)}|_M)\leq \frac{2\epsilon}{3}\mu^{(\omega)}(\theta_k(x_{i_1},\ldots,x_{i_k}))+\mu^{(\omega)}(\neg\theta(x_{i_1},\ldots,x_{i_k}))\\
\leq \frac{2\epsilon}{3}+\mu^{(k)}(\neg\theta_k(x_1,\ldots,x_k)) < \frac{2\epsilon}{3} + \frac{\epsilon}{3} = \epsilon. 
\end{multline*} 
Since $\lambda|_M = \mu^{(\omega)}|_{M}$, it follows that $\int g\, d(\lambda|_M) < \epsilon$.

Now define $h\colon S_{\xbar}(\mathcal{U}) \to [0,1]$ via $h(p) = |r - \Av(a_{i_1},\ldots,a_{i_k})(\varphi(x,b))|$  where $\abar \models p|_{Mb}$. Recall that $\cU\models\psi(b)$, and so  for any $p \in S_{\xbar}(\mathcal{U})$, we have  $h(p) \leq g(p|_M)$. Hence, if  $\rho\colon S_{\xbar}(\mathcal{U}) \to S_{\xbar}(M)$ denotes the restriction map, then
\[
\int h\, d\lambda \leq \int g \circ \rho\, d(\lambda) = \int g\, d(\lambda|_M) < \epsilon. 
\]
This yields $(\ast)$ by definition of $h$. \hfill $\square_{\text{claim}}$\medskip

Now define $\Phi\colon S_{\xbar}(\mathcal{U}) \to [0,1]^{\omega}$ via $\Phi(p) = (\mathbf{1}_{\varphi(x_i,b)}(p))_{i \in \omega}$. Note that this map is continuous. Let $\nu$ be the Borel probability measure on $[0,1]^\omega$ obtained from the pushforward of $\lambda$ along $\Phi$, i.e., for any Borel set $B \subseteq [0,1]^{\omega}$, $\nu(B) = \lambda(\Phi^{-1}(B))$. For any $\epsilon>0$ and $k>N_\epsilon$, if $i_1<\ldots<i_k<\omega$ then 

\begin{multline*} 
\int {\textstyle \left| r - \frac{1}{k} \sum_{j=1}^k x_{i_j}\right|}\, d\nu = \int {\textstyle \left| r - \frac{1}{k}  \sum_{j=1}^k x_{i_j}\right| \circ \Phi}\, d\lambda\\
=  \int |r - \Av(x_{i_1},\ldots,x_{i_k})(\varphi(x,b))|\, d\lambda < \epsilon, 
\end{multline*} 
where the final inequality follows from the Claim.
Therefore $\lim_{i \to \infty} \int x_id\nu = r$ by Lemma \ref{lem:limit}. Hence we conclude that 
\[ 
 \lim_{i \to \infty} \lambda(\varphi(x_{i},b)) =\lim_{i\to \infty} \int x_i \circ \Phi\, d\lambda = \lim_{i \to \infty} \int x_i\, d\nu = r.\qedhere
\]
\end{proof}

Now we start toward  part $(b)$ of Theorem \ref{thm:fim-main}. To ease notation, we will continue to use $\bar{x}$ for an infinite sequence $(x_i)_{i<\omega}$ of variables, each of the same sort $x$. 

\begin{lemma}\label{lem:weird-extension}
 Suppose $\mu\in\kM_x(\cU)$ is invariant over $M\prec\cU$, and   $\sigma \in \frk{M}_{\xbar y}(M)$ is such that $\sigma |_{\xbar} = \mu^{(\omega)}|_M$ and $\sigma |_y$ is a type. Then there is  some $\lambda \in \KM(\mu/M)$ and  $b \in \Uc$ such that for any $\cL_M$-formula $\psi(\xbar,y)$, $\lambda(\psi(\xbar,b)) = \sigma(\varphi(\xbar,y))$.
\end{lemma}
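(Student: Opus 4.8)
The plan is to obtain $\lambda$ and $b$ as limits along a suitable ultrafilter, using compactness of the space of Keisler measures and the assumption that $\mu$ is invariant over $M$. Here is the setup. Let $\sigma \in \mathfrak{M}_{\bar x y}(M)$ be as in the statement, so $\sigma|_{\bar x} = \mu^{(\omega)}|_M$ and $q := \sigma|_y \in S_y(M)$. Fix some $b \in \mathcal{U}$ with $\tp(b/M) = q$; this is the $b$ we will use. The task is to produce $\lambda \in \mathfrak{M}_{\bar x}(\mathcal{U})$ with $\lambda|_M = \mu^{(\omega)}|_M$ and $\lambda(\psi(\bar x, b)) = \sigma(\psi(\bar x, y))$ for every $\mathcal{L}_M$-formula $\psi(\bar x, y)$.

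The key idea is that the desired constraints on $\lambda$ are exactly the constraints of a ``fiber measure'' of $\sigma$ over the type $q$. Concretely, I would disintegrate or approximate as follows. Let $D$ be the collection of pairs $(\psi_1,\varepsilon)$ where $\psi_1$ is an $\mathcal{L}_M$-formula in finitely many of the $x_i$'s and $y$, and $\varepsilon > 0$; partially order $D$ by refinement and decreasing $\varepsilon$. For each such pair, since $\sigma|_y = q$ and $\sigma$ is a genuine measure, I can find a small parameter set over which the relevant pieces of $\sigma$ are ``spread out'', and in particular I can find some $c \models q$ (indeed $c = b$ works once we know the values match up to $\varepsilon$ — but more carefully, one uses an approximation by $\mathcal{L}_M$-definable sets and the fact that $\sigma$ assigns measure $1$ to $\psi_{q}(y)$ for each $\psi_q \in q$) realizing approximate versions of the constraints. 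Then along an ultrafilter $\mathcal{D}$ on $D$ refining the filter of ``cones'' $\{(\psi_2,\delta) : (\psi_2,\delta) \text{ refines } (\psi_1,\varepsilon)\}$, take $\lambda = \lim_{\mathcal{D}} \lambda_{(\psi_1,\varepsilon)}$ in the compact space $\mathfrak{M}_{\bar x}(\mathcal{U})$, where $\lambda_{(\psi_1,\varepsilon)}$ is a global measure chosen so that (i) $\lambda_{(\psi_1,\varepsilon)}|_M = \mu^{(\omega)}|_M$ and (ii) $\lambda_{(\psi_1,\varepsilon)}(\psi_1(\bar x,b)) \approx_\varepsilon \sigma(\psi_1(\bar x,y))$. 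Property (i) is closed and passes to the ultralimit; property (ii), together with the cofinal shrinking of $\varepsilon$, forces $\lambda(\psi(\bar x,b)) = \sigma(\psi(\bar x,y))$ for every $\psi$.

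An alternative and probably cleaner route is to build $\lambda$ directly as an integral. Since $\mu$ is invariant over $M$, for each $n$ the measure $\mu^{(n)}$ is a well-defined measure over the relevant parameters; restricting to $M b$, think of $\sigma$ as giving, for $\mathcal{L}_M$-formulas, the right values, and observe that the required $\lambda$ is any global measure extending the measure $\sigma^{*}$ on $\mathcal{L}_{Mb}$-formulas $\psi(\bar x, b)$ defined by $\sigma^{*}(\psi(\bar x,b)) = \sigma(\psi(\bar x, y))$. One must check $\sigma^{*}$ is well-defined (i.e., if $\psi(\bar x, b) = \psi'(\bar x, b)$ as subsets of $S_{\bar x}(\mathcal{U})$, equivalently $\mathcal{U} \models \forall \bar x\,(\psi(\bar x, b) \leftrightarrow \psi'(\bar x, b))$, then $\sigma(\psi(\bar x,y)) = \sigma(\psi'(\bar x, y))$): this follows because $\forall \bar x\,(\psi \leftrightarrow \psi')(b)$ holds, hence $\forall \bar x\,(\psi \leftrightarrow \psi')(y) \in q = \sigma|_y$, so $\sigma$ gives the symmetric difference $\psi(\bar x,y) \triangle \psi'(\bar x, y)$ measure $0$. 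Then $\sigma^{*}$ is a finitely additive, automatically countably additive (by compactness/Loeb-style arguments for Keisler measures) measure on the Boolean algebra of $\mathcal{L}_{Mb}$-formulas in $\bar x$, restricting to $\mu^{(\omega)}|_M$ on $\mathcal{L}_M$-formulas, so it extends to a global $\lambda \in \mathfrak{M}_{\bar x}(\mathcal{U})$ by the standard extension theorem for Keisler measures; any such extension lies in $\KM(\mu/M)$ and satisfies the conclusion.

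I expect the main obstacle to be the well-definedness and consistency check for $\sigma^{*}$ — i.e., verifying that prescribing $\lambda(\psi(\bar x,b)) = \sigma(\psi(\bar x, y))$ does not conflict with the demand $\lambda|_M = \mu^{(\omega)}|_M$, and that it yields a legitimate (finitely additive, monotone, normalized) measure on $\mathcal{L}_{Mb}$-formulas. Invariance of $\mu$ over $M$ is what guarantees $\mu^{(\omega)}$ is $M$-invariant and hence that the two prescriptions agree on $\mathcal{L}_M$-formulas: for an $\mathcal{L}_M$-formula $\chi(\bar x)$ we have $\sigma(\chi(\bar x)) = (\sigma|_{\bar x})(\chi(\bar x)) = \mu^{(\omega)}(\chi(\bar x))$. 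The remaining points — countable additivity and existence of the global extension — are standard for Keisler measures and I would cite the relevant facts from \cite{CoGaHa} rather than reprove them.
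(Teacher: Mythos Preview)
Your second (``cleaner'') approach is correct and is exactly the paper's proof: fix $b\models\sigma|_y$, define a measure $\lambda_0\in\mathfrak{M}_{\bar x}(Mb)$ by $\lambda_0(\psi(\bar x,b))=\sigma(\psi(\bar x,y))$, and take any global extension. The well-definedness check you spell out (that $\psi(\bar x,b)\equiv\psi'(\bar x,b)$ forces $\sigma(\psi\triangle\psi')=0$ because $\forall\bar x(\psi\leftrightarrow\psi')(y)\in\sigma|_y$) is the only real content, and the paper leaves it implicit.

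Your first (ultrafilter) approach is unnecessarily roundabout and, as written, circular: you never justify the existence of the approximating measures $\lambda_{(\psi_1,\varepsilon)}$ satisfying both (i) and (ii), and establishing this for a single $\psi_1$ already requires the compatibility argument from your second approach. Drop the first approach entirely. Also, the remark about ``countable additivity'' is a red herring---a Keisler measure over $Mb$ is just a finitely additive probability measure on the Boolean algebra of $\mathcal{L}_{Mb}$-formulas, and the existence of a global extension follows from compactness of $\mathfrak{M}_{\bar x}(\mathcal{U})$ (or Hahn--Banach), with no separate $\sigma$-additivity check needed.
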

\begin{proof}
Fix a $b \in \Uc$ satisfying $\sigma |_y$. Let $\lambda_0 \in \frk{M}_{\xbar y}(Ma)$ be the measure defined by $\lambda_0(\psi(\xbar,b)) = \sigma(\psi(\xbar,y))$ for every $M$-formula $\psi(\xbar,y)$. Then any global extension $\lambda$ of $\lambda_0$ has the required properties. 
\end{proof}

Given $M\prec\cU$, let $\rho_M\colon\kM_{\xbar}(\cU)\to \kM_{\xbar}(M)$ denote the restriction map.

\begin{prop}\label{prop:sa-char}
An $M$-invariant measure $\mu\in \frk{M}_x(\Uc)$ is self-averaging over $M$ if and only if for any formula $\varphi(x,y)$ and $\e > 0$, there is an open neighborhood $U$ of $\mu^{(\omega)}|_M$ in $\frk{M}_{\bar{x}}(M)$ and an $n\geq 1$ such that for all $\nu \in \rho_M\inv(U)$ and $b \in \Uc^y$, 
\[
\left|\mu(\varphi(x,b)) - \frac{1}{n}\sum_{i < n}\nu(\varphi(x_i,b))\right| < \e.
\]
\end{prop}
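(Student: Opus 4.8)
The plan is to prove the two implications separately; the backward one is soft, while the forward one carries the content. For the backward direction, assume the displayed condition and fix an $\cL_\cU$-formula $\varphi(x,b)$ and $\lambda\in\KM(\mu/M)$; put $a_i=\lambda(\varphi(x_i,b))$ and $r=\mu(\varphi(x,b))$. The key point is that $\KM(\mu/M)$ is closed under subsampling: for strictly increasing $g\colon\omega\to\omega$, the pushforward $\lambda_g$ of $\lambda$ along the reindexing $x_i\rightsquigarrow x_{g(i)}$ again lies in $\KM(\mu/M)$, since $\mu^{(\omega)}$ restricted to any strictly increasing subfamily of its variables is again $\mu^{(\omega)}$ (immediate from the definition of the Morley power together with invariance). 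So each $\lambda_g\in\rho_M^{-1}(U)$, and the displayed inequality applied to $\lambda_g$ and $b$ gives $|r-\frac1n\sum_{i<n}a_{g(i)}|<\e$; letting $g$ enumerate an arbitrary $n$-element subset of $\omega$ shows that the average of $(a_i)$ over any $n$ indices is $\e$-close to $r$. A one-line argument — if, say, $a_i\ge r+\delta$ for infinitely many $i$, average $n=n(\delta)$ of those $a_i$ — then forces $a_i\to r$, i.e.\ $\mu$ is self-averaging over $M$.

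For the forward direction, assume $\mu$ is self-averaging over $M$. By Corollary \ref{cor:sa-def}, $\mu$ is definable over $M$, so $F^\varphi_\mu$ is continuous; and by Proposition \ref{prop:sa-unif} there is an $n_0$ such that every $\lambda\in\KM(\mu/M)$ satisfies $|\mu(\varphi(x,b))-\frac1{n_0}\sum_{i<n_0}\lambda(\varphi(x_i,b))|\le\e/3$ for all $b\in\cU^y$ (take $N=n_{\e/6,\varphi}$ and $n_0\ge 6N/\e$). Suppose for contradiction that the displayed condition fails for $\varphi,\e$. Then for each open $U\ni\mu^{(\omega)}|_M$ pick $\nu_U\in\rho_M^{-1}(U)$ and $b_U\in\cU^y$ with $|\mu(\varphi(x,b_U))-\frac1{n_0}\sum_{i<n_0}\nu_U(\varphi(x_i,b_U))|\ge\e$, and package these into joint Keisler measures $\xi_U:=\delta_{b_U}\otimes\nu_U\in\kM_{y\bar x}(\cU)$ (a legitimate Morley product, since $\delta_{b_U}$ is definable), which satisfy $\xi_U|_{\bar x}=\nu_U$, $\xi_U|_y=\delta_{b_U}$, and $\xi_U(\varphi(x_i,y))=\nu_U(\varphi(x_i,b_U))$. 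Fix an ultrafilter $\Dc$ on the directed set of open neighborhoods of $\mu^{(\omega)}|_M$ with $\{U:U\seq V\}\in\Dc$ for every such $V$ (these sets have the finite intersection property), and set $\xi^*=\lim_\Dc\xi_U\in\kM_{y\bar x}(\cU)$.

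The crucial observation is that $\xi^*|_y=\lim_\Dc\delta_{b_U}=\delta_{p^*}$ with $p^*:=\lim_\Dc\tp(b_U/\cU)\in S_y(\cU)$, since a $\Dc$-limit of Dirac measures on points is the Dirac on the limit point. Hence $\xi^*$ is supported on the fiber over $p^*$, so it corresponds to a measure $\kappa$ on $\bar x$ over a model $\cU^+\succ\cU$ containing a realization $b^*$ of $p^*$, with $\xi^*(\varphi(x_i,y))=\kappa(\varphi(x_i,b^*))$; moreover $\kappa|_M=(\xi^*|_{\bar x})|_M=\lim_\Dc(\nu_U|_M)=\mu^{(\omega)}|_M$ (as $\Dc$ refines the neighborhood filter), so $\kappa$ is a Keisler--Morley measure in $\mu$ over $M$, and $\tp(b^*/M)=p^*|_M=:q^*$. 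Since $\lim_\Dc\mu(\varphi(x,b_U))=F^\varphi_\mu(q^*)$ by continuity of $F^\varphi_\mu$, the uniform self-averaging bound of Proposition \ref{prop:sa-unif} (which depends only on $\mu,M,\varphi,\e$, not on the ambient monster) yields $|F^\varphi_\mu(q^*)-\frac1{n_0}\sum_{i<n_0}\xi^*(\varphi(x_i,y))|=|\mu(\varphi(x,b^*))-\frac1{n_0}\sum_{i<n_0}\kappa(\varphi(x_i,b^*))|\le\e/3$, contradicting $\lim_\Dc|\mu(\varphi(x,b_U))-\frac1{n_0}\sum_{i<n_0}\nu_U(\varphi(x_i,b_U))|\ge\e$. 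Hence the displayed condition holds.

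The main obstacle lies entirely in the forward direction: because $\frac1n\sum_{i<n}\nu(\varphi(x_i,b))$ for $b\notin M$ is in no way determined by $\nu|_M$, one cannot pass from the uniform statement over $\KM(\mu/M)$ to one over a neighborhood $\rho_M^{-1}(U)$ by a naive closedness argument. Forming the joint measures $\xi_U=\delta_{b_U}\otimes\nu_U$ keeps the coupling between the $x_i$'s and $b_U$ alive in the limit, and the point that makes this tractable is that the $y$-marginal collapses to a Dirac on a single global type, so $\xi^*$ is again a genuine Keisler--Morley measure evaluated against a realization of that type — exactly the situation the uniform self-averaging bound controls. The only ancillary subtlety is that this realization lives in an elementary extension of $\cU$, which is harmless since that bound does not depend on the monster.
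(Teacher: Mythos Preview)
Your proof is correct and follows the same overall strategy as the paper's: for the forward direction, assume failure, choose counterexamples $(\nu_U,b_U)$ indexed by neighborhoods, couple them into joint measures on $\bar{x}y$, take an ultrafilter limit whose $y$-marginal collapses to a type, and realize this as a Keisler--Morley measure evaluated against a single parameter to contradict uniform self-averaging. The main difference is that the paper works over $M$ throughout---defining $\sigma_U\in\kM_{\bar{x}y}(M)$ by $\sigma_U(\psi(\bar{x},y))=\nu_U(\psi(\bar{x},b_U))$ and then applying the elementary Lemma~\ref{lem:weird-extension} to the limit $\sigma_{\mathcal{F}}$ to obtain $\lambda\in\KM(\mu/M)$ and $b\in\cU$---whereas you form global measures $\xi_U\in\kM_{y\bar{x}}(\cU)$ and pass to a realization $b^*$ in an elementary extension $\cU^+\succ\cU$. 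Your appeal to ``the uniform bound does not depend on the monster'' is correct, but unpacking it amounts to pushing the data down to $M$ and back up, which is exactly the content of Lemma~\ref{lem:weird-extension}; you can bypass the detour entirely by restricting your $\xi^*$ to $M$, recovering the paper's $\sigma_{\mathcal{F}}$ directly. Two points where you are more careful than the paper: your choice $n_0\ge 6n_{\e/6,\varphi}/\e$ repairs a sloppiness in the paper (which takes $n=n_{\varphi,\e}$ verbatim from Proposition~\ref{prop:sa-unif}, though that bound controls bad indices rather than averages), and your subsampling argument for the backward direction fills a genuine gap in the paper's one-line ``immediate'', since closeness of first-$n$ averages for a single $n$ does not imply convergence without invoking closure of $\KM(\mu/M)$ under strictly increasing reindexing.
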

\begin{proof}
$(\Leftarrow)$: This implication is immediate, as $\mathrm{KM}(\mu/M) \subseteq \rho_M\inv(U)$ for any neighborhood $U$ of $ \mu^{(\omega)}|_M$ in $\kM_{\xbar}(M)$.

$(\Rightarrow)$: Assume that $\mu$ is self-averaging. Fix some such $\varphi(x,y)$ and $\e > 0$, and let $n = n_{\varphi,\e}$ by as in Proposition \ref{prop:sa-unif}. Assume for the sake of contradiction that for every open neighborhood $U$ of $ \mu^{(\omega)}|_M$ in $\kM_{\xbar}(M)$, there are $\nu_U \in \rho_M\inv(U)$ and $b_U \in \Uc^y$ such that 
\[
\left|\mu(\varphi(x,b_U)) - \frac{1}{n}\sum_{i<n}\nu_U(\varphi(x_i,b_U))\right| \geq \e.
\]
Let $\sigma_U \in \frk{M}_{\xbar y}(M)$ be the measure defined by $\sigma_U(\psi(\xbar,y)) = \nu_U(\psi(\xbar,b_U))$ for all $\cL_M$-formulas $\psi(\xbar,y)$. Let $\mathcal{N}$ be the collection of neighborhoods of $\mu^{(\omega)}|_M$ in $\frk{M}_{\xbar}(M)$. Let $\mathcal{F}$ be an ultrafilter on $\mathcal{N}$ extending the filter generated by sets of the form $\{V \in \mathcal{N} : V \subseteq U\}$. Let $\sigma_{\mathcal{F}} = \lim_{U \to \mathcal{F}} \sigma_U\in\kM_{\xbar y}(M)$. By construction we have $\sigma_{\mathcal{F}}|_{\xbar} = \mu^{(\omega)}|_M$, and $\sigma_{\mathcal{F}}|_y$ is a type rather than just a measure.

Since $\mu$ is definable (by Corollary \ref{cor:sa-def}), the function $F^{\varphi}_\mu\colon S_y(M) \to [0,1]$ is continuous. This implies that
\[
F^{\varphi}_{\mu}(\sigma_{\mathcal{F}}|_y) = \lim_{U \to \mathcal{F}} F^{\varphi}_{\mu}(\tp(b_U/M)) = \lim_{U\to \mathcal{F}} \mu(\varphi(x,b_U)).
\]
Likewise, the function $\nu \mapsto \frac{1}{n}\sum_{i<n} \nu(\varphi(x_i,y))$ from $\frk{M}_{\xbar y}(M)$ to $[0,1]$ is clearly continuous, which implies that 
\[
\frac{1}{n}\sum_{i<n}\sigma_{\mathcal{F}}\left(\varphi(x_i,y)\right) = \lim_{U \to \mathcal{F}} \frac{1}{n}\sum_{i<n} \nu_U(\varphi(x_i,b_U)).
\]
Since
\[
\left|F^\varphi_\mu(\tp(b_U/M)) - \frac{1}{n}\sum_{i<n} \nu_U(\varphi(x_i,b_U))\right| \geq \e 
\]
for every $U \in \mathcal{N}$, we must have that 
\[
\left|F_\mu^\varphi(\sigma_{\mathcal{F}} |_y) - \frac{1}{n}\sum_{i<n} \sigma_{\mathcal{F}}(\varphi(x_i,y)) \right|\geq \e
\]
as well.
Apply \cref{lem:weird-extension} to $\sigma_{\mathcal{F}}$ to get $\lambda \in \KM(\mu/M)$ and $b \in \Uc^y$ such that $\lambda(\psi(\xbar,b)) = \sigma_{\mathcal{F}}(\psi(\xbar,y))$ for every $\cL_M$-formula $\psi(\xbar,y)$. This property implies that
\[
\left|\mu(\varphi(x,b)) - \frac{1}{n}\sum_{i<n} \lambda(\varphi(x_i,b))\right| \geq \e,
\]
which contradicts the fact that $\mu$ is self-averaging.
\end{proof}

\begin{proof}[\textnormal{\textbf{Proof of Theorem \ref{thm:fim-main}$(b)$}}]
Suppose $\mu\in\kM_x(\cU)$ is self-averaging over $M\prec\cU$. We want to show that $\mu$ is \famm\ over $M$. 
Fix a formula $\varphi(x,y)$ and an $\e > 0$. Recall that $\bar{x}=(x_i)_{i<\omega}$ where each $x_i$ is of sort $x$. By \cref{prop:sa-char}, there is an open neighborhood $U$ of $\mu^{(\omega)}|_M$ in $\frk{M}_{\xbar}(M)$  such that for any $\nu \in \rho_M\inv(U)$ and any $b \in \Uc^y$, 
\[
\mu(\varphi(x,b)) \approx_\epsilon \frac{1}{n}\sum_{i<n} \nu(\varphi(x_i,b))\tag{$\dagger$}
\]

The collection of measures of the form $\Av(\abar^0,\abar^1,\dots,\abar^{k-1})$, with each $\abar^i$ an $\omega$-tuple in $M^x$, is dense in $\frk{M}_{\xbar}(M)$.
Choose some such $\Av(\abar^0,\abar^1,\dots,\abar^{k-1})$ in $U$. Expanding out definitions, we get that for any $b \in \Uc^y$,
\[ 
\frac{1}{n}\sum_{i<n}\Av(\abar^0,\abar^1,\dots,\abar^{k-1})(\varphi(x_i,b))= \frac{1}{n}\sum_{i<n}\frac{1}{k}\sum_{j<k} \varphi(a_i^j,b). 
\]
Thus, using $\nu=\Av(\abar^0,\abar^1,\dots,\abar^{k-1})$ in $(\dagger)$, we see that
 the $nk$-tuple $(a_i^j)_{i<n,j<k}$ is a \famm\ approximation of $\mu$ for the formula $\varphi(x,y)$ with accuracy $\e$.
\end{proof}

\begin{remark}
    It follows from Theorem \ref{thm:fim-main}$(b)$ and \cite[Proposition 5.17]{CoGaHa} that self-averaging measures commute with definable measures. In particular, self-averaging implies self-commuting. 
\end{remark}

It is worth pointing out that self-averaging does \emph{not} coincide with \famm\ (e.g., by Corollary \ref{cor:fimtypes} below and the fact that there are \famm\ types that are not generically stable \cite{CoGa}). Thus the main open question at this point is whether self-averaging is the same as \fim, or is a new notion properly between \fim\ and \famm. We have so far been unable to answer this question. However, we can conclude that \fim\ and self-averaging coincide for \emph{types}. 

\begin{corollary}\label{cor:fimtypes}
Suppose $p\in S_x(\cU)$. Then $p$ is generically stable over $M\prec\cU$ if and only if it is self-averaging over $M$.
\end{corollary}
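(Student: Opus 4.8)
The plan is to deduce Corollary \ref{cor:fimtypes} from the already-established results by specializing the general theory of self-averaging measures to the case of a (global) type $p$, regarded as the $\{0,1\}$-valued Keisler measure $\delta_p$. For the forward direction, assume $p$ is generically stable over $M$. By Fact \ref{fact:CG}, $p$ is \fim\ over $M$; since every type is in particular Borel-definable (indeed definable, because a generically stable type is definable), Theorem \ref{thm:fim-main}$(a)$ applies and gives that $p$ is self-averaging over $M$.

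For the converse, the key point is that a Keisler-Morley measure $\lambda \in \KM(p/M)$ need \emph{not} be realized by an actual Morley sequence, so we must extract the combinatorial content of self-averaging and feed it into condition $(ii)$ of Fact \ref{fact:genstab}. First, by Corollary \ref{cor:sa-def}, self-averaging of $p$ gives that $p$ is definable over $M$; combined with the obvious fact that $p$ is finitely satisfiable in its realizations, $p$ is an invariant type, so it has a well-defined Morley power $p^{(\omega)}$. The crucial observation is that if $(a_i)_{i<\omega}$ is any Morley sequence in $p$ over $M$, then $\tp((a_i)_{i<\omega}/\cU) = p^{(\omega)}$, and hence the Dirac measure $\delta_{(a_i)_{i<\omega}}$ lies in $\KM(p/M)$ (its restriction to $M$ is $p^{(\omega)}|_M = \delta_{(a_i)}|_M$, which matches). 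Now apply Proposition \ref{prop:sa-unif} to an arbitrary formula $\varphi(x,y)$ and any $\e < \tfrac12$: we obtain $n_{\e,\varphi}$ such that for every $b \in \cU^y$ and every $\lambda \in \KM(p/M)$,
\[
|\{i<\omega : \lambda(\varphi(x_i,b)) \not\approx_\e p(\varphi(x,b))\}| < n_{\e,\varphi}.
\]
Specializing to $\lambda = \delta_{(a_i)_{i<\omega}}$, this says $|\{i<\omega : \mathbf{1}[\cU\models\varphi(a_i,b)] \not\approx_\e \mathbf{1}[\varphi(x,b)\in p]\}| < n_{\e,\varphi}$, and since $\e<\tfrac12$ the indicator values are either equal or differ by $1 > \e$, so this set is exactly $\{i<\omega : \cU\models\varphi(a_i,b) \iff \varphi(x,b)\notin p\}$. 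Thus for every Morley sequence $(a_i)$ and every $b$, $\varphi(x,b)\in p$ holds if and only if $|\{i<\omega : \cU\models\varphi(a_i,b)\}| \geq n_{\e,\varphi}$ (when $\varphi(x,b)\in p$, all but fewer than $n_{\e,\varphi}$ of the $a_i$ satisfy $\varphi(x,b)$; when $\varphi(x,b)\notin p$, fewer than $n_{\e,\varphi}$ of them do). Taking $n_\varphi := n_{\e,\varphi}$ for, say, $\e = \tfrac14$ gives precisely condition $(ii)$ of Fact \ref{fact:genstab}, so $p$ is generically stable over $M$.

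I expect the only genuine subtlety to be the verification that every Morley sequence in $p$ over $M$ induces an element of $\KM(p/M)$ — i.e.\ that $\tp((a_i)_{i<\omega}/M) = p^{(\omega)}|_M$ — which requires knowing $p$ is invariant; this is why the appeal to Corollary \ref{cor:sa-def} (to get definability, hence invariance, hence that $p^{(\omega)}$ makes sense) comes first in the converse direction. The rest is a routine translation between the real-valued formulation of Proposition \ref{prop:sa-unif} and the $\{0,1\}$-valued bookkeeping of Fact \ref{fact:genstab}$(ii)$, using that an $\e<\tfrac12$ threshold forces exact agreement of indicator functions. One could alternatively run the argument through Theorem \ref{thm:fim-main}$(b)$ to get \famm\ for $p$ plus the order-property obstruction of Proposition \ref{prop:GRASstable}, but the direct route via Proposition \ref{prop:sa-unif} is cleanest.
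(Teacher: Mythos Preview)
Your proof is correct and the forward direction matches the paper exactly. For the converse, the paper takes a shorter path: it simply observes that if $p$ is self-averaging over $M$, then condition $(i)$ of Fact~\ref{fact:genstab} holds directly, since for any Morley sequence $(a_i)_{i<\omega}$ in $p$ over $M$ the Dirac measure $\delta_{(a_i)}$ lies in $\KM(p/M)$, and the defining limit in Definition~\ref{def:GSM} becomes $\lim_{i\to\infty}\mathbf{1}[\cU\models\varphi(a_i,b)]=p(\varphi(x,b))$, i.e., $\lim_{i\to\infty}\tp(a_i/\cU)=p$. Your route through Proposition~\ref{prop:sa-unif} to condition $(ii)$ works, but the uniformity is unnecessary: the non-uniform statement already gives $(i)$, and the equivalence $(i)\Leftrightarrow(ii)$ is part of Fact~\ref{fact:genstab}. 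Also, the appeal to Corollary~\ref{cor:sa-def} and the remark about finite satisfiability are superfluous, since the definition of self-averaging already presupposes Borel-definability over $M$, which is all you need for $M$-invariance and for $p^{(\omega)}$ to make sense.
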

\begin{proof}
First, if $p$ is self-averaging over $M$ then condition $(i)$ of Fact \ref{fact:genstab} clearly holds. Conversely, if $p$ is generically stable over $M$, then it is \fim\ over $M$ by Fact \ref{fact:CG}, and thus self-averaging over $M$ by Theorem \ref{thm:fim-main}$(a)$. 
\end{proof}

We also point out that \fim\ and self-averaging do coincide for  Keisler measures in NIP theories. 

\begin{corollary}\label{cor:saNIP}
Assume $T$ is NIP, and suppose $\mu\in\kM_x(\cU)$. Then $\mu$ is \fim\ over $M\prec\cU$ 
if and only if it is self-averaging over $M$.
\end{corollary}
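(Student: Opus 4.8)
The plan is to prove the equivalence in two steps, of which only one invokes the NIP hypothesis. One implication holds in any theory and is immediate: if $\mu$ is \fim\ over $M$, then $\mu$ is Borel-definable over $M$, so Theorem \ref{thm:fim-main}$(a)$ applies and gives that $\mu$ is self-averaging over $M$.

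For the converse, suppose $\mu$ is self-averaging over $M$. First I would collect what the earlier results of this section already give: by Corollary \ref{cor:sa-def} the measure $\mu$ is definable over $M$, and by Theorem \ref{thm:fim-main}$(b)$ it is \famm\ over $M$; since a measure that is \famm\ over $M$ is finitely satisfiable in $M$, this means $\mu$ is \dfs\ over $M$. Then I would invoke the theorem of Hrushovski, Pillay, and Simon \cite{HPS} that, in an NIP theory, every Keisler measure which is \dfs\ over a model $M$ is \fim\ over $M$. This yields that $\mu$ is \fim\ over $M$, completing the argument.

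The only nontrivial input is that cited NIP fact, and that is where I expect the real content to lie; since it is already in the literature I would not reprove it, but I would make sure to note why it yields \fim\ over the \emph{same} model $M$. Its proof is VC-theoretic: for each formula $\varphi(x,y)$ and each $\e>0$, one uses the definability of $\mu$ over $M$ to produce, for each $n$, a consistent $\cL_M$-formula $\theta_n(x_1,\ldots,x_n)$ that approximately captures the condition $\Av(x_1,\ldots,x_n)(\varphi(x,b))\approx_\e\mu(\varphi(x,b))$ for all $b\in\cU^y$, and then the $\e$-approximation (VC) theorem for the NIP formula $\varphi$ forces $\mu^{(n)}(\theta_n)\to 1$. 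Since this construction refers only to the definability of $\mu$ over $M$ and not to any larger parameter set, the base model $M$ is preserved, which is exactly what the statement of the corollary requires. Everything outside this cited fact is bookkeeping on top of results already established above.
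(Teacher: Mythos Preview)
Your proposal is correct and matches the paper's own proof, which simply says to combine Theorem~\ref{thm:fim-main} with the fact from \cite[Theorem 3.2]{HPS} that \fim\ coincides with \dfs\ in NIP theories. Your invocation of Corollary~\ref{cor:sa-def} is harmless but redundant, since \famm\ over $M$ already gives \dfs\ over $M$.
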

\begin{proof}
Combine Theorem \ref{thm:fim-main} with the fact that \fim\ coincides with \dfs\  in NIP theories (see \cite[Theorem 3.2]{HPS}).
\end{proof}

\begin{remark} 
Remark 7.28 in \cite{Sibook} suggests a shorter proof of Theorem \ref{thm:fim-main}$(a)$. However, as was observed by Artem Chernikov, it is unclear whether the strategy is sound. Let us sketch the proposed proof. Assume $\mu \in \mathfrak{M}_{x}(\mathcal{U})$ is \fim\ over $M$. Fix an $\cL_{\cU}$-formula $\varphi(x,b)$ and $\lambda \in \KM(\mu/M)$.  Suppose toward a contradiction that $\lim_{i \to \infty} \lambda(\varphi(x_i,b)) \neq r\coloneqq\mu(\varphi(x,b))$. Without loss of generality, one may assume that there exists some $\epsilon>0$ such that $\lambda(\varphi(x_i,b)) > r + \epsilon$ for each $i \in \mathbb{N}$. 

Let $\theta_n(x_1,\ldots,x_n)_{n\geq 1}$ be a sequence of $\cL_M$-formulas witnessing \fim\ for $\mu$ with respect to $\varphi(x,y)$. Consider also the family of $\cL_{b}$-formulas $(\zeta_{n}(x_1,\ldots,x_n))_{n  \geq 1}$ where  
\[
    \zeta_{n}(x_1,\ldots,x_n) := |\Av(x_1,\ldots,x_n)(\varphi(x,b)) - r| > 
    \frac{\epsilon}{2}. 
\] 
 For $n$ large enough, we have $\Av(\abar)(\varphi(x,b))\approx_{\epsilon/2}r$ for any $\abar\models\theta_n(\xbar)$, which implies that $\zeta_n(\xbar)\wedge\theta_n(\xbar)$ is inconsistent. Since $\lambda\in\KM(\mu/M)$, we also have $\lim_{n\to\infty}\lambda(\theta_n(\xbar))=\lim_{n\to\infty}\mu^{(n)}(\theta_n(\xbar))=1$. Thus one would obtain a contradiction by proving that $\lim_{n\to\infty}\lambda(\zeta_n(\xbar))>0$. In \cite[Remark 7.28]{Sibook}, this done via an appeal to the weak law of large numbers (see \cite[Proposition B.4]{Sibook}) to conclude that, in fact, $\lim_{n \to \infty} \lambda(\zeta_{n}(\xbar)) = 1$. However, as $\lambda$ is not a product (or some kind of amalgam) of measures, it is not clear that the weak law of large numbers applies in this context.
\end{remark}

\section{Randomizations of \fim\ measures}\label{sec:randomization}

In this section, we study the relationship between \fim\ measures in a discrete\footnote{The assumption that $T$ is discrete should not be necessary, but we will use it for the sake of exposition.} theory $T$ and generically stable types in the  randomization $T^R$ of $T$. We first lay out some preliminaries regarding generically stable types and \fim\ measures in continuous logic. We then discuss transferring global definable measures over a monster model of a discrete theory $T$ to global types over a monster model of the  randomization $T^{R}$. This process was first developed in \cite{BYT} and we take the opportunity to flesh out some of the details. We conclude by showing that \fim\ measures transfer to generically stable types, although the converse remains open. We also prove that \famm\ measures transfer to \famm\ types.

\subsection{Continuous logic preliminaries}\label{sec:GScont}
Let $T$ be a continuous theory in a language $\mathcal{L}$ and $\mathcal{U}$ a monster model of $T$. We first  recall the definition of generic stability for continuous logic (which was first written explicitly by Khanaki in  \cite[Def.~2.11]{Khan}). In order to obtain a notion amenable to quantifier elimination, we will relativize the definition to a specific formula.

\begin{defn}\label{def:genstab} Suppose $p \in S_{x}(\mathcal{U})$ is invariant over $M\prec\cU$. Given an $\cL$-formula $\varphi(x,y)$, we say that $p$ is \textbf{generically stable (over $M$) relative to $\varphi(x,y)$} if there does not exist a Morley sequence $(a_i)_{i<\omega}$ in $p$ over $M$, a sequence $(b_i)_{i<\omega}$ in $\cU^y$, and real numbers $r<s$ such that $\varphi(a_i,b_j) \leq r$ if $i\leq j$ and $\varphi(a_i,b_j) \geq s$ if $i > j$. 

We say that $p$ is \textbf{generically stable (over $M$)} if it is generically stable (over $M$) relative to every $\cL$-formula $\varphi(x,y)$. 
\end{defn}

It is worth emphasizing that the previous definition involves Morley sequences, and so we need to assume $p$ is fully invariant (rather than invariant only for instances of $\varphi(x,y)$).

Now we turn to measures. For clarity, we recall the definition of a Keisler measure in the continuous context. 

\begin{definition} Fix $A \subseteq \mathcal{U}$. A \textbf{Keisler measure (in variable(s) $x$) over $A$} is a regular Borel probability measure on $S_{x}(A)$. As usual, we denote the collection of Keisler measures over $A$ in variable(s) $x$ as $\mathfrak{M}_{x}(A)$. Given some $\mu\in\kM_x(A)$ and an $\cL_A$-formula $\varphi(x)$, we often write $\int \varphi(x)\, d\mu$ as $\mu(\varphi(x))$.   
\end{definition}

The notions of invariance and definability for types and Keisler measures in continuous theories can be formulated identically to the discrete case (see Section \ref{sec:gloss}). For example, if $\mu\in\kM_x(\cU)$ is invariant over $A\subset\cU$ then for any $\varphi(x,y)$, we have the function $F^\varphi_{\mu,A}\colon S_y(A)\to [0,1]$ sending $q\in S_y(A)$ to $\mu(\varphi(x,b))$ for $b\models q$.

The next definition generalizes the notion of \fim\ for discrete theories, except that we again relativize to a single formula.

\begin{defn}\label{def:fimCL} Suppose $\mu\in\kM_x(\cU)$ is invariant over $M\prec\cU$, and further assume that the iterated Morley products $\mu^{(n)}$ are well-defined (e.g., this holds if $\mu$ is definable). Given an $\cL$-formula $\varphi(x,y)$, we say  $\mu\in\kM_x(\cU)$ is \textbf{\fim\ (over $M$) relative to $\varphi(x,y)$} if  there is a sequence $(\theta_n(x_1,\dots,x_n))_{n=1}^\infty$ of $\Lc_M$-formulas satisfying the following properties. 
\begin{enumerate}
    \item For any $\e > 0$, there is a $k(\e) \geq 1$ such that if $n \geq k(\e)$ and $ \theta_n(\abar) < 1$,
    \[
        \sup_{b \in \mathcal{U}^{y}}|\Av(\overline{a})(\varphi(x,b)) - F_{\mu,M}^{\varphi}(b) | \leq \epsilon. 
   \]
    \item $\lim_{n\to \infty} \mu^{(n)}(\theta_n <1) = 1$.
\end{enumerate}

We say that $\mu$ is a \textbf{frequency interpretation measure} (or a \textbf{\fim\ measure}) if it is \fim\ relative to every $\cL$-formula $\varphi(x,y)$. 
\end{defn}

Note that in the previous definition, the assumption on Morley products of $\mu$ was only needed to make sense of $(2)$. So we point out that, as in the discrete case, if for \emph{every} $\cL$-formula $\varphi(x,y)$, there is a sequence $(\theta_n(\xbar))_{n=1}^\infty$ satisfying $(1)$  with $\inf_{\xbar}\theta_n(\xbar)<1$ for all $n$, then it follows that $\mu$ is definable (in fact, finitely approximated in $M$), and hence $\mu^{(n)}$ is well-defined for all $n$.

Many of the basic results on generically stable types and \fim\ measures in discrete theories described in Section \ref{sec:gstypes-def} transfer directly to continuous logic. We state here only the results that we will need. 

\begin{fact}\label{fact:fimgs-cont}
Let $\varphi(x,y)$ be an $\cL$-formula and fix $M\prec\cU$.
\begin{enumerate}[$(a)$]
\item If $\mu\in\kM_x(\cU)$ is \fim\ over $M$ relative to $\varphi(x,y)$, then $F^\varphi_{\mu,M}$ is continuous.
\item Given an $M$-invariant type $p \in S_x(\cU)$, the following are equivalent.
\begin{enumerate}[$(i)$]
\item $p$ is generically stable over $M$ relative to $\varphi(x,y)$.
\item For any Morley sequence $(a_i)_{i<\omega}$ in $p$ over $M$ and any $b\in\cU^y$, 
\[
\lim_{i\to\infty}\varphi(a_i,b)=F^\varphi_{p,M}(b).
\] 
\item For any $\epsilon > 0$ there is some $n_{\epsilon}$ such that for any Morley sequence $(a_i)_{i < \omega}$ in $p$ over $M$, and any $b \in \mathcal{U}^{y}$, 
    $|\{i < \omega: |\varphi(a_i,b) - F_{p,M}^{\varphi}(b)| \geq \epsilon \}| \leq n_{\epsilon}$. 
\item $p$ is \fim\ over $M$ relative to $\varphi(x,y)$. 
\end{enumerate}
\end{enumerate}
\end{fact}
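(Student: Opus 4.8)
The plan is to prove (a) directly and to establish (b) via the cycle $(iv)\To(iii)\To(ii)\To(iv)$ together with $(iii)\To(i)\To(iii)$; this is the continuous, $\varphi$-relativized analogue of Facts~\ref{fact:genstab}, \ref{fact:gen-stab-seq-and-types}, and \ref{fact:CG}, and apart from one step it is essentially formal once Lemma~\ref{lem:uniform-pruning} is available. For (a): fix $\e>0$ and let $(\theta_n)_{n\ge1}$ witness \fim\ for $\mu$ relative to $\varphi(x,y)$. By condition $(2)$ of Definition~\ref{def:fimCL}, $\mu^{(n)}(\theta_n<1)>0$ for all large $n$, so the $\cL_M$-sentence $\inf_{\xbar}\theta_n$ has value $<1$; since $M\prec\cU$ there is $\abar\in M^{\xbar}$ with $\theta_n(\abar)<1$, and for $n\ge k(\e)$ condition $(1)$ yields $\sup_{b\in\cU^y}|\Av(\abar)(\varphi(x,b))-F^\varphi_{\mu,M}(b)|\le\e$. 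As $\abar\in M$, the map $q\mapsto\Av(\abar)(\varphi(x,b))$ $(b\models q)$ is continuous on $S_y(M)$, so $F^\varphi_{\mu,M}$ is a uniform limit of continuous functions, hence continuous.

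For $(iv)\To(iii)$ in (b): let $(\theta_n)_n$ witness \fim\ for $p$ relative to $\varphi$. Since $p$ is a type, $\mu^{(n)}=p^{(n)}$ is $\{0,1\}$-valued, so condition $(2)$ forces $(\theta_n)^{p^{(n)}}<1$ for all large $n$; because $\theta_n$ is over $M$ and every increasing $n$-element subtuple of a Morley sequence in $p$ over $M$ realizes $p^{(n)}|_M$ (by $M$-indiscernibility), every such subtuple $\abar$ satisfies $\theta_n(\abar)<1$. Fixing $\e>0$ and choosing $n\ge k(\e)$ of this form: if a Morley sequence $(a_i)_{i<\omega}$ and a parameter $b$ admitted $\ge n$ indices $i$ with $\varphi(a_i,b)\ge F^\varphi_{p,M}(b)+2\e$, averaging $\varphi(\cdot,b)$ over $n$ of them would violate condition $(1)$; the symmetric estimate gives $n_{2\e}:=2n$, and $\e$ being arbitrary this is $(iii)$. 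The implication $(iii)\To(ii)$ is immediate. For $(ii)\To(iii)$, and simultaneously for the continuity of $F^\varphi_{p,M}$, apply Lemma~\ref{lem:uniform-pruning} with $X=S_y(M)$, $K=[0,1]$, $f=F^\varphi_{p,M}$, and $C\seq[0,1]^\omega\times S_y(M)$ the set of pairs $(\alpha,q)$ for which $\alpha(i)=\varphi(a_i,b)$ for all $i$, for some Morley sequence $(a_i)_{i<\omega}$ in $p$ over $M$ and some $b\models q$: this $C$ projects onto $S_y(M)$, is closed under increasing reindexings of $(a_i)$, and is closed in $[0,1]^\omega\times S_y(M)$ by a compactness argument like the Claim in the proof of Corollary~\ref{cor:sa-def}, while the remaining hypothesis of the lemma is exactly $(ii)$.

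Then $(iii)\To(iv)$ goes as follows. By the previous step $F^\varphi_{p,M}$ is continuous, so fix an $\cL_M$-formula $d\varphi(y)$ with $d\varphi(b)=F^\varphi_{p,M}(b)$ for all $b\in\cU^y$; from $(iii)$ one checks that $d_n:=\sup_b|\Av(a_1,\dots,a_n)(\varphi(x,b))-d\varphi(b)|\to0$ along a Morley sequence $(a_i)_i$ in $p$ over $M$. Picking $c_n\to\infty$ with $c_nd_n\to0$ and setting $\theta_n(x_1,\dots,x_n):=\min(1,\,c_n\sup_y|\Av(x_1,\dots,x_n)(\varphi(x,y))-d\varphi(y)|)$ produces $\cL_M$-formulas witnessing \fim\ relative to $\varphi$: $\theta_n(\abar)<1$ forces $\sup_b|\Av(\abar)(\varphi(x,b))-d\varphi(b)|<1/c_n$, so $k(\e):=\min\{n:c_n\ge1/\e\}$ serves in condition $(1)$, while $(\theta_n)^{p^{(n)}}=\min(1,c_nd_n)\to0$, so $\mu^{(n)}(\theta_n<1)\to1$. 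Finally $(iii)\To(i)$: given a Morley sequence $(a_i)$, a sequence $(b_j)$, and reals $r<s$ witnessing a failure of generic stability relative to $\varphi$, take $\e=\tfrac{s-r}{3}$, let $n=n_\e$ from $(iii)$, and put $b=b_J$ for some $J>n$; whichever side of $\tfrac{r+s}{2}$ the value $F^\varphi_{p,M}(b_J)$ lies on, one of the blocks $\{i\le J\}$ or $\{i>J\}$ yields more than $n$ indices $i$ with $|\varphi(a_i,b_J)-F^\varphi_{p,M}(b_J)|\ge\tfrac{s-r}{2}>\e$, contradicting $(iii)$.

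The remaining implication $(i)\To(iii)$ is the crux and where I expect the real difficulty, since it carries the order-theoretic content of generic stability. I would prove the contrapositive. A failure of $(iii)$ with parameter $\e$ produces, for each $n$, a Morley sequence in $p$ over $M$ and a parameter $b_n$ with more than $n$ indices $i$ satisfying — after a pigeonhole — $\varphi(a_i,b_n)\ge F^\varphi_{p,M}(b_n)+\e$; appending to the $n$-th Morley sequence an element realizing $p$ over $M$, that sequence, and $b_n$ (whose $\varphi$-value against $b_n$ is then exactly $F^\varphi_{p,M}(b_n)$) makes the configuration type-definable over $M$, and then a compactness-and-Ramsey argument — extracting an indiscernible sequence from the $b_n$'s and the induced two-dimensional array, exactly as in the discrete arguments of \cite[Section~3]{CoGa} and \cite{PiTa} — produces a Morley sequence $(a_i)$, a sequence $(b_j)$, and $r<s$ realizing the half-graph pattern of Definition~\ref{def:genstab}. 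This extraction is the only nonformal part of the proof; alternatively, one may invoke \cite[Definition~2.11 and the surrounding discussion]{Khan}, which records the continuous analogues of Facts~\ref{fact:genstab}--\ref{fact:gen-stab-seq-and-types}.
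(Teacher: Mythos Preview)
Your proof is correct and closely parallels the paper's. Part $(a)$, and in part $(b)$ the implications $(ii)\Rightarrow(iii)$ via Lemma~\ref{lem:uniform-pruning}, $(iii)\Rightarrow(iv)$, $(iii)\Rightarrow(i)$, and $(iv)\Rightarrow(iii)$, all match the paper's arguments essentially verbatim.

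The one genuine difference is how you connect $(i)$ to the rest. The paper proves $(i)\Rightarrow(ii)$ directly (``standard exercise in compactness'') and then reaches $(iii)$ via your $(ii)\Rightarrow(iii)$. You instead attack $(i)\Rightarrow(iii)$ head-on and correctly flag it as the hard step, sketching a compactness-and-Ramsey extraction. This works, but it is more labor than needed: since you already have $(ii)\Rightarrow(iii)$, it suffices to show $(i)\Rightarrow(ii)$, and the contrapositive of that is strictly easier than the contrapositive of $(i)\Rightarrow(iii)$. A failure of $(ii)$ hands you a \emph{single} Morley sequence $(a_i)_{i<\omega}$ and a single $b$ with infinitely many indices satisfying (say) $\varphi(a_i,b)\geq r+\e$ where $r=F^\varphi_{p,M}(b)$; after re-indexing so that all $i<\omega$ are bad, append realizations of $p$ over $M\cup\{a_i\}_{i<\omega}\cup\{b\}$ to obtain a longer Morley sequence with $\varphi$-value exactly $r$ against $b$ on the tail, then use $M$-indiscernibility (e.g.\ over a $\mathbb{Z}$-indexed extension) to shift and produce the $(b_j)$'s realizing the order pattern. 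By contrast, a failure of $(iii)$ only gives you, for each $n$, a possibly different Morley sequence with $n$ bad indices, so you must first run an additional compactness step to consolidate into a single sequence --- which is exactly reducing to a failure of $(ii)$. Your route is sound, just longer; the paper's decomposition isolates the order-theoretic content in the cheaper implication.
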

\begin{proof}[Proof.] 
Part $(a)$ follows directly from the fact that uniform limits of continuous functions are continuous. We now sketch the proof of part $(b)$. 

$(i)\Rightarrow (ii)$. This is a standard exercise in compactness. 

$(ii)\Rightarrow (iii)$.  Assume $(ii)$. We show that the map $F_{p,M}^{\varphi}\colon S_{y}(M) \to [0,1]$ satisfies the conditions of Lemma \ref{lem:uniform-pruning}. Let $C \subseteq [0,1]^\omega \times S_{y}(M)$ where 
   \begin{equation*}
       C = \left\{\Big((\varphi(a_i,b))_{i < \omega}, \tp(b/M)\Big): (a_i)_{i < \omega} \models p^{(\omega)}|_{M},~ b \in \mathcal{U}^{y} \right\}. 
   \end{equation*}
  We will show  that $C$ is closed. The rest of the conditions in Lemma \ref{lem:uniform-pruning} are easily verified by basic properties of Morley sequences. Let $\xbar=(x_i)_{i<\omega}$ and set  
\begin{equation*}
    A \coloneqq \{q \in S_{\bar{x}y}(\mathcal{U}): q|_{M,\xbar} = p^{\omega}|_{M}\}.  
\end{equation*}
Define $h\colon S_{\bar{x}y}(\mathcal{U}) \to [0,1]^{\omega} \times S_{y}(M)$ by $h(q) = ((\varphi(x_i,y)^{q})_{i < \omega},q|_{M,\xbar})$. Note $h$ is the product of continuous maps and hence continuous. Since both the domain and codomain are compact Hausdorff spaces, we conclude that $h(A)$ is closed. Since $h(A) = C$, we are finished. 

$(iii)\Rightarrow (i)$. Suppose $(i)$ fails. Then we have a Morley sequence $(a_i)_{i<\omega}$ in $p$, a sequence $(b_i)_{i<\omega}$ in $\cU^y$, and real numbers $r<s$ such that $\varphi(a_i,b_j)\leq r$ if $i\leq j$ and $\varphi(a_i,b_j)\geq s$ if $i>j$. Let $n_\epsilon$ be as in $(iii)$, with $\epsilon=\frac{s-r}{2}$. Let $b=b_{n_\epsilon}$. Then $\varphi(a_i,b)\leq r$ for all $i\leq n_\epsilon$, hence $F^\varphi_{p,M}(b)<r+\epsilon$ by $(iii)$. But then for any $i>n_\epsilon$, we have $\varphi(a_i,b)\geq s$, hence $|\varphi(a_i,b)-F^\varphi_{p,M}(b)|\geq\epsilon$, contradicting $(iii)$.

$(iii) \Rightarrow (iv)$. By $(iii)$, we have that $F^\varphi_{p,M}$ is continuous. Fix a Morley sequence $(a_i)_{i < \omega}$ and for each $n$, let $k_n$ be the largest integer such that 
\begin{equation*}
    \sup_{b \in \mathcal{U}^{y}}|\Av(a_1,\ldots,a_n)(\varphi(x,b)) - F_{p,M}^{\varphi}(b)| < \frac{1}{k_n}. 
\end{equation*}
We claim that the sequence $(k_n)_{n <\omega}$ goes to infinity. Indeed, one can see this by using condition $(iii)$ and observing that for any $b \in \mathcal{U}^{y}$,
\begin{equation*}
    |\Av(a_1,\ldots,a_n)(\varphi(x,b)) - F_{p,M}^{\varphi}(b)| \leq \sum_{i=1}^{n} \frac{1}{n}|\varphi(a_i,b) - F_{p,M}^{\varphi}(b)|. 
\end{equation*}
Setting $\theta_{n}(\bar{x}) \coloneqq \min\{1, k_n \cdot |\Av(\bar{x})(\varphi(x,y)) - F_{p,M}^{\varphi}(y)| \}$ it is straightforward to check  that the sequence $(\theta_{n}(\bar{x}))_{n \geq 1}$ has the properties necessary to conclude $(iv)$. 

$(iv)\Rightarrow  (iii)$. This is a routine exercise. 
\end{proof}

Given an $A$-invariant measure $\mu\in\kM_x(\cU)$, if $F^\varphi_{\mu,A}$ is continuous (e.g., if $\mu$ is definable over $A$), then $F^\varphi_{\mu,A}$  can be regarded as a formula in the sense of continuous logic. Thus we will often suppress the parameter set $A$ from the notation when it is clear or irrelevant. So, for example, given an invariant measure $\mu\in \kM_x(\cU)$, we will say  ``$F^\varphi_\mu$ is continuous" to mean that $F^\varphi_{\mu,A}$ is continuous for some/any $A\subset\cU$ for which $\mu$ is $A$-invariant. Moreover, we will treat $F^\varphi_\mu$ as a continuous formula in this case.

We now prove a technical lemma about \fim\ measures which does not have an explicit discrete analogue in previous literature. It will be convenient to have the following notation.

\begin{defn} 
Let $\varphi(x,y)$ be an $\cL$-formula and suppose $\mu\in\kM_x(\cU)$ is an invariant measure such that $F^\varphi_{\mu}$ is continuous. Given $n\geq 1$,  let 
  \[
    \chi^{\varphi}_{\mu,n}(x_1,\dots,x_n) = {\textstyle \sup_y }| \Av(x_1,\dots,x_n)(\varphi(x,y)) - F^{\varphi}_\mu(y)|.
  \]
  where each $x_i$ is of sort $x$.
\end{defn}

The formula $\chi_{\mu,n}^\varphi(\bar{x})$ measures how well the $n$-tuple $\bar{x}$ approximates the behavior of $\mu(\varphi(x,y))$ for arbitrary parameters $y$.

\begin{lem}\label{lem:fim-char-cont}
Let $\varphi(x,y)$ be an $\cL$-formula. Suppose $\mu\in\kM_x(\cU)$ is an invariant measure such that $\mu^{(n)}$ is well-defined for all $n\geq 1$ and $F^\varphi_{\mu}$ is continuous. Then $\mu$ is \fim\ relative to $\varphi(x,y)$ if and only if
  \[ 
  \lim_{n \to \infty} \int\chi^\varphi_{\mu,n}(\bar{x})\, d\mu^{(n)} = 0. 
  \]

\end{lem}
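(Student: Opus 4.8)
The plan is to prove both implications by relating the quantity $\int \chi^\varphi_{\mu,n}(\xbar)\, d\mu^{(n)}$ to the data in Definition \ref{def:fimCL}. For the ``only if'' direction, suppose $\mu$ is \fim\ relative to $\varphi(x,y)$, witnessed by a sequence $(\theta_n(x_1,\dots,x_n))_{n=1}^\infty$. Fix $\e > 0$ and take $k(\e)$ as in condition $(1)$. For $n \geq k(\e)$, split the integral over the set where $\theta_n < 1$ and its complement. On $\{\theta_n < 1\}$ we have $\chi^\varphi_{\mu,n}(\xbar) \leq \e$ by condition $(1)$ (noting that $\chi^\varphi_{\mu,n}$ is exactly the supremum appearing there); on the complement we bound $\chi^\varphi_{\mu,n}$ crudely by $1$, and this set has $\mu^{(n)}$-measure $1 - \mu^{(n)}(\theta_n < 1)$, which tends to $0$ by condition $(2)$. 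Hence $\int \chi^\varphi_{\mu,n}(\xbar)\, d\mu^{(n)} \leq \e + \mu^{(n)}(\theta_n = 1) \to \e$, and letting $\e \to 0$ gives the limit $0$. One small point to be careful about: $\{\theta_n < 1\}$ must be a $\mu^{(n)}$-measurable (indeed Borel) set, which it is since $\theta_n$ is a continuous formula and $\mu^{(n)}$ is a (well-defined, since $\mu^{(n)}$ exists) Borel measure on the type space.

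For the ``if'' direction, suppose $\lim_{n\to\infty} \int \chi^\varphi_{\mu,n}(\xbar)\, d\mu^{(n)} = 0$; we must produce formulas $(\theta_n(\xbar))_{n=1}^\infty$ witnessing \fim\ relative to $\varphi(x,y)$. The natural choice is a rescaling of $\chi^\varphi_{\mu,n}$ itself: for a suitable sequence of constants $c_n \to \infty$ (chosen slowly enough), set $\theta_n(\xbar) = \min\{1,\, c_n \cdot \chi^\varphi_{\mu,n}(\xbar)\}$. This is an $\cL_M$-formula since $F^\varphi_\mu$ is continuous, hence a continuous formula over $M$. For condition $(1)$: if $\theta_n(\abar) < 1$ then $\chi^\varphi_{\mu,n}(\abar) < 1/c_n$, so $\sup_b |\Av(\abar)(\varphi(x,b)) - F^\varphi_\mu(b)| < 1/c_n \leq \e$ once $c_n \geq 1/\e$, giving $k(\e)$. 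For condition $(2)$: $\mu^{(n)}(\theta_n < 1) \geq \mu^{(n)}(\{\chi^\varphi_{\mu,n} < 1/c_n\})$, and by Markov's inequality $\mu^{(n)}(\{\chi^\varphi_{\mu,n} \geq 1/c_n\}) \leq c_n \int \chi^\varphi_{\mu,n}\, d\mu^{(n)}$. So it suffices to choose $c_n \to \infty$ slowly enough that $c_n \int \chi^\varphi_{\mu,n}\, d\mu^{(n)} \to 0$; since $\int \chi^\varphi_{\mu,n}\, d\mu^{(n)} \to 0$ by hypothesis, such a sequence exists (e.g. $c_n = \min\{n, \lfloor (\int \chi^\varphi_{\mu,n}\, d\mu^{(n)})^{-1/2}\rfloor\}$, with the convention that $c_n = n$ when the integral vanishes).

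The main obstacle, such as it is, is bookkeeping rather than depth: making sure the constants $c_n$ are chosen so that \emph{both} conditions $(1)$ and $(2)$ hold simultaneously — $c_n$ must grow ($c_n \to \infty$, for condition $(1)$) yet not so fast that Markov's inequality becomes useless (for condition $(2)$), which is exactly the classical ``diagonal'' trick of multiplying by a slowly growing sequence. A secondary point worth a sentence is that the definition of \fim\ relative to $\varphi$ (Definition \ref{def:fimCL}) does not, a priori, require the $\theta_n$ to have $\inf_{\xbar}\theta_n(\xbar) < 1$, so we need not worry about consistency of $\theta_n$ in the continuous sense; the equivalence is purely about the existence of \emph{some} witnessing sequence, and the sequence $\min\{1, c_n \chi^\varphi_{\mu,n}\}$ we construct automatically satisfies $\inf \theta_n < 1$ anyway whenever the integral is positive, by Markov. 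I would also remark that this lemma is the continuous-logic analogue of the fact that, for \fim\ measures, the canonical witnessing formulas may be taken to be thresholds of the approximation-error formula $\chi^\varphi_{\mu,n}$ — which is precisely the shape of the construction used in the proof of $(iii)\Rightarrow(iv)$ in Fact \ref{fact:fimgs-cont}.
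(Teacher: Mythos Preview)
Your proposal is correct and follows essentially the same approach as the paper's proof. In both directions the arguments match: the forward direction splits the integral over $\{\theta_n<1\}$ and its complement, and the reverse direction constructs $\theta_n$ as a thresholded rescaling of $\chi^\varphi_{\mu,n}$, using Markov's inequality with a square-root choice of scale (your $c_n \approx r_n^{-1/2}$ corresponds exactly to the paper's threshold $\sqrt{r_n}$ and auxiliary $s_n$).
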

\begin{proof}
Let $(\theta_n(\xbar))_{n=1}^\infty$ be a sequence of formulas witnessing that $\mu$ is \fim\ relative to $\varphi(x,y)$, as given in \cref{def:fimCL}. For any given $\e > 0$, we can find a $k$ such that for all $n \geq k$, the following properties hold:
  \begin{enumerate}[$(i)$]
  \item If $\theta_n(\abar)<1$, then $|\Av(\abar) (\varphi(x,b)) - F^\varphi_{\mu}(b)| < \frac{1}{2}\e$ for all $b \in \Uc^y$.
  \item $\mu^{(n)}(\theta_n(\xbar) < 1) > 1 - \frac{1}{2}\e$.
  \end{enumerate}
  For a given $\e > 0$, $(i)$  and $(ii)$ together imply that
  \begin{align*}
    \int\chi_{\mu,n}^\varphi(\xbar)\, d\mu^{(n)} &\leq \mu^{(n)}(\theta_n\geq 1) + \int_{[\theta_n < 1]}\chi_{\mu,n}^\varphi(\xbar)\, d\mu^{(n)} \\
     & < \frac{1}{2}\e + \frac{1}{2}\e \mu^{(n)}(\theta_n < 1) < \e. 
  \end{align*}
  Since $\e$ was arbitrary, we conclude that $\lim_{n\to \infty} \int\chi_{\mu,n}^\varphi(\xbar)\, d\mu^{(n)} = 0$.

  We now prove the converse.  Assume that
  \[
    \lim_{n\to \infty}\int \chi^n_\varphi(\xbar)\, d\mu^{(n)} = 0.
  \]
  For each $n$, let $r_n \coloneqq \int \chi^n_\varphi(\xbar) d\mu^{(n)}$. Find a sequence of numbers $(s_n)_{n=1}^\infty$ in $[0,1]$ such that for each $n$, if $r_n < 1$, then $\sqrt{r_n} < s_n$ and
   $\lim_{n\to \infty}s_n = 0$.
  For each $n$, if $r_n = 1$, let $\theta_n(\xbar)$ be $d(x_0,x_0)$. Otherwise if $r_n < 1$, find a $[0,1]$-valued formula $\theta_n(\xbar)$ satisfying that if $\chi^n_{\varphi}\leq \sqrt{r_n}$, then $\theta_n(\xbar) = 0$ and if $\chi^n_{\varphi(\xbar)}\geq s_n$, then $\theta_n(\xbar) = 1$.

  For each $n$ such that $r_n < 1$, the $\mu^{(n)}$-measure of the set of types satisfying $\chi^n_\varphi(\xbar) \leq \sqrt{r_n}$ must be at least $1-\sqrt{r_n}$ (otherwise $\int\chi^n_\varphi(\xbar)d\mu^{(n)}$ would be greater than $(\sqrt{r_n})^2 = r_n$). Therefore we must have that $\mu^{(n)}(\theta_n(\xbar)<1) \geq \mu^{(n)}(\theta_n(\xbar) = 0) \geq 1-\sqrt{r_n}$. On the other hand, for any $\abar$, if $\theta_n(\abar) < 1$ then $\chi^n_\varphi(\abar) < s_n$ and hence $|\Av(\abar)(\varphi(x,b)) - F^\varphi_\mu(b)| < s_n$. Since $\lim_{n\to\infty}s_n=0$,  we have the required behavior and $\mu$ is \fim.
 \end{proof}

We now shift gears to  randomizations.

\subsection{Transferring definable measures to the randomization}\label{sec:ran-def-meas}

Let $T$ be a complete first-order discrete theory in a language $\mathcal{L}$. We let $T^{R}$ denote the theory of the randomization. We assume basic familiarity with the construction of $T^R$; see \cite{BYKR} for full definitions and details.  Let  $(\Omega, \mathcal{B},\mathbb{P})$ denote a probability algebra with event space $\Omega$, $\sigma$-algebra $\mathcal{B}$, and probability measure $\mathbb{P}$. Then any model $M\models T$ gives rise to a model $M^{(\Omega,\mathcal{B},\mathbb{P})}$, which we denote $M^{\Omega}$, as follows.
First define 
\[ 
M_0^\Omega := \{f\colon \Omega \to M: f \text{ is } \mathcal{B}\text{-measurable and has finite image}\}.
\] 
Then $M^{\Omega}$ is constructed by first taking the metric completion of $M_0^\Omega$ and then identifying functions up to $\mathbb{P}$-measure $0$. By construction, $M_0^\Omega$ is a metrically dense (pre-)substructure of $M^{\Omega}$.

Now let $\cU$ be a monster model of $T$. The model $\Uc^{\Omega}$ is not saturated and so we will always think of $\Uc^{\Omega}$ as elementarily embedded in a monster model of $T^{R}$, i.e., $\Uc^{\Omega} \prec \mathcal{C}$. If $a \in \Uc$, we let $f_{a}$ denote the constant function with value $a$, i.e., $f_{a}$ is the equivalence class (up to measure $0$) of the map which sends $\Omega$ to the point $a$.  For any $A \subseteq \Uc$, we let $A_{c} = \{f_{a}: a \in A\}$. For any  $\mathcal{L}$-formula $\varphi(x_1,\ldots,x_{n})$, we let $\Eb[\varphi(x_1,\ldots,x_{n})]$ denote the corresponding formula in the randomization. This formula is evaluated on elements $\bar{h} = (h_1,\ldots,h_n)$ of $\mathcal{U}^{\Omega}_0$ via 
\[
    \Eb[\varphi(\bar{h})] = \Pb\left(\{t \in \Omega: \mathcal{U} \models \varphi(h_1(t),\ldots,h_n(t))\} \right), 
\]
and is extended to $\mathcal{U}^{\Omega}$ by uniform limits. Suppose that $\varphi(x,y) \in \mathcal{L}$, $A \subset \mathcal{C}$, $q \in S_{x}(A)$, and $\bar{g} \in A^{y}$. We then write $(\mathbb{E}[\varphi(x,\bar{g})])^{q}$ to denote the unique real number $r$ such that for any $h \models q$, $\mathbb{E}[\varphi(h,\bar{g})] = r$ (i.e., $(\Eb[\varphi(x,\gbar)])^q$ is the value of the formula $\Eb[\varphi(x,\gbar)]$ determined by the type $q$).

We recall the the following basic fact about randomizations.

\begin{fact}[\cite{BYKR}]\label{fact:randomizationsQE}
$T^R$ has quantifier elimination, i.e., for any $\bar{h}$ in $\mathcal{C}$, $\tp(\bar{h})$ is uniquely determined by the values of $\Eb[\varphi(\bar{h})]$ for formulas $\varphi(\bar{x})$ from the language of $T$.  
\end{fact}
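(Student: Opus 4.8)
The plan is to prove quantifier elimination for $T^R$ via the standard back-and-forth criterion for continuous theories, reducing it to a measure-theoretic amalgamation statement. Recall that a continuous $\cL$-theory eliminates quantifiers precisely when, for all $\mathcal{M},\mathcal{N}\models T^R$, any map between finitely generated substructures that preserves quantifier-free formulas extends, after passing to a sufficiently saturated elementary extension of $\mathcal{N}$, by one further generator. So I would fix $\aleph_1$-saturated models $\mathcal{C}_1,\mathcal{C}_2\models T^R$ together with finite tuples $\bar h_1\in\mathcal{C}_1$ and $\bar h_2\in\mathcal{C}_2$ satisfying $\Eb[\varphi(\bar h_1)]=\Eb[\varphi(\bar h_2)]$ for every $\cL$-formula $\varphi(\bar x)$ (the event-sort data in the randomization signature being determined by these values), and argue that the family of all such ``matched'' pairs of tuples is a back-and-forth system. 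Granting this, any two tuples carrying the same quantifier-free data lie in matched pairs, hence realize the same complete type, which is exactly the assertion that $\tp(\bar h)$ is determined by $(\Eb[\varphi(\bar h)])_{\varphi}$.

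The heart of the argument is the single back-and-forth step: given a matched pair $(\bar h_1,\bar h_2)$ and a new element $g_1\in\mathcal{C}_1$, produce $g_2\in\mathcal{C}_2$ with $(\bar h_1 g_1,\bar h_2 g_2)$ matched. Working with the semantics of $T^R$ over an underlying atomless probability algebra with random elements (which one may assume, e.g. the form $\mathcal{M}_i^{\Omega_i}$, after passing to elementary extensions), the quantifier-free type of $\bar h_i g_i$ is exactly the ``distribution of local types'': the pushforward, under $t\mapsto\big(\tp(\bar h_i(t)),\,\tp(g_i(t)/\bar h_i(t))\big)$, of the measure on $\Omega_i$. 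Since $\bar h_1$ and $\bar h_2$ are matched, the pushforwards of $t\mapsto\tp(\bar h_i(t))$ already agree; so the task is to refine the partition of $\Omega_2$ into the fibers of $t\mapsto\tp(\bar h_2(t))$ into pieces carrying the prescribed conditional distribution of $\tp(g/\bar h)$, and then on each piece to choose $g_2(t)$ realizing the corresponding type over $\bar h_2(t)$ inside $\mathcal{U}\models T$ --- possible by saturation of $\mathcal{U}$ --- all done measurably. Atomlessness of the event algebra, upgraded by saturation of $\mathcal{C}_2$ to the statement that any consistent finitely additive assignment of masses to a finite Boolean combination of events can be realized, is what permits the required refinement, and a measurable-selection argument turns the fiberwise choices into an honest element $g_2$ of $\mathcal{C}_2$.

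The main obstacle is precisely this measure-theoretic amalgamation: one must know the event algebra of $\mathcal{C}_2$ is ``rich enough'' to host the refined partition with the prescribed masses --- this is where atomlessness, and (if one wants the sharp uniqueness-of-type conclusion for arbitrary models rather than merely back-and-forth between saturated ones) Maharam homogeneity of the underlying algebras, enters --- and one must carry out the fiberwise realization of types in $T$ with enough uniformity to remain inside a randomization structure. A preliminary point to dispatch is completeness of $T^R$, so that ``same quantifier-free data'' is a meaningful hypothesis across different models; this follows by the same amalgamation applied to the empty tuple, or from the behavior of randomized ultraproducts together with completeness of $T$. The remaining ingredients --- the continuous-logic QE criterion itself, the reduction of the event-sort data to the $\Eb[\varphi]$ values, and the uniform-limit extension of $\Eb[\varphi]$ from $\mathcal{U}_0^\Omega$ to $\mathcal{U}^\Omega$ --- are routine.
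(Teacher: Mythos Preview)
The paper does not prove this statement: it is recorded as a Fact with a citation to Ben~Yaacov--Keisler \cite{BYKR}, and no argument is given beyond that reference. So there is nothing in the paper's own text to compare your proposal against.

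That said, your sketch is essentially the strategy carried out in \cite{BYKR}: identify the quantifier-free type of a tuple in a randomization with the pushforward measure on the type space $S_{\bar x}(T)$ (the ``distribution of local types''), and run a back-and-forth using atomlessness of the event algebra plus saturation to realize the required conditional distributions. One small caution: you write that after passing to elementary extensions one may assume the models have the form $\mathcal{M}^{\Omega}$; this representation is not automatic for arbitrary models of $T^R$, and the cleanest route is instead to work throughout with approximations by simple functions (the dense pre-structure $\Uc_0^\Omega$) and invoke saturation only on the target side to realize the approximate quantifier-free type. The mention of Maharam homogeneity is a distraction---for quantifier elimination the back-and-forth between $\aleph_1$-saturated models already suffices, and atomlessness is all that is used. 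With those adjustments your outline is correct and matches the cited source.
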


In the following analysis, it will be useful to treat the discrete theory $T$ as a continuous theory. In particular, this means that we will be considering arbitrary continuous functions on type space as formulas in the sense of continuous logic. If $F\colon S_x(\Uc) \to \Rb$ is a continuous function, then for $\abar \in \Uc$, we will may write $F(\abar)$ to mean $F(\tp(\abar/\Uc))$. An important fact that is implicit in this is that continuous functions are closed under supremal and infimal quantification. In other words, if $F\colon S_{xy}(\Uc) \to \Rb$ is a continuous function, then there is a continuous function $G\colon S_{x}(\Uc) \to \Rb$ satisfying that $G(a) = \sup_{b \in \Uc} F(a,b)$ for any $a \in \Uc$ (and likewise for $\inf$). We will freely write such functions with expressions such as $\sup_y F(x,y)$ or $\inf_y F(x,y)$.

Note that for any $\hbarr = (h_1,\ldots,h_n) \in (\Uc_0^\Omega)^{n}$, there is a finite partition $\Ac$ of $\Omega$ with the property that $\Ac \subset \mathcal{B}$ and every $h_i$ is constant on each element of $\Ac$. Given such an $\hbarr$, $\Ac$, and $A \in \Ac$, we will write $\hbarr|_A$ for the tuple of constant values of the functions in $\hbarr$ on the set $A$. Notice that for each $A \in \mathcal{A}$, $\hbarr|_{A}$ is an element of $\mathcal{U}^{n}$.

\begin{lemma}\label{lem:randomization-of-continuous-function} For any continuous function $F\colon S_{x}(\Uc) \to \Rb$, there is a unique continuous function $\Eb[F(-)]\colon S_{x}(\Uc^\Omega) \to \Rb$ satisfying that for any $\hbarr \in (\Uc_0^\Omega)^n$,
      \[
    \Eb[F(\tp(\hbarr/\Uc^\Omega))] = \sum_{A \in \Ac} \mathbb{P}(A)F(\tp(\hbarr|_A/\Uc)), \tag{$\ast$}
      \]
      where $\Ac$ is a finite measurable partition of $\Omega$ on which the elements of $\hbarr$ are constant.
\end{lemma}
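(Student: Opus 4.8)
The plan is to first treat the case where $F$ is locally constant on $S_x(\Uc)$, dispatch that case directly using that $\Eb[\psi(x)]$ is already a genuine continuous formula of $T^R$, and then pass to general $F$ by uniformly approximating it by locally constant functions. Throughout I use that $T$ is discrete, so that $S_x(\Uc)$ is a Stone space, and that $F$ is bounded, being continuous on a compact space.

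\textbf{Locally constant $F$.} Since the clopen subsets of $S_x(\Uc)$ are exactly the sets $[\psi(x)]$ for $\psi$ an $\cL_\Uc$-formula, a locally constant $F$ has the form $\sum_{j<m}r_j\mathbf{1}_{[\psi_j(x)]}$ for some clopen partition $\{[\psi_j(x)]\}_{j<m}$ of $S_x(\Uc)$. I would set $\Eb[F(-)] := \sum_{j<m}r_j\,\Eb[\psi_j(x)]$, a finite linear combination of $T^R$-formulas with parameters in $\Uc_c\subseteq\Uc^\Omega$, hence a continuous function on $S_x(\Uc^\Omega)$. Equation $(\ast)$ is then immediate: for $\hbarr\in(\Uc_0^\Omega)^n$ with defining partition $\Ac$, each $\hbarr|_A$ lies in $\Uc^n$, so evaluating $\Eb[\psi_j(x)]$ at $\tp(\hbarr/\Uc^\Omega)$ gives $\Pb(\{t:\Uc\models\psi_j(\hbarr(t))\}) = \sum_{A\in\Ac}\Pb(A)\mathbf{1}_{[\psi_j]}(\hbarr|_A)$, and summing against the $r_j$ yields $(\ast)$. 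In particular the right-hand side of $(\ast)$ is a well-defined real number depending only on $\tp(\hbarr/\Uc^\Omega)$, and not on the choice of $\Ac$ (any two such partitions have a common refinement, under which the displayed sum is unchanged).

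\textbf{A density lemma and uniqueness.} The next step is to show that $D := \{\tp(\hbarr/\Uc^\Omega):\hbarr\in(\Uc_0^\Omega)^n\}$ is dense in $S_x(\Uc^\Omega)$. Since $\Uc^\Omega\models T^R$, the types realized by tuples from $\Uc^\Omega$ are dense in $S_x(\Uc^\Omega)$, because the value of a formula $\inf_x\psi$ in a model equals the infimum of $\psi$ over the domain, so every satisfiable condition is approximately realized inside the model. As $\Uc_0^\Omega$ is metrically dense in $\Uc^\Omega$ and every $T^R$-formula is uniformly continuous, the types in $D$ are then dense among those, hence dense in $S_x(\Uc^\Omega)$. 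Uniqueness of $\Eb[F(-)]$ follows at once: two continuous functions on $S_x(\Uc^\Omega)$ satisfying $(\ast)$ agree on $D$, hence everywhere.

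\textbf{General $F$ and existence.} Because $S_x(\Uc)$ is a Stone space, the locally constant functions are uniformly dense in $C(S_x(\Uc))$, so I would fix locally constant $F_k\to F$ uniformly. By the first step each $\Eb[F_k(-)]$ is continuous and satisfies $(\ast)$. Comparing right-hand sides of $(\ast)$, the functions $\Eb[F_k(-)]$ and $\Eb[F_l(-)]$ differ by at most $\|F_k-F_l\|_\infty$ at every point of $D$, hence everywhere on $S_x(\Uc^\Omega)$ by continuity and the density lemma; so $(\Eb[F_k(-)])_k$ is uniformly Cauchy in $C(S_x(\Uc^\Omega))$ and converges uniformly to a continuous $G$. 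Setting $\Eb[F(-)] := G$, one obtains $(\ast)$ for $F$ by letting $k\to\infty$ in the finite sum $\sum_{A\in\Ac}\Pb(A)F_k(\hbarr|_A)$, each summand converging since uniform convergence implies pointwise convergence on $S_x(\Uc)$; and $G$ does not depend on the chosen sequence $(F_k)$ by the uniqueness already established. The step I expect to require the most care is the density lemma: the entire force of $(\ast)$ rests on the fact that the simple functions in $\Uc_0^\Omega$, rather than arbitrary elements of the completion $\Uc^\Omega$, already realize a dense set of types over $\Uc^\Omega$; everything else is routine bookkeeping with uniform limits and the Stone-space structure of $S_x(\Uc)$.
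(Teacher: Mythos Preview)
Your proof is correct and follows essentially the same approach as the paper's: approximate $F$ uniformly by finite linear combinations of characteristic functions of clopen sets, define $\Eb[F(-)]$ on such step functions via the corresponding $T^R$-formulas $\Eb[\psi_j]$, and pass to the uniform limit, with uniqueness coming from density of the types realized by simple functions in $\Uc_0^\Omega$. You have simply made explicit the density argument and the uniform Cauchy estimate that the paper summarizes as ``a standard computation.''
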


\begin{proof}
For any $\e > 0$, we can find a finite family $(\varphi_i(x,\bbar_i))_{i < n}$ of $\mathcal{L}_{\mathcal{U}}$-formulas and a sequence $(r_i)_{i<n}$ of reals such that for any $p \in S_{x}(\Uc)$ 
\[
F(p) \approx_{\e} G_\e(p) \coloneqq \sum_{i<n} r_i \boldsymbol{1}_{\psi_i(x,\bbar_i)}(p).
\]
 We extend $G_\e$ to a function $\widehat{G}_\e$ on $S_{x}(\Uc^\Omega)$ by
\[
\widehat{G}_\e(p) \coloneqq \sum_{i<n}r_i (\Eb[\psi_i(x,\bar{f}_{b_i})])^p.
\]
By a standard computation, the limit $\lim_{\e \to 0}\widehat{G}_\e$ converges uniformly. We denote this limit by $\Eb[F(-)]\colon S_{x}(\Uc^\Omega) \to \Rb$. A direct computation shows that $(\ast)$ holds for this function. Uniqueness follows from the fact that $(\ast)$ specifies the value of the function on a dense set of points in $S_{x}(\Uc^\Omega)$.
\end{proof}

Note that if $\varphi(x,y)\colon S_{xy}(\mathcal{U}) \to [0,1]$ is a continuous function, then $\Eb[\varphi(x,y)]$ is a $\Uc^{\Omega}$-definable predicate in the randomization (in the sense of continuous logic) and can be safely regarded as a formula in the sense discussed above. The next observation will be used later.

\begin{fact}[\cite{BYKR}]\label{fact:randomizationsb}
 For any continuous map $\varphi(x,y):S_{xy}(\mathcal{U}) \to [0,1]$, we have that 
\begin{equation*}
    \mathcal{U}^{\Omega} \models \forall {x}\left(\textstyle \sup_{{y}} \Eb[\varphi({x},{y})] = \Eb\left[\sup_{{y}} \varphi({x},{y})\right] \right).
\end{equation*}
\end{fact}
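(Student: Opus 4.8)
\emph{Setup.} The plan is to fix an element $h \in \Uc^\Omega$ of sort $x$ and prove
\[
\sup_{g \in \Uc^\Omega} \Eb[\varphi(h,g)] \;=\; \Eb[\textstyle\sup_y \varphi(h,y)],
\]
where the right-hand side is $\Eb[F(-)]$ from \cref{lem:randomization-of-continuous-function} evaluated at $h$, with $F(x) \coloneqq \sup_y \varphi(x,y)$ a continuous function $S_x(\Uc)\to[0,1]$ (continuous functions on type spaces are closed under supremal quantification, as recalled just before \cref{lem:randomization-of-continuous-function}). The one structural subtlety to keep in mind throughout is that the value of $\Eb[\psi(x,y)]$ at a \emph{general} element of $\Uc^\Omega$ is defined by uniform limits, so literal integral computations are only available on the dense pre-substructure $\Uc_0^\Omega$ of finite-image functions; everything else is obtained by invoking the uniform continuity of the relevant continuous predicates of $\Uc^\Omega$.

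\emph{The inequality $\leq$.} For $g,h \in \Uc_0^\Omega$ I would pass to a common finite measurable partition $\Ac$ of $\Omega$ refining the partitions on which $h$ and $g$ are constant. Then \cref{lem:randomization-of-continuous-function} and the pointwise bound $\varphi(a,b)\leq F(a)$ give
\[
\Eb[\varphi(h,g)] = \sum_{A\in\Ac}\Pb(A)\,\varphi(h|_A,g|_A) \leq \sum_{A\in\Ac}\Pb(A)\,F(h|_A) = \Eb[F(h)].
\]
Since $\Eb[\varphi(x,y)]$ and $\Eb[F(x)]$ are uniformly continuous predicates of $\Uc^\Omega$ and $\Uc_0^\Omega$ is metrically dense, approximating arbitrary $g,h\in\Uc^\Omega$ by finite-image functions shows the inequality $\Eb[\varphi(h,g)]\leq\Eb[F(h)]$ holds for all $g,h$; taking the supremum over $g$ yields $\leq$.

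\emph{The inequality $\geq$.} Here I would first reduce to $h\in\Uc_0^\Omega$: given $\e>0$, uniform continuity of the predicates $\sup_y\Eb[\varphi(x,y)]$ and $\Eb[F(x)]$ of $\Uc^\Omega$ supplies a $\delta>0$, and density of $\Uc_0^\Omega$ supplies $h'\in\Uc_0^\Omega$ with $d(h,h')<\delta$, so it suffices to establish $\sup_g\Eb[\varphi(h',y)]\geq\Eb[F(h')]$. Write $h'$ as the constant $a_j\in\Uc^x$ on each piece $A_j$ of a finite partition. For each $j$, compactness of $S_y(\Uc)$ together with continuity of $\varphi$ shows $F(a_j)=\sup_{q\in S_y(\Uc)}\varphi(a_j,q)$ is attained at some type $q_j$, and saturation of $\Uc$ provides $b_j\models q_j$ in $\Uc^y$ with $\varphi(a_j,b_j)=F(a_j)$. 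Let $g\in\Uc_0^\Omega$ be the (measurable, finite-image) function equal to $b_j$ on $A_j$. Then \cref{lem:randomization-of-continuous-function} gives $\Eb[\varphi(h',g)]=\sum_j\Pb(A_j)\varphi(a_j,b_j)=\sum_j\Pb(A_j)F(a_j)=\Eb[F(h')]$, so $\sup_g\Eb[\varphi(h',g)]\geq\Eb[F(h')]$; letting $\e\to0$ gives $\geq$ for $h$. Finally, since $\Uc^\Omega\prec\Cc$, the quantifier $\sup_y$ computed in $\Uc^\Omega$ agrees with the one computed in the monster model $\Cc$, so the displayed identity is exactly the assertion of the Fact.

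\emph{Main obstacle.} The substantive point is the $\geq$ direction: one must realize the supremum $\sup_y\varphi(a_j,y)$ \emph{separately} in each piece of the partition — which is precisely where saturation of $\Uc$ enters — and then reassemble these finitely many choices into a single genuine element of $\Uc^\Omega$. For a finite partition the reassembly is immediate; obtaining the finite-partition case in general requires the density-plus-uniform-continuity reduction above. (An alternative for general $h$ would be a measurable selection theorem applied to $t\mapsto\{y:\varphi(h(t),y)\approx_\e F(h(t))\}$, but reducing to $\Uc_0^\Omega$ keeps the argument elementary and self-contained.) The $\leq$ direction is routine given \cref{lem:randomization-of-continuous-function}.
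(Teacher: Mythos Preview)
Your approach is essentially identical to the paper's: reduce to the dense pre-substructure $\Uc_0^\Omega$ via uniform continuity, handle $\leq$ by the pointwise inequality $\varphi(a,b)\leq F(a)$ and monotonicity of expectation, and handle $\geq$ by choosing on each piece $A_j$ of the partition a witness $b_j\in\Uc^y$ that (nearly) realizes the supremum $\sup_y\varphi(a_j,y)$, then assembling these into a single $g\in\Uc_0^\Omega$.

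There is one small slip in your $\geq$ argument. You claim that compactness gives a type $q_j\in S_y(\Uc)$ at which the supremum is attained, and then ``saturation of $\Uc$ provides $b_j\models q_j$ in $\Uc^y$.'' But $q_j$ is a complete type over the entire monster $\Uc$, and saturation of $\Uc$ only realizes types over \emph{small} parameter sets; a global type need not be realized in $\Uc$. (Concretely, in DLO one can cook up a continuous $\varphi$ so that $b\mapsto\varphi(a_j,b)$ approaches but never equals its supremum on $\Uc$.) The fix is exactly what the paper does: for each $\e>0$ pick $b_j\in\Uc^y$ with $\varphi(a_j,b_j)>F(a_j)-\e$, which exists simply by definition of the supremum and requires no saturation; this yields $\Eb[\varphi(h',g)]>\Eb[F(h')]-\e$, and letting $\e\to 0$ finishes. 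With this adjustment your proof is correct and matches the paper's.
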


\begin{proof}
    This is very similar to the proof of Proposition~2.5 in \cite{BYKR}. We provide  details. Furthermore, since the condition is equivalent to $\sup_x|(\sup_y\Eb[\varphi(x,y)]) - \Eb[\sup_y\varphi(x,y)]| = 0$, it is sufficient to verify this in the sub-pre-structure $\Uc^\Omega_0$.

For any $\gbar \in (\Uc^{\Omega}_0)^{x}$, there exists a finite measurable partition $\mathcal{A}$ of $\Omega$ such that each element of $\gbar$ is constant on each element of $\mathcal{A}$. For any $\epsilon > 0$ and $A \in \mathcal{A}$, there exists some $\bar{c}_{A} \in (\Uc)^{y}$ such that $\varphi(\bar{g}|_{A}, \cbar_{A}) > \sup_{y} \varphi(\bar{g}|_{A},y) - \epsilon$. Let $\cbar \in (\Uc^{\Omega}_0)^{y}$ such that $\bar{c}|_{A} = \cbar_{A}$ for each $A \in \mathcal{A}$. It is immediate that $\Eb[\varphi(\gbar,\cbar)] > \Eb[\sup_y\varphi(\gbar,y)]-\e$. Since we can do this for any $\e > 0$, we have that $\sup_{y} \Eb[\varphi(\gbar,{y})] \geq \Eb[\sup_{{y}} \varphi(\gbar,{y})]$ for any $\gbar \in (\Uc^\Omega_0)^{{x}}$.

For the other direction, we have that $ \sup_{xy} (\varphi({x},{y}) -\sup_{{z}}\varphi({x},{z})) \leq 0$, which implies that for any $\gbar\hbarr \in (\Uc_0^{\Omega})^{xy}$, $\Eb[\varphi(\gbar,\hbarr)] \leq \Eb[\sup_z \varphi(\gbar,z)]$ by the monotonicity of expected value. Therefore $\sup_y \Eb[\varphi(\gbar,y)] \leq \sup_y \Eb[\sup_z \varphi(\gbar,z)]$ by the monotonicity of supremum. Finally, $\sup_y \Eb[\sup_z \varphi(\gbar,z)] = \Eb[\sup_z \varphi(\gbar,z)]$, so $\sup_y \Eb[\varphi(\gbar,y)] \leq \Eb[\sup_z \varphi(\gbar,z)]$, as required.
\end{proof}

 Next we explicitly describe how to transfer a definable measure $\mu(x)$ over $\cU\models T$  to a definable type $r_{\mu}(x)$ over $\mathcal{C}$. This procedure was first established by Ben Yaacov in an unpublished research note \cite{BYT}.\footnote{Actually Ben Yaacov only assumes Borel-definability of $\mu$; see also Remark \ref{rem:BDbad}.} So we take the opportunity here to spell out the details.

\begin{fact}[{\cite[Prop.~1.1]{BYT}}]\label{fact:unique-definable-transfer}
   Let $\mu\in \mathfrak{M}_x(\Uc)$ be a definable measure. There is a unique $\Uc^\Omega$-definable type $r_\mu \in S_x(\Cc)$ defined as follows: for any $\varphi(x,\ybar) \in \Lc$, 
  \[
  F^{\Eb[\varphi(x,\ybar)]}_{r_\mu, \mathcal{U}^{\Omega}}= \Eb[F^{\varphi(x,\ybar)}_{\mu,\mathcal{U}}].
  \]   
    In particular, $(\Eb[\varphi(x,\bar{f_b})])^{r_\mu} = \mu(\varphi(x,\bbar))$ for every $\varphi(x,\bar{y}) \in \Lc$ and $\bar{b} \in \Uc^{\bar{y}}$.
\end{fact}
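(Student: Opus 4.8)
The plan is to build $r_\mu$ directly via quantifier elimination for $T^R$ (Fact \ref{fact:randomizationsQE}) together with the randomization-of-a-continuous-function construction from Lemma \ref{lem:randomization-of-continuous-function}. Since $\mu$ is definable over $\cU$, for each $\cL$-formula $\varphi(x,\ybar)$ the map $F^{\varphi}_{\mu,\cU}\colon S_{\ybar}(\cU)\to[0,1]$ is continuous, hence a continuous formula in the sense of treating $T$ as a continuous theory; applying $\Eb[-]$ produces a continuous $\cU^\Omega$-definable predicate $\Eb[F^{\varphi}_{\mu,\cU}]\colon S_{\ybar}(\cU^\Omega)\to[0,1]$. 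The candidate for $r_\mu$ is then the unique type in $S_x(\Cc)$ such that for every $\cL$-formula $\varphi(x,\ybar)$, the value of the randomization formula $\Eb[\varphi(x,\ybar)]$ is computed from $r_\mu$ by the predicate $\Eb[F^{\varphi}_{\mu,\cU}]$ on the $\ybar$-side — i.e. $F^{\Eb[\varphi(x,\ybar)]}_{r_\mu,\cU^\Omega}=\Eb[F^{\varphi}_{\mu,\cU}]$.

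The key steps, in order, are: (1) \emph{Consistency}: show that the prescription above actually defines a consistent complete type over $\cU^\Omega$. Because $T^R$ has quantifier elimination, it suffices to check that the assigned values of $\Eb[\varphi(\hbarr,\gbar)]$ for varying $\varphi$ and varying tuples $\gbar$ from $\cU^\Omega$ are jointly realizable. Concretely, on the dense sub-prestructure $\cU^\Omega_0$ a parameter tuple $\gbar$ comes with a finite measurable partition $\Ac$ of $\Omega$ on which $\gbar$ is constant; the value $\Eb[\varphi(x,\gbar)]$ we are forced to assign is $\sum_{A\in\Ac}\Pb(A)\,\mu(\varphi(x,\gbar|_A))$ by $(\ast)$ of Lemma \ref{lem:randomization-of-continuous-function}. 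One produces a realization by splitting $\Omega$ into the pieces of $\Ac$ and, on each piece $A$, realizing $\mu(\cdot/\cU)$ independently; more formally, one builds an increasing chain of finitely-determined conditions and invokes saturation of $\Cc$, using that $\mu$ is a genuine probability measure so that the Kolmogorov-style consistency conditions (finite additivity, monotonicity, the Boolean combination identities forced by $(\ast)$) are automatically satisfied. (2) \emph{Definability}: $r_\mu$ is $\cU^\Omega$-definable because $F^{\Eb[\varphi(x,\ybar)]}_{r_\mu,\cU^\Omega}=\Eb[F^{\varphi}_{\mu,\cU}]$ is continuous by Lemma \ref{lem:randomization-of-continuous-function}, and by Fact \ref{fact:randomizationsQE} every formula of $T^R$ over $\cU^\Omega$ is a continuous combination of formulas of the form $\Eb[\varphi(x,\ybar)]$, so continuity of the defining predicates on these generating formulas suffices. (3) \emph{Uniqueness}: any $\cU^\Omega$-definable type whose defining predicate for each $\Eb[\varphi(x,\ybar)]$ agrees with $\Eb[F^{\varphi}_{\mu,\cU}]$ must equal $r_\mu$, again because by quantifier elimination these generating formulas separate types. (4) \emph{The ``in particular" clause}: specializing $\ybar$ to a constant tuple $\bar f_{\bbar}$ with $\bbar\in\cU^{\ybar}$, the partition can be taken trivial, so $(\ast)$ collapses to $\Eb[F^{\varphi}_{\mu,\cU}](\tp(\bar f_{\bbar}/\cU^\Omega))=F^{\varphi}_{\mu,\cU}(\tp(\bbar/\cU))=\mu(\varphi(x,\bbar))$, giving $(\Eb[\varphi(x,\bar f_{\bbar})])^{r_\mu}=\mu(\varphi(x,\bbar))$.

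I expect the main obstacle to be step (1), the consistency/realizability argument: one must verify that the numbers handed down by $(\ast)$ on the dense set of ``simple'' parameter tuples extend to a genuine finitely-satisfiable (and hence, by saturation, realized) complete type over all of $\cU^\Omega$, not merely over $\cU^\Omega_0$. The subtlety is that $(\ast)$ only directly prescribes values at simple tuples, so one needs a uniform-continuity/density argument (the limit computation already packaged in Lemma \ref{lem:randomization-of-continuous-function}) to pass to arbitrary tuples of $\cU^\Omega$, and one must confirm that the ``pointwise independent copies of $\mu$ on the pieces of a partition'' heuristic genuinely yields a consistent system — the cleanest route is probably to realize $r_\mu$ as a weak-$\ast$ limit of types of the form $\tp(h/\cU^\Omega)$ where $h$ is built from finitely many independent draws according to $\mu$ over finer and finer partitions, and check that the expectations converge to the prescribed values. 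Steps (2)–(4) are then essentially bookkeeping on top of quantifier elimination and Lemma \ref{lem:randomization-of-continuous-function}.
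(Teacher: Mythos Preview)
Your proposal is correct and follows essentially the same route as the paper. In particular, the paper carries out exactly your ``cleanest route'' for step~(1): given finitely many $\varphi_i$ and simple parameter tuples $\gbar_i\in(\Uc^\Omega_0)^{<\omega}$, it approximates $\mu$ by an average $\Av(a_0,\dots,a_{m-1})$ on the finite range of the $\gbar_i$'s, then realizes the approximating type via an element $h$ in the larger randomization $\Uc^{\Omega\times[0,1)}$ (elementarily extending $\Uc^\Omega$) defined by $h(t,s)=a_j$ for $s\in[j/m,(j+1)/m)$, and checks by a direct partition-wise sum that $\Eb[\varphi_i(h,\gbar_i)]\approx_\e\Eb[F^{\varphi_i}_\mu(\gbar_i)]$; your steps~(2)--(4) match the paper's remaining bookkeeping.
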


\begin{proof}
 In order to  construct $r_{\mu}$, we claim that it suffices to show that for any $\cL$-formulas $\varphi_0,\dots,\varphi_{n-1}$, $\bar{g}_0,\dots, \bar{g}_{n-1} \in (\Uc_0^\Omega)^{<\omega}$, and $\e>0$, there is some $q \in S_x(\Uc^\Omega)$ such that for each $i< n$,
\[
|(\Eb[\varphi_i(x,\bar{g}_i)])^q- \Eb[F_\mu^{\varphi_i(x,\bar{y})}(\bar{g}_i)]| < \e.
\]

Why?  
Satisfying the condition above yields a type $q_0 \in S_{x}(\mathcal{U}^{\Omega})$ such that for any $\mathcal{L}$-formula $\varphi(x,\ybar)$ and $\gbar \in (\mathcal{U}^{\Omega})^{y}$, $(\Eb[\varphi(x,\gbar)])^{q_0} = \Eb[F_\mu^{\varphi(x,\bar{y})}(\gbar)]$. By \cref{lem:randomization-of-continuous-function}, we have that for each $\varphi(x,\ybar)$, the function $\Eb[F_\mu^{\varphi(x,\bar{y})}]$ is well-defined and continuous. Therefore, by quantifier elimination, the  type $q_0$  extends uniquely to a complete $\cU^\Omega$-definable type over $\mathcal{C}$, which we denote $r_\mu$.

It remains to show the sufficient condition described above. Fix $\cL$-formulas $\varphi_0,\ldots,\varphi_{n-1}$, $\bar{g}_0,\dots, \bar{g}_{n-1} \in (\Uc_0^\Omega)^{<\omega}$, and $\e>0$. 
Let $G\subseteq \Uc$ be the (finite) union of the ranges of the elements which appear in the $\gbar_i$'s. By standard facts about measures, we can find a tuple $\abar = a_0,a_1,\dots, a_{m-1} \in \Uc^m$ with the property that for each $i<n$ and $b \in G$, $\mu(\varphi_i(x,b)) \approx_{\e} \Av(\abar)(\varphi_i(x,b))$.

Fix a finite measurable partition $\Ac$ of $\Omega$ with the property that every element of $G$ is constant on each element of $\Ac$. Then

\[
\Eb[F_\mu^{\varphi_i(x,\bar{y})}(\bar{g}_i)] =_{\hphantom{\e}} \sum_{A \in \Ac} \Pb(A)\mu(\varphi_i(x,\gbar_i|_A))  \approx_\e \sum_{A \in \Ac} \Pb(A)\Av(\abar)(\varphi_i(x,\gbar_i|_A)).
\]

We will now construct $q$ using $\Av(\abar)$. Let $I = [0,1)$ be the standard Lebesgue space. Let $\Omega \times I$ be the product measure space (with the product measure). We may regard $\Bc$ as a sub-$\sigma$-algebra of the $\sigma$-algebra of measurable subsets of $\Omega \times I$. Likewise, we may regard $\Uc^\Omega$ as a substructure of $\Uc^{\Omega \times I}$ by taking each $f(t) \in \Uc^\Omega$ and mapping it to $f'(t,s) \in \Uc^{\Omega \times I}$ where for each $(t,s) \in \Omega \times [0,1)$, $f'(t,s) = f(t)$. By quantifier elimination, this is an elementary embedding of $\Uc^\Omega$ into $\Uc^{\Omega \times I}$. Consider the function $h \in \Uc^{\Omega \times I}$ defined by $h(t,s) = a_j$ if $\frac{j}{m} \leq s < \frac{j+1}{m}$ for all $j<m$. Let $q = \tp(h/\Uc^\Omega)$.
  For each $i<n$, we have
 \begin{align*}
   (\Eb[\varphi_i(x,\gbar_i)])^q &_{\hphantom{\e}}=_{\hphantom{\e}} \sum_{j<m} \frac{1}{m}\Eb[\varphi_i(f_{a_j},\gbar_i)]\\ 
    &_{\hphantom{\e}}=_{\hphantom{\e}} \sum_{j<m} \frac{1}{m}\sum_{A \in \Ac}\Pb(A)\varphi_i(a_j,\gbar_i|_A) \\
    &_{\hphantom{\e}}=_{\hphantom{\e}} \sum_{A \in \Ac}\Pb(A)\sum_{j<m} \frac{1}{m}\varphi_i(a_j,\gbar_i|_A) \\
    &_{\hphantom{\e}}=_{\hphantom{\e}} \sum_{A \in \Ac}\Pb(A)\Av(\abar)(\varphi_i(x,\gbar_i|_A)) \\
    &_{\hphantom{\e}}\approx_\e \Eb[F_\mu^{\varphi_i(x,\bar{y})}(\bar{g}_i)].
 \end{align*}

 This finishes the construction of $r_\mu$. The final statement of the fact follows from the definition of $r_\mu$ and    \cref{lem:randomization-of-continuous-function}. 
\end{proof}

\begin{remark}\label{remark:invariance} Suppose $\mu \in \mathfrak{M}_{x}(\mathcal{U})$ and $\mu$ is definable over $M$. Then the type $r_{\mu}$ is actually definable over $M^{\Omega}$. This is more or less obvious, but we provide an explanation as an exercise in sanity. 

It suffices to show that $r_{\mu}$ is $M^{\Omega}$-invariant on formulas of the form $\mathbb{E}[\varphi(x,y)]$. Fix an $\cL$-formula $\varphi(x,y)$ and $g,h \in \mathcal{C}^{y}$ such that $\tp(g/M^{\Omega}) = \tp(h/M^{\Omega})$. In particular, we have $\mathbb{E}[\psi(h,f_{b})] = \mathbb{E}[\psi(g,f_{b})]$ for any $\mathcal{L}$-formula $\psi(x,y)$ and $b \in M^{y}$. It suffices to prove that $(\Eb[\varphi(x,g)])^{r_{\mu}} = (\Eb[\varphi(x,h)])^{r_{\mu}}$.

Since $\mu$ is definable over $M$, for every $\epsilon > 0$ there exists $\mathcal{L}_M$-formulas $\{\psi_i(y,b_i)\}_{i < n}$ and real numbers $r_1,\ldots,r_n$ such that 
\begin{equation*} 
\sup_{q \in S_{y}(\mathcal{U})}\left|F_{\mu}^{\varphi}(q) - \sum_{i \leq n} r_i \mathbf{1}_{\psi_i(y,b_i)}(q)\right| < \epsilon. 
\end{equation*} 
The $G_{\epsilon}$ in Lemma \ref{lem:randomization-of-continuous-function} can be constructed using the  $\{\psi_i(x,b_i)\}_{i < n}$ as defined above. Observe, 
\begin{multline*} 
\widehat{G}_{\epsilon}(\tp(h/\mathcal{U}^{\Omega})) = \sum_{i \leq n} r_i(\mathbb{E}[\psi_{i}(x,f_{b_i})])^{\tp(h/G)} = \sum_{i \leq n} r_i(\mathbb{E}[\psi_{i}(h,f_{b_i})]) \\ 
= \sum_{i \leq n} r_i(\mathbb{E}[\psi_{i}(g,f_{b_i})]) = \sum_{i \leq n} r_i(\mathbb{E}[\psi_{i}(x,f_{b_i})])^{\tp(g/G)} = \widehat{G}_{\epsilon}(\tp(g/\mathcal{U}^{\Omega})). 
\end{multline*} 
Now, we compute 
\begin{multline*} 
(\mathbb{E}[\varphi(x,h)])^{r_{\mu}} = F_{r_{\mu},\mathcal{U}^{\Omega}}^{\mathbb{E}[\varphi(x,y)]}(\tp(h/\mathcal{U}^{\Omega})) = \mathbb{E}[F_{\mu,\mathcal{U}}^{\varphi(x,y)}](\tp(h/\mathcal{U}^{\Omega}))\\
= \lim_{\epsilon \to 0} \widehat{G}_{\epsilon}(\tp(h/\mathcal{U}^{\Omega})) 
 =  \lim_{\epsilon \to 0} \widehat{G}_{\epsilon}(\tp(g/\mathcal{U}^{\Omega})) = \mathbb{E}[F_{\mu,\mathcal{U}}^{\varphi(x,y)}](\tp(g/\mathcal{U}^{\Omega}))\\ =  F_{r_{\mu},\mathcal{U}^{\Omega}}^{\mathbb{E}[\varphi(x,y)]}(\tp(g/\mathcal{U}^{\Omega})) = (\mathbb{E}[\varphi(x,g)])^{r_{\mu}} . 
\end{multline*} 
\end{remark} 

\begin{corollary}\label{cor:rmu-def} 
Let $\mu \in \mathfrak{M}_x(\mathcal{U})$ be a definable measure. Then for any continuous function $f\colon S_{x}(\mathcal{U}) \to \mathbb{R}$, we have  $\int f\, d\mu = (\Eb[f])^{r_{\mu}}$. 
\end{corollary}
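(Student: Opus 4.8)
The plan is to reduce the identity to the dense family of finite $\Rb$-linear combinations of clopen indicators, where it is essentially the defining property of $r_\mu$, and then pass to a limit. Fix a continuous function $f\colon S_x(\Uc)\to\Rb$. For each $\e>0$, apply the approximation used in the proof of \cref{lem:randomization-of-continuous-function}: there are $\cL_\Uc$-formulas $(\varphi_i(x,\bbar_i))_{i<n}$ and reals $(r_i)_{i<n}$ such that, writing $G_\e=\sum_{i<n}r_i\boldsymbol{1}_{\varphi_i(x,\bbar_i)}$, we have $|f(p)-G_\e(p)|\le\e$ for all $p\in S_x(\Uc)$; moreover the associated continuous predicate $\widehat G_\e=\sum_{i<n}r_i\,\Eb[\varphi_i(x,\bar{f}_{b_i})]$ on $S_x(\Uc^\Omega)$ satisfies $\widehat G_\e\to\Eb[f(-)]$ uniformly as $\e\to 0$. (Recall $\Eb[f(-)]$ is a $\Uc^\Omega$-definable predicate, so $(\Eb[f])^{r_\mu}$ is meaningful, being the value determined by $r_\mu|_{\Uc^\Omega}$.)

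Next I would evaluate $\widehat G_\e$ at $r_\mu$. By the final clause of \cref{fact:unique-definable-transfer}, $(\Eb[\varphi_i(x,\bar{f}_{b_i})])^{r_\mu}=\mu(\varphi_i(x,\bbar_i))$ for each $i<n$, so by linearity of integration,
\[
  (\widehat G_\e)^{r_\mu}=\sum_{i<n}r_i\,\mu(\varphi_i(x,\bbar_i))=\int G_\e\,d\mu .
\]
Now let $\e\to 0$. On the left, $(\widehat G_\e)^{r_\mu}\to(\Eb[f])^{r_\mu}$ by the uniform (hence pointwise) convergence $\widehat G_\e\to\Eb[f(-)]$. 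On the right, since $\sup_{p}|f(p)-G_\e(p)|\le\e$ we get $\bigl|\int f\,d\mu-\int G_\e\,d\mu\bigr|\le\e\to 0$, so $\int G_\e\,d\mu\to\int f\,d\mu$. Therefore $\int f\,d\mu=(\Eb[f])^{r_\mu}$.

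There is no real obstacle here; the argument is bookkeeping around \cref{lem:randomization-of-continuous-function} and \cref{fact:unique-definable-transfer}. The only point that needs care is tracking which structure each predicate lives over ($\Uc$, $\Uc^\Omega$, or $\Cc$), so that the expression $(\Eb[f])^{r_\mu}$ is well-typed; this is resolved by the observation that $\Eb[f(-)]$ is a $\Uc^\Omega$-definable predicate and that $r_\mu$ restricts to a type over $\Uc^\Omega$. If one prefers to avoid limits, the same ingredients yield the explicit estimate $\bigl|\int f\,d\mu-(\Eb[f])^{r_\mu}\bigr|\le 2\e$ for every $\e>0$: combine $|f-G_\e|\le\e$, the identity $(\widehat G_\e)^{r_\mu}=\int G_\e\,d\mu$ above, and the pointwise bound $|\Eb[f(-)]-\widehat G_\e|\le\e$, which follows by applying property $(\ast)$ of \cref{lem:randomization-of-continuous-function} to both $f$ and $G_\e$ (noting $\widehat G_\e=\Eb[G_\e(-)]$) on the dense set $(\Uc_0^\Omega)^{<\omega}$ and invoking continuity.
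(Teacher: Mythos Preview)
Your proof is correct and is essentially the same argument as the paper's: approximate $f$ uniformly by a finite linear combination $\sum_i r_i\boldsymbol{1}_{\varphi_i}$ of indicators of $\cL_\Uc$-formulas, use the last clause of \cref{fact:unique-definable-transfer} together with linearity of $\Eb$ to get the identity for the approximant, and then let $\e\to 0$. The paper presents this as a single $\approx_\e$-chain (your ``explicit estimate'' alternative), while you phrase it as a limit via the notation $\widehat G_\e$ from \cref{lem:randomization-of-continuous-function}, but the content is identical.
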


\begin{proof} Fix $\epsilon>0$. Since $f$ is continuous, there are $\cL_{\cU}$-formulas $\psi_1(x),\ldots,\psi_n(x)$ and real numbers $s_1,\ldots,s_n$ such that $\lVert f - \sum_{i\leq n} s_i \psi_i \lVert_{\infty} < \epsilon$. Therefore, 
\begin{align*}
    \int f\, d\mu \approx_{\epsilon} \int \sum_{ i \leq n} s_i \mathbf{1}_{\psi_i}\, d\mu &= \sum_{i \leq n} s_i \mu(\psi_i(x)) = \sum_{i \leq n} s_i (\Eb[\psi_i(x)])^{r_\mu}\\
    &= \left(\Eb \left[\sum_{i \leq n} s_i (\mathbf{1}_{\psi_i}(x))\right]\right)^{r_\mu} \approx_{\epsilon} \left( \Eb[f] \right)^{r_{\mu}}. \qedhere
\end{align*}   
\end{proof}

The final goal of this section is to show that for definable measures, the transfer map $\mu\mapsto r_\mu$ commutes with Morley products. For this, we need two  lemmas.  

\begin{lemma}\label{random:1} Let $\mu \in \mathfrak{M}_{x}(\mathcal{U})$ be definable. Let $(h_i)_{i \in I}$ be a net in $(\cU_0^{\Omega})^{x}$ such that $\lim_{i \in I} \tp(h_i/\mathcal{U}^{\Omega}) = r_{\mu} |_{\Uc^\Omega}$. For each $i \in I$,  let $\mathcal{D}_i$ be a finite measurable partition of $\Omega$ such that $h_i$ is constant on each $D \in \mathcal{D}_i$. Set $\mu_i = \sum_{D \in \mathcal{D}_i} \mathbb{P}(D) \delta_{h_i|_{D}}$. Then $\lim_{i \in I} \mu_i = \mu$ (in the weak$^*$ topology). 
\end{lemma}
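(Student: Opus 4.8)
The plan is to reduce the statement to two results already in hand: the identity $(\ast)$ defining $\Eb[F(-)]$ in \cref{lem:randomization-of-continuous-function}, and the integral formula $\int f\, d\mu = (\Eb[f])^{r_\mu}$ from \cref{cor:rmu-def}. Recall that weak$^*$ convergence $\mu_i \to \mu$ in $\frk{M}_x(\Uc)$ means precisely that $\int f\, d\mu_i \to \int f\, d\mu$ for every continuous function $f\colon S_x(\Uc)\to\Rb$ (equivalently, for every $\cL_{\Uc}$-formula $\varphi(x)$, though there is no need to restrict). So I would fix such an $f$ and establish this convergence.

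The key observation is that $\int f\, d\mu_i$ is nothing other than $\Eb[f]$ evaluated at the type of $h_i$ over $\Uc^\Omega$. Indeed, since $h_i$ is constant on each piece of $\Dc_i$, the measure $\mu_i = \sum_{D\in\Dc_i}\Pb(D)\delta_{h_i|_D}$ gives
\[
\int f\, d\mu_i = \sum_{D\in\Dc_i}\Pb(D)\, f(\tp(h_i|_D/\Uc)),
\]
and the right-hand side is exactly the quantity prescribed by $(\ast)$ in \cref{lem:randomization-of-continuous-function}, so $\int f\, d\mu_i = \Eb[f(\tp(h_i/\Uc^\Omega))] = (\Eb[f])^{\tp(h_i/\Uc^\Omega)}$. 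Next I would invoke two points: first, $\Eb[f]\colon S_x(\Uc^\Omega)\to\Rb$ is continuous (again \cref{lem:randomization-of-continuous-function}), and it is a $\Uc^\Omega$-definable predicate, so its value on a type depends only on the restriction of that type to $\Uc^\Omega$; second, by hypothesis $\lim_{i\in I}\tp(h_i/\Uc^\Omega) = r_\mu|_{\Uc^\Omega}$. Combining these, $\lim_{i\in I}\int f\, d\mu_i = (\Eb[f])^{r_\mu|_{\Uc^\Omega}} = (\Eb[f])^{r_\mu}$, and by \cref{cor:rmu-def} this equals $\int f\, d\mu$. Since $f$ was an arbitrary continuous function on $S_x(\Uc)$, this is the desired weak$^*$ convergence.

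I do not expect a genuine obstacle here; the lemma is essentially bookkeeping around definitions already set up. The one place meriting a moment's care is the application of $(\ast)$: the partition $\Dc_i$ chosen in the definition of $\mu_i$ is one on which $h_i$ is constant, which is exactly the hypothesis under which $(\ast)$ computes $\Eb[f(\tp(h_i/\Uc^\Omega))]$, and the value it produces is independent of the particular such partition. It is also worth recording explicitly that weak$^*$ convergence in $\frk{M}_x(\Uc)$ need only be checked against continuous functions on $S_x(\Uc)$, since this is precisely what makes the passage to $\Eb[f]$ and the appeal to \cref{cor:rmu-def} available.
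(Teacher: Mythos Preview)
Your proposal is correct and follows essentially the same route as the paper. The paper checks weak$^*$ convergence against formulas $\varphi(x,\bbar)$ and unwinds the definition of $\Eb[\varphi(h_i,\bar{f_b})]$ by hand, while you test against arbitrary continuous $f$ and package the same computation via \cref{lem:randomization-of-continuous-function} and \cref{cor:rmu-def}; the underlying argument is identical.
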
 

\begin{proof} It suffices to show that for each $\cL_{\cU}$-formula $\varphi(x,\bbar)$, $\lim_{i \in I} \mu_i(\varphi(x,\bbar)) = \mu(\varphi(x,\bbar))$. Consider the following computation: 

\begin{align*}
    \lim_{i \in I} \mu_i(\varphi(x,\bbar)) &= \lim_{i \in I}
  \sum_{D \in \mathcal{D}_i} \mathbb{P}(D)\delta_{h_i|_{D}}(\varphi(x,\bbar)) \\
  &= \lim_{i \in I}
  \sum_{\substack{D \in \mathcal{D}_i~\textnormal{s.t.} \\ \Uc \models \varphi(h_i|_{D},\bbar)}} \mathbb{P}(D) \\
  &= \lim_{i \in I} \mathbb{P}(\{t \in \Omega: \Uc \models \varphi(h_i(t), \bbar) \}) \\
  &=  \lim_{i \in I} \mathbb{P}(\{t \in \Omega: \Uc \models \varphi(h_i(t),\bar{f_{b}}(t)) \})\\
  &=  \lim_{i \in I} \Eb[\varphi(h_i,\bar{f_{b}})] \\
  &= (\varphi(x,\bar{f_{b}}))^{r_\mu}\\
  & = \mu(\varphi(x,\bbar)). \qedhere
\end{align*}
\end{proof}

The next lemma is instrumental in finding good approximations of $r_{\mu}$ over $\mathcal{U}^{\Omega}$.

\begin{lem}\label{lem:independent-limit} Fix a definable measure $\mu \in \mathfrak{M}_{x}(\mathcal{U})$ and a finite, measurable partition $\mathcal{A}$ of $\Omega$. Then there is a net $(h_i)_{i \in I}$ of elements in $(\Uc^\Omega_0)^{x}$    and finite measurable partitions $(\Dc_i)_{i \in I}$ of $\Omega$ satisfying the following properties:
\begin{enumerate}[$(i)$]
    \item $\lim_{i \in I} \tp(h_i/\Uc^\Omega) = r_{\mu}|_{\mathcal{U}^{\Omega}}$. 
    \item $h_i$ is constant on each element of $\mathcal{D}_i$.
    \item  $\mathbb{P}(A \cap D) = \mathbb{P}(A) \mathbb{P}(D)$ for each $A \in \mathcal{A}$ and $D \in \mathcal{D}_i$. 
\end{enumerate}
\end{lem}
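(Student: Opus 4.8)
The plan is to repeat the approximation scheme from the proof of \cref{fact:unique-definable-transfer}, but to arrange the approximating step functions so that the partition witnessing that they are step functions is probabilistically independent of $\mathcal{A}$. The one genuinely new ingredient is the construction of such independent partitions, and this is where I would use atomlessness of the event algebra: since $\Uc^\Omega \prec \mathcal{C} \models T^R$, the algebra $(\Omega,\mathcal{B},\mathbb{P})$ is atomless, so for any finite measurable partition $\mathcal{E}$ of $\Omega$ and any $m \geq 1$ one can split each $E \in \mathcal{E}$ into measurable pieces $E_0,\dots,E_{m-1}$ each of measure $\tfrac{1}{m}\mathbb{P}(E)$; setting $D_j = \bigcup_{E \in \mathcal{E}} E_j$ yields a partition $\mathcal{D} = \{D_0,\dots,D_{m-1}\}$ of $\Omega$ with $\mathbb{P}(E \cap D_j) = \tfrac{1}{m}\mathbb{P}(E) = \mathbb{P}(E)\,\mathbb{P}(D_j)$ for all $E \in \mathcal{E}$ and $j < m$.

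Next I would set up the net. By quantifier elimination in $T^R$ (\cref{fact:randomizationsQE}) together with the metric density of $\Uc^\Omega_0$ in $\Uc^\Omega$, a basic open neighborhood of $r_\mu|_{\Uc^\Omega}$ in $S_x(\Uc^\Omega)$ is cut out by finitely many conditions of the form $|(\Eb[\varphi(x,\gbar)])^q - (\Eb[\varphi(x,\gbar)])^{r_\mu}| < \delta$ with $\varphi(x,\bar y) \in \cL$ and $\gbar \in (\Uc^\Omega_0)^{<\omega}$. I would therefore index the net by pairs $(\Sigma,\epsilon)$, where $\Sigma$ is a finite set of such pairs $(\varphi,\gbar)$ and $\epsilon > 0$, directed by $\Sigma \subseteq \Sigma'$ and $\epsilon \geq \epsilon'$; it then suffices to produce, for each $(\Sigma,\epsilon)$, some $h \in (\Uc^\Omega_0)^x$ and a partition $\mathcal{D}$ of $\Omega$ satisfying $(ii)$, $(iii)$, and $(\Eb[\varphi(x,\gbar)])^{\tp(h/\Uc^\Omega)} \approx_\epsilon (\Eb[\varphi(x,\gbar)])^{r_\mu}$ for all $(\varphi,\gbar) \in \Sigma$. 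Taking $h_{(\Sigma,\epsilon)}$ and $\mathcal{D}_{(\Sigma,\epsilon)}$ to be these objects, condition $(i)$ is immediate from the description of basic neighborhoods above.

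Finally, fix an index $(\Sigma,\epsilon)$ with $\Sigma = \{(\varphi_0,\gbar_0),\dots,(\varphi_{n-1},\gbar_{n-1})\}$. Let $\mathcal{A}'$ be a common refinement of $\mathcal{A}$ and of the (finite measurable) partitions on which the individual $\gbar_i$ are constant; for $A' \in \mathcal{A}'$ write $\gbar_i|_{A'}$ for the constant value of $\gbar_i$ on $A'$, and let $G \subseteq \Uc$ be the finite set of coordinates appearing in the tuples $\gbar_i|_{A'}$. By standard facts about measures (as used in \cref{fact:unique-definable-transfer}), choose $\abar = (a_0,\dots,a_{m-1})$ in $\Uc^x$ with $\mu(\varphi_i(x,\cbar)) \approx_\epsilon \Av(\abar)(\varphi_i(x,\cbar))$ for all $i < n$ and all $\cbar$ from $G$ of the appropriate arity. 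Applying the construction of the first paragraph to $\mathcal{E} = \mathcal{A}'$ produces a partition $\mathcal{D} = \{D_0,\dots,D_{m-1}\}$ of $\Omega$ independent of $\mathcal{A}'$, hence of $\mathcal{A}$, which gives $(iii)$; let $h \in (\Uc^\Omega_0)^x$ be the tuple of step functions equal to $a_j$ on $D_j$, so $(ii)$ holds. Since $h$ is constant on each $D_j$ and each $\gbar_i$ is constant on each $A'$, the computation in the proof of \cref{fact:unique-definable-transfer} gives
\[
(\Eb[\varphi_i(x,\gbar_i)])^{\tp(h/\Uc^\Omega)} = \sum_{A' \in \mathcal{A}'} \mathbb{P}(A')\,\Av(\abar)(\varphi_i(x,\gbar_i|_{A'})),
\]
while \cref{fact:unique-definable-transfer} and \cref{lem:randomization-of-continuous-function} give $(\Eb[\varphi_i(x,\gbar_i)])^{r_\mu} = \sum_{A' \in \mathcal{A}'} \mathbb{P}(A')\,\mu(\varphi_i(x,\gbar_i|_{A'}))$; subtracting and invoking the choice of $\abar$ bounds the difference by $\epsilon$, as required. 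The only step needing care is the legitimacy of the independent partition, i.e., atomlessness of $\Omega$; everything else is a bookkeeping variant of the argument already carried out for \cref{fact:unique-definable-transfer}. (One could instead pass from $\Omega$ to $\Omega \times [0,1)$ as in that proof, but then the $h_i$ would no longer lie in $(\Uc^\Omega_0)^x$, so working inside $\Omega$ via atomlessness is the preferable route.)
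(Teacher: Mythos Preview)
Your proposal is correct and follows essentially the same route as the paper: reduce by quantifier elimination to approximating finitely many conditions $\Eb[\varphi_i(x,\gbar_i)]$ with $\gbar_i \in (\Uc^\Omega_0)^{<\omega}$, take a common refinement $\mathcal{A}'$ of $\mathcal{A}$ with the partitions on which the $\gbar_i$ are constant, pick a finite tuple $\abar$ giving an $\epsilon$-approximation of $\mu$ on the relevant instances, use atomlessness to produce a partition $\mathcal{D}$ independent of $\mathcal{A}'$ with $\mathbb{P}(D_j)=\tfrac{1}{m}$, and let $h$ be the step function taking value $a_j$ on $D_j$. Your explicit construction of $\mathcal{D}$ by slicing each cell of $\mathcal{A}'$ into $m$ equal pieces, and your closing remark that passing to $\Omega\times[0,1)$ (as in \cref{fact:unique-definable-transfer}) would not keep $h$ inside $(\Uc^\Omega_0)^x$, are helpful clarifications; the paper simply invokes atomlessness for the partition and spells out the final computation rather than citing the earlier one.
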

\begin{proof}
By quantifier elimination and uniform continuity, it suffices to show that for any $\e >0$, any finite sequence $(\varphi_i(x,\ybar))_{i<n}$ of $\Lc$-formulas, and any finite sequence $(\gbar_i)_{i<n}$ of tuples in $(\Uc^\Omega_0)^{\ybar}$, there is an $h \in \Uc^\Omega_0$ and a finite measurable partition $\Dc$ of $\Omega$ such that $\Eb[\varphi_i(h,\gbar_i)] \approx_\e (\Eb[\varphi_i(x,\gbar_i)])^{r_\mu}$, $h$ is constant on each element of $\Dc$, and $\Pb(A\cap D) = \Pb(A)\Pb(D)$ for each $A \in \Ac$ and $D \in \Dc$. 

So fix some such $\e$, $\varphi_i$'s, and $\gbar_i$'s. Let $G \subseteq \Uc$ be the collection of the ranges of the elements of the $\gbar_i$'s. Note that $G$ is a finite set. By basic facts about Keisler measures over models, we can find $a_0,\dots,a_{k-1}$ in $\Uc^{x}$ with the property that for each $i<k$ and each tuple $\cbar$ made from elements of  $G$, 
\[
\mu(\varphi_i(x,\cbar)) \approx_\e \Av(a_0,\dots,a_{k-1})(\varphi_i(x,\cbar)). 
\]
Let $\Ac^\ast$ be a finite, measurable partition refining $\Ac$ with the property that each element of $G$  is constant on each element of $\Ac^\ast$. Since $(\Omega,\Bc,\Pb)$ is an atomless probability algebra, we can find a finite partition $\Dc$ in $\Bc$ with $|\Dc| = k$, $\Pb(D) = \frac{1}{k}$ for each $D \in \Dc$, and $\Pb(A\cap D) = \Pb(A)\Pb(D)$ for every $A \in \Ac^\ast$ and $D \in \Dc$. Let $D_0,\dots,D_{k-1}$ be an enumeration of $\Dc$, and let $h$ be the element of $\Uc^\Omega_0$ with the property that $h(t) = a_j$ for any $t \in D_j$. We now have that for each $i<n$, 
\begin{align*}
   \Eb[\varphi_i(h,\gbar_i)] &=_{\hphantom{\e}} \sum_{A \in \Ac^\ast}\sum_{D \in \Dc} \Pb(A\cap D)  \mathbf{1}_{\varphi_i}(h|_{A\cap D},\gbar_i|_{A\cap D}) \\
   &=_{\hphantom{\e}} \sum_{A \in \Ac^\ast} \Pb(A)\sum_{D \in \Dc}\Pb(D)  \mathbf{1}_{\varphi_i}(h|_{D},\gbar_i|_{A}) \\
   &=_{\hphantom{\e}} \sum_{A \in \Ac^\ast} \Pb(A)\sum_{j<k}\frac{1}{k}  \mathbf{1}_{\varphi_i}(a_j,\gbar_i|_{A}) \\
   &=_{\hphantom{\e}} \sum_{A \in \Ac^\ast} \Pb(A) \Av(a_0,\dots,a_{k-1})(\varphi_i(x,\gbar_i|_{A})) \\
   &\approx_{\e} \sum_{A \in \Ac^\ast} \Pb(A) \nu(\varphi_i(x,\gbar_i|_{A})) \\
   &=_{\hphantom{\e}} (\Eb[\varphi_i(x,\gbar_i)])^{r_\mu}.\qedhere 
\end{align*}
\end{proof}

\begin{prop}\label{prop:prod-same-1}
    If $\mu\in\kM_x(\cU)$ and $\nu\in\kM_y(\cU)$ are definable, then 
    \[
    r_{\mu\otimes \nu}(x,y) = r_\mu(x)\otimes r_\nu(y).
    \]
\end{prop}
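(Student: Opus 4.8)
The plan is to exploit the uniqueness built into \cref{fact:unique-definable-transfer}: to prove $r_{\mu\otimes\nu} = r_\mu\otimes r_\nu$ it suffices to check that $r_\mu(x)\otimes r_\nu(y)$ is a $\cU^\Omega$-definable type and that, for every discrete $\cL$-formula $\theta(x,y,\zbar)$,
\[
F^{\Eb[\theta(x,y,\zbar)]}_{r_\mu\otimes r_\nu,\,\cU^\Omega} \;=\; \Eb\big[F^{\theta(x,y,\zbar)}_{\mu\otimes\nu,\,\cU}\big].
\]
For the first point, \cref{remark:invariance} gives that $r_\mu$ and $r_\nu$ are definable over $M^\Omega$ (taking $M$ so that $\mu,\nu$ are definable over it), and the Morley product of a type definable over a model with an arbitrary type is again definable over that model (this standard fact transfers to continuous logic), so $r_\mu\otimes r_\nu$ is defined and $\cU^\Omega$-definable. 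In particular, both sides of the displayed equation are continuous functions of $\tp(\zbar/\cU^\Omega)$ — the left by $\cU^\Omega$-definability, the right by \cref{lem:randomization-of-continuous-function}. Since the tuples from $(\cU^\Omega_0)^{\zbar}$ are dense in $(\cU^\Omega)^{\zbar}$, it is enough to verify the equality at $\tp(\hbarr/\cU^\Omega)$ for $\hbarr\in(\cU^\Omega_0)^{\zbar}$.

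So fix $\theta$ and such an $\hbarr$, and let $\Ac$ be a finite measurable partition of $\Omega$ on which $\hbarr$ is constant, with $\hbarr|_A = \cbar_A$. On one hand, \cref{lem:randomization-of-continuous-function} gives $\Eb[F^\theta_{\mu\otimes\nu,\cU}](\hbarr) = \sum_{A\in\Ac}\Pb(A)\,(\mu\otimes\nu)(\theta(x,y,\cbar_A))$. On the other hand, by the definition of the Morley product of the definable type $r_\mu$ with $r_\nu$, we have $(\Eb[\theta(x,y,\hbarr)])^{r_\mu\otimes r_\nu} = G^{r_\nu}$, where $G$ is the $\cU^\Omega$-definable predicate $G(y) = (\Eb[\theta(x,y,\hbarr)])^{r_\mu}$; and by \cref{fact:unique-definable-transfer} (applied to $\mu$ with $(y,\zbar)$ as the parameter tuple), $G = \Eb[F^\theta_{\mu,\cU}](\,\cdot\,,\hbarr)$. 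Now apply \cref{lem:independent-limit} to $\nu$ and $\Ac$ to obtain a net $(k_j)_j$ in $(\cU^\Omega_0)^y$ with partitions $(\mathcal{E}_j)_j$ such that $\lim_j\tp(k_j/\cU^\Omega) = r_\nu|_{\cU^\Omega}$, each $k_j$ is constant on $\mathcal{E}_j$, and $\Pb(A\cap E) = \Pb(A)\Pb(E)$ for $A\in\Ac$, $E\in\mathcal{E}_j$. Evaluating $G$ at $\tp(k_j/\cU^\Omega)$ via \cref{lem:randomization-of-continuous-function} along the partition $\Ac\vee\mathcal{E}_j$ and splitting the finite sum using $\Pb(A\cap E)=\Pb(A)\Pb(E)$ yields
\[
G(\tp(k_j/\cU^\Omega)) \;=\; \sum_{A\in\Ac}\Pb(A)\;\Eb\big[F^\theta_{\mu,\cU}(\,\cdot\,,\cbar_A)\big](\tp(k_j/\cU^\Omega)).
\]
Passing to the limit over $j$ — using continuity of $G$ and of each $\Eb[F^\theta_{\mu,\cU}(\,\cdot\,,\cbar_A)]$ on $S_y(\cU^\Omega)$, the finiteness of $\Ac$, and \cref{cor:rmu-def} — the $A$-th summand converges to $\int F^\theta_{\mu,\cU}(y,\cbar_A)\,d\nu(y) = (\mu\otimes\nu)(\theta(x,y,\cbar_A))$. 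Hence $(\Eb[\theta(x,y,\hbarr)])^{r_\mu\otimes r_\nu} = \sum_{A\in\Ac}\Pb(A)(\mu\otimes\nu)(\theta(x,y,\cbar_A))$, which matches the expression for $\Eb[F^\theta_{\mu\otimes\nu,\cU}](\hbarr)$ computed above. This establishes the displayed equation on a dense set, hence everywhere, and the uniqueness clause of \cref{fact:unique-definable-transfer} gives $r_\mu\otimes r_\nu = r_{\mu\otimes\nu}$.

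The delicate point is the reduction handling \emph{non-constant} parameters $\hbarr\in\cU^\Omega$: one must ensure that the ``mixing over $\Omega$'' encoded by $\hbarr$ does not interfere with the $\nu$-integration hidden inside the second Morley factor. This is precisely the role of the independence property $(iii)$ of \cref{lem:independent-limit}, which lets the approximating net for $r_\nu$ be chosen so that the relevant expectations factor over $\Ac$; without it the sum over $\Ac\vee\mathcal{E}_j$ would not split cleanly. A secondary bookkeeping point is the identity $(\Eb[\theta(x,y,\hbarr)])^{r_\mu\otimes r_\nu} = G^{r_\nu}$, which is just the definition of the Morley product of a definable type with another type and uses $\cU^\Omega$-definability of $r_\mu$.
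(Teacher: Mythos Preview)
Your proposal is correct and follows essentially the same route as the paper: both reduce via the uniqueness clause of \cref{fact:unique-definable-transfer} to checking the defining identity on tuples $\hbarr\in(\cU^\Omega_0)^{\zbar}$, invoke \cref{lem:independent-limit} (applied to $\nu$ and the partition $\Ac$ determined by $\hbarr$) to approximate $r_\nu$ by elements whose partitions are independent from $\Ac$, and then use that independence to split the double sum. The only cosmetic difference is that the paper packages the passage to the limit through \cref{random:1} (identifying $\lim_i\sum_D\Pb(D)\delta_{h_i|_D}=\nu$ and integrating $F^\theta_\mu$), whereas you recast the inner sum as $\Eb[F^\theta_{\mu,\cU}(\cdot,\cbar_A)](\tp(k_j/\cU^\Omega))$ via \cref{lem:randomization-of-continuous-function} and then appeal to \cref{cor:rmu-def}; these are equivalent bookkeeping choices.
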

\begin{proof}
    Since the Morley product of definable types is definable, it is immediate that $r_\mu\otimes r_\nu$ is a definable type. Recall also that the Morley product of definable measures is definable, and so $r_{\mu\otimes\nu}$ is well-defined. By \cref{fact:unique-definable-transfer}, all we need to show is that for any $\varphi(x,y,\zbar)\in \Lc$, $F^{\Eb[\varphi(x,y,\zbar)]}_{r_\mu\otimes r_\nu} = F^{\Eb[\varphi(x,y,\zbar)]}_{r_{\mu \otimes \nu}}$ as functions on $S_{\zbar}(\Uc^\Omega)$. By continuity, it is sufficient to check this on realized types and moreover on elements of $\Uc^\Omega_0$. Fix a tuple $\gbar \in (\Uc^\Omega_0)^n$. Fix a finite, measurable partition $\Ac$  such that each element of $\gbar$ is constant on the elements of $\Ac$. Let $(h_i)_{i \in I}$ and $(\Dc_i)_{i \in I}$ be as in \cref{lem:independent-limit} with respect to $\nu$ and $\mathcal{A}$.  
    Consider the following computation:
    
\begin{align*}
      F^{\Eb[\varphi(x,y,\zbar)]}_{r_\mu \otimes r_\nu}(\tp(\gbar/\Uc^\Omega)) &=F^{\Eb[\varphi(x,y,\gbar)]}_{r_\mu}(r_{\nu}|_{\mathcal{U}^{\Omega}}) \\
      &=F^{\Eb[\varphi(x,y,\gbar)]}_{r_\mu}(\lim_{i \in I} \tp(h_i/\mathcal{U}^{\Omega})) \\
      &\overset{(a)}{=}\lim_{i \in I}F^{\Eb[\varphi(x,y,\gbar)]}_{r_\mu}( \tp(h_i/\mathcal{U}^{\Omega})) \\
      &= \lim_{i \in I} (\Eb[\varphi(x,h_i,\gbar)])^{r_\mu} \\
      &= \lim_{i \in I} \sum_{A \in \Ac}\sum_{D \in \Dc_i} \Pb(A\cap D) \mu(\varphi(x,h_i|_{A\cap D},\gbar|_{A\cap D})) \\
      &=  \sum_{A \in \Ac} \Pb(A) \lim_{i \in I}\sum_{D \in \Dc_i}\Pb(D) \mu(\varphi(x,h_i|_{ D},\gbar|_{A})) \\
      &=   \sum_{A \in \Ac} \Pb(A) \lim_{i \in I}\int F_\mu^{\varphi(x,y,\gbar|_A)}\, d\left(\sum_{D \in \Dc_i} \Pb(D)\delta_{h_i|_D}\right)  \\
      &\overset{(b)}{=}    \sum_{A \in \Ac} \Pb(A) \int F_\mu^{\varphi(x,y,\gbar|_A)}\, d\left(\lim_{i \in I}\sum_{D \in \Dc_i} \Pb(D)\delta_{h_i|_D}\right)  \\
      &\overset{(c)}{=}    \sum_{A \in \Ac} \Pb(A) \int F_\mu^{\varphi(x,y,\gbar|_A)}\, d\nu  \\
      &=    \sum_{A \in \Ac} \Pb(A) (\mu\otimes \nu)(\varphi(x,y,\gbar|_A))  \\
      &=    (\Eb[\varphi(x,y,\gbar)])^{r_{\mu\otimes \nu}}  \\
      &= F_{r_{\mu\otimes\nu}}^{\Eb[\varphi(x,y,\zbar)]}(\tp(\gbar/\Uc^\Omega)).
\end{align*} 

Equality $(a)$ follows from continuity and the fact that $r_\mu$ is definable. Equality $(b)$ follows from the fact that   $\int F^{\varphi(x,y,\gbar|_A)}_\mu d(-)$  is a continuous map from $\mathfrak{M}_{x}(\mathcal{U})$ to $[0,1]$. Equality $(c)$ follows from \cref{random:1}.
\end{proof}

\begin{corollary}\label{cor:prod-same}
  Suppose  $\mu \in \mathfrak{M}_{x}(\mathcal{U})$  is definable. Then for every $n\geq 1$, $r_{\mu^{(n)}}(\bar{x}) = (r_\mu)^{(n)}(\bar{x})$.
\end{corollary}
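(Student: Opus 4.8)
The statement is an immediate induction on $n$, built on \cref{prop:prod-same-1}. The base case $n = 1$ is just the definitional identity $r_{\mu^{(1)}} = r_\mu = (r_\mu)^{(1)}$, since $\mu^{(1)} = \mu$ and $(r_\mu)^{(1)} = r_\mu$ by the definition of Morley powers. For the inductive step, I would assume $r_{\mu^{(n)}} = (r_\mu)^{(n)}$ and unwind the definition $\mu^{(n+1)}(x_0,\ldots,x_n) = \mu(x_0) \otimes \mu^{(n)}(x_1,\ldots,x_n)$.

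The one point worth checking carefully (though it is routine given what is already established) is that every measure and type involved is definable, so that both the Morley products and the transfer map $\mu \mapsto r_\mu$ make sense. Concretely: $\mu$ is definable by hypothesis; Morley products of definable measures are definable (recalled in the excerpt just before \cref{fact:fimgs-cont} and used in \cref{prop:prod-same-1}), so $\mu^{(n)}$ is definable for every $n$ by a trivial induction; hence $r_{\mu^{(n)}}$ and $r_{\mu^{(n+1)}} = r_{\mu\otimes\mu^{(n)}}$ are well-defined via \cref{fact:unique-definable-transfer}; and $r_\mu$ is definable, so its Morley powers $(r_\mu)^{(n)}$ are well-defined as well.

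With these preliminaries, the inductive step is a two-line computation: applying \cref{prop:prod-same-1} to the definable measures $\mu(x)$ and $\mu^{(n)}(x_1,\ldots,x_n)$ gives
\[
r_{\mu^{(n+1)}} = r_{\mu\,\otimes\,\mu^{(n)}} = r_\mu \otimes r_{\mu^{(n)}},
\]
and then substituting the inductive hypothesis $r_{\mu^{(n)}} = (r_\mu)^{(n)}$ yields $r_{\mu^{(n+1)}} = r_\mu \otimes (r_\mu)^{(n)} = (r_\mu)^{(n+1)}$, the last equality being the definition of the $(n+1)$-st Morley power of the type $r_\mu$.

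\textbf{Main obstacle.} There is essentially none: all the substance is in \cref{prop:prod-same-1}, and the corollary is purely a bookkeeping induction. The only thing to be slightly careful about is keeping track of the variable sorts and the indexing of the Morley powers so that the matching of $r_{\mu\otimes\mu^{(n)}}$ with $r_\mu\otimes r_{\mu^{(n)}}$ is literally an instance of \cref{prop:prod-same-1} (with $y$ there being the tuple $x_1,\ldots,x_n$ of sort $x$ variables), which is straightforward.
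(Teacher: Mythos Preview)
Your proof is correct and matches the paper's approach exactly: the paper's proof consists of the single sentence ``This follows from \cref{prop:prod-same-1} by induction,'' and you have simply written out the details of that induction.
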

\begin{proof}
   This follows from \cref{prop:prod-same-1} by induction. 
\end{proof}

\begin{remark}\label{rem:BDbad}
As previously noted, Ben Yaacov \cite{BYT} defines the map $\mu\mapsto r_\mu$ only assuming Borel-definability of $\mu$. However, Proposition \ref{prop:prod-same-1} fails under a general Borel-definability assumption due to the fact that the Morley product need not preserve Borel-definability or satisfy associativity (even when all Morley products involved are Borel-definable). See \cite[Section 3]{CoGaHa} for details. On the other hand, if one assumes $T$ is countable and restricts to measures that are Borel-definable over a countable model, then the Morley product is associative and preserves Borel-definability. Thus one could potentially extend Proposition \ref{prop:prod-same-1} to Borel-definable measures under these additional countability assumptions.
\end{remark}

\subsection{Randomizations of \fim\ measures are generically stable}

We now prove that if $\mu$ is a \fim\ measure in a discrete theory, then the definable type $r_\mu$ constructed in the previous section is generically stable in the randomization. This will  follow immediately from the following \textit{nearly} identical characterizations of ``$\mu$ is \fim" and ``$r_\mu$ is generically stable".

\begin{proposition}\label{prop:ABC}
Let $T$ be a discrete $\cL$-theory. Suppose $\mu\in\kM_x(\cU)$ is definable.
\begin{enumerate}[$(a)$]
\item  $\mu$ is \fim\ if and only if for every $\cL$-formula $\varphi(x,y)$, 
  \[ 
  \lim_{n \to \infty} \textstyle \left(\sup_y\mathbb{E}[|\Av(\xbar)(\varphi(x,y))-F^\varphi_\mu(y)|]\right)^{r^{(n)}_\mu} = 0.
  \]
  \item $r_\mu$ is generically stable if and only if for every $\cL$-formula $\varphi(x,y)$, 
  \[
  \lim_{n\to\infty}\textstyle \left(\sup_y |\mathbb{E} [\Av(\xbar)(\varphi(x,y))-F^\varphi_\mu(y)]|\right)^{r^{(n)}_\mu}=0.
  \]
  \end{enumerate}
\end{proposition}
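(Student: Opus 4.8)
The plan is to show that each of the two displayed limit conditions is, formula by formula, a verbatim translation of a previously established criterion --- Lemma \ref{lem:fim-char-cont} for ``$\mu$ is \fim'' in part $(a)$, and Fact \ref{fact:fimgs-cont}$(b)$ together with Lemma \ref{lem:fim-char-cont} for ``$r_\mu$ is generically stable'' in part $(b)$ --- after transporting everything into the randomization via Corollary \ref{cor:rmu-def}, Fact \ref{fact:randomizationsb}, and Fact \ref{fact:unique-definable-transfer}.

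For part $(a)$: recalling that \fim\ for a measure in a discrete theory agrees with \fim\ when $T$ is regarded as a continuous theory, Lemma \ref{lem:fim-char-cont} says that $\mu$ is \fim\ relative to a fixed $\cL$-formula $\varphi(x,y)$ iff $\lim_{n\to\infty}\int\chi^\varphi_{\mu,n}(\xbar)\,d\mu^{(n)}=0$, and $\mu$ is \fim\ iff this holds for every $\varphi$. Since $\mu$, hence $\mu^{(n)}$, is definable and $\chi^\varphi_{\mu,n}=\sup_y|\Av(\xbar)(\varphi(x,y))-F^\varphi_\mu(y)|$ is a continuous function on $S_{\xbar}(\cU)$, Corollary \ref{cor:rmu-def} gives $\int\chi^\varphi_{\mu,n}\,d\mu^{(n)}=(\Eb[\chi^\varphi_{\mu,n}])^{r_{\mu^{(n)}}}$. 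Then Fact \ref{fact:randomizationsb}, applied to the continuous map $(\xbar,y)\mapsto|\Av(\xbar)(\varphi(x,y))-F^\varphi_\mu(y)|$, moves $\Eb$ inside the supremum, so $\Eb[\chi^\varphi_{\mu,n}]=\sup_y\Eb[|\Av(\xbar)(\varphi(x,y))-F^\varphi_\mu(y)|]$, while $r_{\mu^{(n)}}=r^{(n)}_\mu$ by Corollary \ref{cor:prod-same}. Assembling these identities rewrites $\int\chi^\varphi_{\mu,n}\,d\mu^{(n)}$ as the expression under the limit in $(a)$, and part $(a)$ follows.

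For part $(b)$: $r_\mu$ is definable over $M^\Omega$ (Remark \ref{remark:invariance}), so the equivalence $(i)\Leftrightarrow(iv)$ of Fact \ref{fact:fimgs-cont}$(b)$, read inside $T^R$, shows $r_\mu$ is generically stable iff it is \fim\ relative to every formula of $T^R$. I would next reduce this, using quantifier elimination for $T^R$ (Fact \ref{fact:randomizationsQE}) and the Morley-sequence form $(ii)$ of generic stability, to testing only the formulas $\Eb[\varphi(x,y)]$ with $\varphi\in\cL$. Lemma \ref{lem:fim-char-cont} applied to $r_\mu$ and $\Eb[\varphi(x,y)]$ then says \fim-ness relative to $\Eb[\varphi(x,y)]$ is equivalent to $\lim_n\int\chi^{\Eb[\varphi(x,y)]}_{r_\mu,n}\,d(r_\mu)^{(n)}=0$. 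Finally, by linearity of $\Eb$ and the identity $F^{\Eb[\varphi(x,y)]}_{r_\mu}=\Eb[F^\varphi_\mu]$ of Fact \ref{fact:unique-definable-transfer}, one computes $\chi^{\Eb[\varphi(x,y)]}_{r_\mu,n}(\xbar)=\sup_y|\Eb[\Av(\xbar)(\varphi(x,y))-F^\varphi_\mu(y)]|$, and since $(r_\mu)^{(n)}=r^{(n)}_\mu$ is a type, integrating this continuous formula against it is plain evaluation; this is exactly the condition in $(b)$.

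The step I expect to be the real obstacle is the quantifier-elimination reduction in $(b)$: deducing generic stability of $r_\mu$ relative to every $T^R$-formula from generic stability relative to the $\Eb[\varphi(x,y)]$ alone. The key observation is that criterion $(ii)$ of Fact \ref{fact:fimgs-cont}$(b)$ is stable under uniform limits of formulas: if $\lim_i\Phi_k(a_i,\bbar)=F^{\Phi_k}_{r_\mu}(\bbar)$ for each $\Phi_k$ in a uniformly convergent sequence with limit $\Phi$, then --- since the definable type $r_\mu$ commutes with continuous combinations of formulas and $F^{\Phi_k}_{r_\mu}\to F^\Phi_{r_\mu}$ uniformly --- an $\e/3$ estimate yields $\lim_i\Phi(a_i,\bbar)=F^\Phi_{r_\mu}(\bbar)$, and QE supplies the approximating $\Phi_k$. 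A secondary, routine chore is keeping the continuous-logic bookkeeping honest, in particular making sense of $\Eb$ applied to the $[-1,1]$-valued predicate $\Av(\xbar)(\varphi(x,y))-F^\varphi_\mu(y)$ via Lemma \ref{lem:randomization-of-continuous-function}, and checking the discrete-versus-continuous coincidence of \fim\ invoked in part $(a)$.
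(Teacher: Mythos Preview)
Your proposal is correct and follows essentially the same route as the paper: the paper also isolates the two identities $\int\chi^\varphi_{\mu,n}\,d\mu^{(n)}=(\sup_y\Eb[|\Av(\xbar)(\varphi(x,y))-F^\varphi_\mu(y)|])^{r_\mu^{(n)}}$ and $\chi^{\Eb[\varphi]}_{r_\mu,n}=\sup_y|\Eb[\Av(\xbar)(\varphi(x,y))-F^\varphi_\mu(y)]|$ (via Corollary~\ref{cor:rmu-def}, Fact~\ref{fact:randomizationsb}, Corollary~\ref{cor:prod-same}, and Fact~\ref{fact:unique-definable-transfer}), then invokes Lemma~\ref{lem:fim-char-cont} and Fact~\ref{fact:fimgs-cont}$(b)$ exactly as you do. The QE reduction you flag as the delicate point is handled in the paper just as you suggest, via criterion $(ii)$ of Fact~\ref{fact:fimgs-cont}$(b)$ and an appeal to ``quantifier elimination and continuity''.
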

\begin{proof}
We first prove a claim.\medskip

\noindent\emph{Claim.} Let $\varphi(x,y)$ be an $\cL$-formula, and fix $n\geq 1$.
\begin{enumerate}[$(i)$]
\item  $\chi^{\Eb[\varphi]}_{r_\mu,n}(\xbar)=  \sup_y |\Eb[\Av(\bar{x})(\varphi(x,y)) - F^\varphi_\mu(y)]|$.
\item $\int\chi^\varphi_{\mu,n}(\xbar)\, d\mu^{(n)}=\left(\sup_{\bar{y}} \Eb[|\Av(\bar{x})(\varphi(x,\bar{y})) - F^\varphi_\mu(y)|]\right)^{r^{(n)}_\mu}$. 
\end{enumerate}
\noindent\emph{Proof.}
For part $(i)$, we have
\begin{align*}
  \chi^{\Eb[\varphi]}_{r_\mu,n}(\xbar) &=\textstyle  \sup_y  |\Av (\bar{x})(\Eb[\varphi(x,{y})]) - F^{\Eb[\varphi]}_{r_\mu}(y))|  \\
                                    & \overset{(1)}{=} \textstyle \sup_y |\Av (\bar{x})(\Eb[\varphi(x,{y})]) - \Eb[F^\varphi_\mu(y)]| \\
                                    & \overset{(2)}{=} \textstyle \sup_y |\Eb[\Av(\bar{x})(\varphi(x,y)) - F^\varphi_\mu(y)]|.
  \end{align*}
Equality $(1)$ uses  \cref{fact:unique-definable-transfer}. Equality $(2)$ uses linearity of $\Eb$.

For part $(ii)$, we have
\[
    \int \chi_{\mu,n}^{\varphi}(\bar{x})d\mu^{(n)} \overset{(1)}{=} \left(\Eb[\chi_{\mu,n}^{\varphi}(\bar{x})]\right)^{r_{\mu^{(n)}}} \overset{(2)}{=} \left({\textstyle \sup_{y}}\Eb[|\Av(\xbar)(\varphi(x,y) - F_{\mu}^{\varphi}(y)|]\right)^{r_{\mu}^{(n)}}. 
\]
Equality $(1)$ uses Corollary \ref{cor:rmu-def}. Equality $(2)$ uses Corollary \ref{cor:prod-same}, the definition of $\chi^\varphi_{\mu,n}$, and  Fact \ref{fact:randomizationsb}. \hfill $\square_{\text{claim}}$\medskip

Part $(a)$ of the proposition now follows immediately from part $(ii)$ of the claim and  Lemma \ref{lem:fim-char-cont}, which we can apply to $T$ viewed as a continuous theory in the usual way. 

For part $(b)$, first note that for any $\cL$-formula $\varphi(x,y)$, part $(i)$ of the claim yields
\[
\left({\textstyle \sup_y|\Eb[\Av(\xbar)(\varphi(x,y))-F^\varphi_\mu(y)]|}\right)^{r^{(n)}_\mu} = \left(\chi^{\Eb[\varphi]}_{r_\mu,n}(\xbar)\right)^{r^{(n)}_\mu} = \int \chi^{\Eb[\varphi]}_{r_\mu,n}(\xbar)\, dr^{(n)}_\mu(\xbar).
\]
By Lemma \ref{lem:fim-char-cont} applied to the continuous theory $T^R$ and the type $r_\mu$, we see that $r_\mu$ satisfies the right-hand condition in $(b)$ if and only if, for every $\cL$-formula $\varphi(x,y)$, $r_\mu$ is \fim\ relative to $\Eb[\varphi(x,y)]$. By Fact \ref{fact:fimgs-cont}$(b)$, it follows that $r_\mu$ satisfies the right-hand condition in $(b)$ if and only if, for every $\cL$-formula $\varphi(x,y)$, $r_\mu$ is generically stable relative to $\Eb[\varphi(x,y)]$. In particular, this immediately yields the left-to-right implication in $(b)$. 

Conversely, suppose $r_\mu$ satisfies the right-hand condition in $(b)$. Let $(a_i)_{i<\omega}$ be a Morley sequence in $r_\mu$. Given an $\cL$-formula $\varphi(x,y)$, by the previous paragraph, we have that $r_\mu$ is generically stable relative to $\Eb[\varphi(x,y)]$. So by Fact \ref{fact:fimgs-cont}$(b)$, we have that  $\lim_{i\to \infty} \Eb[\varphi(a_i,b)] = F^{\Eb[\varphi]}_{r_\mu}(b)$ for all $\cL$-formulas $\varphi(x,y)$ and all $b$. By quantifier elimination and continuity, this implies that $\lim_{i \to \infty} \psi(a_i,b) = F^{\psi}_{r_\mu}(b)$ for all $\cL^R$-formulas $\psi(x,y)$ and parameters $b$. So $r_\mu$ is generically stable.
\end{proof}

\begin{cor}\label{cor:fim-transfer} 
Let $T$ be a discrete theory. Suppose $\mu \in \mathfrak{M}_{x}(\mathcal{U})$  is \fim\ over $M$. Then  $r_{\mu} \in S_{x}(\mathcal{C})$ is generically stable over $M^{\Omega}$ (and thus also \fim\ over $M^{\Omega}$). 
\end{cor}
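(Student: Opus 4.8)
The plan is to deduce Corollary \ref{cor:fim-transfer} almost immediately from Proposition \ref{prop:ABC} together with the elementary fact that Jensen's inequality relates the two displayed limits. First I would observe that for every $\cL$-formula $\varphi(x,y)$, the quantity
\[
\left(\sup_y|\Eb[\Av(\xbar)(\varphi(x,y))-F^\varphi_\mu(y)]|\right)^{r^{(n)}_\mu}
\]
appearing in part $(b)$ is pointwise bounded above by the quantity
\[
\left(\sup_y\Eb[|\Av(\xbar)(\varphi(x,y))-F^\varphi_\mu(y)|]\right)^{r^{(n)}_\mu}
\]
appearing in part $(a)$, since $|\Eb[Z]|\leq \Eb[|Z|]$ for any (bounded measurable) $Z$, and taking $\sup_y$ and evaluating at $r^{(n)}_\mu$ both preserve the inequality. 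Hence, if $\mu$ is \fim, then by Proposition \ref{prop:ABC}$(a)$ the larger limit is $0$ for every $\varphi$, so the smaller limit is $0$ for every $\varphi$ as well, and Proposition \ref{prop:ABC}$(b)$ yields that $r_\mu$ is generically stable.

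Next I would address the parenthetical claim that $r_\mu$ is then also \fim\ over $M^\Omega$. This follows because $r_\mu$ is definable over $M^\Omega$ by Remark \ref{remark:invariance}, hence $M^\Omega$-invariant, and a generically stable $M^\Omega$-invariant type is \fim\ over $M^\Omega$ by the continuous-logic equivalence recorded in Fact \ref{fact:fimgs-cont}$(b)$, applied formula-by-formula (i.e., $r_\mu$ is generically stable relative to every $\cL^R$-formula, hence \fim\ relative to every such formula, hence \fim). Alternatively one can cite the already-established chain inside the proof of Proposition \ref{prop:ABC}$(b)$, where generic stability of $r_\mu$ relative to each $\Eb[\varphi]$ was shown equivalent to \fim\ relative to each $\Eb[\varphi]$, and then use quantifier elimination to pass from $\Eb[\varphi]$-formulas to arbitrary $\cL^R$-formulas exactly as in the last paragraph of that proof.

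There is essentially no obstacle here: the real content was already extracted into Proposition \ref{prop:ABC}, and the only thing left is the trivial direction of Jensen's inequality (the "weak inequality between the formulas $\psi^1_n$ and $\psi^2_n$" alluded to in the introduction). The one point requiring a sentence of care is making sure the inequality $|\Eb[Z]|\le\Eb[|Z|]$ is applied inside the randomization at the level of the continuous formulas $\sup_y\Eb[\,\cdot\,]$ and $\sup_y|\Eb[\,\cdot\,]|$ and that evaluating at the Morley power $r^{(n)}_\mu$ respects it; both are immediate from monotonicity of expectation, of supremum, and of integration against a measure, so the whole argument is a short citation-level deduction.

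\begin{proof}
For any $\cL$-formula $\varphi(x,y)$ and any $n\geq 1$ we have the pointwise inequality of continuous formulas
\[
\textstyle\sup_y|\Eb[\Av(\xbar)(\varphi(x,y))-F^\varphi_\mu(y)]|\ \leq\ \sup_y\Eb[|\Av(\xbar)(\varphi(x,y))-F^\varphi_\mu(y)|],
\]
since $|\Eb[Z]|\leq\Eb[|Z|]$ and both $\sup_y$ and evaluation against a measure are monotone. Now suppose $\mu$ is \fim\ over $M$. By Proposition \ref{prop:ABC}$(a)$, for every $\cL$-formula $\varphi(x,y)$,
\[
\lim_{n\to\infty}\left(\textstyle\sup_y\Eb[|\Av(\xbar)(\varphi(x,y))-F^\varphi_\mu(y)|]\right)^{r^{(n)}_\mu}=0,
\]
so the inequality above forces
\[
\lim_{n\to\infty}\left(\textstyle\sup_y|\Eb[\Av(\xbar)(\varphi(x,y))-F^\varphi_\mu(y)]|\right)^{r^{(n)}_\mu}=0
\]
for every $\cL$-formula $\varphi(x,y)$. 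By Proposition \ref{prop:ABC}$(b)$, $r_\mu$ is generically stable. Finally, $r_\mu$ is definable over $M^\Omega$ by Remark \ref{remark:invariance}, hence $M^\Omega$-invariant, so by Fact \ref{fact:fimgs-cont}$(b)$ (applied to each $\cL^R$-formula, using quantifier elimination for $T^R$ as in the proof of Proposition \ref{prop:ABC}$(b)$) $r_\mu$ is \fim\ over $M^\Omega$.
\end{proof}
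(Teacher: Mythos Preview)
Your proof is correct and essentially identical to the paper's own argument: both apply Proposition~\ref{prop:ABC}$(a)$, then use Jensen's inequality (i.e., $|\Eb[Z]|\leq \Eb[|Z|]$) to pass to the hypothesis of Proposition~\ref{prop:ABC}$(b)$, and finally invoke Remark~\ref{remark:invariance} for the base $M^\Omega$. Your treatment of the parenthetical ``and thus also \fim'' via Fact~\ref{fact:fimgs-cont}$(b)$ is slightly more explicit than the paper's, but this is the intended justification.
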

\begin{proof}
Since $\mu$ is \fim, the right-hand side of Proposition \ref{prop:ABC}$(a)$ holds. By Jensen's inequality, this implies that the right-hand side of Proposition \ref{prop:ABC}$(b)$ holds. Thus $r_\mu$ is generically stable. Since $r_\mu$ is invariant over $M^\Omega$ (see Remark \ref{remark:invariance}), it follows that $r_\mu$ is generically stable over $M^\Omega$.
\end{proof}

We can establish the converse of Corollary \ref{cor:fim-transfer}  in the case of types. 

\begin{prop}\label{prop:rpgs}
Suppose $p\in S_x(\cU)$ is a definable type such that $r_p$ is generically stable. Then $p$ is generically stable. 
\end{prop}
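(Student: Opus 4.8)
The plan is to verify condition $(i)$ of Fact~\ref{fact:genstab} for $p$ over a model $M$ over which $p$ is definable: given a Morley sequence $(a_i)_{i<\omega}$ in $p$ over $M$, I want to show $\lim_{i\to\infty}\tp(a_i/\cU)=p$. The key observation is that the sequence of \emph{constant} functions $(f_{a_i})_{i<\omega}$ is a Morley sequence in $r_p$ over $M^{\Omega}$. Granting this, the proposition follows quickly. By Remark~\ref{remark:invariance} the type $r_p$ is definable, hence invariant, over $M^{\Omega}$, so $r_p$ is generically stable over $M^{\Omega}$. Thus, for an arbitrary $\cL$-formula $\varphi(x,y)$, applying Fact~\ref{fact:fimgs-cont}$(b)$ to the continuous theory $T^R$, the type $r_p$, the $\cL^R$-formula $\Eb[\varphi(x,y)]$, the Morley sequence $(f_{a_i})_{i<\omega}$, and a parameter $f_c$ for $c\in\cU^y$, gives $\lim_{i\to\infty}\Eb[\varphi(f_{a_i},f_c)]=F^{\Eb[\varphi(x,y)]}_{r_p}(f_c)$. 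Here $\Eb[\varphi(f_{a_i},f_c)]=\mathbf{1}[\cU\models\varphi(a_i,c)]$, and by the last clause of Fact~\ref{fact:unique-definable-transfer}, $F^{\Eb[\varphi(x,y)]}_{r_p}(f_c)=p(\varphi(x,c))=\mathbf{1}[\varphi(x,c)\in p]$. Since both sides are $\{0,1\}$-valued, the limit statement forces $\cU\models\varphi(a_i,c)\iff\varphi(x,c)\in p$ for all large $i$; as $\varphi$ and $c$ are arbitrary, $\lim_{i\to\infty}\tp(a_i/\cU)=p$, so $p$ is generically stable over $M$.

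It remains to justify the key observation. Since $r_p$ is $M^{\Omega}$-invariant, a Morley sequence in $r_p$ over $M^{\Omega}$ is exactly a realization of $(r_p)^{(\omega)}|_{M^{\Omega}}$, so it suffices to prove $\tp(f_{a_0},\dots,f_{a_{n-1}}/M^{\Omega})=(r_p)^{(n)}|_{M^{\Omega}}$ for every $n$. By quantifier elimination (Fact~\ref{fact:randomizationsQE}) and continuity it is enough to compute $\Eb[\varphi(f_{a_0},\dots,f_{a_{n-1}},\gbar)]$ for $\cL$-formulas $\varphi$ and $\gbar$ ranging over the metrically dense set $(M^{\Omega}_0)^y$ of finite-image measurable functions. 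For such a $\gbar$, constant on the blocks of a finite measurable partition $\Ac$ of $\Omega$, and using that $\tp(a_0,\dots,a_{n-1}/M)=p^{(n)}|_M$ and that $\gbar|_A\in M^y$ for each $A$,
\[
\Eb[\varphi(f_{a_0},\dots,f_{a_{n-1}},\gbar)]=\sum_{A\in\Ac}\Pb(A)\,\mathbf{1}[\cU\models\varphi(a_0,\dots,a_{n-1},\gbar|_A)]=\sum_{A\in\Ac}\Pb(A)\,F^{\varphi}_{p^{(n)}}(\gbar|_A).
\]
By Lemma~\ref{lem:randomization-of-continuous-function} the last sum equals $\Eb\big[F^{\varphi}_{p^{(n)}}\big](\gbar)$, which by Fact~\ref{fact:unique-definable-transfer} equals $\big(\Eb[\varphi(x_0,\dots,x_{n-1},\gbar)]\big)^{r_{p^{(n)}}}$, and $r_{p^{(n)}}=(r_p)^{(n)}$ by Corollary~\ref{cor:prod-same}. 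This is precisely the value that $(r_p)^{(n)}|_{M^{\Omega}}$ assigns to $\Eb[\varphi(x_0,\dots,x_{n-1},\gbar)]$, so the two types agree.

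The main obstacle is this last claim, i.e.\ correctly identifying $(f_{a_i})_{i<\omega}$ as a Morley sequence in $r_p$ over $M^{\Omega}$; this is exactly where one must commute the transfer $\mu\mapsto r_\mu$ with Morley products (Corollary~\ref{cor:prod-same}) and use the explicit description of $\Eb[F]$ on finite-image functions (Lemma~\ref{lem:randomization-of-continuous-function}). As an alternative to the last step, once $r_p$ is known to be a generically stable \emph{type} it is \fim\ relative to each $\Eb[\varphi]$ by Fact~\ref{fact:fimgs-cont}$(b)$, and one can instead compare the two characterizations in Proposition~\ref{prop:ABC}: the only difference there is whether the absolute value sits inside or outside of $\Eb$, and since $F^{\varphi}_p$ is $\{0,1\}$-valued one may replace $\varphi(x,y)$ by the $\cL_M$-formula asserting that $\varphi(x,y)$ disagrees with the ($M$-definable, clopen) $\varphi$-definition of $p$ — a formula whose associated function is identically $0$ and for which $\Av(\xbar)$ of it equals $|\Av(\xbar)(\varphi(x,y))-F^{\varphi}_p(y)|$ pointwise — thereby upgrading the characterization of ``$r_p$ is generically stable'' in Proposition~\ref{prop:ABC}$(b)$ to that of ``$p$ is \fim'' in Proposition~\ref{prop:ABC}$(a)$; then $p$ is generically stable by Fact~\ref{fact:CG}.
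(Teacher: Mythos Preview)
Your main argument is correct and is essentially the paper's proof run in the direct (rather than contrapositive) direction: both hinge on the observation that the constant functions $(f_{a_i})_{i<\omega}$ realize $(r_p)^{(\omega)}$ restricted to the base, which you verify via Corollary~\ref{cor:prod-same} and the explicit description of $\Eb[F]$ just as the paper does. A minor difference is that you work over the full model $M^{\Omega}$ while the paper only checks agreement over the set $M_c$ of constant functions, but this costs nothing extra.

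Your alternative route through Proposition~\ref{prop:ABC} is genuinely different and worth noting: replacing $\varphi(x,y)$ by the $\cL_M$-formula $\varphi(x,y)\,\triangle\,d_p\varphi(y)$ makes $F^{\psi}_p$ identically zero and forces $\Av(\xbar)(\psi(x,y))=|\Av(\xbar)(\varphi(x,y))-F^{\varphi}_p(y)|$, so the absolute value inside versus outside $\Eb$ becomes irrelevant and condition $(b)$ of Proposition~\ref{prop:ABC} collapses to condition $(a)$. This sidesteps the Morley-sequence computation entirely, at the cost of relying on the full strength of Proposition~\ref{prop:ABC} (and one must absorb the $M$-parameters of $d_p\varphi$ into the $y$-variable to stay within the hypotheses there, which is harmless since the supremum only gets larger). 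The paper's route is more self-contained; yours makes the reason for the type case being special (namely, that $F^{\varphi}_p$ is $\{0,1\}$-valued) completely transparent.
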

\begin{proof}
Let $p$ be definable over some $M \models T$. Suppose that $p$ is not generically stable. This implies that there is $a_{<\omega} \models p^{(\omega)}|_M$, $b$, and a formula $\varphi(x,y)$ such that $\lim_{i \to \infty} \varphi(a_i,b)$ does not exist.  

Let $N$ be a model containing $Ma_{<\omega}b$. Let $\Omega$ be an atomless probability algebra. For each $i<\omega$, let $a'_i$ be the element of $N^\Omega$ that is constantly equal to $a_i$. Likewise, let $b' \in N^\omega$ be the element constantly equal to $b$, and let $M' \subseteq N^\Omega$ be the set of elements $c'$ which are constantly equal to some $c \in M$. We clearly have that $\lim_{i \to \infty} \Eb[\varphi(a_i,b)]$ does not exist. So by Fact \ref{fact:fimgs-cont}$(b)$, to show that $r_p$ is not generically stable, it suffices to establish that $a'_{<\omega} \models r_p^{(\omega)}|_{M'}$. (Note that $r_p$ is automatically definable over $M'$.)

We know by \cref{cor:prod-same} that $r_p^{(\omega)}$ is the unique $M'$-definable type in $T^R$ corresponding to $p^{(\omega)}$. For any formula $\varphi(\bar{x},\bar{c})$ (in the language of $T$) with $\bar{c} \in M$, we have that $\Eb[\varphi(a'_{<\omega},\bar{c}')]$ is either $0$ or $1$. Furthermore, we have that each of the following is equivalent to the next:
    $\Eb[\varphi(a'_{<\omega},\bar{c}')] = 1$ $\Toot$ $\varphi(a_{<\omega},\bar{c})$ holds $\Toot$ 
    $F^{\varphi}_{p^{(\omega)}}(\bar{c}) = 1$ $\Toot$
    $\Eb[F^{\varphi}_{p^{(\omega)}}(\bar{c}')] = 1$ $\Toot$
    $F^{\Eb[\varphi]}_{r_p^{(\omega)}}(\bar{c}') = 1$. 
   Therefore, by quantifier elimination, we have that $a'_{<\omega} \models r_p^{(\omega)} | M'$. 
\end{proof}

\begin{remark}\label{rem:sa-random}
It is possible to characterize self-averaging measures (\cref{def:GSM}) in terms of randomizations. 
A definable measure $\mu$ in $\mathfrak{M}_{x}(\mathcal{U})$ is self-averaging if and only if $r_\mu$ is generically stable with respect to any formula with parameters that are constant functions (in the standard model of the randomization), i.e.,  for any $\cL$-formula $\varphi(x,y)$ and any $\e > 0$, there is an $n_{\varphi,\e}$ such that for any Morley sequence $(a_i)_{i< \omega}$ in $r_\mu$ and any $b \in \Uc$, 
\[
|\{i < \omega : |\Eb[\varphi(a_i,f_{b})] -(\Eb[\varphi(x,f_b)])^{r_{\mu}}| > \e\}|< n_{\varphi,\e}.
\] 
This characterization follows directly from saturation of the monster model. Since this property is entailed by generic stability of $r_\mu$, we have another proof of \cref{thm:fim-main}$(a)$: $\mu$ being \fim\ implies that $r_\mu$ is generically stable, which implies that $\mu$ is self-averaging. Note that if one could strengthen the above characterization to include formulas with arbitrary parameters, then this would yield the equivalence of self-averaging for $\mu$ and generic stability for $r_\mu$.
\end{remark}

\begin{remark}\label{rem:fim-transfer-NIP}
    If $T$ is NIP then Corollary \ref{cor:saNIP} and Remark \ref{rem:sa-random} yield the equivalence between our three main notions ($\mu$ is \fim,  $r_\mu$ is generically stable, $\mu$ is self-averaging). However, as noted by Will Johnson, one one can establish the equivalence between \fim\ for $\mu$ and generic stability for $r_\mu$ in the NIP case using only Proposition \ref{prop:prod-same-1} together with known results. In particular, assume $T$ is NIP. Then a measure $\mu\in\kM_x(\cU)$ is \fim\ if and only if it is self-commuting by \cite[Theorem 3.2]{HPS}. Moreover, since $T^R$ is also NIP  \cite{BY-NIP},  a type $p\in S_x(\mathcal{C})$ is generically stable if and only if it is self-commuting by \cite[Prop. 3.2]{HP} (adapted to continuous logic). Finally, Proposition \ref{prop:prod-same-1} shows that a definable measure $\mu$ is self-commuting if and only if $r_\mu$ is self-commuting. 
\end{remark}

\subsection{Randomization of \famm\ measures are \famm}\label{sec:fam}

In this section we show that the transfer map $\mu\mapsto r_\mu$ also preserves \famm. The proof will ultimately boil down to transferring \famm\ when $\mu$ is an average measure $\Av(\abar)$, which we can do using Corollary \ref{cor:fim-transfer} since $\mu$ is actually \fim\ in this case. It could be interesting to find an easier or more direct proof that $r_\mu$ is \famm\ when $\mu$ is an average measure. 

\begin{lemma} \label{lem:fam-transfer}
Suppose $\mu\in\kM(\cU)$ is definable over $M\prec\cU$.
Fix an $\cL$-formula $\varphi(x,y)$ and some $\epsilon > 0$. Assume $\abar = a_0,\ldots,a_n$ is a tuple from $M^{x}$ such that 
\begin{equation*} 
\sup_{b \in \mathcal{U}^{y}} |\mu(\varphi(x,b)) - \Av(\abar)(\varphi(x,b))| < \epsilon.
\end{equation*} 
Then 
\begin{equation*}
\sup_{g \in \mathcal{C}^{y}}|(\Eb[\varphi(x,g)])^{r_{\mu}} -  \Eb[\varphi(x,g)])^{r_{\Av(\abar)}}| \leq \epsilon.
\end{equation*} 
\end{lemma}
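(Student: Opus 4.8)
The plan is to first reduce the supremum over $\cC^y$ to a supremum over the dense sub-pre-structure $(\cU^\Omega_0)^y$, and then use the finite-partition description of transferred types from Lemma \ref{lem:randomization-of-continuous-function} to reduce the desired inequality to the hypothesis about constant-function parameters.

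First I would note that $\Av(\abar)$ is definable over $M$: the value $\Av(\abar)(\varphi(x,b))$ depends only on which of the finitely many formulas $\varphi(a_i,y)$ hold at $b$, so $F^\varphi_{\Av(\abar)}$ is locally constant, hence continuous. Thus $r_{\Av(\abar)}$ is a well-defined $\cU^\Omega$-definable type by Fact \ref{fact:unique-definable-transfer}, and the function
\[
\theta(y) \coloneqq \bigl|(\Eb[\varphi(x,y)])^{r_\mu} - (\Eb[\varphi(x,y)])^{r_{\Av(\abar)}}\bigr|
\]
is a definable predicate over $\cU^\Omega$ in $T^R$: by the identity $F^{\Eb[\varphi(x,\ybar)]}_{r_\mu} = \Eb[F^{\varphi(x,\ybar)}_{\mu}]$ from Fact \ref{fact:unique-definable-transfer} together with Lemma \ref{lem:randomization-of-continuous-function}, we have $(\Eb[\varphi(x,g)])^{r_\mu} = \Eb[F^\varphi_\mu](\tp(g/\cU^\Omega))$ and $(\Eb[\varphi(x,g)])^{r_{\Av(\abar)}} = \Eb[F^\varphi_{\Av(\abar)}](\tp(g/\cU^\Omega))$, and both depend continuously on $\tp(g/\cU^\Omega)$. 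Since $\cU^\Omega \prec \cC$, we get $\sup_{g \in \cC^y}\theta(g) = \sup_{g \in (\cU^\Omega)^y}\theta(g)$, and since $\cU^\Omega_0$ is metrically dense in $\cU^\Omega$ and $\theta$ is continuous, this equals $\sup_{g \in (\cU^\Omega_0)^y}\theta(g)$. So it suffices to show $\theta(g) < \epsilon$ for every $g \in (\cU^\Omega_0)^y$.

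Now I would fix such a $g$ and pick a finite measurable partition $\Ac$ of $\Omega$ on which each coordinate of $g$ is constant, so that every $g|_A$ lies in $\cU^y$. Applying equation $(\ast)$ of Lemma \ref{lem:randomization-of-continuous-function} to the continuous functions $F^\varphi_\mu$ and $F^\varphi_{\Av(\abar)}$ gives
\[
(\Eb[\varphi(x,g)])^{r_\mu} = \sum_{A \in \Ac}\Pb(A)\,\mu(\varphi(x,g|_A)), \qquad (\Eb[\varphi(x,g)])^{r_{\Av(\abar)}} = \sum_{A \in \Ac}\Pb(A)\,\Av(\abar)(\varphi(x,g|_A)).
\]
Subtracting, applying the triangle inequality, using the hypothesis $\sup_{b}|\mu(\varphi(x,b)) - \Av(\abar)(\varphi(x,b))| < \epsilon$ with $b = g|_A$ for each $A$, and $\sum_{A\in\Ac}\Pb(A) = 1$, yields $\theta(g) \le \sum_{A \in \Ac}\Pb(A)\,|\mu(\varphi(x,g|_A)) - \Av(\abar)(\varphi(x,g|_A))| < \epsilon$, as needed (the passage to $\sup$ over all of $\cC^y$ then gives the non-strict bound $\leq \epsilon$ in the statement).

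I do not expect a serious obstacle here. The only step requiring a little care is the reduction in the second paragraph — checking that $\theta$ genuinely is a definable predicate over $\cU^\Omega$, so that one may freely pass between $\cC^y$, $(\cU^\Omega)^y$, and $(\cU^\Omega_0)^y$ — but this is exactly the kind of bookkeeping already carried out in Remark \ref{remark:invariance}.
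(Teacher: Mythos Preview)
Your proposal is correct and follows essentially the same approach as the paper: reduce to parameters in $(\cU^\Omega_0)^y$ using $\cU^\Omega$-definability of $r_\mu$ and $r_{\Av(\abar)}$, then compute both transferred types over a finite measurable partition and apply the hypothesis pointwise. The only difference is that you spell out the reduction step in more detail than the paper does.
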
 

\begin{proof} 
Since both $r_{\mu}$ and $r_{\Av(\bar{a})}$ are $\mathcal{U}^{\Omega}$-definable types, it suffices to show the inequality for any $h \in (\mathcal{U}^{\Omega}_0)^{y}$. Choose a finite measurable partition $\mathcal{A}$ of $\Omega$ such that each member of $h$ is constant on each element of $\mathcal{A}$. Then
\begin{align*}
\left( \Eb[\varphi(x,h)] \right)^{r_{\mu}} &= \sum_{A \in \mathcal{A}} \mathbb{P}(A)\mu(\varphi(x,h|_{A}))\\
&\approx_{\epsilon} \sum_{A \in \mathcal{A}} \mathbb{P}(A) \Av(\overline{a})(\varphi(x,h|_{A}))\\
&= \left( \Eb[\varphi(x,h)] \right)^{r_{\Av(\abar)}} \qedhere
\end{align*}

\end{proof} 

\begin{remark} 
We take a moment to make a subtle point which arises in the proof of the next theorem. If we let $\abar =a_1,\ldots,a_n$ be elements in $\mathcal{U}^{x}$, then the type $r_{\Av(\abar)}$ is almost never a realized type. The only exception is when $\abar$ consists of a single element. 
\end{remark}

\begin{theorem}\label{thm:fim-transfer} 
If $\mu \in \mathfrak{M}_{x}(\mathcal{U})$ is \famm\ over $M$, then $r_{\mu}$ is \famm\ over $M^{\Omega}$. 
\end{theorem}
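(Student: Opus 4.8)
The plan is to interpose average measures between $\mu$ and $r_\mu$, as suggested in the remarks preceding the statement. Fix an $\cL^R$-formula $\psi(x,y)$ and $\e>0$; since \famm\ implies definability, $\mu$ is definable and $r_\mu$ is well-defined, and it suffices to produce a tuple $\hbarr$ from $(M^\Omega)^x$ with $\sup_{g\in\Cc^y}|(\psi(x,g))^{r_\mu}-\Av(\hbarr)(\psi(x,g))|\le\e$ (this witnesses that $r_\mu$ is \famm\ over $M^\Omega$, with $A=(M^\Omega)^x$). The key point is that if $\abar$ is a tuple from $M^x$ which approximates $\mu$ well, then on one hand $r_{\Av(\abar)}$ is uniformly close to $r_\mu$ by \cref{lem:fam-transfer}, while on the other hand $\Av(\abar)$ is \fim\ over $M$, so by \cref{cor:fim-transfer} the type $r_{\Av(\abar)}$ is generically stable, hence \fim, hence \famm, over $M^\Omega$ — and crucially this last \famm\ statement is available relative to $\psi$ itself, not merely relative to formulas of the form $\Eb[\varphi]$.

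The one thing requiring care is that \cref{lem:fam-transfer} only compares $r_\mu$ and $r_{\Av(\abar)}$ on formulas $\Eb[\varphi(x,y)]$ with $\varphi\in\cL$, whereas $\psi$ is an arbitrary $\cL^R$-formula. To bridge this I would first use quantifier elimination for $T^R$ (\cref{fact:randomizationsQE}), together with a standard Stone--Weierstrass argument, to find a continuous combination $\psi'(x,y)=c(\Eb[\varphi_1(x,y)],\dots,\Eb[\varphi_k(x,y)])$ with $\varphi_i\in\cL$ and $c\colon[0,1]^k\to[0,1]$ uniformly continuous, such that $\lVert\psi-\psi'\rVert_\infty$ is as small as desired. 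Since $r_\mu$ and $r_{\Av(\abar)}$ are \emph{types} (\cref{fact:unique-definable-transfer}), the value each assigns to $\psi'$ is obtained by applying $c$ to the values it assigns to $\Eb[\varphi_1(x,y)],\dots,\Eb[\varphi_k(x,y)]$. So if $\abar$ is chosen, using \famm\ of $\mu$ over $M$ (and the routine reduction of finitely many formulas to one, e.g.\ by encoding $\varphi_1,\dots,\varphi_k$ as instances of a single $\cL$-formula via an auxiliary parameter), to be a simultaneous $\eta$-approximation of $\mu$ for $\varphi_1,\dots,\varphi_k$, then \cref{lem:fam-transfer} applied to each $\varphi_i$ gives $|(\Eb[\varphi_i(x,g)])^{r_\mu}-(\Eb[\varphi_i(x,g)])^{r_{\Av(\abar)}}|\le\eta$ for all $g$, and uniform continuity of $c$ upgrades this to $\sup_{g}|(\psi'(x,g))^{r_\mu}-(\psi'(x,g))^{r_{\Av(\abar)}}|\le\omega_c(\eta)$; absorbing the $\lVert\psi-\psi'\rVert_\infty$ errors then yields $\sup_{g}|(\psi(x,g))^{r_\mu}-(\psi(x,g))^{r_{\Av(\abar)}}|<\e/2$, provided $\lVert\psi-\psi'\rVert_\infty$ and $\eta$ were chosen small enough \emph{after} fixing $\psi$ (hence $c$ and its modulus of continuity $\omega_c$).

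With $\abar$ fixed, $\Av(\abar)$ is \fim\ over $M$ (a standard fact about average measures over a model, as noted before \cref{lem:fam-transfer}), so by \cref{cor:fim-transfer} the type $r_{\Av(\abar)}$ is generically stable, hence \fim, hence \famm, over $M^\Omega$. Applying \famm\ of $r_{\Av(\abar)}$ to the $\cL^R$-formula $\psi$ with accuracy $\e/2$ produces a tuple $\hbarr$ from $(M^\Omega)^x$ with $\sup_{g}|(\psi(x,g))^{r_{\Av(\abar)}}-\Av(\hbarr)(\psi(x,g))|\le\e/2$. The triangle inequality then gives $\sup_{g}|(\psi(x,g))^{r_\mu}-\Av(\hbarr)(\psi(x,g))|\le\e$, as required.

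The main obstacle is conceptual rather than computational: \famm\ must be checked against \emph{every} $\cL^R$-formula, and average measures do not commute with continuous connectives, so there is no naive reduction to formulas of the form $\Eb[\varphi]$. The resolution is the asymmetry above — the work of handling arbitrary $\cL^R$-formulas is carried out by $\Av(\abar)$ itself (via its being \fim), while the quantifier-elimination step combined with \cref{lem:fam-transfer} is only needed to transfer the softer statement that $r_\mu$ and $r_{\Av(\abar)}$ are uniformly close, which \emph{does} reduce coordinatewise to the $\Eb[\varphi_i]$-formulas precisely because types commute with connectives. A secondary point to get right is the order of the parameters: the approximation quality $\eta$ of $\abar$ and the accuracy $\lVert\psi-\psi'\rVert_\infty$ must be fixed only once $\psi$, the $\varphi_i$, and the modulus $\omega_c$ are fixed.
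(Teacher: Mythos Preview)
Your proposal is correct and follows essentially the same route as the paper: reduce via quantifier elimination to formulas of the form $c(\Eb[\varphi_1],\dots,\Eb[\varphi_k])$, use \famm\ of $\mu$ (with a standard encoding to handle finitely many $\varphi_i$ at once) to find $\abar$ in $M$, invoke \cref{lem:fam-transfer} coordinatewise together with uniform continuity of $c$ to see that $r_\mu$ and $r_{\Av(\abar)}$ are uniformly close on such formulas, and then use \cref{cor:fim-transfer} applied to the \fim\ measure $\Av(\abar)$ to get that $r_{\Av(\abar)}$ is \famm\ over $M^\Omega$ and hence supplies the approximating tuple. The only cosmetic difference is that the paper works directly with $\Psi=c(\Eb[\varphi_1],\dots,\Eb[\varphi_k])$ and asserts that checking \famm\ on these suffices, whereas you carry an explicit $\lVert\psi-\psi'\rVert_\infty$ error through the triangle inequality; the underlying argument and $\e$-bookkeeping are the same.
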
 

\begin{proof} 
First, note that $\mu$ is definable over $M$ and so $r_\mu$ is well-defined.
Suppose we are given $\mathcal{L}$-formulas $\psi_1(x,y),\ldots,\psi_n(x,y)$ and $\epsilon > 0$. We want to show that for any continuous connective $c\colon [0,1]^{n} \to [0,1]$, we have a \famm\ approximation for the formula $\Psi(x,y):=c(\mathbb{E}[\psi_1(x,y)],\ldots,\mathbb{E}[\psi_n(x,y)])$ (by quantifier elimination for randomizations, it is sufficient to check formulas of this form). Recall that $c$ is uniformly continuous and so for any $\epsilon > 0$ there exists some $\delta_{\epsilon}$ such that if $d(a_i,b_i) \leq \delta_{\epsilon}$ for each $i \leq n$, then $d(c(\abar),c(\bbar)) < \epsilon$. 

By a standard encoding argument, there exists $\abar = a_1,\ldots,a_n$ in $M^{x}$ such that for each $i \leq n$,
\begin{equation*} 
\sup_{b \in \mathcal{U}^{y}} |\mu(\varphi_i(x,b)) - \Av(\abar)(\varphi_i(x,b))| < \delta_{\frac{\epsilon}{3}}.\tag{$\dagger$}
\end{equation*} 
Since $\Av(\abar)$ is \fim\ over $M$, $r_{\Av(\abar)}$ is generically stable over $M^{\Omega}$ (by Corollary \ref{cor:fim-transfer}) and hence \famm\ over $M^{\Omega}$. Hence there exists $\fbar = f_1,\ldots,f_m$ from $M^{\Omega}$ such that 
\begin{equation*} 
\sup_{g \in \mathcal{C}^{y}} |(\Psi(x,g))^{r_{\Av(\abar)}} - \Av(\fbar)(\Psi(x,g))| < \frac{\epsilon}{3}\tag{$\dagger\dagger$}
\end{equation*} 
Therefore, for any $g \in \mathcal{C}^{y}$, we have  
\begin{align*} 
(\Psi(x,g))^{r_{\mu}} &= c\Big((\Eb[\varphi_{1}(x,g)])^{r_{\mu}},\ldots,(\Eb[\varphi_{n}(x,g)])^{r_{\mu}} \Big) \\
& \approx_{\frac{\epsilon}{3}} c\Big((\Eb[\varphi_{1}(x,g)])^{r_{\Av(\abar)}},\ldots,(\Eb[\varphi_{n}(x,g)])^{{r_{\Av(\abar)}}} \Big) \\
&= \Big(\Psi(x,g) \Big)^{r_{\Av(\abar)}} \\
&\approx_{\frac{\epsilon}{3}} \Av(\fbar)(\Psi(y,g)), 
\end{align*} 
where the first approximation follows from $(\dagger)$, Lemma \ref{lem:fam-transfer}, and choice of $\delta_{\frac{\epsilon}{3}}$, and the second approximation follows from $(\dagger\dagger)$.
Thus $r_{\mu}$ is \famm\ over $M^{\Omega}$. 
\end{proof}

\subsection{Randomization of \dfs\ measures need not be \dfs}\label{sec:dfs}

Here we will give an example showing that \emph{dfs} can fail to transfer through the map $\mu\mapsto r_\mu$. We work with the (discrete) theory $\Ti$ defined in Section 7 of \cite{CoGaHa}. Familiarity with this theory is assumed. 

\begin{proposition}
Let $\cU$ be a monster model of $\Ti$, and let $q\in S_Q(\cU)$ be the type defined in \cite[Cor.~7.12]{CoGaHa}. Then $q$ is $\emptyset$-definable and finitely satisfiable in every small model of $\Ti$, but $r_q$ is not finitely satisfiable in any small model of $(\Ti)^R$. 
\end{proposition}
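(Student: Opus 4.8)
The proof splits into two essentially separate tasks: (1) recall/verify the properties of $q$ coming from \cite{CoGaHa} — that $q\in S_Q(\cU)$ is $\emptyset$-definable and finitely satisfiable in every small model of $\Ti$; and (2) show $r_q$ is \emph{not} finitely satisfiable in any small model of $(\Ti)^R$. For (1), since $q$ is by construction the type from \cite[Cor.~7.12]{CoGaHa}, we can simply cite the relevant facts there: $\emptyset$-definability and the finite satisfiability in every small model are established in that section (this is precisely why $\Ti$ and $q$ were built). So the substantive work is entirely in (2).

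\textbf{The core argument for (2).} The idea is that finite satisfiability of $r_q$ in a small model $N_0\prec\mathcal{C}$ of $(\Ti)^R$ would force, via the structure of the randomization, a form of finite satisfiability of $q$ that $\Ti$ does not provide — more precisely, it would force the \emph{values} $\mu(\varphi(x,b))$ (here $q$ is a type, so these values are $0$ or $1$) to be approximable by instances $\mathbb{E}[\varphi(f_a,g)]$ for $f_a$ in $N_0$, and unpacking an element $f_a\in N_0$ into its (essentially finitely many, up to $\e$) constant values $a\in\cU$ will produce elements of a small model of $\Ti$ that witness the "wrong" behavior. Concretely: suppose $r_q$ is finitely satisfiable in $N_0$. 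By Fact~\ref{fact:unique-definable-transfer} we have $(\Eb[\varphi(x,f_b)])^{r_q}=q(\varphi(x,b))\in\{0,1\}$ for each $\cL$-formula $\varphi(x,y)$ and $b\in\cU^y$. One then exhibits an $\cL$-formula $\varphi(x,y)$ and a parameter $b\in\cU$ such that $q(\varphi(x,b))=1$ (say) but no realization of $q$ restricted to any small model can witness $\varphi$ with $b$ — i.e., the specific failure of a stronger form of finite satisfiability that the construction in \cite[Sec.~7]{CoGaHa} was designed to exhibit for $q$. Because $r_q$ finitely satisfiable in $N_0$ means: for the continuous condition "$\mathbb{E}[\varphi(x,f_b)]=1$" (or rather, for any $\e>0$, the condition "$\mathbb{E}[\varphi(x,f_b)]>1-\e$"), there is $h\in N_0^x$ with $\mathbb{E}[\varphi(h,f_b)]>1-\e$. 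Approximating $h$ by an element $h'\in (N_0)_0^\Omega$ of $N_0$'s dense pre-substructure, $h'$ takes finitely many constant values $a_1,\dots,a_k$ on a measurable partition, each $a_j$ lying in a fixed small model $M\prec\cU$ of $\Ti$ (the "trace" of $N_0$). Then $\mathbb{E}[\varphi(h',f_b)]>1-\e$ forces $\mathbb{P}$-most $t$ to satisfy $\cU\models\varphi(h'(t),b)$, hence some constant value $a_j\in M$ satisfies $\cU\models\varphi(a_j,b)$. Choosing $\varphi$ and $b$ so that $q(\varphi(x,b))=1$ but $\varphi(x,b)$ is \emph{not} realized in any small model (or, more carefully, choosing a definable family so that no single small model realizes the whole "tail" of $q$) yields the contradiction.

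\textbf{The main obstacle.} The delicate point is pinning down \emph{which} failure of finite satisfiability $q$ actually exhibits. By hypothesis $q$ \emph{is} finitely satisfiable in every small model of $\Ti$, so a single instance $\varphi(x,b)\in q$ is always realized in any small model — the above naive argument can't work as stated. The real obstruction for $r_q$ must be more subtle: it should come from the fact that a single element $h\in N_0$ of the randomization packages together \emph{infinitely many} constant values only up to metric approximation, so that while each finite fragment of $q$ is satisfiable in a small model of $\Ti$, the "diagonal" behavior needed to satisfy $\mathbb{E}[\varphi(x,f_b)]=1$ simultaneously for a suitable \emph{sequence} of formulas/parameters built into the construction of $\Ti$ is not. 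So the plan is: identify, from \cite[Sec.~7]{CoGaHa}, the specific sequence of instances $(\varphi_n(x,b_n))_{n<\omega}$ all in $q$ whose simultaneous realization requires an "unbounded" model, observe that $\mathbb{E}[\bigwedge_{n\le N}\varphi_n(x,f_{b_n})]$ having value $1$ under $r_q$ (which follows from Fact~\ref{fact:unique-definable-transfer} since $q$ is a type, so all these conjunctions have $q$-value $1$), and then finite satisfiability of $r_q$ in $N_0$ would force, for each $N$, an element of $N_0$ whose constant values realize $\bigwedge_{n\le N}\varphi_n(x,b_n)$ in $\cU$ — and pushing $N\to\infty$ together with the smallness of $N_0$ (its constant values lie in a single small $M\prec\cU$) contradicts the defining property of $\Ti$ that makes $q$ not "uniformly" finitely satisfiable. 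Verifying this last step requires a careful look at the precise combinatorics of $\Ti$ in \cite{CoGaHa}; that bookkeeping is where the real care is needed, and it is where I would expect to spend most of the effort.
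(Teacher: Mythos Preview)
Your plan for part (2) has a real gap. The whole argument runs through closed conditions of the form $\Eb[\psi(x,f_{\bbar})]=1$ with \emph{constant} parameters $f_{\bbar}$ coming from $\cU$. But for such conditions the finite satisfiability of $r_q$ in a small $N_0$ gives you nothing beyond finite satisfiability of $q$ itself: if $h\in N_0$ has $\Eb[\bigwedge_{n\le N}\varphi_n(h,f_{b_n})]>1-\e$ and you unpack $h$ into constant values $a_j\in M$ (via \cite[Prop.~2.1.10]{AnGoKe}), you deduce that some $a_j\in M$ satisfies $\bigwedge_{n\le N}\varphi_n(x,b_n)$. That is exactly the statement that $q$ is finitely satisfiable in $M$, which you already know. ``Pushing $N\to\infty$'' does not help, because the witnessing $a_j$ is allowed to depend on $N$; you never produce a single element of $M$ realizing all of $q$. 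In short, constant-parameter conditions in $r_q$ cannot distinguish the randomization from the base theory here, so this route cannot yield a contradiction.

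The paper's proof supplies the missing idea: one must use a \emph{non-constant} parameter in $\mathcal{C}$. Concretely, one takes the global measure $\mu$ from \cite[Lem.~7.13]{CoGaHa} (so that $\mu(a\sqin y)=\textnormal{st}(\ell(a))$), lets $a\in\mathcal{C}$ realize $r_\mu$ over the relevant parameters, and observes that $r_q$ contains both $\Eb[a\sqin y]=1$ and $\Eb[\ell(y)=\tfrac{1}{2}]=1$. Finite satisfiability of $r_q$ in $N$ then yields $b\in N$ with $\Eb[a\sqin b]>\tfrac{3}{4}$ and $\Eb[\ell(b)=\tfrac{1}{2}]>\tfrac{3}{4}$. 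Because $a$ realizes $r_\mu$, the first quantity is computed by the defining scheme of $r_\mu$ as $\int_\Omega \textnormal{st}(\ell(b(\omega)))\,d\nu$, and the second forces this integral to be at most $\tfrac{1}{2}\cdot\tfrac{3}{4}+\tfrac{1}{4}<\tfrac{3}{4}$, a numerical contradiction. The point is that the obstruction lives in the interaction between $q$ and the genuinely random parameter $a$; no sequence of constant parameters can see it.
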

\begin{proof}
    It is established in \cite[Cor.~7.12]{CoGaHa} that $q$ is $\emptyset$-definable and finitely satisfiable in every small model of $\Ti$. Let $N$ be a small model of $(T_{\nicefrac{1}{2}}^\infty)^R$ and, toward a contradiction, assume that $r_q$ is finitely satisfiable in $N$.

  By \cite[Prop.~2.1.10]{AnGoKe}, we may assume that there is a model $M$ of $T_{\nicefrac{1}{2}}^\infty$ and an atomless probability space $(\Omega,\mathcal{B},\nu)$ such that $N$ is isomorphic to (the metric reduction of) some collection of functions from $\Omega$ to $M$ that are compatible with the measure on $\Omega$ in the appropriate way.

 Let $\mu$ be the measure defined in \cite[Lem.~7.13]{CoGaHa}, and  
  let $a$ be a realization of $r_\mu|_M$. By definition of $q$, $r_q$ contains the closed conditions $\Eb[a \sqin y] = 1$ and $\Eb[\ell(y) = \frac{1}{2}] = 1$. This means that there must be some $b \in M$ such that $\Eb[a\sqin b] > \frac{3}{4}$ and $\Eb[\ell(b) = \frac{1}{2}] > \frac{3}{4}$.

  By the definition of $\mu(x)$, we have that for each $c \in Q^M$,
  \[
    \Eb[a \sqin c] = \int_{ \Omega} \textnormal{st}(\ell(c(\omega)))\, d\nu,
  \]
  where $\textnormal{st}$ is the standard part map taking elements of $[0,1]^M$ to $[0,1] \subset \mathbb{R}$. Since $\Eb[\ell(b) = \frac{1}{2}] > \frac{3}{4}$, we must have that
  \[
    \frac{3}{4} < \Eb[a \sqin b] = \int_{ \Omega} \textnormal{st}(\ell(b(\omega)))\, d\nu < \frac{1}{2}\cdot\frac{3}{4} + \frac{1}{4} < \frac{3}{4},
  \]
  which is absurd.
\end{proof}

\section{Generically stable types and NTP$_2$}\label{sec:NTP2}

As stated in the introduction, the following question remains open for an arbitrary theory (see \cite[Section 8.1]{CoGaHa} for further related discussion).

\begin{question}
    Is the Morley product of two generically stable types  generically stable?
\end{question}

The goal of this section is to give a combinatorial characterization of theories in which the answer to this question is positive. As an application, we will  show that generically stable types are closed under Morley products in any NTP$_2$ theory. Throughout this section, $T$ is a complete first-order theory with monster model $\cU$. The notion of an ``\textit{ict}-pattern" comes from work of Shelah \cite{Sh783} (see also \cite[Fact 3.7]{ChNTP2}).  We recall the following special case.

\begin{definition}\label{def:urp}
A \textbf{uniform \textit{ict}-pattern (in $T$)} consists of a formula $\varphi(x,y)$ and an array $\{a_{i,j}:i,j<\omega\}$ from $\Uc^x$ such that for any function $f\colon\omega\to\omega$, the set $\{\varphi(a_{i,j},y)^{f(i)=j}:i,j<\omega\}$ is consistent.
\end{definition}

The next fact is a standard exercise.

\begin{fact}\label{fact:NIPrp}
$T$ is \textnormal{NIP} if and only if it does not admit a uniform ict-pattern.
\end{fact}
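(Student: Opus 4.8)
The plan is to prove the contrapositive-flavored equivalence at the level of a single formula: a formula $\varphi(x,y)$ has the independence property (IP) if and only if $\varphi$ can serve as the formula in a uniform ict-pattern of the ``array along graphs of functions'' shape in \cref{def:urp}. Since $T$ is NIP exactly when no formula has IP, and $T$ admits a uniform ict-pattern exactly when some formula does, this local equivalence immediately yields the fact. The whole argument is essentially the observation that shattering an $\omega$-indexed sequence and shattering an $\omega\times\omega$-array ``along graphs of functions $\omega\to\omega$'' are the same thing, so no serious obstacle arises; the only thing needing care is keeping the $\psi^{\epsilon}$-bookkeeping straight.

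For the direction IP $\Rightarrow$ uniform ict-pattern: suppose $\varphi(x,y)$ has IP, witnessed by $(a_n)_{n<\omega}$ in $\cU^x$, so that for every $S\subseteq\omega$ there is $b_S\in\cU^y$ with $\cU\models\varphi(a_n,b_S)$ iff $n\in S$. Fix a bijection $\pi\colon\omega\times\omega\to\omega$ and set $a_{i,j}:=a_{\pi(i,j)}$. Given any $f\colon\omega\to\omega$, let $S_f:=\{\pi(i,f(i)):i<\omega\}\subseteq\omega$ be the image of the graph of $f$ under $\pi$; then $b_{S_f}$ satisfies $\cU\models\varphi(a_{i,j},b_{S_f})$ iff $\pi(i,j)\in S_f$ iff $j=f(i)$. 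Thus $b_{S_f}$ realizes $\{\varphi(a_{i,j},y)^{f(i)=j}:i,j<\omega\}$, so this set is consistent, and $\varphi$ together with $\{a_{i,j}:i,j<\omega\}$ is a uniform ict-pattern. (If one instead wants the shattered array produced directly by compactness, the standard partial type asserting shattering of an $\omega\times\omega$-array is finitely satisfiable by the definition of IP and hence realized in $\cU$; this is the one slightly fussy step, and it is optional given the reindexing above.)

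For the direction uniform ict-pattern $\Rightarrow$ IP: suppose $\varphi(x,y)$ and $\{a_{i,j}:i,j<\omega\}$ form a uniform ict-pattern. Set $c_i:=a_{i,0}$ for $i<\omega$. Given $S\subseteq\omega$, define $f_S\colon\omega\to\omega$ by $f_S(i)=0$ for $i\in S$ and $f_S(i)=1$ for $i\notin S$, and pick $b_S\in\cU^y$ realizing the (consistent) set $\{\varphi(a_{i,j},y)^{f_S(i)=j}:i,j<\omega\}$. Restricting attention to the instances with $j=0$: if $i\in S$ then $f_S(i)=0$, so $b_S\models\varphi(a_{i,0},y)$, i.e. $\cU\models\varphi(c_i,b_S)$; if $i\notin S$ then $f_S(i)=1\neq 0$, so $b_S\models\neg\varphi(a_{i,0},y)$, i.e. $\cU\models\neg\varphi(c_i,b_S)$. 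Hence $(c_i)_{i<\omega}$ is shattered by $\varphi$, so $\varphi$ has IP. Assembling the two directions: if $T$ is not NIP then some $\varphi$ has IP and hence $T$ admits a uniform ict-pattern; conversely, a uniform ict-pattern uses some formula $\varphi$, which then has IP, so $T$ is not NIP.
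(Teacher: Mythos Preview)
Your proof is correct. The paper does not actually prove this fact; it merely labels it a ``standard exercise,'' and your argument is exactly the standard one: reindex a shattered $\omega$-sequence as an $\omega\times\omega$-array for one direction, and read off shattering of the zeroth column using $\{0,1\}$-valued functions for the other.
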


\begin{remark}\label{rem:bij}
The existence of uniform \textit{ict}-patterns would not be affected by restricting to bijections in Definition \ref{def:urp}. Specifically, suppose  there is a formula $\varphi(x,y)$ and an array $\{a_{i,j}:i,j<\omega\}$ in $\Uc^x$ such that for any bijection $f\colon \omega\to\omega$, $\{\varphi(a_{i,j},y)^{f(i)=j}:i,j<\omega\}$ is consistent. Then we claim that $T$ admits a uniform \textit{ict}-pattern using the same formula $\varphi(x,y)$. Indeed, by compactness we may extend the array to be indexed $\{a_{i,j}:i,j<\omega^2\}$, while maintaining the same conclusion for any bijection $f\colon\omega^2\to\omega^2$. Then $\varphi(x,y)$ yields a uniform \textit{ict}-pattern using the sub-array $\{a_{i,\omega\cdot i+j}:i,j<\omega\}$. In particular, given an arbitrary function $g\colon \omega\to\omega$, one can construct a bijection $f\colon \omega^2\to\omega^2$ with the property that $f(i)=\omega\cdot i+g(i)$ for all $i<\omega$.
\end{remark}

We now introduce a new kind of \textit{ict}-pattern, which is built using stable sequences (recall Definition \ref{def:stabseq}) and involves a tri-partitioned formula. 

\begin{definition}\label{def:gss}
A \textbf{stable \textit{ict}-pattern (in $T$)} consists of a formula $\varphi(x,y,z)$ and stable sequences $(a_i)_{i<\omega}$ from $\Uc^x$ and $(b_i)_{i<\omega}$ from $\Uc^y$ such that for any bijection $f\colon\omega\to\omega$, the set $\{\varphi(a_i,b_j,z)^{f(i)=j}:i,j<\omega\}$ is consistent.
\end{definition}

\begin{remark}$~$
\begin{enumerate}[$(a)$]
\item If $T$ admits a stable \textit{ict}-pattern involving the formula $\varphi(x,y,z)$ then,  by Remark \ref{rem:bij}, it admits a uniform \textit{ict}-pattern involving $\varphi(x,y;z)$.
\item If one replaces ``bijection" with ``function" in Definition \ref{def:gss}, then the resulting pattern is impossible. For example, if $(a_i)_{i<\omega}$ from $\Uc^x$ is stable, $(b_i)_{i<\omega}$ from $\Uc^y$ is arbitrary, and $f\colon\omega\to\omega$ has an infinite/co-infinite fiber, then for any $\varphi(x,y,z)$, the set  $\{\varphi(a_i,b_j,z)^{f(i)=j}:i,j<\omega\}$ is inconsistent. 
\end{enumerate}
\end{remark}

The following  straightforward observations concerning Morley products will be needed for the main result below.

\begin{proposition}\label{prop:sec4tech}
    Fix an integer $\ell\geq 1$.
    \begin{enumerate}[$(a)$]
\item For each $1\leq t\leq \ell$, let $(a^t_i)_{i<\omega}$ be a sequence from $\Uc^{x_t}$ such that $p_t:=\Avtp((a^t_i)_{i<\omega})$ is a  complete type in $S_{x_t}(\Uc)$. Then an  $\Lc_{\Uc}$-formula $\theta(x_1,\ldots,x_\ell)$ is in $p_1\otimes\ldots\otimes p_\ell$ if and only if 
\[
\forall^\infty i_1,\ldots,\forall^\infty i_\ell,~ \Uc\models \theta(a^1_{i_1},\ldots,a^\ell_{i_\ell})
\]
(where $\forall^\infty$ denotes ``for all but finitely many").
\item For each $1\leq t\leq \ell$, let $p_t\in S_{x_t}(\Uc)$ be an $M$-invariant type. Assume that any two (not necessarily distinct) types chosen from $\{p_1,\ldots,p_\ell\}$ commute. Then for any Morley sequence $(a_{1,i}\ldots a_{\ell,i})_{i<\omega}$ in $p_1\otimes\ldots\otimes p_\ell$ over $M$, the sets $\{a_{t,i}:i<\omega\}$ for $1\leq t\leq \ell$ are mutually indiscernible over $M$. 
    \end{enumerate}
\end{proposition}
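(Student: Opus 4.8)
The plan is to derive both parts from the ``realise‑in‑order'' description of Morley products of invariant types, together with two standard facts: an increasing subsequence of a Morley sequence in an invariant type is again a Morley sequence in that type (over the same base, or over any subset of it), and the Morley product of invariant types is associative — so that, under the pairwise commutation hypothesis of $(b)$, the factors of an iterated product may be permuted freely.

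For $(a)$, I would first record two consequences of the hypothesis that $p_t = \Avtp((a^t_i)_{i<\omega})$ is a complete type. The first is that $p_t$ is invariant over $B_t := \{a^t_i : i<\omega\}$, since whether $\varphi(x,b) \in p_t$ depends only on $\tp(b/B_t)$. The second is that, precisely because $p_t$ is complete, $\varphi(x,b) \in p_t$ holds if and only if $\{i : \Uc \models \varphi(a^t_i,b)\}$ is \emph{cofinite} (the complementary set cannot also be infinite); this in particular shows $F^\varphi_{p_t,B_t}$ is $F_\sigma$, so $p_t$ is Borel‑definable and the iterated product is well defined. Next I would fix a small model $B \prec \Uc$ containing $\bigcup_t B_t$ and the parameters of $\theta$, and realise inside $\Uc$ a tuple $(c_1,\dots,c_\ell)$ with $c_\ell \models p_\ell|_B$, $c_{\ell-1} \models p_{\ell-1}|_{Bc_\ell}$, \dots, $c_1 \models p_1|_{Bc_2\cdots c_\ell}$; then $\tp(c_1\cdots c_\ell/B) = (p_1\otimes\cdots\otimes p_\ell)|_B$, so $\theta(x_1,\dots,x_\ell) \in p_1\otimes\cdots\otimes p_\ell$ iff $\Uc \models \theta(c_1,\dots,c_\ell)$. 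The proof then finishes by peeling off the $c_t$ one at a time: using $c_1 \models p_1|_{Bc_2\cdots c_\ell}$ and the ``cofinite'' characterisation, $\Uc\models\theta(c_1,\dots,c_\ell)$ iff $\theta(x_1,c_2,\dots,c_\ell) \in p_1$ iff $\forall^\infty i_1,\ \Uc\models\theta(a^1_{i_1},c_2,\dots,c_\ell)$; for each such $i_1$ we have $a^1_{i_1}\in B$ and $c_2 \models p_2|_{Bc_3\cdots c_\ell}$, so the same step peels off $c_2$, and so on down to $c_\ell$, producing exactly the nested condition $\forall^\infty i_1,\dots,\forall^\infty i_\ell,\ \Uc\models\theta(a^1_{i_1},\dots,a^\ell_{i_\ell})$.

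For $(b)$, write $q = p_1\otimes\cdots\otimes p_\ell$ (an $M$‑invariant type) and let $(\bar a_i)_{i<\omega}$, $\bar a_i = (a_{1,i},\dots,a_{\ell,i})$, be a Morley sequence in $q$ over $M$. Fix $n$ and a large $P$. Then $(\bar a_i)_{i<P} \models q^{(P)}|_M$; regarding $q^{(P)}$ as a type in the variables $(x_{t,k})_{t\le\ell,\,k<P}$, associativity together with the commutation hypothesis — applied to \emph{possibly‑equal} pairs among $p_1,\dots,p_\ell$, which is why ``not necessarily distinct'' is needed — lets me permute the tensor factors of $q^{(P)} = \bigotimes_{k<P}\bigotimes_{t\le\ell}p_t(x_{t,k})$ into $\bigotimes_{t\le\ell}\bigotimes_{k<P}p_t(x_{t,k}) = p_1^{(P)}\otimes\cdots\otimes p_\ell^{(P)}$. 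Hence $\big((a_{t,k})_{k<P}\big)_{t\le\ell} \models \big(p_1^{(P)}\otimes\cdots\otimes p_\ell^{(P)}\big)|_M$. Now for any strictly increasing index tuples $\bar v^1,\dots,\bar v^\ell$ of length $n$ from $\{0,\dots,P-1\}$, I claim $\big((a_{t,v^t_k})_{k<n}\big)_{t\le\ell} \models \big(p_1^{(n)}\otimes\cdots\otimes p_\ell^{(n)}\big)|_M$, the same type regardless of the $\bar v^t$: reading off the realise‑in‑order description, the row $(a_{t,k})_{k<P}$ is a Morley sequence in $p_t$ over $M$ together with the rows of index $>t$; the subsequence indexed by $\bar v^t$ is again a Morley sequence in $p_t$ over that set, hence over its restriction to the rows of index $>t$ already selected by the $\bar v^s$; assembling these facts across $t$ realises $p_1^{(n)}\otimes\cdots\otimes p_\ell^{(n)}$ over $M$. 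Taking $\bar v^t = (0,1,\dots,n-1)$ for all $t$ identifies this fixed type with $\tp\big((a_{t,k})_{t\le\ell,\,k<n}/M\big)$, so for arbitrary $\bar v^t$ we get $\tp\big((a_{t,v^t_k})_{t,k}/M\big) = \tp\big((a_{t,k})_{t,k}/M\big)$. Letting $n$ vary, this ``independent shuffle'' property is exactly mutual indiscernibility of $(\{a_{t,i}:i<\omega\})_{t\le\ell}$ over $M$: indiscernibility of row $t$ over $M$ and the other rows follows by padding an arbitrary finite configuration out to one of the above form.

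The only real friction I anticipate is the bookkeeping in the reindexing step of $(b)$ — making sure that turning $\bigotimes_{k<P}\bigotimes_{t\le\ell}p_t(x_{t,k})$ into $\bigotimes_{t\le\ell}\bigotimes_{k<P}p_t(x_{t,k})$ is legitimate, which comes down to associativity plus the observation that any two adjacent factors form a (possibly coinciding) pair from $\{p_1,\dots,p_\ell\}$ and hence commute — together with the purely cosmetic task of stating cleanly the equivalence between the independent‑shuffle property and mutual indiscernibility. Part $(a)$ is a straightforward induction/peeling and should cause no trouble.
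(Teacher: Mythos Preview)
Your proposal is correct and follows essentially the same approach as the paper, which leaves both parts to the reader with the hints ``induction on $\ell$'' for $(a)$ and ``induction on $n_1+\ldots+n_\ell$'' for the main claim in $(b)$; your peeling argument for $(a)$ and your rearrangement of $q^{(P)}$ via associativity and pairwise commutation for $(b)$ are exactly what those inductions amount to.

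One small point worth tightening in $(b)$: the paper's main claim is stated for \emph{pairwise distinct} (not just increasing) indices $i^t_1,\ldots,i^t_{n_t}$ of possibly different lengths $n_t$, and the statement asks for mutual indiscernibility of the \emph{sets} $\{a_{t,i}:i<\omega\}$; this set-level conclusion is what is actually used later (a bijection $f$ permutes one of the sequences). Your version with strictly increasing tuples of common length $n$ yields mutual indiscernibility as \emph{sequences}. The upgrade to sets is immediate from the hypothesis that each $p_t$ commutes with itself (the ``not necessarily distinct'' clause), since then $p_t^{(P)}$ is symmetric and any distinct selection from row $t$, in any order, is again a Morley sequence in $p_t$ over the same parameters. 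It would be worth saying this explicitly rather than folding it into ``padding''.
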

\begin{proof}
Part $(a)$ is left to the reader (use induction on $\ell$). For part $(b)$, the main claim is that given integers $n_1,\ldots,n_\ell\geq 0$ and, for each $1\leq t\leq\ell$,   pairwise distinct indices $i^t_1,\ldots,i^t_{n_t}<\omega$, we have
\[
\textstyle (a_{1,i^1_1},\ldots, a_{1,i^1_{n_1}},\ldots ,a_{\ell,i^\ell_1},\ldots,a_{\ell,i^\ell_{n_\ell}})\models \bigotimes_{t=1}^\ell\bigotimes_{j=1}^{n_t}p_t(x_{t,j})|_M.
\]
This yields the desired result  since the above type depends only on $n_1,\ldots,n_\ell$. We leave the proof of the above claim  to the reader (use induction on $n_1+\ldots+n_\ell$). 
\end{proof}

We now prove the main result of this section.

\begin{theorem}\label{thm:gsrp}
$T$ admits a stable \textit{ict}-pattern if and only if there are generically stable types $p\in S_x(\Uc)$ and $q\in S_y(\Uc)$ such that $p\otimes q$ is not generically stable.
\end{theorem}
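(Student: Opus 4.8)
The plan is to prove both implications of the equivalence, working throughout over a fixed small model $M$ carrying all the relevant data. I will repeatedly use two standard facts: generically stable types commute with every invariant type (so any two generically stable types commute), and, by Fact \ref{fact:gen-stab-seq-and-types}, a type is generically stable over $M$ exactly when it is the average type of its Morley sequences over $M$, while the average type of any stable sequence is generically stable.

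\emph{From a stable ict-pattern to a non-generically-stable product.} Given a stable ict-pattern consisting of $\varphi(x,y,z)$ and stable sequences $(a_i)_{i<\omega}$ in $\Uc^x$, $(b_j)_{j<\omega}$ in $\Uc^y$, I would take $M$ to contain $\{a_i\}\cup\{b_j\}$ and set $p=\Avtp((a_i))\in S_x(\Uc)$, $q=\Avtp((b_j))\in S_y(\Uc)$, which are generically stable over $M$ by Fact \ref{fact:gen-stab-seq-and-types}(a). Using that $p$ and $q$ commute, pick $(a'_i)_{i<\omega}\models p^{(\omega)}|_M$ and $(b'_j)_{j<\omega}\models q^{(\omega)}|_{Ma'_{<\omega}}$, so that $(a'_i,b'_i)_{i<\omega}$ is a Morley sequence in $p\otimes q$ over $M$. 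The first claim is that for every bijection $f\colon\omega\to\omega$ the partial type $\{\varphi(a'_i,b'_j,z)^{f(i)=j}:i,j<\omega\}$ is consistent; by compactness it is enough to realize, for each $N$, the formula $\exists z\bigwedge_{i,j<N}\varphi(a'_i,b'_j,z)^{f(i)=j}$. By Proposition \ref{prop:sec4tech}(a) applied to $N$ copies of $(a_i)$ followed by $N$ copies of $(b_j)$ (whose product type, by commutativity, is the type of $(a'_0,\dots,a'_{N-1},b'_0,\dots,b'_{N-1})$ over $M$), this reduces to showing the same formula holds of $(a_{i_0},\dots,a_{i_{N-1}},b_{j_0},\dots,b_{j_{N-1}})$ for arbitrary pairwise-distinct $i$'s and pairwise-distinct $j$'s; and there a witness is read off from the ict-pattern using a bijection $g$ of $\omega$ with $g(i_k)=j_{f(k)}$. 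Finally, for each $j$ let $f_j$ be the bijection fixing $\{0,\dots,j\}$ pointwise and acting without fixed points on the rest, pick $d_j$ realizing the partial type for $f_j$, and observe $\varphi(a'_i,b'_i,d_j)\iff f_j(i)=i\iff i\le j$. Thus $\varphi$, the Morley sequence $(a'_i,b'_i)_{i<\omega}$, and $(d_j)_{j<\omega}$ witness the failure of Fact \ref{fact:genstab}(iii), so $p\otimes q$ is not generically stable while $p$ and $q$ are.

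\emph{From a non-generically-stable product to a stable ict-pattern.} Conversely, assume $p,q$ are generically stable over $M$ but $p\otimes q$ is not. By Fact \ref{fact:genstab}(iii) there are a formula $\psi((x,y);z)$, a Morley sequence $(e_i,g_i)_{i<\omega}$ in $p\otimes q$ over $M$, and $(d_j)_{j<\omega}$ with $\psi(e_i,g_i,d_j)\iff i<j$. Using commutativity of $p$ and $q$ to pass to the ``grid'' picture, and a Ramsey extraction, I would arrange that $(e_i)_{i<\omega}$ and $(g_j)_{j<\omega}$ are totally indiscernible over $M$ and mutually indiscernible over $M$ (so they are stable sequences with $\Avtp((e_i))=p$, $\Avtp((g_j))=q$), that $(e_i,g_i,d_i)_{i<\omega}$ is $M$-indiscernible, and that $\psi(e_i,g_i,d_j)\iff i<j$ and ``$(e_i,g_i)_{i<\omega}$ is Morley in $p\otimes q$ over $M$'' are preserved (the latter because generically stable types are the average types of their Morley sequences, so these conditions are read off the EM-type over $M$). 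The heart of the argument is then to show, using generic stability of \emph{both} $p$ and $q$, that the ``cross'' values are trivial: $\psi(e_i,g_k,d_j)\iff i=k$ whenever $i,k<j$. By $M$-indiscernibility this value depends only on the order type of $(i,k,j)$, so for $i,k<j$ it is $T$ when $i=k$ and some fixed truth values $P$ (for $i<k$), $Q$ (for $k<i$) otherwise; one argues that $P=T$ or $Q=T$ would make $\psi(x,y,z)$, with one marginal element frozen as a parameter, witness an order pattern on a Morley sequence of $p$ or of $q$, contradicting Fact \ref{fact:genstab}(iii) for that type — hence $P=Q=F$. Once this is in hand, mutual total indiscernibility of $(e_i)$ and $(g_j)$ over $M$ gives, for every bijection $\pi$ of $\omega$, an automorphism $\tau_\pi\in\operatorname{Aut}(\Uc/M)$ fixing each $e_i$ and sending $g_i\mapsto g_{\pi(i)}$; applying $\tau_\pi$ to $d_j$ for $j$ large enough yields a parameter $c$ with $\psi(e_i,g_k,c)=\psi(e_i,g_{\pi^{-1}(k)},d_j)$, which holds iff $\pi(i)=k$ for $i,k$ in any prescribed finite range. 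By compactness, $\psi(x,y,z)$ together with the stable sequences $(e_i)_{i<\omega}$ and $(g_j)_{j<\omega}$ is a stable ict-pattern. (Combined with Remark \ref{rem:bij} this also re-proves that such a pattern forces NIP to fail.)

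\emph{Main obstacle.} The only nonroutine point is the ``cross values are trivial'' step of the second implication: one must exclude the degenerate possibilities in which $\psi(\cdot,\cdot,d_j)$ encodes, say, a growing family of rectangles or an equality-type pattern between $(e_i)$ and $(g_j)$ rather than a moving diagonal. This is exactly where the generic stability of $q$ (and not merely the stability of the sequence $(g_j)$, and symmetrically for $p$) is essential, and it is where I expect the bookkeeping to be delicate; everything else is routine manipulation with Proposition \ref{prop:sec4tech}, Fact \ref{fact:genstab}, and the commutation of generically stable types.
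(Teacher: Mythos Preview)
Your forward direction is correct and essentially the paper's argument with a cosmetic change: you finish via Fact~\ref{fact:genstab}(iii) (building parameters $d_j$ that produce an order pattern along the diagonal) rather than via (i) (exhibiting a single $c$ with $\varphi(a^*_i,b^*_i,c)$ for all $i$ but $\varphi(x,y,c)\notin p\otimes q$). The compactness/Prop.~\ref{prop:sec4tech}(a) reduction you describe is the same as the paper's.

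The backward direction has a genuine gap at exactly the step you flag. Your claim is that if $P$ (the value of $\psi(e_i,g_k,d_j)$ for $i<k<j$) is $T$, then freezing one marginal yields an order pattern on a Morley sequence in $p$ or $q$. This does not hold. First, stability of $(e_i)$ (fix $k<j$ with $k$ large) forces $P=T\Rightarrow Q=T$, and symmetrically $Q=T\Rightarrow P=T$; so the only cases are $P=Q=F$ and $P=Q=T$. In the case $P=Q=T$, iterating the same stability arguments shows that \emph{every} strict off-diagonal order type of $(i,k,j)$ gives value $T$, so $\psi(e_i,g_k,d_j)$ fails only on the diagonal $i=k\ge j$ (and possibly at the equalities $i=j$ or $k=j$). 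Now freeze $g_k$ and vary $(i,j)$: the formula $\psi(x,g_k,d_j)$ is true at every $e_i$ with $i\ne k$ and flips only at the single index $i=k$. There is no order pattern on $(e_i)$ here; symmetrically for $(g_k)$. Thus $P=Q=T$ is fully compatible with generic stability of $p$ and $q$, and your argument stalls. (In that case one could switch to $\neg\psi$ and a fixed $d_0$, since the cross values for $\neg\psi$ are then $F$ on the tail; but you do not do this, and the case split is precisely the difficulty you hoped to avoid.)

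The paper avoids the problem by starting from characterization (i) rather than (iii): take a single $c$ with $\varphi(a_i,b_i,c)$ for all $i$ and $\neg\varphi(x,y,c)\in p\otimes q$. Since $p\otimes q=q\otimes p$, Proposition~\ref{prop:sec4tech}(a) applied in both orders gives directly that for cofinitely many $t$ both $\{i:\varphi(a_i,b_t,c)\}$ and $\{j:\varphi(a_t,b_j,c)\}$ are finite; a straightforward thinning then yields $\varphi(a_i,b_j,c)\Leftrightarrow i=j$, and mutual indiscernibility (Prop.~\ref{prop:sec4tech}(b)) finishes exactly as you outline. The point is that knowing $\neg\varphi(x,y,c)\in p\otimes q$ --- rather than merely an order pattern in an auxiliary parameter sequence --- is what delivers the cross-value information for free.
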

\begin{proof}
Suppose first that $(\varphi(x,y,z),(a_i)_{i<\omega},(b_i)_{i<\omega})$ is a stable \textit{ict}-pattern in $T$. By Fact \ref{fact:gen-stab-seq-and-types}$(a)$, the types $p=\Avtp(a_{<\omega})\in S_x(\Uc)$ and $q=\Avtp(b_{<\omega})\in S_y(\Uc)$ are well-defined and  generically stable. We will show that $p\otimes q$ is not generically stable.

Fix $M\prec\Uc$ such that $p$ and $q$ are generically stable over $M$.
View $(p\otimes q)^{(\omega)}|_M$ as a type in variables $x_{<\omega}y_{<\omega}$, and consider  the set
\[
\Gamma=(p\otimes q)^{(\omega)}|_M\cup\{\varphi(x_i,y_j,z)^{i=j}:i,j<\omega\}.
\]
We show that $\Gamma$ is finitely satisfiable. Let $\Gamma_0\subseteq\Gamma$ be finite. Choose $n$ such that $\Gamma_0$ only involves $x_i$ and $y_i$ for $i<n$. Without loss of generality, we can assume $\Gamma_0$ is of the form $\theta(x_0,\ldots,x_{n-1},y_0,\ldots,y_{n-1})\wedge\bigwedge_{i,j<n}\varphi(x_i,y_j,z)^{i=j}$ where $\theta(\bar{x},\bar{y})\in (p\otimes q)^{(n)}|_M$. By Proposition \ref{prop:sec4tech}$(a)$, we can choose pairwise distinct $s_1,\ldots,s_n<\omega$ and $t_1,\ldots,t_n<\omega$  such that $\theta(a_{s_1},\ldots,a_{s_n},b_{t_1},\ldots,b_{t_n})$ holds.  Let $f\colon \omega\to\omega$ be a bijection such that $f(s_i)=t_i$ for all $1\leq i\leq n$. By assumption, there is some $c$ such that for all $s,t<\omega$, $\varphi(a_s,b_t,c)$ holds if and only if $f(s)=t$. So for $i,j<n$, we have $\varphi(a_{s_i},b_{t_j},c)$ if and only if $i=j$.  
Therefore $(a_{s_1},\ldots,a_{s_n},b_{t_1},\ldots,b_{t_n},c)$ realizes $\Gamma_0$. 

Let $(a^*_i,b^*_i,c)_{i<\omega}$ be a realization of $\Gamma$. Then $(a^*_i,b^*_i)_{i<\omega}$ is a Morley sequence in $p\otimes q$ over $M$, and $\varphi(a^*_i,b^*_i,c)$ holds for all $i<\omega$. So to show that $p\otimes q$ is not generically stable, it suffices to show that $\varphi(x,y,c)\not\in p\otimes q$. To see this, note first that $(a^*_i)_{i<\omega}$ and $(b^*_i)_{i<\omega}$ are Morley sequences in $p$ and $q$ (respectively) over $M$, and so $p=\Avtp((a^*_i)_{i<\omega})$ and $q=\Avtp((b^*_i)_{i<\omega})$ by generic stability. Moreover, for any fixed $i<\omega$, we have $\Uc\models\neg\varphi(a^*_i,b^*_j,c)$ for all $j\neq i$. So $\neg\varphi(x,y,c)\in p\otimes q$ by Proposition \ref{prop:sec4tech}$(a)$. 
\medskip

Conversely, suppose there are generically stable types $p\in S_x(\Uc)$ and $q\in S_y(\Uc)$ such that $p\otimes q$ is not generically stable. Then we have some $\Lc_{\Uc}$-formula $\varphi(x,y,c)$ and a Morley sequence $(a_i,b_i)_{i<\omega}$ in $p\otimes q$ over $M$ such that $\varphi(x,y,c)\not\in p\otimes q$ but $\varphi(a_i,b_i,c)$ holds for all $i<\omega$. We will show that, after possibly passing to subsequences, $(\varphi(x,y,z),(a_i)_{i<\omega}, (b_i)_{i<\omega})$ is a  stable \textit{ict}-pattern. Note first that $(a_i)_{i<\omega}$ and $(b_i)_{i<\omega}$ are Morley sequences in $p$ and $q$ (respectively) over $M$. So $p=\Avtp((a_i)_{i<\omega})$ and $q=\Avtp((b_i)_{i<\omega})$ by generic stability. 

 Since $\varphi(x,y,c)\not\in p\otimes q$, and $p\otimes q=q\otimes p$,  it follows from Proposition \ref{prop:sec4tech}$(a)$ that there is a cofinite set $I\subseteq\omega$ such that for all $t\in I$, the sets $\{i<\omega:\varphi(a_i,b_t,c)\}$ and $\{i<\omega:\varphi(a_t,b_i,c)\}$ are finite. We inductively construct an increasing subsequence $(i_k)_{k<\omega}$ from $I$ such that $\neg\varphi(a_{i_k},b_{i_\ell},c)$ holds for all $k\neq \ell$. Let $i_0\in I$ be arbitrary. Given $i_0<\ldots<i_k<\omega$, choose $i_{k+1}\in I$ such that $i_{k+1}>i_k$ and 
 \[
 i_{k+1}\not\in \bigcup_{t=1}^k\{i<\omega:\varphi(a_i,b_{i_t},c)\}\cup\{i<\omega:\varphi(a_{i_t},b_i,c)\}.
 \]

After passing to the subsequence  above, we may assume that $\varphi(a_i,b_j,c)$ holds if and only if $i=j$. We now verify that $(\varphi(x,y,z),(a_i)_{i<\omega},(b_i)_{i<\omega})$ is a  stable \textit{ict}-pattern. Note that $(a_i)_{i<\omega}$ and $(b_i)_{i<\omega}$ are Morley sequences in $p$ and $q$ (respectively) over $M$, and thus are stable. So fix a bijection $f\colon\omega\to\omega$. Then by  Proposition \ref{prop:sec4tech}$(b)$ (with $\ell=2$), we have
\[
a_0a_1a_2\ldots b_0b_1b_2\ldots \equiv a_{0}a_{1}a_{2}\ldots b_{f(0)}b_{f(1)}b_{f(2)}\ldots.
\]
Therefore $\{\varphi(a_i,b_j,z)^{f(i)=j}:i,j<\omega\}$ is realized by some conjugate of $c$.
\end{proof}

\begin{remark}
The proof shows that if $T$ admits a stable \textit{ict}-pattern, then it admits one of the form $(\varphi(x,y,z),(a_i)_{i<\omega},(b_i)_{i<\omega})$, where $(a_i,b_i)_{i<\omega}$ is a Morley sequence in $\Avtp((a_i)_{i<\omega})\otimes \Avtp((b_i)_{i<\omega})$ over some small model $M$. In particular, the sequences are mutually indiscernible sets. More precisely, we see that $T$ admits a  stable \textit{ict}-pattern if and only if there are types $p\in S_x(\Uc)$ and $q\in S_y(\Uc)$, which are generically stable over some $M\prec\Uc$, a Morley sequence $(a_i,b_i)_{i<\omega}$ in $p\otimes q$ over $M$, and formula $\varphi(x,y,z)$ such that $\{\varphi(a_i,b_j,z)^{i=j}:i,j<\omega\}$ is consistent. 
\end{remark}

Once again, there is no known example of a theory  that actually admits a stable \emph{ict}-pattern. The last goal of this section is to show that if such an example exists, then it must have TP$_2$.

\begin{definition}\label{def:TP2}
A formula $\varphi(x,y)$ has \textbf{TP}$_2$ if there is an array $\{a_{i,j}:i,j<\omega\}$ in $\Uc^y$ satisfying the following properties:
\begin{enumerate}[$(i)$]
\item For all functions $f\colon\omega\to\omega$, $\{\varphi(x,a_{i,f(i)}):i<\omega\}$ is consistent.
\item There is some $k\geq 2$ such that for all $i<\omega$, $\{\varphi(x,a_{i,j}):j<\omega\}$ is $k$-inconsistent.
\end{enumerate}
$T$ is \textbf{NTP}$_2$ if there is no formula with TP$_2$.
\end{definition}

\begin{remark}\label{rem:inj}
Using a similar compactness argument as in Remark \ref{rem:bij}, one can show that in condition $(i)$ of the definition of TP$_2$, it suffices to only consider bijections $f\colon\omega\to\omega$.
\end{remark}

\begin{proposition}\label{prop:NTP2ict}
If $T$ is $\textnormal{NTP}_2$ then it does not admit a  stable \textit{ict}-pattern. 
\end{proposition}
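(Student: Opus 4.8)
The plan is to prove the contrapositive: if $T$ admits a stable \textit{ict}-pattern then some formula has $\mathrm{TP}_2$, so $T$ is not $\mathrm{NTP}_2$. Fix a stable \textit{ict}-pattern $(\varphi(x,y,z),(a_i)_{i<\omega},(b_j)_{j<\omega})$. For each bijection $f\colon\omega\to\omega$, fix $c_f\in\Uc^z$ realizing the (consistent) set $\{\varphi(a_i,b_j,z)^{f(i)=j}:i,j<\omega\}$, so that $\models\varphi(a_i,b_j,c_f)$ if and only if $f(i)=j$. Since $(b_j)_{j<\omega}$ is a stable sequence, Definition \ref{def:stabseq} applied to $\varphi(x,y,z)$ (reading $y$ as the variable of the sequence and $(x,z)$ as parameters) yields some $n\geq 1$ such that for every $a\in\Uc^x$ and $c\in\Uc^z$, either $|\{j<\omega:\models\varphi(a,b_j,c)\}|\leq n$ or $|\{j<\omega:\models\neg\varphi(a,b_j,c)\}|\leq n$.

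The crux of the argument is to use the formula $\psi(z;x,y,y'):=\varphi(x,y,z)\wedge\neg\varphi(x,y',z)$, with $z$ as the object variable, together with the array $d_{i,j}:=(a_i,b_j,b_{j+1})\in\Uc^{xyy'}$ for $i,j<\omega$. I would verify the two clauses of $\mathrm{TP}_2$ for $\psi$ and this array. For clause $(i)$ it suffices, by Remark \ref{rem:inj}, to check sections along bijections: given a bijection $h\colon\omega\to\omega$, the element $c_h$ realizes $\{\psi(z;d_{i,h(i)}):i<\omega\}$, since $\models\varphi(a_i,b_{h(i)},c_h)$ (because $h(i)=h(i)$) and $\models\neg\varphi(a_i,b_{h(i)+1},c_h)$ (because $h(i)\neq h(i)+1$). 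For clause $(ii)$ I would take $k=n+1$ and show each row is $(n+1)$-inconsistent: fix $i<\omega$ and $c\in\Uc^z$, and put $S:=\{m<\omega:\models\varphi(a_i,b_m,c)\}$; then $\models\psi(c;d_{i,j})$ exactly when $j\in S$ and $j+1\notin S$, so the set of such $j$ is contained in $S$ (which has $\leq n$ elements in the first horn of the dichotomy) and also in $\{j:j+1\in\omega\setminus S\}$ (which has $\leq n$ elements in the second horn, since $j\mapsto j+1$ is injective). The stable-sequence dichotomy forces one of the two horns, so at most $n$ entries of row $i$ are satisfied by $c$. Thus $\psi$ has $\mathrm{TP}_2$.

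The essential point --- the reason one cannot simply use $\varphi$ and the pairs $(a_i,b_j)$ --- is that the \textit{ict}-pattern only guarantees witnesses $c_f$ whose ``fibers'' $\{j:\varphi(a_i,b_j,c_f)\}$ are singletons, while stability of $(b_j)$ leaves open the possibility of a witness $c$ whose fiber over some row is \emph{cofinite}, which would destroy $k$-inconsistency of a row built from $\varphi$ alone; conjoining $\neg\varphi(x,y',z)$ evaluated at the successor element $b_{j+1}$ is precisely what neutralizes the cofinite horn while leaving the singleton witnesses $c_h$ usable for clause $(i)$. Only stability of $(b_j)$ is used (not of $(a_i)$). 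I do not expect a genuine obstacle here: the one mildly technical ingredient, the reduction of clause $(i)$ to bijections, is already supplied by Remark \ref{rem:inj}, so once the formula $\psi$ and the array $d_{i,j}$ are written down the verification is routine.
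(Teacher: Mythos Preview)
Your argument is correct, and it takes a genuinely different route from the paper's. The paper also proves the contrapositive and also uses only the stability of $(b_j)_{j<\omega}$, but it passes through the average type $q=\Avtp((b_j)_{j<\omega})$: since $q$ is generically stable and hence definable, there is a formula $\psi(x,z)$ such that $\varphi(a,y,c)\in q$ iff $\models\psi(a,c)$, and the paper shows that $\theta(z;x,y)\coloneqq\varphi(x,y,z)\wedge\neg\psi(x,z)$ has $\mathrm{TP}_2$ with array $(a_i,b_j)_{i,j<\omega}$. The conjunct $\neg\psi(x,z)$ globally forces the fiber $\{j:\varphi(a_i,b_j,c)\}$ to be finite (indeed bounded), which yields row inconsistency directly. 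Your construction is more elementary: in place of the defining formula of $q$ you use a successor shift, conjoining $\neg\varphi(x,y',z)$ and placing $y'=b_{j+1}$, so that a realization in row $i$ sits at a ``boundary point'' of the fiber; the stable dichotomy then bounds the number of boundary points in either horn. What your approach buys is that it never invokes the definability (or even the existence) of the average type---it works straight from Definition~\ref{def:stabseq}. What the paper's approach buys is a cleaner array (no extra $y'$ coordinate) and a formula that is conceptually tied to the generically stable type $q$, which fits the surrounding narrative of the section.
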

\begin{proof}
Suppose $T$ admits a  stable \textit{ict}-pattern $(\varphi(x,y,z),(a_i)_{i<\omega},(b_i)_{i<\omega})$. Set $q=\Avtp((b_i)_{i<\omega})$, and note that $q$ is generically stable by Fact \ref{fact:gen-stab-seq-and-types}$(a)$. In particular, $q$ is definable, so we may choose a formula $\psi(x,z)$ defining $q$ with respect to $\varphi(x,y,z)$. Let $\theta(z;x,y)$  be the formula $\varphi(x,y,z)\wedge \neg \psi(x,z)$. We will show that $\theta(z;x,y)$ has TP$_2$ witnessed by the array $\{(a_i,b_j):i,j<\omega\}$. 

With Remark \ref{rem:inj} in mind, first fix a bijection $f\colon \omega\to\omega$. Then we have $c\in\Uc^z$ such that $\varphi(a_i,b_j,c)$ holds if and only if $f(i)=j$. We want to show that for all $i<\omega$, $\theta(c;a_i,b_{f(i)})$ holds. So, in particular, we need to show that for all $i<\omega$, $\varphi(a_i,y,c)\not\in q$. But this follows from the fact that for any $i<\omega$, we have $\neg \varphi(a_i,b_j,c)$ for any $j\neq f(i)$. 

Now, by assumption there is some $k<\omega$ such that for any $a'c'\in\Uc^{xz}$, if $|\{j<\omega:\varphi(a',b_j,c')\}|\geq k$ then $\varphi(a',y,c')\in q$.  It follows that for any $i<\omega$, the set $\{\theta(z;a_i,b_j):j<\omega\}$ is $k$-inconsistent. 
\end{proof}

\begin{corollary}\label{cor:NTP2gs}
If $T$ is $\textnormal{NTP}_2$ then the Morley product of two generically stable types is generically stable.
\end{corollary}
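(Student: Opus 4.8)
The plan is to deduce this immediately from \cref{thm:gsrp} and \cref{prop:NTP2ict}. Suppose $T$ is NTP$_2$. By \cref{prop:NTP2ict}, $T$ does not admit a stable \textit{ict}-pattern. By the contrapositive of the ``only if'' direction of \cref{thm:gsrp} (equivalently, the statement that the existence of generically stable $p,q$ with $p\otimes q$ not generically stable forces a stable \textit{ict}-pattern), there do not exist generically stable types $p\in S_x(\Uc)$ and $q\in S_y(\Uc)$ with $p\otimes q$ not generically stable. Since generically stable types are definable (Fact \ref{fact:genstab} together with Fact \ref{fact:CG}, or directly), the Morley product $p\otimes q$ is always well-defined, so this says precisely that the Morley product of any two generically stable types is generically stable.

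There is essentially no obstacle here: the content has already been packaged into the two cited results, and the corollary is a one-line logical consequence. The only thing worth spelling out is that the hypothesis of \cref{thm:gsrp} is symmetric in the sense that $p\otimes q$ makes sense (one of the factors must be Borel-definable, which holds since both are definable), so that the phrase ``$p\otimes q$ is not generically stable'' in \cref{thm:gsrp} covers all pairs of generically stable types.

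Concretely, the proof reads: Assume $T$ is $\mathrm{NTP}_2$ and let $p\in S_x(\Uc)$ and $q\in S_y(\Uc)$ be generically stable. If $p\otimes q$ were not generically stable, then by \cref{thm:gsrp} the theory $T$ would admit a stable \textit{ict}-pattern, contradicting \cref{prop:NTP2ict}. Hence $p\otimes q$ is generically stable. As remarked in the introduction, this recovers the NIP case (where the conclusion follows from \cite[Proposition 3.2]{HP}), since NIP implies $\mathrm{NTP}_2$.
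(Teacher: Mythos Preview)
Your proposal is correct and follows exactly the approach intended by the paper: the corollary is an immediate consequence of combining \cref{thm:gsrp} with \cref{prop:NTP2ict}, and the paper leaves the proof implicit for precisely this reason.
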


In \cite{CoGa}, the first two authors provided a characterization of generically stable types in NTP$_2$ theories in terms of weak stationarity and a forking symmetry condition similar to the notion of ``generic simplicity" (defined by Simon in \cite{SimGSG}). Using various lemmas from \cite{SimGSG}, many of which are adapted from \cite{ChNTP2}, one can give an alternate proof of Corollary \ref{cor:NTP2gs} by showing that if $T$ is NTP$_2$ then this characterization of generic stability for types is closed under Morley products. 

\begin{remark}
It follows from the proof of Proposition \ref{prop:NTP2ict} that TP$_2$ can be obtained from a weaker version of a stable \textit{ict}-pattern in which only one of the two sequences is assumed to be stable. However, stability of at least one of the sequences is necessary to obtain TP$_2$. For example, if $T$ is the theory of the generic $3$-uniform hypergraph, then $T$ is simple (hence NTP$_2$), but one can easily find sequences  $(a_i)_{i<\omega}$ and $(b_i)_{i<\omega}$ such that for \emph{any} $I\seq \omega\times\omega$, the set $\{R(a_i,b_j,z)^{(i,j)\in I}:i,j<\omega\}$ is consistent. In other words, if one removes the assumption in Definition \ref{def:gss} that the sequences are stable, then the resulting ``pattern" can be obtained in any theory that is not $2$-dependent; but this pattern is \emph{a priori} much weaker than what is required to witness $2$-independence. Altogether, while we currently have no example of any theory admitting a stable \textit{ict}-pattern, this discussion suggests that the resolution  for $2$-dependent theories may be more accessible. 
\end{remark}

\section{Tame extensions for local measures}\label{sec:local}

Let $T$ be a complete theory with monster model $\cU$. Recall that a global measure $\mu\in\kM_x(\cU)$ is  \emph{smooth} if there is some $A\subset\cU$ such that $\mu$ is the unique global extension of $\mu|_A$. The following result is due to Keisler (see \cite[Theorem 3.16]{Keis}).

\begin{fact}[Keisler]\label{fact:Keisler}
A countable theory $T$  is NIP if and only if  any Keisler measure over a small model $M\prec\cU$ has a smooth global extension. 
\end{fact}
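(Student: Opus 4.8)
The plan is to prove the two implications separately; the substance is almost entirely in the forward direction (NIP $\Rightarrow$ smooth extensions exist), so that is where I would concentrate.

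For the backward implication I would argue contrapositively: assuming some formula $\varphi(x,y)$ has IP, I would construct a Keisler measure over a small model with no smooth global extension. Passing to the flip of $\varphi$ if necessary, fix a formula $\chi(x,y)$ and a sequence $(b_i)_{i<\omega}$ in $\cU^y$ such that every finite Boolean combination of the formulas $\chi(x,b_i)$ is consistent, and take a small $M\supseteq\{b_i:i<\omega\}$. Over $M$ one builds $\mu\in\kM_x(M)$ extending the assignment that makes the events $\chi(x,b_i)$ independent of measure $\tfrac12$; using compactness and the fact that the shatter function of $\chi$ grows like $2^n$, one arranges that this shattering persists into every small elementary extension, so that for every small $N$ with $M\subseteq N\prec\cU$ and every extension $\nu\in\kM_x(N)$ of $\mu$ there is a parameter whose $\chi$-instance is not pinned down by $\nu$; hence no extension of $\mu$ is smooth over any small model. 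The details here are as in the proof of \cite[Theorem 3.16]{Keis}.

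For the forward implication, fix $\mu_0\in\kM_x(M)$ with $M\prec\cU$ small, and extend it by compactness to some global $\mu\in\kM_x(\cU)$. It suffices to find a small $N$ with $M\subseteq N\prec\cU$ for which $\mu|_N$ has a unique global extension, since that extension is then a smooth global extension of $\mu_0$. I would use Keisler's combinatorial criterion: $\mu|_N$ has a unique global extension if and only if for every $\cL$-formula $\varphi(x,y)$ and every $\e>0$ there are $\cL_N$-formulas $\psi_1(y),\dots,\psi_m(y)$ defining a partition of $S_y(N)$ into clopen sets, together with $\cL_N$-formulas $\theta_i^-(x),\theta_i^+(x)$ ($i\le m$) such that $\theta_i^-(x)\vdash\varphi(x,b)\vdash\theta_i^+(x)$ for every $b\models\psi_i$ and $\mu(\theta_i^+\wedge\neg\theta_i^-)<\e$ for each $i$. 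Since $T$ is countable there are only countably many pairs $(\varphi,1/k)$; for each I would produce, using NIP, such witnessing formulas over a small parameter set, and then let $N\prec\cU$ be a small model containing $M$ and all parameters used. The collected formulas then lie in $\cL_N$, and after refining each partition to $S_y(N)$ they verify the criterion for $\mu|_N$ simultaneously for all $(\varphi,1/k)$, so $\mu|_N$ is smooth.

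The one step that genuinely uses NIP — and the main obstacle — is the following: given an NIP formula $\varphi(x,y)$, the measure $\mu$, and $\e>0$, produce, over a small parameter set, a finite partition $\psi_1,\dots,\psi_m$ of $y$-space together with $\cL$-formulas $\theta_i^\pm(x)$ that sandwich $\varphi(x,b)$ on each piece with $\mu(\theta_i^+\wedge\neg\theta_i^-)<\e$. The plan for this is first to apply the VC theorem to $\mu$: finite VC dimension of $\{\varphi(x,b):b\in\cU^y\}$ yields a finite tuple $\bar{a}$ in $\cU^x$ with $\sup_{b}|\mu(\varphi(x,b))-\Av(\bar{a})(\varphi(x,b))|<\e/2$, and the finite-valued, $\bar{a}$-definable function $b\mapsto\Av(\bar{a})(\varphi(x,b))$ supplies the partition $\{\psi_i\}$. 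Upgrading this numerical $\e/2$-approximation to honest \emph{formula} bounds $\theta_i^\pm(x)$ with uniform control on each piece is exactly the theory of honest definitions for NIP formulas — proved via $\e$-nets, $\e$-approximations, and the $(p,q)$-theorem — and this is the real work; I would import it from \cite[Ch.~7]{Sibook} (or reconstruct it along those lines). Everything else — the smoothness criterion, the countable bookkeeping, and checking that the witnessing formulas still work over the enlarged model $N$ — is routine.
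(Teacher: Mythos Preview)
Your backward direction and your identification of the smoothness criterion (sandwiching each $\varphi(x,b)$ between $\theta_i^\pm(x)$ with $\mu(\theta_i^+\wedge\neg\theta_i^-)<\e$) are both fine. The gap is in the forward direction, precisely at the step you flag as ``the real work.''

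The passage from the numerical approximation $\mu(\varphi(x,b))\approx_\e \Av(\bar a)(\varphi(x,b))$ to actual sandwich formulas $\theta_i^-(x)\vdash\varphi(x,b)\vdash\theta_i^+(x)$ with $\mu(\theta_i^+\wedge\neg\theta_i^-)<\e$, uniformly over each piece $\psi_i$, is \emph{not} what honest definitions deliver. Honest definitions (and the $(p,q)$-machinery behind them) take a single externally definable set $\varphi(M,c)$ and produce a formula over $M$ that agrees with it on $M$ and implies it globally; they do not sandwich an entire family $\{\varphi(x,b):b\models\psi_i\}$ between two fixed formulas of small $\mu$-gap. Knowing that $\Av(\bar a)(\varphi(x,b))$ is constant on $\psi_i$ only pins down $\varphi(\bar a,b)$; it says nothing about $\varphi(\cU,b)$ away from $\bar a$. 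Producing such uniform sandwiches is essentially equivalent to the smoothness you are trying to establish, so this step is circular rather than a citation. (Also, honest definitions live in Chapter~3 of \cite{Sibook}; Chapter~7 is where the target result, Proposition~7.9, is proved.)

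The paper's route---which is the standard one from \cite[Prop.~7.9]{Sibook}, recast locally in Lemma~\ref{lem:NIPsmooth}---is entirely different and does not try to build the sandwiches directly. One starts from an arbitrary global extension; if it is not smooth over the current base, one finds two further extensions disagreeing by at least some $\e$ on some $\varphi(x,b)$, replaces the measure by their average restricted to a slightly larger model, and iterates transfinitely. If the process never halts, pigeonhole on the pair $(\varphi,1/k)$ produces, for the limit measure, infinitely many $b$'s whose instances $\varphi(x,b)$ are pairwise $\e$-separated---an infinite $\e$-packing. This contradicts NIP via Haussler's packing lemma (Corollary~\ref{cor:Haussler}). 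The VC input enters only at the end, as the source of the contradiction, not as a constructive tool for the sandwich; the idea you are missing is this averaging-and-iterating argument.
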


There are several important remarks to make here, some of which motivated the work in this section. First,  Keisler works with a weaker notion of smoothness and thus the left-to-right direction of the above fact is formally stronger that what is actually proved in \cite{Keis}. However, it was observed by Hrushovski, Pillay, and Simon \cite{HPS} that the mechanics of Keisler's proof work for what has now become the modern notion of smoothness (see \cite[Proposition 7.9]{Sibook} for a detailed proof). We also note that this direction does not require countability of $T$. 

Curiously enough, the right-to-left direction of Fact \ref{fact:Keisler} had been passed around as an open problem by a number of people in the field. It was only after completing the work in this section that the authors double-checked Keisler's paper and found the result stated there. Morever, since Keisler's notion of smoothness is weaker, he actually proves a \emph{stronger} result in this direction. On the other hand, there is a blanket countability assumption made throughout Keisler's paper, and he asks in \cite[Section 3.18]{Keis} to what extent this assumption is necessary. We do not claim to answer this question, since the overall picture of Keisler's work is rather  elaborate. However, we will prove a local analogue of Fact \ref{fact:Keisler}, which makes the cardinality of $T$ irrelevant. Since a local analogue of the left-to-right direction is a straightforward adaptation of the global argument, and with the faux open question above in mind, our main focus will be on the right-to-left direction. In addition to providing a local analogue for an NIP formula, our proof will also simplify several steps by drawing a more direction connection to VC-dimension and, in particular, Haussler's packing lemma and the Sauer-Shelah Lemma. In particular, these tools yield a quantitative argument for bounding the size of an $\epsilon$-separated family with respect to measures in an NIP context (see the discussion at the start of Subsection \ref{sec:smoothext} for more details). We also note that our result strenghtens Keisler's in the sense that we actually characterize NIP via the existence of ``strongly definable" extensions for measures, which is a signicantly weaker notion than smoothness.

\subsection{Preliminaries}

Throughout this section, we work with a fixed formula $\varphi(x,y)$. Recall that a \emph{$\varphi$-formula} is a finite Boolean combination of instances $\varphi(x,b)$ of $\varphi(x,y)$, for $b\in\cU^y$. By a \emph{$\varphi$-generated formula} we mean a (parameter-free) formula $\theta(x,\ybar)$ obtained as a Boolean combination of $\varphi(x,y_i)$ where $y_i$ is in the sort of $y$. In particular, $\varphi$-formulas correspond to \emph{instances} of $\varphi$-generated formulas.

Given $A\seq\cU$, we let $\kM_\varphi(A)$ be the space of finitely additive probability measures on the Boolean algebra of $\varphi$-formulas over $A$. We refer to a measure in $\kM_\varphi(A)$ as a \emph{$\varphi$-measure over $A$}.

\begin{definition}\label{defn:local-invariance}
Suppose $\mu\in\kM_\varphi(\cU)$ is a $\varphi$-measure, and $\theta(x,\ybar)$ is a $\varphi$-generated formula. Fix a set $A\subset\cU$. 
\begin{enumerate}[$(1)$]
\item $\mu$ is \textbf{$\theta$-invariant} over $A$ if for all $\bbar\in\cU^{\ybar}$, $\mu(\theta(x,\bbar))$ depends only on $\tp(\bbar/A)$. In this case, let $F^\theta_{\mu,A}\colon S_{\ybar}(A)\to [0,1]$ denote the resulting map.
\item $\mu$ is \textbf{$\theta$-definable} over $A$ if it is $\theta$-invariant over $A$ and $F^\theta_{\mu,A}$ is continuous.
\item $\mu$ is \textbf{strongly $\theta$-invariant} over $A$ if for all $\bbar\in\cU^{\ybar}$, $\mu(\theta(x,\bbar))$ depends only on $\tp_{\theta^*}(\bbar/A)$. In this case, let $F^{\theta,s}_{\mu,A}\colon S_{\theta^*}(A)\to [0,1]$ denote the resulting map. 
\item $\mu$ is \textbf{strongly $\theta$-definable} over $A$ if it is strongly $\theta$-invariant over $A$ and $F^{\theta,s}_{\mu,A}$ is continuous.
\item $\mu$ is \textbf{finitely $\theta$-approximated in $A$} if for all $\epsilon>0$ there is some $\abar\in A^{<\omega}$ such that for all $\bbar\in \cU^{\ybar}$, $\mu(\theta(x,\bbar))\approx_\epsilon \Av(\abar)(\theta(x,\bbar))$. 
\end{enumerate}
\end{definition}

We now list the basic implications between the above notions.

\begin{proposition}\label{prop:imps}
Suppose $\mu\in\kM_\varphi(\cU)$ is a $\varphi$-measure, and $\theta(x,\ybar)$ is a $\varphi$-generated formula. Fix a set $A\subset\cU$. 
\begin{enumerate}[$(a)$]
\item If $\mu$ is strongly $\theta$-invariant over $A$, then $\mu$ is $\theta$-invariant over $A$.
\item If $\mu$ is strongly $\theta$-definable over $A$, then $\mu$ is $\theta$-definable over $A$.
\item If $\mu$ is $\theta$-definable over $A$ and strongly $\theta$-invariant over $A$, then $\mu$ is strongly $\theta$-definable over $A$.
\item If $\mu$ is finitely $\theta$-approximated in $A$, then $\mu$ is strongly $\theta$-definable over $A$.
\end{enumerate}
\end{proposition}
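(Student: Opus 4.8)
The plan is to prove the four implications in the order stated, since each builds on the previous, with the one structural fact doing most of the work being this: for any $A\subset\cU$, the forgetful (restriction) map $\pi\colon S_{\ybar}(A)\to S_{\theta^*}(A)$, sending a complete type in the variables $\ybar$ to its $\theta^*$-part, is surjective (every complete $\theta^*$-type over $A$ is finitely satisfiable, hence extends to a complete type over $A$) and continuous; being a continuous surjection between compact Hausdorff spaces, it is a topological quotient map. Everything else is bookkeeping about which of the two type spaces $S_{\ybar}(A)$ and $S_{\theta^*}(A)$ each averaging functional $F$ naturally lives on.

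For part $(a)$: if $\bbar\equiv_A\cbar$ then in particular $\tp_{\theta^*}(\bbar/A)=\tp_{\theta^*}(\cbar/A)$, so strong $\theta$-invariance gives $\mu(\theta(x,\bbar))=\mu(\theta(x,\cbar))$; thus $\mu$ is $\theta$-invariant over $A$, and by construction $F^\theta_{\mu,A}=F^{\theta,s}_{\mu,A}\circ\pi$. Part $(b)$ is then immediate from $(a)$: $F^{\theta,s}_{\mu,A}$ is continuous by hypothesis and $\pi$ is continuous, so the composite $F^\theta_{\mu,A}$ is continuous. For part $(c)$: strong $\theta$-invariance together with $(a)$ again gives $F^\theta_{\mu,A}=F^{\theta,s}_{\mu,A}\circ\pi$; here the left-hand side is continuous by $\theta$-definability and $\pi$ is a quotient map, so the universal property of quotient maps forces $F^{\theta,s}_{\mu,A}$ to be continuous, i.e. $\mu$ is strongly $\theta$-definable over $A$.

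For part $(d)$: the key observation is that average measures over parameters from $A$ are computed from the $\theta^*$-type alone. For a finite tuple $\abar=(a_1,\dots,a_n)$ from $A^x$, the value $\Av(\abar)(\theta(x,\bbar))=\tfrac1n|\{i\le n:\cU\models\theta(a_i,\bbar)\}|$ is determined by the truth values at $\bbar$ of the $\theta^*$-formulas $\theta(a_i,\ybar)$, $i\le n$, and hence defines a locally constant (so continuous) function $g_{\abar}\colon S_{\theta^*}(A)\to[0,1]$. Given $\epsilon>0$, choose $\abar$ in $A^{<\omega}$ witnessing that $\mu$ is finitely $\theta$-approximated to accuracy $\epsilon$; then $\mu(\theta(x,\bbar))\approx_\epsilon g_{\abar}(\tp_{\theta^*}(\bbar/A))$ for every $\bbar\in\cU^{\ybar}$. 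Applying this to two parameters having the same $\theta^*$-type over $A$ and letting $\epsilon\to 0$ yields strong $\theta$-invariance, and the very same estimate exhibits $F^{\theta,s}_{\mu,A}$ as a uniform limit of the continuous functions $g_{\abar}$, hence continuous; so $\mu$ is strongly $\theta$-definable over $A$.

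I do not expect a serious obstacle: the two load-bearing points are that $\pi$ is a topological quotient map (needed in $(c)$) and that $\Av(\abar)(\theta(x,-))$ factors through $S_{\theta^*}(A)$ when $\abar$ is drawn from $A$ (needed in $(d)$), and both are elementary. The only thing to be careful about is keeping track of which type space each $F$ is defined on and making sure the stated commuting triangle $F^\theta_{\mu,A}=F^{\theta,s}_{\mu,A}\circ\pi$ is set up before it is used.
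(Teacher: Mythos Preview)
Your proof is correct and follows essentially the same approach as the paper: both use the restriction map $S_{\ybar}(A)\to S_{\theta^*}(A)$ as a continuous quotient map to handle $(b)$ and $(c)$, and both prove $(d)$ by observing that $\Av(\abar)(\theta(x,-))$ for $\abar\in A^{<\omega}$ factors through $S_{\theta^*}(A)$ as a locally constant function, yielding strong $\theta$-invariance and exhibiting $F^{\theta,s}_{\mu,A}$ as a uniform limit of continuous functions.
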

\begin{proof}
Part $(a)$ is clear.

Part $(b)$. Let $\rho\colon S_{\ybar}(A)\to S_{\theta^*}(A)$ denote the restriction map. Recall that $\rho$ is continuous. Assume $\mu$ is strongly $\theta$-definable over $A$. Then $F^\theta_{\mu,A}$ exists (by part $(a)$) and is equal to $F^{\theta,s}_{\mu,A}\circ\rho$, and hence is continuous.

Part $(c)$. Let $\rho$ be as above. Note that $\rho$ is a continuous surjective function between compact Hausdorff spaces, and hence is a quotient map. Now assume $\mu$ is $\theta$-definable over $A$ and strongly $\theta$-invariant over $A$. Then $F^\theta_{\mu,A}$ is continuous and $F^{\theta,s}_{\mu,A}$ exists. Since $F^\theta_{\mu,A}=F^{\theta,s}_{\mu,A}\circ\rho$, it follows that $F^{\theta,s}_{\mu,A}$ is continuous by the universal property of quotient maps. 

Part $(d)$. Assume $\mu$ is finitely $\theta$-approximated over $A$. Note that if $\tp_{\theta^*}(\bbar/A)=\tp_{\theta^*}(\bbar'/A)$ then $\Av(\abar)(\theta(x,\bbar))=\Av(\abar)(\theta(x,\bbar'))$ for any $\abar\in A^{<\omega}$. It follows that $\mu$ is strongly $\theta$-invariant over $A$. To prove that $\mu$ is strongly $\theta$-definable over $A$, we will show that $F^{\theta,s}_{\mu,A}$ is a uniform limit of continuous functions, and hence is continuous. Fix some $\epsilon>0$. Since $\mu$ is finitely $\theta$-approximated there is some $\abar\in A^{<\omega}$ such that 
\[
\sup_{q \in S_{y}(A)}| F^{\theta,s}_{\mu,A}(q)- F^{\theta,s}_{\Av(\abar),A}(q)| = \sup_{b \in \mathcal{U}^{y}}|\mu(\theta(x,b)) - \Av(\overline{a})(\theta(x,b))|<\epsilon.
\]
Moreover, $F^{\theta,s}_{\Av(\abar),A}=\frac{1}{n}\sum_{i=1}^n \boldsymbol{1}_{\theta(a_i,\ybar)}$, and so $F^{\theta_s}_{\Av(\abar),A}$ is a uniform limit of continuous (hence continuous).  
\end{proof}

Next we define the obvious local formulation of smoothness. It is worth mentioning that this notion has been previously considered by many people in the field, although not in any published work as far as we know. Thus we take the opportunity  to state the definition and list some basic  facts. 

\begin{definition}
A measure $\mu\in\kM_\varphi(\cU)$ is \textbf{smooth over $A\subset\cU$} if $\mu$ is the unique measure in $\kM_\varphi(\cU)$ extending $\mu|_A\in \kM_\varphi(A)$.
\end{definition}

\begin{remark}\label{rem:nosmooth}
It is worth noting that, unlike the ``non-local" situation, a realized local type need not be smooth. In fact, there may be no smooth local measures at all. For example, let $T$ be the theory of the random graph and let $\varphi(x,y)$ be the edge relation. Fix $\mu\in\kM_\varphi(\cU)$ and $A\subset\cU$. Then for any $b\in\cU\backslash A$ and any $r\in [0,1]$, there exists some $\nu_r\in \kM_\varphi(\cU)$ such that $\nu_r|_A=\mu|_A$ and $\nu_r(\varphi(x,b))=r$ (e.g., this follows from \cite{LosMar}, or see \cite[Theorem 3.7]{StarBour}; we also note that if $\mu$ is a type, and $r\in\{0,1\}$, one can further assume that $\nu_r$ is a type). So, in particular, $\mu$ is not smooth over $A$.
\end{remark}

For ``non-local" measures, the notion of smoothness has a useful syntactic characterization, which also holds in the local case.

\begin{proposition}\label{prop:smoothchar}
Fix $\mu\in\kM_\varphi(\cU)$ and $A\subset\cU$.  Then $\mu$ is smooth over $A$ if and only if for any $\varphi$-generated formula $\theta(x,\ybar)$ and any $\epsilon>0$, there are finitely many $\cL$-formulas $\psi_{\epsilon,1}(\ybar),\ldots \psi_{\epsilon,n}(\ybar)$ over $A$, and $\varphi$-formulas 
\[
\chi^0_{\epsilon,1}(x),\ldots,\chi^0_{\epsilon,n}(x),\chi^1_{\epsilon,1}(x),\ldots,\chi^1_{\epsilon,n}(x)
\]
over $A$, which satisfy the following properties.
\begin{enumerate}[$(i)$]
\item $\psi_{\epsilon,1}(\ybar),\ldots,\psi_{\epsilon,n}(\ybar)$ partition $\cU^{\ybar}$.
\item For all $1\leq i\leq n$, if $\bbar\models\psi_{\epsilon,i}(\ybar)$ then  $\chi^0_{\epsilon,i}(\cU)\seq\theta(\cU,\bbar)\seq\chi^1_{\epsilon,i}(\cU)$.
\item For all $1\leq i\leq n$, $\mu(\chi^1_{\epsilon,i}(x))-\mu(\chi^0_{\epsilon,i}(x))\leq \epsilon$.
\end{enumerate}
\end{proposition}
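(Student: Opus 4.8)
The plan is to prove both implications directly. The reverse implication is immediate from monotonicity of measures, while the forward implication uses the standard inner/outer measure dichotomy together with compactness of the Stone space $S_{\ybar}(A)$.

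\emph{Reverse direction.} Suppose the families $\psi_{\epsilon,i}$, $\chi^0_{\epsilon,i}$, $\chi^1_{\epsilon,i}$ exist for every $\varphi$-generated formula $\theta(x,\ybar)$ and every $\epsilon>0$. Let $\nu\in\kM_\varphi(\cU)$ with $\nu|_A=\mu|_A$; I would show $\nu=\mu$. Since the instances $\theta(x,\bbar)$ of $\varphi$-generated formulas are exactly the $\varphi$-formulas over $\cU$, it suffices to fix such a $\theta(x,\ybar)$ and $\bbar\in\cU^{\ybar}$ and prove $\nu(\theta(x,\bbar))=\mu(\theta(x,\bbar))$. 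Given $\epsilon>0$, use $(i)$ to get the $i$ with $\bbar\models\psi_{\epsilon,i}(\ybar)$. By $(ii)$ and monotonicity, $\mu(\chi^0_{\epsilon,i})\le\mu(\theta(x,\bbar))\le\mu(\chi^1_{\epsilon,i})$ and likewise $\nu(\chi^0_{\epsilon,i})\le\nu(\theta(x,\bbar))\le\nu(\chi^1_{\epsilon,i})$; since $\chi^0_{\epsilon,i},\chi^1_{\epsilon,i}$ are $\varphi$-formulas over $A$ we have $\nu(\chi^j_{\epsilon,i})=\mu(\chi^j_{\epsilon,i})$. Combined with $(iii)$ this gives $|\nu(\theta(x,\bbar))-\mu(\theta(x,\bbar))|\le\epsilon$, and letting $\epsilon\to 0$ finishes this direction.

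\emph{Forward direction.} Assume $\mu$ is smooth over $A$. The crucial sub-claim is that for every $\varphi$-generated $\theta(x,\ybar)$ and every $\bbar\in\cU^{\ybar}$, writing $I_{\bbar}$ for the set of $\varphi$-formulas $\chi$ over $A$ with $\chi(\cU)\seq\theta(\cU,\bbar)$ and $J_{\bbar}$ for those with $\theta(\cU,\bbar)\seq\chi(\cU)$,
\[
\sup\{\mu(\chi):\chi\in I_{\bbar}\}=\inf\{\mu(\chi):\chi\in J_{\bbar}\}.
\]
Indeed the left side is always $\le\mu(\theta(x,\bbar))\le$ the right side; if the inequality were strict there would be a real $t\neq\mu(\theta(x,\bbar))$ between them, and by the classical fact that a finitely additive probability measure on a Boolean subalgebra extends to the whole algebra realizing any value in the inner–outer interval on a prescribed element (applied to the algebra of $\varphi$-formulas over $A$ inside that over $\cU$, with the element $\theta(x,\bbar)$), there would be a $\varphi$-measure over $\cU$ extending $\mu|_A$ yet differing from $\mu$ at $\theta(x,\bbar)$, contradicting smoothness. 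Granting the sub-claim, fix $\theta(x,\ybar)$ and $\epsilon>0$. For each $q\in S_{\ybar}(A)$ pick $\bbar_q\models q$ and choose $\varphi$-formulas $\chi^0_q,\chi^1_q$ over $A$ with $\chi^0_q(\cU)\seq\theta(\cU,\bbar_q)\seq\chi^1_q(\cU)$ and $\mu(\chi^1_q)-\mu(\chi^0_q)<\epsilon$. The $\cL_A$-formula $\psi_q(\ybar):=\forall x(\chi^0_q(x)\to\theta(x,\ybar))\wedge\forall x(\theta(x,\ybar)\to\chi^1_q(x))$ holds of $\bbar_q$, hence lies in $q$, so the basic clopen sets $[\psi_q]$ cover $S_{\ybar}(A)$; extract a finite subcover $[\psi_{q_1}],\dots,[\psi_{q_n}]$ and disjointify by setting $\psi_{\epsilon,i}(\ybar):=\psi_{q_i}(\ybar)\wedge\bigwedge_{j<i}\neg\psi_{q_j}(\ybar)$, $\chi^0_{\epsilon,i}:=\chi^0_{q_i}$, $\chi^1_{\epsilon,i}:=\chi^1_{q_i}$ (discarding indices whose part is unsatisfiable). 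Then the $\psi_{\epsilon,i}$ partition $\cU^{\ybar}$ (pairwise disjoint by construction, covering since the $\psi_{q_i}$ do), $(ii)$ holds because $\psi_{\epsilon,i}\vdash\psi_{q_i}$, and $(iii)$ holds because $\mu(\chi^1_{\epsilon,i})-\mu(\chi^0_{\epsilon,i})=\mu(\chi^1_{q_i})-\mu(\chi^0_{q_i})<\epsilon$. The main obstacle is the sub-claim, i.e. that smoothness forces inner and outer measures with respect to the $\varphi$-algebra over $A$ to coincide on every $\varphi$-formula; this is the local shadow of the argument underlying the global smoothness characterization (cf.\ the proof of \cite[Proposition 7.9]{Sibook}) and rests on the extension property for finitely additive measures on Boolean algebras. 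Everything else — the reverse direction, the disjointification, and the single appeal to compactness of $S_{\ybar}(A)$ — is routine.
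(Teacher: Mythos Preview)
Your proof is correct and follows essentially the same approach as the paper's: the reverse direction by monotonicity, and the forward direction via the \L o\'{s}--Marczewski extension theorem (your ``sub-claim'' that inner and outer $\varphi$-measures over $A$ coincide) together with compactness of $S_{\ybar}(A)$ and a disjointification. The only quibble is that the relevant global reference is \cite[Lemma~7.8]{Sibook} rather than Proposition~7.9.
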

\begin{proof}
The argument can be directly adapted  from the proof of the global result (first formulated in \cite[Lemma 2.3]{HPS}). See also \cite[Lemma 7.8]{Sibook} and \cite[Proposition 3.14]{StarBour} for other detailed proofs. The local analogue follows using the  same compactness argument applied to a classical result of {\L}o\'{s} and Marczewski \cite{LosMar} on extending measures (which also follows from a slightly earlier paper of Horn and Tarski \cite{HoTa}). See  \cite[Theorem 3.7]{StarBour} for a formulation suitable to the present context. 
\end{proof}

\begin{remark}
The proof of Proposition \ref{prop:smoothchar} does \emph{not}  yield that the formulas $\psi_{\epsilon,i}(\ybar)$ are $\theta^*$-formulas over $A$. However, the statement of the proposition clearly implies that we can take each $\psi_{\epsilon,i}(\ybar)$ to be a finite Boolean combination of the formulas 
\[
\forall x((\chi^0_{\epsilon,t}(x)\rightarrow\theta(x,\ybar))\wedge(\theta(x,\ybar)\rightarrow\chi^1_{\epsilon,t}(x)))
\]
where $1\leq t\leq n$. (If one only asks for a covering of $\cU^{\ybar}$ in condition $(i)$, rather than a partition, then one can take $\psi_{\epsilon_,i}(\ybar)$ to be exactly the formula above with $t=i$). In particular, each $\psi_{\epsilon,i}(\ybar)$ is a formula in the reduct of $\cL$ to $\varphi(x,y)$. 
\end{remark}

Finally, we connect smoothness to the rest of the notions listed at the start of this subsection. First we show that if a local measure is smooth over some model $M$ then it is finitely approximated in $M$. We will actually prove a sharper result that deals with finite approximations for arbitrary Borel sets. Altogether, this result is a local analogue of \cite[Proposition 7.10]{Sibook}. The overal strategy of the argument is roughly the same. However, we will remove the use of the weak law of large numbers, and instead give a slightly more elementary proof using atoms of finite Boolean algebras.

\begin{definition}
    Given $B\seq A\seq\cU$, let $\rho^\varphi_{B,A}\colon S_\varphi(B)\to S_\varphi(A)$ denote the restriction map. We write $\rho^\varphi_A$ for $\rho^{\varphi}_{\cU,A}$.
\end{definition}

\begin{proposition}\label{prop:smoothfam}
Suppose $\mu\in\kM_\varphi(\cU)$ is smooth over $A\subset\cU$. Fix finitely many Borel sets $X_1,\ldots,X_k\seq S_\varphi(A)$ and  $\varphi$-generated formulas $\theta_1(x,\ybar_1),\ldots,\theta_k(x,\ybar_k)$. Then for any $\epsilon>0$ there is some $\bar{a}\in (\cU^x)^{<\omega}$ such that for any $1\leq s,t\leq k$ and $\bbar\in\cU^{\ybar_s}$,
\[
\mu(\theta_s(x,\bbar)\cap X'_t)\approx_\epsilon\Av(\abar)(\theta_s(x,\bbar)\cap X'_t),
\]
where $X'_t=(\rho^\varphi_A)^{\text{-}1}(X_t)$.

Moreover, if $A$ is a model $M\prec\cU$, and each $X_t$ is clopen, then we may assume $\bar{a}\in(M^x)^{<\omega}$. In particular, $\mu$ is finitely $\theta$-approximated in $M$ for any $\varphi$-generated formula $\theta(x,\ybar)$.
\end{proposition}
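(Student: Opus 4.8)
The plan is to mimic the standard argument that a smooth measure is finitely approximated, but carried out for the finite Boolean algebra generated by the relevant $\varphi$-formulas and Borel sets, using atoms in place of the weak law of large numbers. First I would fix $\epsilon > 0$ and apply the syntactic characterization of smoothness (Proposition \ref{prop:smoothchar}) to each $\varphi$-generated formula $\theta_s(x,\ybar_s)$ with accuracy $\epsilon/2$ (say), obtaining for each $s$ a finite partition $\psi^s_1(\ybar_s),\dots,\psi^s_{n_s}(\ybar_s)$ of $\cU^{\ybar_s}$ and $\varphi$-formulas $\chi^{s,0}_i(x) \subseteq \theta_s(x,\bbar) \subseteq \chi^{s,1}_i(x)$ over $A$ whenever $\bbar \models \psi^s_i(\ybar_s)$, with $\mu(\chi^{s,1}_i) - \mu(\chi^{s,0}_i) \leq \epsilon/2$. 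The point is that each instance $\theta_s(x,\bbar)$ is ``sandwiched'' between two fixed $\varphi$-formulas over $A$ that $\mu$-nearly agree, so it suffices to find a finite tuple $\abar$ that simultaneously $\epsilon/2$-approximates $\mu$ on all the finitely many sets $\chi^{s,0}_i(x) \cap X'_t$ and $\chi^{s,1}_i(x) \cap X'_t$; then for any $\bbar$ the value $\Av(\abar)(\theta_s(x,\bbar) \cap X'_t)$ is trapped between $\Av(\abar)(\chi^{s,0}_i(x)\cap X'_t)$ and $\Av(\abar)(\chi^{s,1}_i(x)\cap X'_t)$, which are within $\epsilon/2$ of the corresponding $\mu$-values, which themselves are within $\epsilon/2$ of $\mu(\theta_s(x,\bbar)\cap X'_t)$; adjusting constants gives the claim.

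So the problem reduces to: given finitely many Borel sets $Y_1,\dots,Y_N \subseteq S_\varphi(\cU)$ (namely the sets $\chi(x) \cap X'_t$ above), find $\abar \in (\cU^x)^{<\omega}$ with $\mu(Y_j) \approx_{\epsilon/2} \Av(\abar)(Y_j)$ for all $j$. Here I would use the finite Boolean algebra $\cB$ generated by $Y_1,\dots,Y_N$, listing its atoms $E_1,\dots,E_M$. For each $E_\ell$ with $\mu(E_\ell) > 0$, pick a rational $q_\ell \approx \mu(E_\ell)$ with $\sum_\ell q_\ell = 1$ and small total error, choose a type $p_\ell \in E_\ell$ in the support of $\mu$, realize it by some $a_\ell \in \cU^x$, and let $\abar$ consist of each $a_\ell$ repeated a number of times proportional to $q_\ell$ (clearing a common denominator). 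Then $\Av(\abar)$ assigns mass $\approx \mu(E_\ell)$ to each atom, hence $\approx \mu(Y_j)$ to each $Y_j$ since each $Y_j$ is a union of atoms. This is where one needs $\mu$ to be well-defined on the atoms: the $Y_j$ are Borel (finite intersections of clopen $\varphi$-formula sets with the Borel pullbacks $X'_t$), $\mu$ is a regular Borel probability measure, so $\mu(E_\ell)$ makes sense and the support of $\mu$ meets each $E_\ell$ of positive measure — standard measure theory.

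For the ``moreover'' clause, assume $A = M \prec \cU$ and each $X_t$ is clopen; then each $X'_t$ is clopen in $S_\varphi(\cU)$ and defined by a $\varphi$-formula over $M$ (a clopen subset of $S_\varphi(M)$ corresponds to a $\varphi$-formula over $M$, and its pullback is the same formula viewed over $\cU$), so each atom $E_\ell$ is itself defined by a consistent $\varphi$-formula over $M$; since $M$ is a model, each such $\varphi$-formula is realized in $M$, so we may take $a_\ell \in M^x$ and conclude $\abar \in (M^x)^{<\omega}$. The final sentence then follows by taking $k = 1$, $\theta_1 = \theta$, and $X_1 = S_\varphi(M)$ (clopen), which forces $X'_1 = S_\varphi(\cU)$ and recovers exactly finite $\theta$-approximation in $M$. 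I expect the main obstacle to be purely bookkeeping: tracking the several layers of $\epsilon$-fractions through the sandwiching and the rational-approximation-of-atom-masses step, and making sure the partition supplied by Proposition \ref{prop:smoothchar} for each $\theta_s$ is handled uniformly across all $s$ simultaneously (just take the common refinement / union of all the finitely many $\varphi$-formulas produced). There is no serious conceptual difficulty beyond what is already in Proposition \ref{prop:smoothchar}.
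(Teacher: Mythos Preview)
Your proposal is correct and follows essentially the same route as the paper's proof: apply the syntactic smoothness characterization to each $\theta_s$, form the finite Boolean algebra generated by the resulting $\chi$-formulas together with the $X_t$'s, pick a realization in each atom, and weight the realizations by rational approximations to the atom masses. The paper packages the atom step via an intermediate measure $\nu=\sum_j\mu(Y'_j)\delta_{a_j}$ and works with the algebra in $S_\varphi(A)$ rather than $S_\varphi(\cU)$, but these are cosmetic differences; the sandwiching estimate and the ``moreover'' argument are identical to yours (and your requirement that $p_\ell$ lie in the support of $\mu$ is harmless but unnecessary---any point of the atom will do).
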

\begin{proof}
Fix $\epsilon>0$. For each $1\leq s\leq t$, choose $\cL$-formulas $\psi_{s,1}(\ybar),\ldots \psi_{s,n_s}(\ybar)$ over $A$, and $\varphi$-formulas $\chi^0_{s,1}(x),\ldots,\chi^0_{s,n_s}(x),\chi^1_{s,1}(x),\ldots,\chi^1_{s,n_s}(x)$ over $A$ as in Proposition \ref{prop:smoothchar} applied to $\theta_s(x,\ybar_s)$, but for $\frac{\epsilon}{2}$. Let $\mathbb{B}$ be the finite sub-algebra of Borel sets in $S_\varphi(A)$ generated by $X_1,\ldots,X_k$ and the formulas $\chi^t_{s,i}(x)$ for $1\leq s\leq k$, $1\leq i\leq n_s$ and $t\in\{0,1\}$. Let $Y_1,\ldots,Y_m$ be the atoms of $\mathbb{B}$, and fix some $p_j\in Y_j$. Choose $a_j\in \cU^x$ realizing $p_j$, and set $Y'_j=(\rho^\varphi_A)^{\text{-}1}(Y_j)$. Define the global Kiesler $\varphi$-measure
\[
\nu=\sum_{j=1}^m \mu(Y'_j)\delta_{a_j}.
\]
Then $\nu|_A$ and $\mu|_A$ agree on all all atoms of $\mathbb{B}$, and hence all sets in $\mathbb{B}$. It is easy to show that, for some $\abar\in \{a_1,\ldots,a_m\}^{<\omega}$, we have $\nu(X)\approx_{\frac{\epsilon}{2}}\Av(\abar)(X)$ for all Borel $X\seq S_\varphi(\cU)$ (in particular, approximate each $\mu(Y'_j)$  by a rational within $\frac{\epsilon}{2m}$). So to establish the primary claim of the proposition, we fix $1\leq s,t\leq k$ and $\bbar\in\cU^{\ybar_s}$, and show that $\mu(\theta_s(x,\bbar)\cap X'_t)\approx_{\frac{\epsilon}{2}}\nu(\theta_s(x,\bbar)\cap X'_t)$. 

By assumption, there is some $1\leq i\leq n$ such that $\psi_{s,i}(\bbar)$ holds. So we have $\chi^0_{s,i}(\cU)\seq\theta_s(\cU,\bbar)\seq\chi^1_{s,i}(\cU)$ and $\mu(\chi^1_{s,i}\backslash \chi^0_{s,i})\leq \frac{\epsilon}{2}$.  Note that
\[
\chi^0_{s,i}(\cU)\cap X'_t\seq \theta_s(\cU,\bbar)\cap X'_t\seq (\chi^0_{s,i}(\cU)\cap X'_t)\cup (\chi^1_{s,i}\backslash\chi^0_{s,i})(\cU),
\]
and so for $\lambda\in\{\mu,\nu\}$, we have
\[
\lambda(\chi^0_{s,i}(x)\cap X'_t)\leq \lambda(\theta_s(x,\bbar)\cap X'_t)\leq \lambda(\chi^0_{s,i}(x)\cap X'_t)+\lambda(\chi^1_{s,i}\backslash\chi^0_{s,i}).
\]
Since $\chi^0_{s,i}\cap X_t$ and $\chi^1_{s,i}\backslash \chi^0_{s,i}$ are both in $\mathbb{B}$, we also have
\[
\nu(\chi^0_{s,i}(x)\cap X'_t)=\nu|_A(\chi^0_{s,i}(x)\cap X_t)=\mu|_A(\chi^0_{s,i}(x)\cap X_t)=\mu(\chi^0_{s,i}(x)\cap X'_t),\text{ and}
\]
\[
\nu(\chi^1_{s,i}\backslash \chi^0_{s,i})=\nu|_A(\chi^1_{s,i}\backslash \chi^0_{s,i})=\mu|_A(\chi^1_{s,i}\backslash \chi^0_{s,i})=\mu(\chi^1_{s,i}\backslash \chi^0_{s,i})\leq \frac{\epsilon}{2}.
\]
So, altogether if $\lambda\in\{\mu,\nu\}$ then $\lambda(\theta_s(x,\bbar)\cap X'_t)$ is between $\mu(\chi^0_{s,i}(x)\cap X'_t)$ and $\mu(\chi^0_{s,i}\cap X'_t)+\frac{\epsilon}{2}$. So $\mu(\theta_s(x,\bbar)\cap X'_t)\approx_{\frac{\epsilon}{2}}\nu(\theta_s(x,\bbar)\cap X'_t)$, as desired.  

For the moreover statement, note that if each $X_t$ is clopen, then so is each $Y_j$, and thus we can assume $p_j$ is realized in $M$,  hence pick $a_j\in M^x$. In the case $k=1$ and $X_1=S_\varphi(M)$, we conclude that $\mu$ is finitely $\theta$-approximated in $M$ for any $\varphi$-generated formula $\theta(x,\ybar)$. 
\end{proof}

The previous result has the following consequences for local measures with smooth extensions (analogous to the global result in \cite[Proposition 7.11]{Sibook}.

\begin{corollary}
Suppose $\mu\in\kM_\varphi(B)$ for some $B\subset\cU$, and assume $\mu$ has a global extension smooth over some $M\prec\cU$ containing $B$. Fix finitely many Borel sets $X_1,\ldots,X_k\seq S_\varphi(B)$ and  $\varphi$-generated formulas $\theta_1(x,\ybar_1),\ldots,\theta_k(x,\ybar_k)$. Then for any $\epsilon>0$ there is some $\bar{p}\in (S(\mu))^{<\omega}$ such that for any $1\leq s,t\leq k$ and $\bbar\in B^{\ybar_s}$,
\[
\mu(\theta_s(x,\bbar)\cap X_t)\approx_\epsilon\Av(\bar{p})(\theta_s(x,\bbar)\cap X_t).
\]
\end{corollary}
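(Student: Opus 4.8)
The plan is to deduce the corollary from Proposition~\ref{prop:smoothfam} by transporting everything along restriction maps of $\varphi$-type spaces. First I would fix a global extension $\bar\mu\in\kM_\varphi(\cU)$ of $\mu$ which is smooth over $M$. Write $\rho^\varphi_M\colon S_\varphi(\cU)\to S_\varphi(M)$ and $\rho^\varphi_B\colon S_\varphi(\cU)\to S_\varphi(B)$ for the restriction maps, and let $\pi\colon S_\varphi(M)\to S_\varphi(B)$ be the restriction map, so that $\rho^\varphi_B=\pi\circ\rho^\varphi_M$; writing $\mu_M=\bar\mu|_M$, we have $\mu_M=(\rho^\varphi_M)_\ast\bar\mu$ and $\mu=\pi_\ast\mu_M=(\rho^\varphi_B)_\ast\bar\mu$. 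I would then pull each $X_t\seq S_\varphi(B)$ back to the Borel set $\tilde X_t=\pi^{\text{-}1}(X_t)\seq S_\varphi(M)$ and apply Proposition~\ref{prop:smoothfam} to $\bar\mu$ (smooth over $M$), the Borel sets $\tilde X_1,\dots,\tilde X_k$, the formulas $\theta_1,\dots,\theta_k$, and $\epsilon$. This produces a tuple $\abar=(a_1,\dots,a_N)\in(\cU^x)^{<\omega}$ such that
\[
\bar\mu(\theta_s(x,\bbar)\cap X_t^\ast)\approx_\epsilon\Av(\abar)(\theta_s(x,\bbar)\cap X_t^\ast)
\]
for all $1\le s,t\le k$ and all $\bbar\in\cU^{\ybar_s}$, where $X_t^\ast=(\rho^\varphi_M)^{\text{-}1}(\tilde X_t)=(\rho^\varphi_B)^{\text{-}1}(X_t)$.

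Next I would set $p_i=\tp_\varphi(a_i/B)=\pi(\tp_\varphi(a_i/M))$ and $\bar p=(p_1,\dots,p_N)$, and verify two essentially formal identities valid for $\bbar\in B^{\ybar_s}$. First, since $\theta_s(x,\bbar)$ is a $\varphi$-formula over $B$, its preimage under $\rho^\varphi_B$ is the corresponding clopen subset of $S_\varphi(\cU)$, so $(\rho^\varphi_B)^{\text{-}1}(\theta_s(x,\bbar)\cap X_t)=\theta_s(x,\bbar)\cap X_t^\ast$, and pushing $\bar\mu$ forward gives $\mu(\theta_s(x,\bbar)\cap X_t)=\bar\mu(\theta_s(x,\bbar)\cap X_t^\ast)$. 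Second, for each $i$ the point $\tp_\varphi(a_i/\cU)$ lies in $\theta_s(x,\bbar)\cap X_t^\ast$ if and only if $\cU\models\theta_s(a_i,\bbar)$ and $p_i\in X_t$, i.e.\ if and only if $p_i\in\theta_s(x,\bbar)\cap X_t$, so $\Av(\abar)(\theta_s(x,\bbar)\cap X_t^\ast)=\Av(\bar p)(\theta_s(x,\bbar)\cap X_t)$. Chaining these with the displayed approximation yields $\mu(\theta_s(x,\bbar)\cap X_t)\approx_\epsilon\Av(\bar p)(\theta_s(x,\bbar)\cap X_t)$, which is the desired conclusion.

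The one point that is not purely formal, and the step I expect to require the most care, is ensuring $\bar p\in(S(\mu))^{<\omega}$, i.e.\ that each $p_i$ lies in the support of $\mu$. For this I would reexamine the proof of Proposition~\ref{prop:smoothfam}: there the elements $a_j$ are taken to realize representatives $p_j\in Y_j$ of the atoms $Y_j$ of a finite Boolean algebra of Borel subsets of $S_\varphi(M)$, and the output tuple only involves those $a_j$ for which $\bar\mu\big((\rho^\varphi_M)^{\text{-}1}(Y_j)\big)>0$ (the rational weight assigned to a null atom may be taken to be $0$). For such an atom $Y_j$ we have $\mu_M(Y_j)=\bar\mu\big((\rho^\varphi_M)^{\text{-}1}(Y_j)\big)>0$, so $Y_j$ meets $\mathrm{supp}(\mu_M)$; choosing $p_j\in Y_j\cap\mathrm{supp}(\mu_M)$ we get $\pi(p_j)\in\pi(\mathrm{supp}(\mu_M))\seq\mathrm{supp}(\pi_\ast\mu_M)=S(\mu)$ by continuity of $\pi$. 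Hence the $a_j$ occurring in $\abar$ can be chosen so that $\tp_\varphi(a_j/B)\in S(\mu)$, which is exactly what the statement demands; the rest of the argument above is unaffected by this choice.
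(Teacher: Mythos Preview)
Your argument is correct, and the identities you verify in the second paragraph match what the paper does. The difference lies entirely in how you arrange for $\bar p\in(S(\mu))^{<\omega}$.

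The paper treats Proposition~\ref{prop:smoothfam} as a black box: it enlarges the list of Borel sets by adding $X_{k+1}=S(\mu)$ together with the tautological formula $\theta_{k+1}(x,y)=\varphi(x,y)\vee\neg\varphi(x,y)$, applies the proposition with accuracy $\epsilon/3$, and uses the resulting approximation $\Av(\abar)(X'_{k+1})\approx_{\epsilon/3}\mu(S(\mu))=1$ to conclude that all but an $\epsilon/3$-fraction of the $a_j$'s restrict to types in $S(\mu)$. The bad $a_j$'s are then discarded, and a short computation shows the truncated tuple still approximates within $\epsilon$. By contrast, you open up the proof of Proposition~\ref{prop:smoothfam} and observe that the representatives $p_j$ of the positive-measure atoms $Y_j$ can be chosen in $Y_j\cap\mathrm{supp}(\mu_M)$ (nonempty since $\mu_M(Y_j)>0$ forces $Y_j$ to meet the support), whence $\pi(p_j)\in S(\mu)$ automatically. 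Your route is tidier, avoiding the $\epsilon/3$ bookkeeping and the truncation step, but it requires going inside the proof of the proposition rather than invoking its statement. The paper's route is more modular but pays for it with the error analysis.
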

\begin{proof}
We may clearly assume $B^y$ is nonempty, since otherwise the statement holds vacuously.
Let $X_{k+1}=S(\mu)$ and $\theta_{k+1}(x,y)$ be $\varphi(x,y)\vee\neg\varphi(x,y)$.   Now fix $\epsilon>0$. Setting $X''_t=(\rho^\varphi_{M,B})^{\text{-}1}(X_t)$, apply the previous proposition to $\mu^*$, $X''_1,\ldots,X''_{k+1}$, and $\theta_1,\ldots,\theta_{k+1}$, with $\frac{\epsilon}{3}$, to obtain $\abar\in (\cU^x)^{<\omega}$. 

Let $X'_t\coloneqq(\rho^\varphi_B)^{\text{-}1}(X_t)=(\rho^\varphi_M)^{\text{-}1}(X''_t)$. Since $\theta_{k+1}(x,b)$ is equivalent to $x=x$ for some/any $b\in B^y$, we have $\Av(\abar)(X'_{t})\approx_{\frac{\epsilon}{3}} \mu^*(X'_t)$
for any $1\leq t\leq k+1$. In particular, 
\[
\Av(\abar)(X'_{k+1})\approx_{\frac{\epsilon}{3}} \mu^*(X'_{k+1})=\mu^*|_B(X_{k+1})=\mu(S(\mu))=1.
\]
So if we enumerate $\abar=(a_1,\ldots,a_m)$, then (without loss of generality) $\tp_\varphi(a_j/\cU)\in X'_{k+1}$ for all 
$1\leq j\leq m'$, where $m'\geq (1-\frac{\epsilon}{3})m$. So $p_j\coloneqq\tp_\varphi(a_j/B)\in S(\mu)$ for all $1\leq j\leq m'$.  We show that the desired conclusion holds for $\bar{p}\coloneqq(p_1,\ldots,p_{m'})$. 

Fix $1\leq s,t\leq k$ and $\bar{b}\in B^{\ybar_s}$. Then
\begin{multline*}
\mu(\theta_s(x,\bbar)\cap X_t)=\mu^*|_B(\theta_s(x,\bbar)\cap X_t)=\mu^*(\theta_s(x,\bbar)\cap X'_t)\\
\approx_{\frac{\epsilon}{3}} \Av(a_1,\ldots,a_m)(\theta_s(x,\bbar)\cap X'_t)=\Av(p_1,\ldots,p_m)(\theta_s(x,\bbar)\cap X_t).
\end{multline*}
Set $\alpha=\Av(p_1,\ldots,p_m)(\theta_s(x,\bbar)\cap X_t)$ and $\beta=\Av(\bar{p})(\theta_s(x,\bbar)\cap X_t)$. Then
\begin{multline*}
|\alpha-\beta|=\left|\frac{m'-m}{m}\beta+\frac{1}{m}\sum_{j=m'+1}^mp_j(\theta_s(x,\bbar)\cap X_t)\right|\leq 2\frac{m-m'}{m}\leq \frac{2\epsilon}{3}.\\
\end{multline*}
So altogether we have $\mu(\theta_s(x,\bbar)\cap X_t)\approx_{\frac{\epsilon}{3}} \alpha\approx_{\frac{2\epsilon}{3}}\beta$, as desired.
\end{proof}

As we will see later, the assumptions of the previous corollary hold for any local measure with respect to an NIP formula $\varphi(x,y)$.

\subsection{Packing numbers}\label{sec:packing}

We again fix an $\cL$-formula $\varphi(x,y)$. In this section, we focus on  $\varphi$-measures over an arbitrary model $M\preceq\cU$. The general question we are interested in is when a measure in $\kM_\varphi(M)$ has a global extension satisfying  nice properties (e.g., those defined in the previous subsection). We will see that large ``separated families" (defined below) present an obstacle to the existence of such an extension. To get started, we recall the following notions from discrete geometry.

\begin{definition}
Let $(X,\mathcal{B},\mu)$ be a finitely additive probability space. 
\begin{enumerate}[$(1)$]
\item A subset $\mathcal{F}\seq\mathcal{B}$ is \textbf{$\epsilon$-separated} if $\mu(A\ssd B)\geq\epsilon$  for all distinct $A,B\in\mathcal{F}$. 
\item Given $\mathcal{S}\seq\mathcal{B}$, the \textbf{$\epsilon$-packing number of $\mathcal{S}$}  is
\[
\pn(\cS,\mu,\epsilon)=\sup\{|\cF|:\cF\seq\cS\text{ is finite and $\epsilon$-separated}\}.
\]
\end{enumerate}
\end{definition}

We apply this definition in our model theoretic setting in the usual way.

\begin{definition}
Given $M\preceq\cU$ and $\mu\in\kM_\varphi(M)$, let $\pn(\varphi,\mu,\epsilon)$ denote $\pn(\cS_{\varphi,M},\mu,\epsilon)$, where $S_{\varphi,M}$ is the collection of instances of $\varphi(x,y)$ over $M$. 
\end{definition}

\begin{remark}\label{rem:mono}
If $\mu\in\kM_\varphi(M)$ and $\nu\in\kM_\varphi(\cU)$ is a global extension of $\mu$ then $\pn(\mu,\varphi,\epsilon)\leq \pn(\nu,\varphi,\epsilon)$ for any $\epsilon>0$.
\end{remark}

Define the $\varphi$-generated formula
\[
\varphi_{\subt}(x;y_1,y_1):= \varphi(x,y_1)\fsd \varphi(x,y_2).
\] 
Then, given $M\preceq \cU$ and $\mu\in\kM_\varphi(M)$, we have $\pn(\mu,\varphi,\epsilon)\geq n$ if and only if there are $b_1,\ldots,b_n\in M^y$ such that $\mu(\varphi_{\subt}(x;b_i,b_j))\geq\epsilon$ for all $1\leq i<j\leq n$.
The next result follows from  the proof of \cite[Proposition 2.18]{ChStNIP}.

\begin{fact}[Chernikov \& Starchenko]\label{fact:CS}
If $\mu\in\kM_{\varphi}(M)$ is finitely $\varphi_{\subt}$-approximated, then $\pn(\varphi,\mu,\epsilon)<\infty$ for all $\epsilon>0$.
\end{fact}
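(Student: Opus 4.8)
The plan is a short pigeonhole argument on traces; no substantial machinery is needed for the finiteness statement. Fix $\epsilon > 0$. Since $\mu$ is finitely $\varphi_{\subt}$-approximated in $M$, choose $\eta$ with $0 < \eta < \epsilon$ and a finite tuple $\abar = (a_1,\dots,a_m)$ of elements of $M^x$ such that $\mu(\varphi_{\subt}(x,\bbar)) \approx_\eta \Av(\abar)(\varphi_{\subt}(x,\bbar))$ for every $\bbar \in \cU^{y_1 y_2}$. For $b \in M^y$, let $s(b) = \{\, k \leq m : \cU \models \varphi(a_k,b)\,\}$ be the trace of the instance $\varphi(x,b)$ on $\abar$; there are at most $2^m$ possible traces.

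By the characterization of $\pn(\varphi,\mu,\epsilon)$ recalled just before the statement, it suffices to show that there do not exist $b_1,\dots,b_{2^m+1} \in M^y$ with $\mu(\varphi_{\subt}(x;b_i,b_j)) \geq \epsilon$ for all $i < j$. Suppose toward a contradiction that such $b_1,\dots,b_{2^m+1}$ exist. Since $2^m+1$ exceeds the number of possible traces, there are $i \neq j$ with $s(b_i) = s(b_j)$; then $\cU \models \neg\varphi_{\subt}(a_k;b_i,b_j)$ for every $k \leq m$, so $\Av(\abar)(\varphi_{\subt}(x;b_i,b_j)) = 0$, and hence $\mu(\varphi_{\subt}(x;b_i,b_j)) \leq \eta < \epsilon$ by the choice of $\abar$ — a contradiction. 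Therefore $\pn(\varphi,\mu,\epsilon) \leq 2^m < \infty$, and since $\abar$ (hence $m$) depends only on $\epsilon$, this is exactly the desired bound.

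This argument is essentially pure counting, so there is no real obstacle; the only points requiring a little care are choosing the approximation error $\eta$ strictly below $\epsilon$ (so that $\epsilon$-separation is incompatible with a vanishing average) and observing that $\Av(\abar)(\varphi_{\subt}(x;b_i,b_j))$ depends on $b_i,b_j$ only through the traces $s(b_i),s(b_j)$, vanishing when they coincide. For completeness one might record that the proof of \cite[Proposition 2.18]{ChStNIP} in fact extracts a sharper (polynomial in $1/\epsilon$) bound when $\varphi$ is NIP, by applying Haussler's packing lemma to an $\eta$-approximation of the set system cut out by $\varphi_{\subt}$; only the crude count above is needed for the finiteness statement used here.
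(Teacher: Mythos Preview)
Your argument is correct. The paper does not actually prove this fact---it merely cites \cite[Proposition 2.18]{ChStNIP}---so there is nothing to compare directly. That said, your pigeonhole-on-traces argument is precisely the specialization, to the case where $\mu$ is finitely $\varphi_{\subt}$-approximated, of the paper's own proof of the more general Proposition~\ref{prop:phipack}: there one approximates $F^{\varphi_{\ssubt},s}_{\mu,A}$ by a step function supported on finitely many $\varphi_{\subt}^*$-formulas with parameters $A_0$, uses finiteness of $S_{\varphi^*}(A_0)$ in place of your $2^m$ traces, and derives the same contradiction. One cosmetic point: since $\mu\in\kM_\varphi(M)$, the approximation should be stated for $\bbar\in M^{y_1y_2}$ rather than $\cU^{y_1y_2}$, but this is harmless because the $b_i$ are already in $M^y$. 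Also, because $\approx_\epsilon$ is a strict inequality in this paper, the auxiliary $\eta<\epsilon$ is not actually needed.
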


Via Remark \ref{rem:mono}, one then concludes:

\begin{corollary}\label{cor:CS}
If $\mu\in\kM_{\varphi}(M)$ has a global finitely $\varphi_{\subt}$-approximated extension, then $\pn(\varphi,\mu,\epsilon)<\infty$ for all $\epsilon>0$.
\end{corollary}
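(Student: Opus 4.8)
The plan is to derive this immediately from Fact \ref{fact:CS} together with the monotonicity observation in Remark \ref{rem:mono}. Suppose $\mu\in\kM_\varphi(M)$ has a global extension $\nu\in\kM_\varphi(\cU)$ that is finitely $\varphi_{\subt}$-approximated in some $A\subset\cU$. Since $A\subseteq\cU$, every tuple $\abar\in A^{<\omega}$ is also a tuple from $\cU$, so $\nu$ is in particular finitely $\varphi_{\subt}$-approximated in $\cU$; that is, $\nu\in\kM_\varphi(\cU)$ is finitely $\varphi_{\subt}$-approximated when we take the ambient model to be $\cU$ itself.

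Now apply Fact \ref{fact:CS} with the model $M$ there instantiated as the monster model $\cU$: this gives $\pn(\varphi,\nu,\epsilon)<\infty$ for every $\epsilon>0$. Finally, since $\nu$ is a global extension of $\mu$, Remark \ref{rem:mono} yields $\pn(\varphi,\mu,\epsilon)\leq\pn(\varphi,\nu,\epsilon)<\infty$ for every $\epsilon>0$, which is exactly the claim.

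There is essentially no obstacle here; the only point requiring a word of care is checking that "finitely $\varphi_{\subt}$-approximated" transfers along the inclusion $A\subseteq\cU$ so that Fact \ref{fact:CS} is genuinely applicable with $\cU$ in the role of the model — but this is immediate from the definition, since the approximating average measures are built from tuples that now simply live in the larger model. (One could alternatively absorb $A$ into a small elementary submodel $N\prec\cU$ containing $A$, but this is unnecessary.) Thus the corollary is a one-line consequence of the two cited results.
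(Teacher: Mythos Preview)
Your proof is correct and matches the paper's approach exactly: the paper derives the corollary in one line by saying ``Via Remark \ref{rem:mono}, one then concludes,'' which is precisely your argument of applying Fact \ref{fact:CS} to the global extension and then invoking the monotonicity of packing numbers under extension.
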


The previous corollary is stronger than Fact \ref{fact:CS} since if $\mu\in\kM_\varphi(M)$ is finitely $\varphi_{\subt}$-approximated then it has a global extension that is finitely $\varphi_{\subt}$-approximated in $M$. On the other hand, there are measures over models that are not finitely approximated, but do have global  finitely approximated extensions. 

Recall that if $\varphi(x,y)$ is an $\cL$-formula and $\mu\in\kM_\varphi(\cU)$ is finitely $\varphi_{\subt}$-approximated in $A\subset\cU$, then it is strongly $\varphi_{\subt}$-definable over $A$ by Proposition \ref{prop:imps}$(d)$. Therefore the next result generalizes Corollary \ref{cor:CS}.

\begin{proposition}\label{prop:phipack}
If $\mu\in\kM_\varphi(M)$ has a global strongly $\varphi_{\subt}$-definable extension, then $\pn(\varphi,\mu,\epsilon)<\infty$ for all $\epsilon>0$.
\end{proposition}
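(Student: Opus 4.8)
The plan is to pass to the global extension and then exploit uniform continuity of the strong-definability predicate on a profinite space. By hypothesis, $\mu$ has a global extension $\mu^*\in\kM_\varphi(\cU)$ that is strongly $\varphi_{\subt}$-definable over some $A\subset\cU$; since $\mu^*|_M=\mu$, Remark \ref{rem:mono} reduces the statement to showing that $\pn(\varphi,\mu^*,\epsilon)<\infty$ for every $\epsilon>0$.

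Fix $\epsilon>0$. The key point is that $F^{\varphi_{\subt},s}_{\mu^*,A}\colon S_{(\varphi_{\subt})^*}(A)\to[0,1]$ is a continuous function on a compact, totally disconnected space; covering by clopen neighbourhoods on each of which $F^{\varphi_{\subt},s}_{\mu^*,A}$ has oscillation $<\epsilon$, extracting a finite subcover and disjointifying, we obtain a finite partition of $S_{(\varphi_{\subt})^*}(A)$ into clopen classes on each of which $F^{\varphi_{\subt},s}_{\mu^*,A}$ varies by strictly less than $\epsilon$. Each such clopen class is defined by a $(\varphi_{\subt})^*$-formula over $A$, i.e.\ a Boolean combination of instances $\varphi_{\subt}(a;y_1,y_2)$ with $a\in A^x$; collecting the finitely many parameters that occur yields $a_1,\dots,a_N\in A^x$ with the property that the partition class of $\tp_{(\varphi_{\subt})^*}(\bbar/A)$, and hence the value of $\mu^*(\varphi_{\subt}(x;b_1,b_2))$ up to $\epsilon$, is determined by the truth values of $\varphi_{\subt}(a_k;b_1,b_2)$ for $k\leq N$, that is, by which of the $a_k$ "$\varphi$-separate" $b_1$ and $b_2$.

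Now I claim that whenever $b,b'\in\cU^y$ satisfy $\varphi(a_k,b)\leftrightarrow\varphi(a_k,b')$ for all $k\leq N$, one has $\mu^*(\varphi(x,b)\fsd\varphi(x,b'))<\epsilon$. Indeed, under this hypothesis each $\varphi_{\subt}(a_k;b,b')$ is false, exactly as for the degenerate pair $(b,b)$, so $\tp_{(\varphi_{\subt})^*}(bb'/A)$ and $\tp_{(\varphi_{\subt})^*}(bb/A)$ lie in the same clopen class; hence $\mu^*(\varphi(x,b)\fsd\varphi(x,b'))$ is within $\epsilon$ of $\mu^*(\varphi(x,b)\fsd\varphi(x,b))=0$, proving the claim. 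Consequently, if $\{\varphi(x,b_i):i<n\}$ is a finite $\epsilon$-separated family (so $\mu^*(\varphi(x,b_i)\fsd\varphi(x,b_j))=\mu^*(\varphi_{\subt}(x;b_i,b_j))\geq\epsilon$ for $i\neq j$), then distinct $b_i,b_j$ must realize distinct patterns $(\varphi(a_k,b_i))_{k\leq N}\in\{0,1\}^N$, so $n\leq 2^N$. Thus $\pn(\varphi,\mu^*,\epsilon)\leq 2^N<\infty$, and therefore $\pn(\varphi,\mu,\epsilon)<\infty$.

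The only step requiring genuine care—though it is routine—is the passage from continuity of $F^{\varphi_{\subt},s}_{\mu^*,A}$ to a finite clopen partition described by finitely many parameters $a_k\in A^x$: one uses that clopen subsets of the Stone space $S_{(\varphi_{\subt})^*}(A)$ are precisely the $(\varphi_{\subt})^*$-definable ones over $A$, and then tracks that an instance of $(\varphi_{\subt})^*$ with parameter $a$ records exactly whether $a$ $\varphi$-separates the pair under consideration. Everything else is bookkeeping, and no NIP hypothesis on $\varphi$ is needed—the tameness of the measure alone drives the bound.
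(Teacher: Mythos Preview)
Your proof is correct and follows essentially the same approach as the paper: both reduce to the global extension, use continuity of $F^{\varphi_{\ssubt},s}_{\mu^*,A}$ on the Stone space to pass to finitely many parameters $a_1,\dots,a_N$, and then compare the pair $(b,b')$ to the degenerate pair $(b,b)$ to force $\mu^*(\varphi_{\ssubt}(x;b,b'))$ close to $0$ whenever no $a_k$ $\varphi$-separates $b$ and $b'$. The only cosmetic difference is that the paper argues by contradiction via a step-function approximation of $F^{\varphi_{\ssubt},s}_{\mu^*,A}$, whereas you phrase it directly via a small-oscillation clopen partition and thereby extract the explicit bound $\pn(\varphi,\mu^*,\epsilon)\leq 2^N$.
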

\begin{proof}
In light of Remark \ref{rem:mono}, it suffices to assume that $M=\cU$ and $\mu$ is strongly $\varphi_{\subt}$-definable over some small set $A\subset\cU$.
Fix $\epsilon>0$. Toward a contradiction, suppose $\#(\mu,\varphi,\epsilon)=\infty$. Since $F^{\varphi_{\ssubt},s}_{\mu,A}$ is a continuous function on the Stone space $S_{\varphi^*_{\ssubt}}(A)$, it can be uniformly approximated by a finite linear combination of characteristic functions of clopen sets (see, e.g., \cite[Fact 2.10]{GannNIP}). So there are finitely many real numbers $r_1,\ldots,r_n\in[0,1]$ and $\varphi_{\subt}^*$-formulas $\psi_1(y_1,y_2),\ldots,\psi_n(y_1,y_2)$  such that, for any $b,c\in\cU^y$, 
\[
\mu(\varphi_{\subt}(x;b,c))\approx_{\frac{\epsilon}{2}}\sum_{i=1}^n r_i\boldsymbol{1}_{\psi_i}(b,c).
\]

Let $A_0$ be the collection of parameters that occur in $\psi_1,\ldots,\psi_n$. Note that $S_{\varphi^*}(A_0)$ is finite (since $A_0$ is finite). Since $\pn(\mu,\varphi,\epsilon)=\infty$, we can choose $b,c\in\cU^y$ such that $\mu(\varphi_{\subt}(x;b,c))\geq\epsilon$ and $\tp_{\varphi^*}(b/A_0)=\tp_{\varphi^*}(c/A_0)$. The first condition implies $\sum_{i=1}^n r_i\boldsymbol{1}_{\psi_i}(b,c)>\frac{\epsilon}{2}$, while the second condition implies that  $\cU\models\neg\varphi_{\subt}(a;b,c)$ for any $a\in A_0$, and so $\tp_{\varphi^*_{\ssubt}}(b,c/A_0)=\tp_{\varphi^*_{\ssubt}}(b,b/A_0)$. In particular, for all $1\leq i\leq n$, we have $\boldsymbol{1}_{\psi_i}(b,c)=\boldsymbol{1}_{\psi_i}(b,b)$ since $\psi_i$ is a $\varphi^*_{\subt}$-formula over $A_0$. Altogether, 
\[
\frac{\epsilon}{2}<\sum_{i=1}^nr_i\boldsymbol{1}_{\psi_i}(b,c)=\sum_{i=1}^nr_i\boldsymbol{1}_{\psi_i}(b,b)\approx_{\frac{\epsilon}{2}} \mu(\varphi_{\subt}(x;b,b))=0,
\]
which is a contradiction.
\end{proof}

In comparing Corollary \ref{cor:CS} and Proposition \ref{prop:phipack}, we note that strong definability is a much weaker assumption than finite approximability. For example, in DLO, if $\varphi(x,y)$ is $x<y$ then the type $p\in S_\varphi(\cU)$ containing $x>b$ for all $b\in\cU$ is strongly $\theta$-definable for any $\varphi$-generated formula $\theta(x,\ybar)$, but is not  even finitely satisfiable (with respect to  $\varphi(x,y)$) in any small model.

We end this section with some remarks about non-local measures. We say that a  measure $\mu\in\kM_x(\cU)$ is \emph{(strongly) definable} over some $A\subset\cU$ if, for any formula $\varphi(x,y)$ and any $\varphi$-generated formula $\theta(x,\ybar)$, $\mu|_\varphi$ is (strongly) $\theta$-definable over $A$. Recall that any strongly definable measure is definable (Proposition \ref{prop:imps}$(b)$). Since the notion of strong definability is not prevalent in the literature, we also point out that it is a relatively weak property in general (when compared to, say, finite approximation or smoothness). For example, any \emph{dfs} measure is strongly definable by the following result (which is largely evident from \cite[Proposition 4.5]{GannNIP} or \cite[Proposition 2.9]{CoGa}).

\begin{proposition}\label{prop:dfssd}
Let $\theta(x,\ybar)$ be a $\varphi$-generated formula, and suppose $\mu\in\kM_\varphi(\cU)$ is finitely $\theta_{\subt}$-satisfiable in some $A\subset\cU$ (i.e., every instance of $\theta_{\subt}$ with positive measure has a solution in $A^x$). Then $\mu$ is strongly $\theta$-invariant over $A$. Thus if $\mu$ is also $\theta$-definable, then $\mu$ is strongly $\theta$-definable over $A$. 
\end{proposition}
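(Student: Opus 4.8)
The plan is to unwind the definitions and show directly that if $\bbar,\bbar'\in\cU^{\ybar}$ satisfy $\tp_{\theta^*}(\bbar/A)=\tp_{\theta^*}(\bbar'/A)$, then $\mu(\theta(x,\bbar))=\mu(\theta(x,\bbar'))$. The key idea is that the formula $\theta_{\subt}(x;\ybar_1,\ybar_2):=\theta(x,\ybar_1)\fsd\theta(x,\ybar_2)$ is a $\varphi$-generated formula, and if $\bbar$ and $\bbar'$ have the same $\theta^*$-type over $A$, then the instance $\theta_{\subt}(x;\bbar,\bbar')$ should have $\mu$-measure zero, which forces $\mu(\theta(x,\bbar))=\mu(\theta(x,\bbar'))$ since $\mu(\theta(x,\bbar)\fsd\theta(x,\bbar'))=0$ implies the two sets have equal measure.

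First I would verify the zero-measure claim. Suppose toward a contradiction that $\mu(\theta_{\subt}(x;\bbar,\bbar'))>0$. By the assumption that $\mu$ is finitely $\theta_{\subt}$-satisfiable in $A$, there is some $a\in A^x$ with $\cU\models\theta_{\subt}(a;\bbar,\bbar')$, i.e., $\cU\models\theta(a,\bbar)$ and $\cU\not\models\theta(a,\bbar')$ (or vice versa). But $a\in A^x$, so the formula ``$\exists$ no such $a$'' — more precisely, the $\theta^*$-type of $\bbar$ over $A$ — records for each $a\in A^x$ whether $\theta(a,\bbar)$ holds. Since $\tp_{\theta^*}(\bbar/A)=\tp_{\theta^*}(\bbar'/A)$, we have $\cU\models\theta(a,\bbar)\leftrightarrow\theta(a,\bbar')$ for every $a\in A^x$, contradicting the existence of the witness $a$. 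Hence $\mu(\theta_{\subt}(x;\bbar,\bbar'))=0$.

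From $\mu(\theta(x,\bbar)\fsd\theta(x,\bbar'))=0$ we get, by finite additivity, $\mu(\theta(x,\bbar))=\mu(\theta(x,\bbar)\wedge\theta(x,\bbar'))=\mu(\theta(x,\bbar'))$, so $\mu$ is strongly $\theta$-invariant over $A$. The final sentence is then immediate from Proposition \ref{prop:imps}$(c)$: if in addition $\mu$ is $\theta$-definable over $A$ (equivalently, $\theta$-definable over any set, but here we need it over $A$ — note $\theta$-definability over $A$ together with strong $\theta$-invariance over $A$ gives strong $\theta$-definability over $A$), then $\mu$ is strongly $\theta$-definable over $A$.

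I do not anticipate a serious obstacle here; the only subtlety is making sure the bookkeeping about $\theta^*$-types versus $\varphi^*$-types is consistent — that is, confirming that ``$\theta$-generated'' is the right closure to take so that $\theta_{\subt}$ is again $\varphi$-generated and that $\tp_{\theta^*}(\bbar/A)$ genuinely determines the truth value of $\theta(a,\bbar)$ for $a\in A^x$. This is essentially by definition of $\theta^*$ (the partitioned formula with the roles of the variables swapped), so the argument should go through cleanly. The potential mismatch between ``$\theta$-definable over $A$'' in the hypothesis of the last sentence and what Proposition \ref{prop:imps}$(c)$ requires is the one place to state carefully, but it is exactly the content of part $(c)$.
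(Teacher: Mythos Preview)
Your proof is correct and follows essentially the same approach as the paper: both argue that if $\tp_{\theta^*}(\bbar/A)=\tp_{\theta^*}(\bbar'/A)$ then $\mu(\theta_{\subt}(x;\bbar,\bbar'))=0$ (since any realization in $A^x$ would contradict the equality of $\theta^*$-types), which forces $\mu(\theta(x,\bbar))=\mu(\theta(x,\bbar'))$, and then invoke Proposition~\ref{prop:imps}$(c)$ for the second claim. The only difference is cosmetic: the paper phrases the first step as a direct contrapositive (``if the measures differ then the symmetric difference has positive measure, contradiction''), whereas you spell out the contradiction more explicitly.
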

\begin{proof}
First we show that $\mu$ is strongly $\theta$-invariant. Fix $b,b'\in\cU^y$ such that $\tp_{\theta^*}(b/A)=\tp_{\theta^*}(b'/A)$. If $\mu(\varphi(x,b))\neq\mu(\varphi(x,b'))$ then $\mu(\theta_{\subt}(x;b,b'))>0$, which  which contradicts finite $\theta_{\subt}$-satisfiability of $\mu$ in $A$ and the choice of $b,b'$. This proves the first claim. The second claim follows from Proposition \ref{prop:imps}$(c)$. 
\end{proof}

Since \emph{dfs} measures are widely studied and more naturally occurring, we also make note of the following corollary.

\begin{corollary}
If $\mu\in\kM_x(M)$ has a global dfs extension, then $\pn(\varphi,\mu,\epsilon)<\infty$ for any $\cL$-formula $\varphi(x,y)$ and any $\epsilon>0$. 
\end{corollary}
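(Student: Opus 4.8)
The statement to prove is the final corollary: if $\mu\in\kM_x(M)$ has a global \dfs\ extension, then $\pn(\varphi,\mu,\epsilon)<\infty$ for any $\cL$-formula $\varphi(x,y)$ and any $\epsilon>0$.

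The plan is to chain together the results already established. First, recall that $\mu \in \kM_x(M)$ having a global \dfs\ extension means there is $\mu^* \in \kM_x(\cU)$ extending $\mu$ which is definable and finitely satisfiable in some $A \subset \cU$. Fix an $\cL$-formula $\varphi(x,y)$; it suffices to bound $\pn(\varphi, \mu|_\varphi, \epsilon)$ where $\mu|_\varphi$ is the restriction of $\mu$ to $\varphi$-formulas over $M$, and by Remark~\ref{rem:mono} it suffices to work with the global restriction $\mu^*|_\varphi \in \kM_\varphi(\cU)$ in place of a measure over a model (indeed $\pn(\varphi,\mu|_\varphi,\epsilon) \le \pn(\varphi, \mu^*|_\varphi, \epsilon)$, once one checks the packing numbers are compatible — the instances of $\varphi$ over $M$ form a subfamily of those over $\cU$).

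Next I would verify that $\mu^*|_\varphi$ is strongly $\varphi_{\subt}$-definable over $A$, which is exactly what is needed to invoke Proposition~\ref{prop:phipack}. For this, apply Proposition~\ref{prop:dfssd} with $\theta(x,\ybar) = \varphi(x,y)$ (so $\theta$ is $\varphi$-generated): since $\mu^*$ is finitely satisfiable in $A$, it is in particular finitely $\varphi_{\subt}$-satisfiable in $A$ (every instance of $\varphi_{\subt}$ with positive measure, being a $\varphi$-formula over $\cU$, has a solution in $A^x$ by finite satisfiability of $\mu^*$). Hence $\mu^*|_\varphi$ is strongly $\varphi_{\subt}$-invariant over $A$. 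Moreover, definability of $\mu^*$ gives that $\mu^*|_\varphi$ is $\varphi_{\subt}$-definable over $A$ (as $\varphi_{\subt}$ is a $\varphi$-generated formula). By the second claim of Proposition~\ref{prop:dfssd} (equivalently Proposition~\ref{prop:imps}$(c)$), $\mu^*|_\varphi$ is strongly $\varphi_{\subt}$-definable over $A$.

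Finally, apply Proposition~\ref{prop:phipack} to $\mu^*|_\varphi$ (viewing $\cU$ itself as the ``model'' and noting the proposition's proof reduces to exactly the situation of a global strongly $\varphi_{\subt}$-definable measure over a small set): we conclude $\pn(\varphi, \mu^*|_\varphi, \epsilon) < \infty$ for all $\epsilon > 0$, and therefore $\pn(\varphi, \mu, \epsilon) < \infty$ by Remark~\ref{rem:mono}. Since $\varphi$ and $\epsilon$ were arbitrary, we are done. There is essentially no obstacle here — the work has all been done in Propositions~\ref{prop:dfssd}, \ref{prop:phipack}, and \ref{prop:imps}; the only point requiring mild care is the bookkeeping that a global \dfs\ extension of a non-local measure restricts to a strongly $\varphi_{\subt}$-definable local $\varphi$-measure, and that $\varphi_{\subt}$ being a $\varphi$-generated formula lets the local definability/finite-satisfiability hypotheses transfer cleanly from the non-local measure to its $\varphi$-reduct.
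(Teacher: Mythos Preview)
Your overall strategy is correct and matches the paper's intended route: combine Proposition~\ref{prop:dfssd} with Proposition~\ref{prop:phipack} via Remark~\ref{rem:mono}. However, there is a small but genuine slip in how you apply Proposition~\ref{prop:dfssd}. You set $\theta(x,\ybar)=\varphi(x,y)$; with that choice the hypothesis ``finitely $\theta_{\subt}$-satisfiable'' is indeed ``finitely $\varphi_{\subt}$-satisfiable'', but the \emph{conclusion} of the proposition is ``strongly $\theta$-invariant/definable'', i.e.\ strongly $\varphi$-definable, not strongly $\varphi_{\subt}$-definable as you claim. Proposition~\ref{prop:phipack} requires the latter.

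The fix is immediate: apply Proposition~\ref{prop:dfssd} with $\theta=\varphi_{\subt}$ instead. Then the hypothesis becomes finite $(\varphi_{\subt})_{\subt}$-satisfiability in $A$, which still follows from global finite satisfiability of $\mu^*$ since $(\varphi_{\subt})_{\subt}$ is a $\varphi$-generated formula; and you need $\varphi_{\subt}$-definability, which follows from global definability of $\mu^*$. The conclusion is then exactly strongly $\varphi_{\subt}$-definable over $A$, and the rest of your argument goes through unchanged. (Alternatively, the sentence preceding Proposition~\ref{prop:dfssd} already records that any \dfs\ measure is strongly definable, meaning strongly $\theta$-definable for every $\varphi$-generated $\theta$; this gives the needed conclusion for $\theta=\varphi_{\subt}$ directly.)
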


Indeed, the above conclusion holds when $\mu$ has a global strongly definable extension.

\subsection{Smooth extensions for NIP formulas}\label{sec:smoothext}

In this section, we prove the local characterization of NIP formulas in terms of the existence of smooth extensions for measures over small models. This characterization will pass through the fact that an NIP formula admits finite packing numbers with respect to any measure. This fact is well known in the model theory literature, though usually in the context of NIP theories (see e.g., \cite[Proposition 3.3]{HPP} and \cite[Theorem 3.14]{Keis}). The standard model-theoretic proof involves expanding the language to artificially make the measure in question definable, and then applying compactness in order to access alternation numbers of indiscernible sequences with respect to NIP formulas. Alternatively, one can  directly apply the following important fact about VC-dimension.

\begin{fact}[Haussler's Packing Lemma \cite{HaussPL}]
Suppose $(X,\cB,\mu)$ is a probability space, and $\cS\seq\cB$ is a set system with VC-dimension $d$. Then $\pn(\mu,\cS,\epsilon)\leq (4e^2/\epsilon)^d$ for all $\epsilon>0$.
\end{fact}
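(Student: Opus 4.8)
The plan is the standard route to Haussler's lemma: transfer the statement from the measure-theoretic setting to the finite Boolean cube by random sampling, and there combine the Sauer--Shelah bound with Haussler's one-inclusion graph lemma to limit how an $\epsilon$-separated family can sit inside a VC class.

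Concretely, let $\cF\seq\cS$ be a finite $\epsilon$-separated family with $|\cF|=N$; the goal is $N\le(4e^2/\epsilon)^d$. Fix an integer $m$ (to be optimized at the end) and draw $x_1,\dots,x_m$ independently from $\mu$, writing $S=\{x_1,\dots,x_m\}$ and $A|_S\in\{0,1\}^m$ for the trace of $A$ on $S$. The trace system $W=\{A|_S:A\in\cS\}$ again has VC-dimension at most $d$, so two facts apply: $(i)$ by Sauer--Shelah, $|W|\le\sum_{i\le d}\binom{m}{i}\le(em/d)^d$ for $m\ge d$; and $(ii)$ by Haussler's one-inclusion graph lemma, the graph on vertex set $W$ joining traces that differ in exactly one coordinate has at most $d\,|W|$ edges. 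I would prove $(ii)$ by down-compression: repeatedly shifting $W$ in each coordinate never raises the VC-dimension and never lowers the one-inclusion edge count, so one reduces to $W$ downward closed; there every $v\in W$ has support of size at most $d$ (else its support is shattered), and charging each one-inclusion edge to its upper endpoint gives at most $d$ edges per vertex.

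The core is then an expectation argument. For a fixed pair $A\ne B$ in $\cF$, the traces $A|_S$ and $B|_S$ differ in $\mathrm{Bin}(m,\delta)$ coordinates with $\delta=\mu(A\ssd B)\ge\epsilon$, so the probability that $A|_S=B|_S$, or that $A|_S$ and $B|_S$ are adjacent in the one-inclusion graph, equals $(1-\delta)^{m-1}\bigl(1+(m-1)\delta\bigr)$, which is decreasing in $\delta$ and hence at most $(1-\epsilon)^{m-1}\bigl(1+(m-1)\epsilon\bigr)$. Summing over the $\binom N2$ pairs bounds from above the expected number $Z$ of such ``close'' pairs. In the other direction, with $t(v)=|\{A\in\cF:A|_S=v\}|$ and $\sum_v t(v)=N$, convexity gives $\sum_v\binom{t(v)}{2}\ge\tfrac12\bigl(N^2/|W|-N\bigr)$, so the collision part of $Z$ alone is already comparable to $N^2/(em/d)^d$; the role of the edge bound $|E|\le d\,|W|$ is to let one work with adjacency in the one-inclusion graph rather than mere equality of traces, which is exactly what permits taking $m$ of order $d/\epsilon$ instead of order $\epsilon^{-1}d\log(1/\epsilon)$. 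Choosing such an $m$ to balance the two estimates makes $E[Z]$ exceed its upper bound once $N>(4e^2/\epsilon)^d$, a contradiction.

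The step I expect to be the main obstacle is precisely this balancing, i.e., squeezing out the sharp constant. Using only that traces are distinct --- Sauer--Shelah without the one-inclusion graph --- forces $m\asymp\epsilon^{-1}d\log(1/\epsilon)$ and costs a factor $(\log(1/\epsilon))^d$, so it is Haussler's one-inclusion edge bound, together with careful handling of the lower-order $-N/2$ and $1+(m-1)\epsilon$ terms in the optimization, that recovers the clean $\epsilon^{-d}$ dependence with the stated constant. For this bookkeeping I would follow the argument in \cite{HaussPL}.
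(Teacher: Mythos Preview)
The paper does not prove this statement: it is stated as a Fact cited from \cite{HaussPL} and used as a black box. What the paper does supply is the Remark immediately following Corollary~\ref{cor:Haussler}, which sketches the simpler Sauer--Shelah-only argument (random sample of size $n$, show that with positive probability all $\cF$-traces are distinct, then apply $\pi_{\cS}(n)\le(en/d)^d$) and obtains the weaker bound $O_d(1/\epsilon)^{(1+o_d(1))d}$. You describe exactly this weaker route in your final paragraph, so on that point you and the paper agree.

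Your proposal goes further than the paper does, outlining the full Haussler argument with the one-inclusion graph edge bound in pursuit of the sharp $(4e^2/\epsilon)^d$. The ingredients you list---random sampling, Sauer--Shelah, and the shifting proof that the unit-distance graph on $W$ has at most $d|W|$ edges---are indeed those of \cite{HaussPL}. However, the way you wire them together is not quite right as written. Your lower bound on $Z$ uses only the collision count $\sum_v\binom{t(v)}{2}\ge\tfrac12(N^2/|W|-N)$, which does not invoke the edge bound at all; you then assert that the edge bound is what permits $m$ of order $d/\epsilon$, but you do not show how it actually enters the inequality. In Haussler's argument the edge bound is not used to improve a lower bound on close pairs; rather, one samples $m+1$ points, uses the edge density to control how many $\cF$-traces merge when a random coordinate is dropped, and thereby lower-bounds the expected number of \emph{distinct} $\cF$-traces on an $m$-sample directly. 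Your closing sentence honestly flags this gap by deferring the ``bookkeeping'' to \cite{HaussPL}, so the proposal is a correct inventory of the right tools rather than a complete proof.
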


 Note that one can deduce Haussler's Packing Lemma for a finitely additive probability space by applying the previous statement to the corresponding Stone space. Therefore we immediately obtain the following quantitative version of \cite[Proposition 3.3]{HPP}. Given $d\geq 1$, we say that an $\cL$-formula $\varphi(x,y)$ is \textbf{$k$-NIP} (with respect to $T$) if there do not exist sequences $(a_i)_{i=1}^k$ from $\cU^x$ and $(b_I)_{I\seq[k]}$ from $\cU^y$ such that $\varphi(a_i,b_I)$ holds if and only if $i\in I$. In other words, $\varphi(x,y)$ is $k$-NIP if and only if $\cS_{\varphi,\cU}$ has VC-dimension at most $k-1$. 

\begin{corollary}\label{cor:Haussler}
Suppose $\varphi(x,y)$ is  $k$-NIP. Then $\pn(\mu,\varphi,\epsilon)\leq (4e^2/\epsilon)^{k-1}$ for all $\mu\in\kM_\varphi(\cU)$ and $\epsilon>0$.
\end{corollary}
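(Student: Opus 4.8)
The plan is to derive Corollary \ref{cor:Haussler} directly from Haussler's Packing Lemma as stated just above, together with the observation immediately preceding it that the finitely additive case reduces to the countably additive case via Stone duality. First I would unpack the hypothesis: by definition, $\varphi(x,y)$ being $k$-NIP means exactly that the set system $\cS_{\varphi,\cU}=\{\varphi(x,b):b\in\cU^y\}$, viewed as a family of subsets of the type space $S_\varphi(\cU)$ (or of $\cU^x$), has VC-dimension at most $k-1$; this is just a restatement of the combinatorial definition of shattering. Then, given $\mu\in\kM_\varphi(\cU)$, I would regard $\mu$ as a regular Borel probability measure on the Stone space $X=S_\varphi(\cU)$, with $\cB$ the Borel $\sigma$-algebra and $\cS_{\varphi,\cU}\seq\cB$ the clopen sets defined by instances of $\varphi$. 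Since $\pn(\mu,\varphi,\epsilon)$ is by definition $\pn(\cS_{\varphi,\cU},\mu,\epsilon)$, applying Haussler's Packing Lemma with $d=k-1$ gives $\pn(\mu,\varphi,\epsilon)\leq (4e^2/\epsilon)^{k-1}$ for all $\epsilon>0$, which is exactly the claim.

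The only point requiring a word of care is the translation from the finitely additive setting (in which Keisler $\varphi$-measures are defined in Definition, namely as finitely additive probability measures on the Boolean algebra of $\varphi$-formulas) to the genuinely countably additive probability space needed to invoke Haussler's lemma. This is handled by the standard fact that a finitely additive probability measure on a Boolean algebra $\mathcal{A}$ extends uniquely to a regular (countably additive) Borel probability measure on the Stone space of $\mathcal{A}$, under which the elements of $\mathcal{A}$ correspond to clopen sets and $\epsilon$-separation with respect to the symmetric difference is preserved (since $A\ssd B$ corresponds to a clopen set whose measure agrees with the finitely additive value). Thus the supremum defining $\pn(\cS_{\varphi,\cU},\mu,\epsilon)$ is literally unchanged upon passing to the Stone space, and the hypothesis on VC-dimension transfers verbatim. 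I would state this reduction in one sentence, referencing the remark preceding the Haussler fact in the text, rather than belabor it.

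I do not anticipate a genuine obstacle here: the corollary is essentially a dictionary translation of an external combinatorial result into the model-theoretic notation already set up in this subsection. If anything, the one subtlety worth flagging in the write-up is making sure the index bookkeeping is right — VC-dimension at most $k-1$ corresponds to $\varphi$ being $k$-NIP (shattering $k$ points is prohibited), so the exponent in Haussler's bound is $d=k-1$, not $k$. I would therefore present the proof as: observe $k$-NIP means $\mathrm{VC}(\cS_{\varphi,\cU})\leq k-1$; pass to the Stone space so that $\mu$ becomes a Borel probability measure and $\cS_{\varphi,\cU}$ becomes a set system of clopen sets with the same packing numbers; apply Haussler's Packing Lemma with $d=k-1$; conclude. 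The whole argument is two or three sentences.

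\begin{proof}
Fix $\mu\in\kM_\varphi(\cU)$ and $\epsilon>0$. By definition, $\varphi(x,y)$ being $k$-NIP means precisely that the set system $\cS_{\varphi,\cU}$ of instances of $\varphi(x,y)$ over $\cU$ has VC-dimension at most $k-1$. As noted above, applying Haussler's Packing Lemma to the Stone space of the Boolean algebra of $\varphi$-formulas over $\cU$ (equipped with the regular Borel probability measure corresponding to $\mu$, under which instances of $\varphi$ become clopen sets and $\epsilon$-separation is preserved), we obtain $\pn(\mu,\varphi,\epsilon)=\pn(\mu,\cS_{\varphi,\cU},\epsilon)\leq (4e^2/\epsilon)^{k-1}$.
\end{proof}
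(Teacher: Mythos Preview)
Your proposal is correct and matches the paper's approach exactly: the paper does not give a separate proof but simply notes that $k$-NIP means $\cS_{\varphi,\cU}$ has VC-dimension at most $k-1$, and that Haussler's Packing Lemma transfers to the finitely additive setting via the Stone space, from which the corollary follows immediately.
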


\begin{remark}
Since the proof of Haussler's Packing Lemma is fairly complicated, we note that the same statement with a slightly weaker bound can be deduced quickly from the Sauer-Shelah Lemma, which is more familiar and elementary (see, e.g., \cite[Lemma 6.4]{Sibook}). This argument is well-known in the combinatorics literature, and  is short enough to include here (following the remarks after \cite[Theorem 2.1]{Moran-Yeh}).

Let $(X,\cB,\mu)$ be a probability space, and suppose $\cS\seq\cB$ has VC-dimension $d$. Let $\cF\seq \cS$ be a finite $\epsilon$-separated set of size $k$, and let $n$ be the least integer such that $n\geq d$ and ${k\choose 2}(1-\epsilon)^n<1$.  Consider the product measure $\mu^n$ on $X^n$, and let 
\[
E=\bigcup_{\{A,B\}\in{\cF\choose 2}} (X\backslash(A\ssd B))^n.
\]
Then $\mu^n(E)\leq {k\choose 2}(1-\epsilon)^n<1$. So we can fix some $\xbar\in X^n\backslash E$. Then for all distinct $A,B\in\cF$ there is some $x_i\in A\ssd B$, and so $A\cap \{x_1,\ldots,x_n\}\neq B\cap \{x_1,\ldots,x_n\}$. So the map $A\mapsto A\cap\{x_1,\ldots,x_n\}$ is injective on $\cF$. By the Sauer-Shelah Lemma,
\[
k=|\cF|\leq \pi_{\cS}(n)\leq (en/d)^d,
\]
where $\pi_{\cS}$ denotes the shatter function for $\cS$.
Together with the choice of $n$, one can use this to compute an explicit bound of the form $k\leq O_d(1/\epsilon)^{(1+o_d(1))d}$.
\end{remark}

Next, we note that with the above tools in hand, the local analogue of one direction of Fact \ref{fact:Keisler} becomes relatively straightforward.

\begin{lemma}\label{lem:NIPsmooth}
Assume $\varphi(x,y)$ is NIP, and suppose $\mu\in\kM_\varphi(M)$ where $M\prec\cU$. Then there is a global extension $\hat{\mu}\in\kM_\varphi(\cU)$ of $\mu$ which is smooth over some small model of size at most $|M|$.  
\end{lemma}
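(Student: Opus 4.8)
\textbf{Proof sketch for Lemma \ref{lem:NIPsmooth}.} The plan is to build $\hat\mu$ by a transfinite construction, at each step resolving one ``ambiguity'' in the current partial extension while only adding countably many (or at most $|M|$-many) new parameters to the model over which we are working. The model-theoretic heuristic is the standard one: a measure is smooth over $N$ exactly when there is no pair of global extensions disagreeing on some instance $\varphi(x,c)$, so we want to iteratively kill such disagreements. Concretely, start with $M_0=M$ and $\mu_0=\mu\in\kM_\varphi(M_0)$. At a successor stage, suppose $\mu_\alpha\in\kM_\varphi(M_\alpha)$ is given and is \emph{not} smooth over $M_\alpha$; then by Proposition \ref{prop:smoothchar} (its negation) there is a $\varphi$-generated formula $\theta(x,\ybar)$ and an $\epsilon>0$ witnessing the failure, i.e., for which no finite partition/sandwiching data of the required accuracy exists. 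The key point is that the obstruction to smoothness is, by Proposition \ref{prop:smoothchar}, a genuinely \emph{local} and finitary phenomenon, so its negation gives us a witnessing tuple $\bbar\in\cU^{\ybar}$ together with two global $\varphi$-measures extending $\mu_\alpha$ that differ on $\theta(x,\bbar)$ by more than $\epsilon$. We then enlarge $M_\alpha$ to $M_{\alpha+1}\prec\cU$ containing $\bbar$ and of the same cardinality, and choose $\mu_{\alpha+1}\in\kM_\varphi(M_{\alpha+1})$ to be \emph{any} restriction of a global extension of $\mu_\alpha$ that actually realizes the value $\theta(x,\bbar)$ must take (so that the ambiguity is resolved in $M_{\alpha+1}$). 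At limit stages take unions. The construction must be organized with a bookkeeping argument so that every potential obstruction is eventually addressed; since at each stage we only need to enumerate $\varphi$-generated formulas (countably many) and rational $\epsilon$'s together with the relevant types over the current model, and there are at most $|M_\alpha|$ such types, the whole process closes off after at most $|M|^{+}$ steps with a model $N$ of size $\le |M|$ and a measure $\mu_N\in\kM_\varphi(N)$ that has a \emph{unique} global extension $\hat\mu$.

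The role of the NIP hypothesis — and this is where Corollary \ref{cor:Haussler} (Haussler's packing lemma) enters — is to guarantee that the construction actually terminates, i.e., that after boundedly many steps no further ambiguities remain. Without a packing bound, one could imagine an infinite descending chain of refinements that never stabilizes; the finite $\epsilon$-packing number $\pn(\varphi,\nu,\epsilon)\le (4e^2/\epsilon)^{k-1}$ for $k$-NIP $\varphi$ provides a uniform ceiling on how many ``$\epsilon$-far apart'' instances can be distinguished by any global $\varphi$-measure, which is exactly the kind of compactness one needs to see that, for each fixed $\theta$ and $\epsilon$, only finitely many sandwiching pieces are ever required. Thus for each $\varphi$-generated $\theta$ and each rational $\epsilon>0$, there is a finite bound $N(\theta,\epsilon)$ (depending on the VC-dimension of the family generated by $\theta$, itself bounded since $\varphi$ is NIP and $\theta^*$-types over finite sets live in a finite space) on the number of distinct ``behaviors'' that must be pinned down; pinning each down once in the model suffices forever afterward, because the relevant partition-formulas in Proposition \ref{prop:smoothchar} can then be taken over $N$.

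I expect the main obstacle to be the \emph{organization of the induction}: making precise exactly what ``ambiguity'' is enumerated at each stage, and verifying that resolving finitely many of them per $(\theta,\epsilon)$ genuinely yields the syntactic smoothness criterion of Proposition \ref{prop:smoothchar} at the end. In particular one must be careful that resolving an ambiguity for $\theta$ over $M_\alpha$ does not create new ambiguities for $\theta$ over $M_{\alpha+1}$ — here is where the packing bound is doing the real work, ensuring that the number of $\epsilon$-separated instances of the sandwich formulas $\chi^0,\chi^1$ appearing in Proposition \ref{prop:smoothchar} cannot keep growing. A clean way to run this is to fix, for each $\varphi$-generated $\theta$ and rational $\epsilon$, an upper bound $d_\theta$ on the VC-dimension of the relevant set system and invoke Corollary \ref{cor:Haussler} to get that any global extension has at most $(4e^2/\epsilon)^{d_\theta}$ pairwise-$\epsilon$-separated instances of $\theta$; then one shows that once the model $N$ contains representatives realizing each of these finitely many separated classes (for every $(\theta,\epsilon)$ from a countable list), the conditions $(i)$–$(iii)$ of Proposition \ref{prop:smoothchar} are automatically satisfied, using that $F^\theta_{\mu_N,N}$ (which exists since $\mu_N$ will be $\theta$-invariant over $N$ by construction) has finite image up to $\epsilon$ and is therefore approximable by finitely many clopen pieces over $N$. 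The remaining steps — closing off the elementary submodel $N\prec\cU$ of size $\le|M|$ via a standard Löwenheim–Skolem/union-of-chain argument, and checking $\mu\subseteq\mu_N$ so that $\hat\mu$ is genuinely an extension of $\mu$ — are routine.
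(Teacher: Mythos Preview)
Your overall plan---a transfinite chain of models and local measures, with termination forced by the packing bound from Corollary~\ref{cor:Haussler}---is exactly the shape of the paper's argument (which follows \cite[Proposition~7.9]{Sibook}). However, there is a genuine gap in how you propose to run the successor step, and this gap is precisely where the standard proof does something you have omitted.

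At stage $\alpha$ you take two global extensions $\nu_1,\nu_2$ of $\mu_\alpha$ disagreeing by $\geq\epsilon$ on $\theta(x,\bbar_\alpha)$, and then you set $\mu_{\alpha+1}$ to be the restriction of \emph{one} of them. The paper (and Simon) instead take the \emph{average} $\mu_{\alpha+1}=\frac{1}{2}(\nu_1+\nu_2)|_{M_{\alpha+1}}$. This is not cosmetic: the averaging is what guarantees that the witnesses $\bbar_\alpha$ accumulated along the chain (for a fixed $(\theta,\epsilon)$) form an $\epsilon'$-separated family with respect to a single limit measure $\mu^*$. Concretely, after averaging, $\theta(x,\bbar_\alpha)$ retains ``width'' $\geq\epsilon/2$ in $\mu_{\alpha+1}$ in the sense that there remain extensions of $\mu_{\alpha+1}$ disagreeing on it; combined with the fact that the value of each earlier $\theta(x,\bbar_\gamma)$ is already pinned down in $\mu_{\alpha+1}$, one can show the $\bbar_\alpha$'s are pairwise separated under $\mu^*$. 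If the chain fails to terminate before $|M|^+$, pigeonhole on the countably many pairs $(\theta,\epsilon)$ gives some $\theta$ with $\pn(\theta,\mu^*,\epsilon')=\infty$, contradicting Corollary~\ref{cor:Haussler} since $\theta$ is NIP.

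Your ``pick one extension'' step does not produce this separation: if you simply fix $\nu_1(\theta(x,\bbar_\alpha))$ and move on, there is no residual width, and nothing prevents a later witness $\bbar_\beta$ from being $\epsilon$-close to $\bbar_\alpha$ under every relevant measure. Your claim that ``pinning each down once suffices forever afterward'' because the packing number bounds the ``distinct behaviors'' conflates two different things: the packing bound limits the number of $\epsilon$-separated instances \emph{under one fixed measure}, not the number of instances on which \emph{some pair of extensions} can disagree by $\epsilon$. Without the averaging trick (or an equivalent device) you have not exhibited the single measure with an infinite separated family that contradicts Haussler, so the termination argument as written does not go through. Replace the successor step with the average and argue by contradiction; the rest of your outline is fine.
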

\begin{proof}
We can follow the same strategy as the proof in the global case of an NIP theory (see \cite[Proposition 7.9]{Sibook}). In particular, if the result fails then one obtains a $\varphi$-generated formula $\theta(x,\ybar)$, a measure $\mu^*\in\kM_\varphi(\cU)$, and some $\epsilon>0$, such that $\pn(\theta,\mu^*,\epsilon)=\infty$. Since $\theta(x,\ybar)$ is still NIP, this contradicts Corollary \ref{cor:Haussler}. 
\end{proof}

Now we prove the main local result.

\begin{thm}\label{thm:localNIPchar}
The following are equivalent.
\begin{enumerate}[$(i)$]
\item $\varphi(x,y)$ is NIP.
\item For any $M\models T$, every $\mu\in\kM_\varphi(M)$ has a smooth global extension.
\item For any $M\models T$ of size at most $|T|$, every $\mu\in\kM_\varphi(M)$ has a strongly $\varphi_{\subt}$-definable global extension.
\item $\pn(\mu,\varphi,\epsilon)<\infty$ for all $\mu\in\kM_\varphi(\cU)$ and $\epsilon>0$. 
\end{enumerate}
\end{thm}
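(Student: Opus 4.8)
\textbf{Proof plan for Theorem \ref{thm:localNIPchar}.}

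The plan is to prove the cycle $(i)\Rightarrow(ii)\Rightarrow(iii)\Rightarrow(iv)\Rightarrow(i)$, which is the most economical route given the machinery already developed. The implication $(i)\Rightarrow(ii)$ is exactly Lemma \ref{lem:NIPsmooth}, so nothing further is needed there. For $(ii)\Rightarrow(iii)$: given $M\models T$ of size at most $|T|$ and $\mu\in\kM_\varphi(M)$, apply $(ii)$ to get a global extension $\hat\mu$ smooth over some small $N\prec\cU$ (which we may take of size at most $|M|\le|T|$, after elementarily extending if necessary). By Proposition \ref{prop:smoothfam} (the ``moreover'' clause, applied with $k=1$, $\theta=\varphi_{\subt}$, $X_1=S_\varphi(N)$), $\hat\mu$ is finitely $\varphi_{\subt}$-approximated in $N$, hence strongly $\varphi_{\subt}$-definable over $N$ by Proposition \ref{prop:imps}$(d)$. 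So $\hat\mu$ is the required extension of $\mu$. For $(iii)\Rightarrow(iv)$: let $\mu\in\kM_\varphi(\cU)$ and $\epsilon>0$ be arbitrary. The packing number $\pn(\mu,\varphi,\epsilon)$ depends only on the values $\mu(\varphi_{\subt}(x;b_i,b_j))$, so it suffices to find some small $M\prec\cU$ (of size at most $|T|$) and a $\varphi$-measure $\mu_0$ over $M$ whose restriction controls a large separated family, then extend $\mu_0$ by $(iii)$ and apply Proposition \ref{prop:phipack}. Concretely, if $\pn(\mu,\varphi,\epsilon)\ge n$, pick witnesses $b_1,\dots,b_n$; let $M$ be any small model of size at most $|T|$ containing all the $b_i$, set $\mu_0=\mu|_{\varphi,M}$; then $\pn(\mu_0,\varphi,\epsilon)\ge n$ as well. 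By $(iii)$, $\mu_0$ has a global strongly $\varphi_{\subt}$-definable extension, and hence $\pn(\mu_0,\varphi,\epsilon)<\infty$ by Proposition \ref{prop:phipack} together with Remark \ref{rem:mono}. Since $n$ was an arbitrary lower bound, $\pn(\mu,\varphi,\epsilon)<\infty$; as $\mu$ and $\epsilon$ were arbitrary, $(iv)$ holds. Finally, $(iv)\Rightarrow(i)$ is the contrapositive of Corollary \ref{cor:Haussler}: if $\varphi(x,y)$ has the independence property then for each $d$ there is (by compactness, using an indiscernible-sequence or random-graph-style construction, or simply by noting that the shatter function is unbounded) a measure realizing arbitrarily large $\epsilon$-separated families of instances — explicitly, one can average Dirac measures on a set of size $2^d$ that is shattered, giving a family of size $2^{d-1}$ that is $2^{1-d}$-separated — contradicting the uniform bound $(4e^2/\epsilon)^{k-1}$ implied by $k$-NIP for every $k$.

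A minor subtlety worth flagging in the writeup: in $(iii)\Rightarrow(iv)$ one must be careful that the model $M$ produced has size at most $|T|$, since $(iii)$ is only stated for such models; this is fine because $n$ parameters together with a skolemization generate a model of size $|T|$, and the packing witnesses are finite in number. One should also note explicitly that the separated-family data $\mu(\varphi_{\subt}(x;b_i,b_j))\ge\epsilon$ is preserved under restriction to any model containing the $b_i$, which is immediate from the definition of restriction.

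The main obstacle I expect is purely bookkeeping rather than conceptual: arranging the cardinality conditions in $(iii)$ and $(iv)$ to mesh, and making sure that the reduction ``$\pn$ depends only on the $\varphi_{\subt}$-values'' is stated precisely enough that moving between $\mu$ over $\cU$, its restriction to a small $M$, and a global extension of that restriction is transparently valid (Remark \ref{rem:mono} does most of this work). The genuinely nontrivial inputs — Proposition \ref{prop:smoothfam}, Proposition \ref{prop:phipack}, Corollary \ref{cor:Haussler}, and Lemma \ref{lem:NIPsmooth} — are all already available, so the theorem is essentially an assembly of these pieces along the cycle above.
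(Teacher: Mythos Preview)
Your cycle $(i)\Rightarrow(ii)\Rightarrow(iii)$ is correct and matches the paper's route exactly. The step $(iii)\Rightarrow(iv)$ has a small quantifier slip: you fix $n$, build $M$ and $\mu_0$ depending on $n$, conclude $\pn(\mu_0,\varphi,\epsilon)<\infty$, and then write ``since $n$ was an arbitrary lower bound, $\pn(\mu,\varphi,\epsilon)<\infty$''. That does not follow---the finite bounds on the $\mu_0$'s could grow with $n$. The fix (which is what the paper does) is to argue by contradiction: if $\pn(\mu,\varphi,\epsilon)=\infty$, collect witnesses for every $n$ into a single countable set, put them all inside one $M$ of size $\leq|T|$, and then $\pn(\mu|_M,\varphi,\epsilon)=\infty$ contradicts Proposition \ref{prop:phipack} applied to the extension from $(iii)$.

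The real gap is in $(iv)\Rightarrow(i)$. This is \emph{not} the contrapositive of Corollary \ref{cor:Haussler}; that corollary is precisely $(i)\Rightarrow(iv)$, so you need the converse, which requires an independent construction. Your explicit suggestion---average Dirac measures on a shattered set of size $2^d$---does not work: it produces, for each $d$, a \emph{different} measure $\mu_d$ and a \emph{different} $\epsilon_d\to 0$ with $\pn(\mu_d,\varphi,\epsilon_d)$ finite but growing. That is entirely compatible with $(iv)$, which only asserts finiteness for each fixed $\mu$ and $\epsilon$. What you need is a \emph{single} global $\varphi$-measure $\mu$ and a \emph{fixed} $\epsilon>0$ with $\pn(\mu,\varphi,\epsilon)=\infty$. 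The paper obtains this by the ``random-graph-style construction'' you allude to but do not carry out: from IP one gets an infinite $B\subseteq\cU^y$ whose instances are independent, and then a coin-flipping measure $\mu$ satisfying $\mu(\varphi_{\subt}(x;b,b'))=\tfrac12$ for all distinct $b,b'\in B$, so $\pn(\mu,\varphi,\tfrac12)=\infty$. You should replace your finite-average construction with this.
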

\begin{proof}
 $(i)\Rightarrow(ii)$. This is Lemma \ref{lem:NIPsmooth}.
 
 $(ii)\Rightarrow(iii)$. This follows from Propositions \ref{prop:imps}$(d)$ and \ref{prop:smoothfam}.

$(iii)\Rightarrow (iv)$. If $(iii)$ holds then, by Proposition \ref{prop:phipack}, we have $\pn(\mu,\varphi,\epsilon)<\infty$ for all $\mu\in\kM_\varphi(M)$ and $\epsilon>0$, where $|M|\leq |T|$. This suffices to conclude $(iv)$ since if $\mu\in\kM_\varphi(\cU)$ and $\pn(\mu,\varphi,\epsilon)=\infty$, then there is some $M\prec\cU$ of size at most $|T|$ such that $\pn(\mu|_M,\varphi,\epsilon)=\infty$.

$(iv)\Rightarrow (i)$. Suppose $\varphi(x,y)$ has the independence property. Then we can find an infinite set $B\seq \cU^y$ such that if $X,Y\subset B$ are finite and disjoint, then there is some $a\in \cU^x$ such that $\varphi(x,b)$ holds for all $b\in X$ and $\neg\varphi(a,b)$ holds for all $b\in Y$. In particular, for any $b_1,\ldots,b_m,c_1,\ldots,c_n\in B$, if $\bigcap_{i=1}^m\varphi(\cU^x,b_i)\seq\bigcup_{j=1}^n\varphi(\cU^x,c_j)$, then $b_i=c_j$ for some $i$ and $j$. This independence among $B$-instances of $\varphi(x,y)$ implies the existence of a ``coin-flipping" measure analogous to the usual probabilistic measure on the random graph (see Lemma A.6 and Corollary A.7 of \cite{CoGaHa} for details). In particular, there is a measure $\mu\in \kM_\varphi(\cU)$ such that if $b_1,\ldots,b_n\in B$ are pairwise distinct and $\epsilon_1,\ldots,\epsilon_n\in\{0,1\}$ then
\[
\mu\left(\bigwedge_{i=1}^n\varphi^{\epsilon_i}(x,b_i)\right)=\frac{1}{2^n}.
\]
Thus if $b,b'\in B$ are distinct then 
\[
\mu(\varphi_{\subt}(x,b,b'))=\mu(\varphi(x,b)\wedge\neg\varphi(x,b'))+\mu(\neg\varphi(x,b)\wedge\varphi(x,b'))=\frac{1}{4}+\frac{1}{4}=\frac{1}{2}.
\]
Therefore $\pn(\mu,\varphi,1/2)=\infty$.
\end{proof}

We can also use this to obtain an analogous characterization of NIP theories, which refines Fact \ref{fact:Keisler}. Before stating it, recall from Remark \ref{rem:nosmooth} that if $\mu\in\kM_x(\cU)$ is smooth then it is not necessarily the case that $\mu|_{\varphi}\in\kM_\varphi(\cU)$ is smooth for a particular $\cL$-formula $\varphi(x,y)$. (This is in contrast to other notions such as finite approximability and definability.) However, if $\mu\in\kM_x(\cU)$ is smooth over $M\prec\cU$ then it is finitely approximated in $M$, and thus $\mu|_\varphi$ is finitely approximated in $M$ for any $\cL$-formula $\varphi(x,y)$.

\begin{thm}
The following are equivalent.
\begin{enumerate}[$(i)$]
\item $T$ is NIP.
\item For any $M\models T$, every $\mu\in\kM_x(\cU)$ has a smooth global extension.
\item For any $M\models T$ of size at most $|T|$ and any singleton variable $x$, every $\mu\in\kM_x(\cU)$ has a strongly definable global extension.
\item $\pn(\mu,\varphi,\epsilon)<\infty$ for any $\cL$-formula $\varphi(x,y)$, $\mu\in\kM_x(\cU)$, and $\epsilon>0$.
\end{enumerate}
\end{thm}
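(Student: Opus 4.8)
The theorem is the ``global'' analogue of Theorem~\ref{thm:localNIPchar}, and the plan is to reduce each implication to the local version formula-by-formula, together with the standard fact that smoothness over a small model implies finite approximability of all reducts. I would organize the argument around the same cyclic structure $(i)\Rightarrow(ii)\Rightarrow(iii)\Rightarrow(iv)\Rightarrow(i)$.

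For $(i)\Rightarrow(ii)$: assume $T$ is NIP and fix $\mu\in\kM_x(\cU)$ together with a small model $M\prec\cU$. One runs the same argument as in the global NIP case (the proof of \cite[Proposition 7.9]{Sibook}, whose mechanics were already invoked in Lemma~\ref{lem:NIPsmooth}): if $\mu|_M$ had no smooth global extension, then via the characterization in Proposition~\ref{prop:smoothchar} one extracts a $\varphi$-generated formula $\theta(x,\ybar)$, a global $\theta$-measure $\mu^*$, and some $\epsilon>0$ with $\pn(\theta,\mu^*,\epsilon)=\infty$; but $\theta(x,\ybar)$ is NIP since $T$ is, contradicting Corollary~\ref{cor:Haussler}. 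The only difference from Lemma~\ref{lem:NIPsmooth} is that we work with a full measure in $\kM_x(\cU)$ rather than a $\varphi$-measure, but the packing obstruction is still localized to a single $\varphi$-generated formula, so nothing new is needed. For $(ii)\Rightarrow(iii)$: if $\mu$ is smooth over $M\prec\cU$ of size at most $|T|$, then by Proposition~\ref{prop:smoothfam} (the ``moreover'' clause, applied with $A=M$ and clopen $X_t$), $\mu|_\varphi$ is finitely $\theta$-approximated in $M$ for every $\cL$-formula $\varphi(x,y)$ and every $\varphi$-generated $\theta(x,\ybar)$; by Proposition~\ref{prop:imps}$(d)$ this means $\mu|_\varphi$ is strongly $\theta$-definable over $M$, i.e.\ $\mu$ is strongly definable over $M$ in the sense defined just before Proposition~\ref{prop:dfssd}. (Here one should note that a measure smooth over some small model is automatically smooth over a model of size $\leq|T|$ — or simply quote that Keisler measures over models of size $\leq|T|$ suffice for the statement, restricting attention to such $M$ from the outset.)

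For $(iii)\Rightarrow(iv)$: fix an $\cL$-formula $\varphi(x,y)$ and a global measure $\nu\in\kM_x(\cU)$, and suppose toward a contradiction that $\pn(\nu,\varphi,\epsilon)=\infty$ for some $\epsilon>0$. Since $\pn(\nu,\varphi,\epsilon)\geq n$ is witnessed by finitely many parameters $b_1,\dots,b_n\in\cU^y$ with $\nu(\varphi_{\subt}(x;b_i,b_j))\geq\epsilon$, by a Löwenheim–Skolem argument there is a model $M\prec\cU$ with $|M|\leq|T|$ such that $\pn(\nu|_M,\varphi,\epsilon)=\infty$ already (taking $M$ to contain all the witnessing parameters for each $n$, using that $\kM_\varphi$-measures over $M$ restrict compatibly). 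By $(iii)$, $\nu|_M$ — or rather the measure $\mu\in\kM_\varphi(M)$ obtained from it — has a global extension strongly $\varphi_{\subt}$-definable over some small set; by Proposition~\ref{prop:phipack}, $\pn(\varphi,\mu,\epsilon)<\infty$, contradicting Remark~\ref{rem:mono} applied to $\mu$ and its extension $\nu|_M\subseteq\nu$. Finally $(iv)\Rightarrow(i)$ is immediate from the local version: if $T$ is not NIP then some $\cL$-formula $\varphi(x,y)$ has the independence property, and the ``coin-flipping'' measure $\mu\in\kM_\varphi(\cU)$ constructed in the proof of Theorem~\ref{thm:localNIPchar}$(iv)\Rightarrow(i)$ (via Lemma~A.6 and Corollary~A.7 of \cite{CoGaHa}) satisfies $\pn(\mu,\varphi,1/2)=\infty$; extending $\mu$ arbitrarily to a measure in $\kM_x(\cU)$ preserves this by Remark~\ref{rem:mono}.

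\textbf{Main obstacle.} The substantive work is entirely contained in the local Theorem~\ref{thm:localNIPchar} and in Propositions~\ref{prop:smoothfam}, \ref{prop:phipack}, and Corollary~\ref{cor:Haussler}, all of which are available. So the ``hard part'' here is really just bookkeeping: making sure the reduction from global measures to $\varphi$-measures is clean at each step — in particular that the cardinality bound $|M|\leq|T|$ is genuinely enough in $(iii)$ (one must verify that a strongly definable extension over a size-$\leq|T|$ model suffices to bound packing numbers of global measures, which is exactly the Löwenheim–Skolem step in $(iii)\Rightarrow(iv)$), and that ``smooth'' for a full measure over a model cleanly yields ``finitely approximated / strongly definable'' for every reduct $\mu|_\varphi$, which is the content of the paragraph following the statement of the theorem in the excerpt. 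There is no new mathematical idea required beyond the local case.
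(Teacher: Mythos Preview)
Your overall strategy matches the paper's, and most of the steps are fine, but there is one genuine bookkeeping gap in your cycle $(iii)\Rightarrow(iv)$: condition $(iii)$ is stated only for \emph{singleton} $x$, while $(iv)$ ranges over formulas $\varphi(x,y)$ with $x$ an arbitrary tuple. Your argument (``fix an $\cL$-formula $\varphi(x,y)$ and $\nu\in\kM_x(\cU)$\ldots\ by $(iii)$, $\nu|_M$ has a global extension strongly $\varphi_{\subt}$-definable\ldots'') silently invokes $(iii)$ for a tuple variable, which is not available. The paper sidesteps this by not proving $(iii)\Rightarrow(iv)$ at all: it shows $(i)\Leftrightarrow(iv)$ directly from the local Theorem~\ref{thm:localNIPchar}, and then proves $(iii)\Rightarrow(i)$ by observing that your argument (restricted to $|x|=1$) yields that every $\varphi(x,y)$ with $|x|=1$ is NIP, which is enough to conclude $T$ is NIP. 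You can repair your cycle the same way: run your $(iii)\Rightarrow(iv)$ argument only for singleton $x$, deduce that every such $\varphi$ is NIP via the local equivalence, conclude $(i)$, and then get the full $(iv)$ from Corollary~\ref{cor:Haussler}.

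A smaller point on $(ii)\Rightarrow(iii)$: your citation of Proposition~\ref{prop:smoothfam} is not quite right, since that proposition assumes $\mu\in\kM_\varphi(\cU)$ is \emph{locally} smooth, and (as Remark~\ref{rem:nosmooth} warns) global smoothness of $\mu$ does not imply smoothness of $\mu|_\varphi$. The correct route---which you do identify in your ``Main obstacle'' paragraph---is the remark just before the theorem: global smooth $\Rightarrow$ globally finitely approximated $\Rightarrow$ $\mu|_\varphi$ finitely $\theta$-approximated $\Rightarrow$ strongly $\theta$-definable (Proposition~\ref{prop:imps}$(d)$). Also, your parenthetical about cardinality there is unnecessary: $(iii)$ asks only about measures over models of size $\leq|T|$, and $(ii)$ applies to any model, so there is nothing to arrange.
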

\begin{proof}
$(i)\Leftrightarrow (v)$. This follows from the corresponding equivalence in Theorem \ref{thm:localNIPchar}. 

$(i)\Rightarrow (ii)$. This is the well-known non-local analogue of Lemma \ref{lem:NIPsmooth}.  

$(ii)\Rightarrow (iii)$. This follows from the fact that if $\mu\in\kM_x(\cU)$ is smooth then it is finitely approximated (together with the above remarks). 

$(iii)\Rightarrow (i)$. If $(iii)$ holds then, as in the proof of  Theorem \ref{thm:localNIPchar}, we see that any $\cL$-formula $\varphi(x,y)$ with $|x|=1$ is NIP. This suffices to conclude that $T$ is NIP.
\end{proof}

\end{document}